\documentclass[twoside, 12pt]{article}

\usepackage{mathtools}
\usepackage{amsmath,amsfonts,amsthm,amssymb}
\usepackage{tikz-cd}
\usepackage{stmaryrd}
\usepackage{mathrsfs}
\usepackage{color}
\usepackage{stmaryrd} 
\usepackage{bbm}
\usepackage{verbatim}
\usepackage{enumerate}
\usepackage{quiver}
\usepackage{hyperref}

\usepackage[margin=1in, headheight=15pt]{geometry}

\newcommand{\bbone}{\text{\usefont{U}{bbold}{m}{n}1}}
\MakeRobust{\bbone}

\newtheorem{theorem}{Theorem}[subsection]
\newtheorem{lemma}[theorem]{Lemma}
\newtheorem{proposition}[theorem]{Proposition}
\newtheorem{corollary}[theorem]{Corollary}

\theoremstyle{remark}
\newtheorem{remark}[theorem]{Remark}

\newtheorem{definition}[theorem]{Definition}

\DeclareMathOperator{\Spec}{Spec}

\newcommand{\nc}{\newcommand} 
\nc{\on}{\operatorname}

\nc{\BC}{\mathbb{C}}
\nc{\C}{\mathbb{C}}

\nc{\BR}{\mathbb{R}}
\nc{\R}{\mathbb{R}}

\nc{\BZ}{\mathbb{Z}}
\nc{\Z}{\mathbb{Z}}

\nc{\BQ}{\mathbb{Q}}
\nc{\Q}{\mathbb{Q}}

\nc{\BF}{\mathbb{F}}

\renewcommand{\O}{\mathcal{O}}

\nc{\BP}{\mathbb{P}}

\nc{\BA}{\mathbb{A}}

\nc{\Hom}{\on{Hom}}
\nc{\Aut}{\on{Aut}}
\nc{\Gal}{\on{Gal}}
\nc{\End}{\on{End}}
\nc{\Ext}{\on{Ext}}
\nc{\Sht}{\on{Sht}}
\nc{\Bun}{\on{Bun}}
\nc{\CH}{\on{CH}}
\nc{\Vir}{\on{Vir}}
\nc{\Ord}{\on{Ord}}

\usepackage[OT2,T1]{fontenc}
\DeclareSymbolFont{cyrletters}{OT2}{wncyr}{m}{n}
\DeclareMathSymbol{\Sha}{\mathalpha}{cyrletters}{"58}

\nc{\GL}{\on{GL}}
\nc{\PSL}{\on{PSL}}
\nc{\SL}{\on{SL}}
\nc{\PGL}{\on{PGL}}
\nc{\SO}{\on{SO}}
\nc{\Sp}{\on{Sp}}
\nc{\Spin}{\on{Spin}}

\nc{\conj}{\overline}

\nc{\ad}{\on{ad}}
\nc{\V}{\check{\mathbf{V}}}
\nc{\Ind}{\on{Ind}}

\nc{\Ptriv}{\tilde{P}_{1^\vee}}
\nc{\Pw}{\tilde{P}_{\omega^\vee}}
\nc{\PSp}{\tilde{P}_{\on{Sp}^\vee}}
\nc{\PSpw}{\tilde{P}_{(\on{Sp} \otimes \omega)^\vee}}
\nc{\Mone}{\tilde{M}_{1^\vee}}
\nc{\Monenot}{\tilde{M}_{1^\vee, 0}}
\nc{\Ppi}{\tilde{P}_{\pi^\vee}}
\nc{\Ptau}{\tilde{P}_{\tau^\vee}}

\nc{\mideal}{\mathfrak{m}}
\nc{\aideal}{\mathfrak{m}}

\nc{\lb}{\llbracket}
\nc{\rb}{\rrbracket}

\usepackage{graphicx} % Required for inserting images
\date{}
\title{The $3$-adic Steinberg block and a categorical $3$-adic local Langlands correspondence for $\GL_2(\BQ_3)$}
\author{CJ Dowd}
\usepackage[
backend=biber,
style=alphabetic
]{biblatex}

\DeclareLabelalphaNameTemplate{
  \namepart[use=true, pre=true, strwidth=1, compound=true]{prefix}
  \namepart[compound=true]{family}
}

\usepackage{fancyhdr}
\begin{document}
\maketitle
\abstract{We extend the work of Johansson--Newton--Wang-Erickson on a categorical $p$-adic Langlands correspondence over $\GL_2(\BQ_p)$ to $p = 3$ by recontextualizing their construction in the Steinberg case. We produce a novel, more categorical method of analyzing the properties of this construction without requiring an explicit algebraic presentation. As input to this argument, we also perform some new analysis of the representation theory of the $3$-adic Steinberg block of $\GL_2(\BQ_3)$, obtaining results complementing those of Pa\v{s}k\={u}nas.}
\tableofcontents 

\section{Introduction}
\subsection{Context}

This paper aims to extend the main results of \cite{JNW}, which establishes a categorical $p$-adic local Langlands correspondence, to $p = 3$. We first discuss this result and the obstacles involved in the $p = 3$ case. 

Let $G = \GL_2(\BQ_p)$ and $\Gamma = \Gal(\conj{\BQ}/\BQ)$. Let $L/\BQ_p$ be a finite field extension, and let $\O$ be the ring of integers of $L$ with residue field $k$. Fix a smooth character $\zeta: \BQ_p^\times \to \O^\times$, and let $\varepsilon: \Gamma \to \BZ_p^\times$ be the cyclotomic character. We let $\mathfrak{X}_{\zeta \varepsilon}$ denote the algebraized moduli stack of $2$-dimensional continuous representations of $\Gamma$ over $\O$ with fixed determinant $\zeta \varepsilon$, and we let $\on{Mod}_{G, \zeta}^{\on{lfin}}(\O)$ denote the category of locally finite smooth representations of $G$ over $\O$ with central character $\zeta$; see \S\ref{sec:moduli} and \S\ref{sec:rep-theory} for precise definitions. The main result of \cite{JNW} is a formulation of a categorical $p$-adic local Langlands correspondence:

\begin{theorem}
    (\cite[Theorem 1.1.1]{JNW}) Assume $p \geq 5$. Then there exists a fully faithful embedding 
    \begin{align*}
        \mathcal{D}(\on{Mod}^{\on{lfin}}_{G, \zeta}(\O)) \to \on{IndCoh}(\mathfrak{X}_{\zeta \varepsilon}),
    \end{align*}
    where $\mathcal{D}(\on{Mod}^{\on{lfin}}_{G, \zeta}(\O))$ denotes the derived $\infty$-category of $\on{Mod}^{\on{lfin}}_{G, \zeta}(\O)$ and $\on{IndCoh}$ denotes the ind-completion of the bounded derived $\infty$-category of coherent sheaves on $\mathfrak{X}_{\zeta \varepsilon}$. 
\end{theorem}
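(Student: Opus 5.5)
The plan follows the strategy of \cite{JNW}: reduce to blocks, realize each block as modules over a pseudocompact ring, and compare with the moduli stack. First, Pa\v{s}k\={u}nas' block decomposition splits $\on{Mod}^{\on{lfin}}_{G,\zeta}(\O)$ into a product of blocks $\mathfrak{B}$. The stack $\mathfrak{X}_{\zeta\varepsilon}$ splits compatibly into a disjoint union of components $\mathfrak{X}_{\mathfrak{B}}$, indexed by the corresponding semisimple mod $p$ pseudorepresentations. It therefore suffices to construct, for each block, a fully faithful functor $\mathcal{D}(\mathfrak{B}) \to \on{IndCoh}(\mathfrak{X}_{\mathfrak{B}})$ and then take the product.

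For a fixed block, I would take the projective envelope $\tilde P_{\mathfrak{B}}$ of the sum of the irreducibles in $\mathfrak{B}$, with $E_{\mathfrak{B}} = \End(\tilde P_{\mathfrak{B}})$. By Gabriel's theory, $\mathfrak{B}$ is anti-equivalent to pseudocompact right $E_{\mathfrak{B}}$-modules, so $\mathcal{D}(\mathfrak{B})$ becomes a derived category of (discrete duals of) $E_{\mathfrak{B}}$-modules. Pa\v{s}k\={u}nas shows, for $p \ge 5$, that the center of $E_{\mathfrak{B}}$ is the pseudodeformation ring $R^{\on{ps}}_{\mathfrak{B}}$. He also shows that $E_{\mathfrak{B}}$ is identified with a Cayley--Hamilton algebra over it, namely the generalized matrix algebra of the universal pseudorepresentation, with determinant condition $\zeta\varepsilon$.

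The key construction is a coherent sheaf of algebras on $\mathfrak{X}_{\mathfrak{B}}$. Let $\mathcal{V}$ be the universal Galois representation, and let $\rho^{\on{univ}}$ be the universal representation on the framed space $X^{\square}$. Then $E_{\mathfrak{B}}$ acts on $\mathcal{V}$ through the map of Cayley--Hamilton algebras $E_{\mathfrak{B}} \to \End(\rho^{\on{univ}})$, and this action is $\GL_2$-equivariant. This produces an $E_{\mathfrak{B}}$-module object in $\on{Coh}(\mathfrak{X}_{\mathfrak{B}})$, and hence a functor
\begin{align*}
M \mapsto \mathcal{V}^{\vee} \otimes^{L}_{E_{\mathfrak{B}}} M,
\end{align*}
extended to ind-objects. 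Full faithfulness then reduces to a single statement: the natural map $E_{\mathfrak{B}} \to R\Hom_{\mathfrak{X}_{\mathfrak{B}}}(\mathcal{V},\mathcal{V})$ is an isomorphism, with no higher cohomology. Concretely, the $\GL_2$-invariants of $\End(\rho^{\on{univ}}) = M_2(\O_{X^{\square}})$ should be exactly the Cayley--Hamilton algebra $E_{\mathfrak{B}}$. The higher invariants must also vanish, which follows from the reductivity of $\GL_2$ together with a good-filtration argument on the coordinate ring of $X^\square$. A compact-generation check finishes the argument: $\tilde P_{\mathfrak{B}}$, or its dual, generates, and its image is compact, so an equivalence on generators extends.

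I expect the main obstacle to be this invariant-theory step in the non-generic blocks, namely the reducible cases and especially the case $\bar\rho \cong \chi \oplus \chi\omega$ that contains the Steinberg representation. There $R^{\on{ps}}_{\mathfrak{B}}$ is not regular, $E_{\mathfrak{B}}$ has an explicit but intricate presentation, and $X^{\square}$ is singular. I would first try to verify the statement on an explicit presentation of $E_{\mathfrak{B}}$, presenting $X^{\square}$ as a complete intersection, checking that the invariants agree after inverting the reducibility ideal, and then using depth, Cohen--Macaulay, or reflexivity arguments to extend across the reducible locus. The restriction $p \ge 5$ enters because this is the range where Pa\v{s}k\={u}nas' computation of $E_{\mathfrak{B}}$ is available.
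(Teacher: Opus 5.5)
Your reduction to blocks and the tensor-kernel formalism are the right frame. However, the central identification you rely on fails in exactly the block you flag, and the failure is not an invariant-theory difficulty. You take $\tilde{E}_{\mathfrak{B}}=\End(\tilde{P}_{\mathfrak{B}})$ to be (the opposite of) the Cayley--Hamilton algebra $E$, so that the universal bundle $\mathfrak{V}$ can serve as the kernel. In the Steinberg block this is false. Colmez's functor annihilates characters, and what Pa\v{s}k\={u}nas (and, uniformly, \cite{paskunas-tung}) identify with $E^{\on{op}}$ is only the corner $\tilde{E}'_{\mathfrak{B}}=\End(\tilde{P}')$. This corner is built from the projective envelopes of the two non-character irreducibles. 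For $p\ge 5$ the block $\{1,\on{Sp},\Ind_P^G\omega\otimes\omega^{-1}\}$ has three irreducibles, so $\tilde{E}_{\mathfrak{B}}$ has three simple modules and $\tilde{E}_{\mathfrak{B}}\otimes_R K\simeq M_{3\times 3}(K)$, where $K=\on{Frac}(R)$. By contrast, $E$ has two simple modules (one for each of $\omega$ and $1$) and generic rank $4$. Hence $\End_{\mathfrak{X}}(\mathfrak{V})\simeq E$ is not $\tilde{E}_{\mathfrak{B}}^{\on{op}}$, not even up to Morita equivalence. The necessary and sufficient criterion $\on{REnd}_{\mathfrak{X}}(X^*)\simeq\tilde{E}_{\mathfrak{B}}^{\on{op}}$ therefore fails for $X^*=\mathfrak{V}$. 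If you let only the corner act, the functor factors through the truncation $M\mapsto eM$ and kills the trivial representation. The mismatch is already visible at the generic point, so your proposed repair cannot close the gap: comparing after inverting the reducibility ideal would simply exhibit the rank discrepancy, and depth or reflexivity arguments cannot change a generic rank.

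The missing idea is to enlarge the kernel by a summand accounting for $\tilde{P}_{1^\vee}$. \cite{JNW} take $X^*_{\mathfrak{B}}$ to be $\mathfrak{V}$ plus an explicit rank-one maximal Cohen--Macaulay sheaf that is not locally free. In this paper's reformulation the kernel is $\mathfrak{W}_{\on{tf}}$, the $R$-torsion-free quotient of $\mathfrak{V}\otimes_{\tilde{E}'_{\mathfrak{B}}}\Hom(\tilde{P},\tilde{P}')$; for $p=3$ it is $S(1)\oplus S(-1)\oplus(b_0,b_1)(1)\oplus(c_0,c_1)(-1)$. Two inputs are then needed that your outline does not supply. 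The first is $\End_{\mathfrak{X}}(X^*_{\mathfrak{B}})\simeq\tilde{E}_{\mathfrak{B}}^{\on{op}}$, which \cite{JNW} prove by matching explicit generators and relations against Pa\v{s}k\={u}nas' presentation of $\tilde{E}_{\mathfrak{B}}$ in \cite[\S10]{paskunas13}. The second is $\Ext^i_{\mathfrak{X}}(X^*_{\mathfrak{B}},X^*_{\mathfrak{B}})=0$ for $i>0$. For the non-projective summand this comes from an explicit $2$-periodic resolution, not from reductivity; on $[\Spec S/\mathbb{G}_m]$ the higher self-Exts of $\mathfrak{V}$ vanish for trivial reasons. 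There are also two smaller points. In the supersingular blocks $\tilde{E}_{\mathfrak{B}}\simeq R$ while $\End_{\mathfrak{X}}(\mathfrak{V})\simeq M_2(R)$, so there one must use a twisted structure sheaf instead of $\mathfrak{V}$; this is a harmless Morita adjustment. And deducing full faithfulness on all of $\mathcal{D}(\on{Mod}^{\on{lfin}}_{G,\zeta}(\O))$ from the self-Ext computation in \cite{JNW} uses a finite cohomological amplitude hypothesis on $\tilde{E}_{\mathfrak{B}}$ (\cite[Assumption 4.2.1(c)]{JNW}). That is a second place where $p\ge 5$ enters, beyond the availability of Pa\v{s}k\={u}nas' computation.
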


We give a brief overview of the proof of this theorem. The category $\on{Mod}^{\on{lfin}}_{G, \zeta}(\O)$ has a direct sum decomposition $$\on{Mod}^{\on{lfin}}_{G, \zeta}(\O) = \bigoplus_{\mathfrak{B}} \on{Mod}^{\on{lfin}}_{G, \zeta}(\O)^{\mathfrak{B}},$$ where the $\mathfrak{B}$ are certain equivalence classes (``blocks'') of irreducible objects in $\on{Mod}^{\on{lfin}}_{G, \zeta}(\O)$ and $\on{Mod}^{\on{lfin}}_{G, \zeta}(\O)^{\mathfrak{B}}$ is the full subcategory of $\on{Mod}^{\on{lfin}}_{G, \zeta}(\O)^{\mathfrak{B}}$ consisting of representations whose irreducible subquotients all lie in $\mathfrak{B}$. Meanwhile, the stack $\mathfrak{X}_{\zeta \varepsilon}$ decomposes into a disjoint union 
$$\mathfrak{X}_{\zeta \varepsilon} = \coprod_{\conj{D}} \mathfrak{X}_{\conj{D}}$$
where the $\conj{D}$ range over residual pseudorepresentations of $\Gamma$ and $\mathfrak{X}_{\conj{D}}$ is the algebraized moduli stack of representations with residual pseudorepresentation $\conj{D}$. The mod $p$ local Langlands correspondence establishes a bijection between blocks $\mathfrak{B}$ and residual pseudorepresentations $\conj{D}$. Therefore, the theorem may be established by working block-by-block; that is, by proving that we have a fully faithful embedding $$\mathcal{D}(\on{Mod}^{\on{lfin}}_{G, \zeta}(\O)^{\mathfrak{B}}) \to \on{IndCoh}(\mathfrak{X}_{\mathfrak{B}})$$ for each block $\mathfrak{B}$, where $\mathfrak{X}_{\mathfrak{B}} = \mathfrak{X}_{\conj{D}}$ for the residual pseudorepresentation $\conj{D}$ corresponding to $\mathfrak{B}$.

Via Morita theory, each category $\on{Mod}^{\on{lfin}}_{G, \zeta}(\O)^{\mathfrak{B}}$ may be identified with the category $\on{LMod}_{\on{disc}}(\tilde{E}_{\mathfrak{B}})$ of discrete left $\tilde{E}_{\mathfrak{B}}$-modules for a certain compact algebra $\tilde{E}_{\mathfrak{B}}$. When $p \geq 5$, these algebras have been computed explicitly in \cite{paskunas13}. The functor in the theorem is given by a derived tensor product of the form 
\begin{align*}
    V \rightsquigarrow X^*_{\mathfrak{B}} \otimes^L_{\tilde{E}_{\mathfrak{B}}} V
\end{align*}
for $V \in \on{LMod}_{\on{disc}}(\tilde{E}_{\mathfrak{B}})$. Here, the object $X^*$ is a sheaf on $\mathfrak{X}_{\mathfrak{B}}$ satisfying $\on{REnd}_{\mathfrak{X}}(X^*_{\mathfrak{B}}) \simeq \tilde{E}_{\mathfrak{B}}^{\on{op}}$, which is a necessary and sufficient condition for the derived tensor functor to be fully faithful. 

One of the main achievements of \cite{JNW} is constructing this ``tensor kernel'' $X_{\mathfrak{B}}^*$ with this desired property for each block. For most blocks, there is a clear candidate for $X^*_{\mathfrak{B}}$: it is the universal vector bundle on $\mathfrak{X}_{\mathfrak{B}}$ in the residually generic or scalar cases, or a twisted structure sheaf in the residually supersingular cases. However, a novel construction is required in the so-called \emph{Steinberg} case; this is the block $\mathfrak{B}$ that includes the trivial representation and the special representation, and the corresponding residual pseudocharacter on the Galois side is $\omega \oplus 1$, where $\omega$ is the mod $p$ cyclotomic character. The algebra $\tilde{E}_{\mathfrak{B}}$ is larger and more complicated in the Steinberg case than in others. 

In the Steinberg case, the sheaf $X^*_{\mathfrak{B}}$ is constructed as the direct sum of the universal vector bundle with an explicit maximal Cohen-Macaulay sheaf. The methods used to defined $X_{\mathfrak{B}}^*$ and prove its properties in \cite{JNW} depend on explicit presentations of various Galois (pseudo)deformations rings as well as the explicit presentation of $\tilde{E}_{\mathfrak{B}}$ from \cite{paskunas13}. In particular, the isomorphism $\on{\End}_{\mathfrak{X}}(X^*_\mathfrak{B}) \simeq \tilde{E}_{\mathfrak{B}}^{\on{op}}$ is established by giving explicit generators and relations for $\on{\End}_{\mathfrak{X}}(X^*_\mathfrak{B})$ as an algebra over the universal pseudodeformation ring $R$ and then matching generators and relations to the presentation of $\tilde{E}_{\mathfrak{B}}$ from \cite{paskunas13}. Such an approach is double-edged. The correspondence is made completely explicit---see, for example, \cite[Proposition 5.5.3]{JNW}---but in the words of \cite[\S 1.1]{JNW}, the work is computationally demanding. 

This paper aims to extend the main result of \cite{JNW} to $p = 3$. The reasons for the assumption $p \geq 5$ in \cite{JNW} include the following:
\begin{itemize}
    \item When $p = 2$, the mod $p$ cyclotomic character $\omega$ is trivial, and when $p = 3$ we have $\omega = \omega^{-1}$. These coincidences cause certain objects to coincide in a complicated way and also tend to introduce $H^2$ obstructions in the Galois deformation theory, making an explicit presentations of the relevant deformation rings messy or impossible. 
    \item The results of \cite{paskunas13} assume $p \geq 5$. Much of this work remains valid for $p < 5$, but for the Steinberg block $p \geq 5$ is essential, as the structure of this block is completely different depending on whether $p = 2, p = 3$, or $p \geq 5$. The results of \cite{paskunas16} deal with the generic case for $p < 5$, and \cite{paskunas-tung} provides a uniform treatment covering all blocks and all primes, but the latter work does not explicitly compute the algebra $\tilde{E}_{\mathfrak{B}}$. 
\end{itemize}

As suspected in \cite[\S 1.3]{JNW}, the work \cite{paskunas16} allows results to be extended to arbitrary $p$ in the generic case. The argument in the scalar case (labeled ``non-generic case I'' in \cite{JNW}) also works for $p = 3$. Most of the arguments for the supersingular cases carry over for arbitrary primes, although for $p < 5$ there are cases where the universal framed deformation ring is singular, and infinite cohomological amplitude causes issues with the machinery of \cite[\S4.2]{JNW}. 

For the $p = 3$ Steinberg case, there are several obstacles to constructing the tensor kernel $X_{\mathfrak{B}}^*$ using the methods of \cite{JNW}. We lack the results of \cite[\S10]{paskunas13} on the structure of the algebra $\tilde{E}_{\mathfrak{B}}$ for $p = 3$, and moreover the complexity of the $p = 3$ Steinberg block seems to make writing down an explicit presentation difficult. Likewise, while the relevant deformation rings are known on the Galois side (see \S\ref{sec:explicit-S}), the presentations of these rings seem too cumbersome to reasonably repeat the kinds of explicit computations from \cite[\S3.5]{JNW}. 

To circumvent these issues, we instead sought a more categorical, inexplicit way to compare the two categories. The insight on how to construct the sheaf $X^*_{\mathfrak{B}}$ in a more natural way comes from \cite[\S3.6]{DEG26}, where their description $X^*_{\mathfrak{B}}$ (notated as $\mathfrak{W}_{\conj{\theta}}/\mathfrak{W}_{\conj{\theta}}[J]$ in loc.\ cit.) involves $\on{Mod}_{G, \zeta}^{\on{lfin}}(\O)$ directly. However, the relevant properties of $\mathfrak{W}_{\conj{\theta}}/\mathfrak{W}_{\conj{\theta}}[J]$ stated in \cite[Theorem 3.6.44]{DEG26} are not proven using this new description; rather, the authors exhibit an explicit isomorphism with the sheaf $X_{\mathfrak{B}}^*$ from \cite{JNW} and import the necessary results from the latter paper. The construction of $X^*_{\mathfrak{B}}$ in this paper, which we denote by $\mathfrak{W}_{\on{tf}}$ (see \S4), is a direct generalization of the corresponding sheaf in \cite{DEG26}. 

A key input for this construction is the comparison between categories coming from Colmez's so-called Montreal functor, as applied in \cite{paskunas-tung}. The Montreal functor, with normalizations as in loc.\ cit., is an exact contravariant functor from $\on{Mod}_{G, \zeta}^{\on{lfin}}(\O)$ to the category of linearly compact $\O\lb \Gamma \rb$-modules. It is ``almost'' fully faithful except for the fact that is annihilates characters, a defect that makes the Steinberg block more difficult to analyze via this functor. Nevertheless, \cite{paskunas-tung} succeed in showing that the image of the Montreal functor retains partial information about the algebra $\tilde{E}_{\mathfrak{B}}$, Namely, they identify the part of this algebra ``coming from non-characters'' with the universal Cayley Hamilton algebra on the Galois side, and moreover they show that the (Bernstein) center of $\on{Mod}_{G, \zeta}^{\on{lfin}}(\O)$ may be naturally identified with the universal pseudodeformation ring. See \S\ref{sec:paskunas-tung} for an overview of these results.

Knowing these facts beforehand grants us a considerable advantage: if Langlands is supposed to be a ``bridge,'' then the results of \cite{paskunas-tung} are the ``suspension towers'' of that ``bridge,'' serving as a foundation on which to build out the rest. Less metaphorically, we can construct $X^*_{\mathfrak{B}}$ by using the results of \cite{paskunas-tung} to realize the universal vector bundle $\mathfrak{V}$ on $\mathfrak{X}$ as a starting point, satisfying $\End_{\mathfrak{X}}(\mathfrak{V})^{\on{op}} \simeq \tilde{E}'_{\mathfrak{B}}$ for a subalgebra $\tilde{E}'_{\mathfrak{B}} \subset \tilde{E}_{\mathfrak{B}}$. We then append additional information to this starting point to recover the full endomorphism algebra $\tilde{E}_\mathfrak{B}$. This idea, spurred by the particular framing of $X^*_{\mathfrak{B}}$ presented in \cite{DEG26}, was the motivation to pursue the generalization of this construction in the present paper. 
\subsection{Main result}

In this paper, we generalize the construction of $X_{\mathfrak{B}}^*$ from \cite[\S3.6]{DEG26} and demonstrate via a categorical argument that its endomorphism algebra matches $\tilde{E}_{\mathfrak{B}}^{\on{op}}$ in the Steinberg and generic blocks for $p \geq 3$. This is achieved without explicitly computing either of these algebras. 

Set $p = 3$, and let $\mathfrak{B}$ denote the Steinberg block of $\on{Mod}_{G/Z}^{\on{lfin}}(\O)$, let $\tilde{P}$ be the direct sum of the projective envelopes of the four irreducible objects in $\mathfrak{C}(\O)^{\mathfrak{B}}$, and let $\tilde{E}_{\mathfrak{B}} = \End_{\mathfrak{C}(\O)}(\tilde{P})$. Let $\mathfrak{X}$ denote the algebraized moduli stack of $2$-dimensional continuous Galois representations with residual pseudocharacter $\omega \oplus 1$ and fixed determinant. Our main new result is:

\begin{theorem}\label{thm:intro-thm-1}
    There is a maximal Cohen-Macaulay sheaf $\mathfrak{W}_{\on{tf}}$ on $\mathfrak{X}$ with
    \begin{enumerate}[(a)]
        \item A natural isomorphism $\tilde{E}_{\mathfrak{B}}^{\on{op}} \simeq \End_{\mathfrak{X}}(\mathfrak{W}_{\on{tf}})$; and
        \item $\Ext^i_{\mathfrak{X}}(\mathfrak{W}_{\on{tf}}, \mathfrak{W}_{\on{tf}}) = 0$ for all $i > 0$.
    \end{enumerate} 
\end{theorem}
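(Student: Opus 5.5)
The approach is to build $\mathfrak{W}_{\on{tf}}$ by gluing Galois-side data, which the Montreal functor sees, to the missing information coming from characters, and then to compute its endomorphisms categorically.

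\textbf{Step 1: setup via Pa\v{s}k\={u}nas--Tung.} Write $\tilde{P} = \tilde{P}' \oplus \tilde{P}''$, where $\tilde{P}''$ collects the projective envelopes of the characters in $\mathfrak{B}$ and $\tilde{P}'$ those of the non-characters. By \cite{paskunas-tung}:
\begin{itemize}
\item the center of the block is identified with the universal pseudodeformation ring $R$;
\item applying the Montreal functor $\check{\mathbf{V}}$ to $\tilde{P}$ yields a module over the universal Cayley--Hamilton algebra $E^{\on{CH}}$;
\item $\End(\tilde{P}')^{\on{op}}$ is identified with $E^{\on{CH}}$, and hence with $\End_{\mathfrak{X}}(\mathfrak{V})$ for the universal bundle $\mathfrak{V}$.
\end{itemize}

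\textbf{Step 2: definition of $\mathfrak{W}_{\on{tf}}$.} Following \cite[\S3.6]{DEG26}, consider the sheaf $\mathfrak{W}$ on $\mathfrak{X}$ attached to the $R$-module $\check{\mathbf{V}}(\tilde{P})$. This is a $\Gamma$-module with a compatible $\tilde{E}_{\mathfrak{B}}^{\on{op}}$-action. Equivalently, it is the pushforward along the framed cover of $\check{\mathbf{V}}(\tilde{P})\otimes_{E^{\on{CH}}}$ (the universal framed representation). Define $\mathfrak{W}_{\on{tf}}$ as its quotient by the torsion part, i.e.\ the sections supported on the reducible locus. By construction, $\tilde{E}_{\mathfrak{B}}^{\on{op}}$ acts on $\mathfrak{W}_{\on{tf}}$, which gives a map $\tilde{E}_{\mathfrak{B}}^{\on{op}} \to \End_{\mathfrak{X}}(\mathfrak{W}_{\on{tf}})$.

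\textbf{Step 3: Cohen--Macaulayness.} First check that $\mathfrak{W}_{\on{tf}}$ is maximal Cohen--Macaulay by using the depth of $\tilde{P}$ over $R$. Pa\v{s}k\={u}nas shows that $\tilde{P}$ is flat over suitable regular subrings, and the explicit form of $R$ for $p=3$ from \S\ref{sec:explicit-S} gives its dimension. Maximal Cohen--Macaulayness together with torsion-freeness then makes $\Hom$ reflexive. Hence both $\End_{\mathfrak{X}}(\mathfrak{W}_{\on{tf}})$ and $\tilde{E}_{\mathfrak{B}}$ are determined in codimension one. Over the irreducible locus, which has complement of codimension $\ge 2$ after accounting for the determinant, $\mathfrak{W}_{\on{tf}}$ restricts to a sum of copies of $\mathfrak{V}$. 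This reduces (a) to three points:
\begin{itemize}
\item injectivity of $\tilde{E}_{\mathfrak{B}}^{\on{op}} \to \End_{\mathfrak{X}}(\mathfrak{W}_{\on{tf}})$, which follows from faithfulness of $\check{\mathbf{V}}$ on $\tilde{P}$ modulo characters, plus torsion-freeness of $\tilde{E}_{\mathfrak{B}}$ over $R$;
\item an isomorphism at generic points of the codimension-one strata;
\item reflexivity of $\tilde{E}_{\mathfrak{B}}$ as an $R$-module.
\end{itemize}
Reflexivity should follow from $\tilde{E}_{\mathfrak{B}} = \Hom(\tilde{P},\tilde{P})$, with $\tilde{P}$ Cohen--Macaulay over a regular ring, via a Hom-depth lemma.

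\textbf{Step 4: part (b).} Compute $\Ext^i_{\mathfrak{X}}$ on the framed deformation space, which is a complete intersection, and take invariants under the framing group. This group is reductive, so taking invariants is exact in characteristic zero. Integrally, I expect to need the splitting of $\mathfrak{W}_{\on{tf}}$ into summands that are pushforwards of line bundles or $\mathfrak{V}$ from a resolution, i.e.\ the blow-up along the reducible locus, and then apply cohomology vanishing there.

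\textbf{Main obstacle.} I expect the hardest step to be the codimension-one comparison, and in particular surjectivity onto $\End$ along the reducible locus. There the characters contribute extensions that $\check{\mathbf{V}}$ kills, and at $p=3$ the coincidence $\omega=\omega^{-1}$ produces extra $H^2$-obstructions. My first attempt would be to localize at each height-one prime of the reducible locus, where $R$ is a DVR and both sides become hereditary orders. I would then match their ranks using the known structure of $\Ext^1$ between irreducibles in $\mathfrak{B}$ together with the reducible Galois deformations.
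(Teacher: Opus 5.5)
Your proposal has a genuine gap in Step 3, and it goes beyond the ``main obstacle'' you flag.

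First, the reducible locus is not of codimension $\ge 2$. The reducibility ideal $I=(b_0c_0,b_0c_1,b_1c_0,b_1c_1)$ satisfies $R/I\simeq\O\lb a_0,a_1\rb$, which has dimension $3$ inside the $4$-dimensional domain $R$. Likewise $\dim S/(b_0,b_1)=4$ while $\dim S=5$. So the reducible locus is a divisor, and the entire content of (a) sits at its generic point.

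Second, ``determined in codimension one'' needs $R$ normal and both $\tilde{E}_{\mathfrak{B}}$ and $\End_{\mathfrak{X}}(\mathfrak{W}_{\on{tf}})$ reflexive over $R$. At $p=3$ one only knows $\on{depth} R\ge 3$, not even that $R$ is Cohen--Macaulay. A flatness or Cohen--Macaulay statement for $\tilde{P}$ over a regular ring in the Steinberg block would come from Pa\v{s}k\={u}nas' explicit $p\ge 5$ analysis, which is precisely what is unavailable here.

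Third, even granting that $R_I$ is a DVR, matching ranks cannot show that an injective map between $R_I$-orders in $M_{4\times4}(K)$ is onto, even for hereditary orders. For example, $\begin{bmatrix} R_I & R_I \\ \mathfrak{m}_I & R_I\end{bmatrix}\subsetneq M_{2\times 2}(R_I)$ are both hereditary of the same rank. This is exactly the shape of the $\{\Ptriv,\PSpw\}$ corner of $\tilde{E}_{\mathfrak{B}}$, since $\Hom(\Ptriv,\PSpw)\simeq I$ while $\Hom(\PSpw,\Ptriv)\simeq R$. Any working argument must see the individual lattices $\Hom(\Ppi,\Ptau)\subset K$ on both sides at once, and yours does not.

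The paper never localizes at height-one primes. Each $\Hom(\Ppi,\Ptau)$ is torsion-free of rank $1$ and all composition pairings are nonzero, so $\tilde{E}_{\mathfrak{B}}\otimes_R K\simeq M_{4\times4}(K)$ compatibly with the four idempotents. By Corollary \ref{cor:compatible-diagram-hom}, a map $\mathfrak{W}^\tau\to\mathfrak{W}^\pi$ is a pair $(x,y)$ in degrees $\pm1$ commuting with every $b\in S_2$ and $c\in S_{-2}$. This forces $x$ and $y$ to be multiplication by a single element of $K$. What remains, for $\tau=1$ (and $\tau=\omega$ by twisting), is to extend the induced map from $\mathfrak{a}=\ker(R\to\O)$, the image of $(\alpha,\beta)$, to all of $R$. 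This is a Hartogs-type extension across the codimension-$3$ locus $V(\mathfrak{a})$. It follows from $\on{depth}_R\Hom(\Ppi,\Ptriv)\ge3$, which is proved via the depth lemma, $\Hom(\PSp,\Mone)\simeq\O\lb x,y\rb$, and $\on{depth}_R S_i\ge 3$ from an explicit regular sequence in $S$.

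Your Step 4 also lacks a mechanism. With $\mathfrak{X}=[\Spec S/\mathbb{G}_m]$, taking invariants is exact integrally, so no blow-up is needed. However, the ungraded groups $\Ext^{2k}_S(\mathfrak{W}^1_{\on{tf}},\mathfrak{W}^1_{\on{tf}})$ do not vanish: the resolution is $2$-periodic and $(b_0,b_1)$ is not projective. So the vanishing is purely a weight phenomenon. The paper first identifies $\mathfrak{W}_{\on{tf}}\simeq S(1)\oplus S(-1)\oplus(b_0,b_1)(1)\oplus(c_0,c_1)(-1)$, using $\Hom(\Ptriv,\PSpw)=I$ and $\Hom(\Ptriv,\PSp)\simeq S_2$. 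It then resolves $(b_0,b_1)(1)$ $2$-periodically via $x_0b_1+x_1b_0=0$, by free modules generated in positive degrees, and reads off the vanishing in degree $0$. A separate lifting argument handles $\Ext^1_{\mathfrak{X}}(\mathfrak{W}^1_{\on{tf}},\mathfrak{W}^\omega_{\on{tf}})$. Maximal Cohen--Macaulayness then follows from $\Ext^{>0}_S(\mathfrak{W}_{\on{tf}},S)=0$ and $S$ being Gorenstein, not from any depth property of $\tilde{P}$.
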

\begin{proof}
    The sheaf $\mathfrak{W}_{\on{tf}}$ is constructed in \S\ref{sec:candidate}; an alternative presentation of this sheaf is also given in Corollary \ref{cor:wtf-presentation}. The isomorphism of endomorphism algebras is Theorem \ref{thm:endomorphism-algebra-isomorphism}, and the vanishing of the higher Ext groups is Theorem \ref{thm:ext-vanishing}. The fact that $\mathfrak{W}_{\on{tf}}$ is maximal Cohen-Macaulay is Corollary \ref{cor:wtf-is-mcm}. 
\end{proof} 
The proof of the endomorphism algebra isomorphism depends on being able to identify endomorphisms of $\mathfrak{W}_{\on{tf}}$ with a certain collection of commutative diagrams resembling the naturality diagrams obtained from the Yoneda embedding; see Corollary \ref{cor:compatible-diagram-hom}. These are diagrams of $R$-modules for the universal pseudodeformation ring $R$. To simplify the picture and more easily identify morphisms, it is often helpful to pass to the generic fiber $K := \on{Frac}(R)$. To this end, one of the main ingredients is showing that $\tilde{E}_{\mathfrak{B}} \otimes_R K = M_{4 \times 4}(K)$, which we deduce from a new analysis of representation theory of the $p = 3$ Steinberg block. To show that these diagrams are induced by an element of $\tilde{E}_{\mathfrak{B}}$, we give an argument using the fact that the depths of the $R$-modules involved are sufficiently large. This approach is flexible enough to re-prove the endomorphism algebra isomorphism for all cases where the residual pseudorepresentation is reducible and multiplicity-free, i.e. the generic and $p > 2$ Steinberg cases. The key inputs are the aforementioned $K$-rational structure of $\tilde{E}_{\mathfrak{B}}$ and its $R$-depth properties, both of which can be deduced without explicitly computing $\tilde{E}_{\mathfrak{B}}$.  

Theorem \ref{thm:intro-thm-1} allows us to define a categorical $p$-adic local Langlands correspondence that includes $p = 3$ in the manner of \cite[Theorem 1.1]{JNW}. However, due to issues with infinite cohomological amplitude discussed in \S5.2, we restrict our version of the correspondence to perfect complexes. Fix a block $\mathfrak{B}$. Let $\on{Perf}(\tilde{E}_{\mathfrak{B}})$ denote the derived category of bounded complexes of finitely generated projective discrete left $\tilde{E}_{\mathfrak{B}}$-modules, and let $D^b(\on{Coh}(\mathfrak{X}_{\zeta \varepsilon}))$ denote the bounded derived category of coherent sheaves on $\mathfrak{X}_{\mathfrak{B}}$. We may consider these either as triangulated categories or as $\infty$-categories---the enhanced structure of $\infty$-categories is not necessary for our purposes. We will treat these as triangulated categories for simplicity; see the discussion in \cite[\S2.4]{JNW} for remarks on working over $\infty$-categories. 
\begin{theorem}
    Assume $p \geq 3$. Then for each block $\mathfrak{B}$ of $\on{Mod}_{G, \zeta}^{\on{lfin}}(\O)$, there is a fully faithful embedding 
    \begin{align*}
        \on{Perf}(\tilde{E}_{\mathfrak{B}}) \to D^b(\on{Coh}(\mathfrak{X}_{\mathfrak{B}})). 
    \end{align*}
\end{theorem}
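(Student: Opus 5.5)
The plan is to define the functor block-by-block as a derived tensor product with a tensor kernel, and then reduce full faithfulness to a statement about its value on the generator $\tilde{E}_{\mathfrak{B}}$. For each block $\mathfrak{B}$, let $X^*_{\mathfrak{B}}$ be a coherent sheaf on $\mathfrak{X}_{\mathfrak{B}}$ that carries a right action of $\tilde{E}_{\mathfrak{B}}$ through an isomorphism $\tilde{E}_{\mathfrak{B}}^{\on{op}} \simeq \End_{\mathfrak{X}}(X^*_{\mathfrak{B}})$. Define
\begin{align*}
F_{\mathfrak{B}}: \on{Perf}(\tilde{E}_{\mathfrak{B}}) \to D^b(\on{Coh}(\mathfrak{X}_{\mathfrak{B}})), \qquad V \mapsto X^*_{\mathfrak{B}} \otimes^L_{\tilde{E}_{\mathfrak{B}}} V.
\end{align*}
A perfect complex is a bounded complex of finitely generated projectives, each a direct summand of some $\tilde{E}_{\mathfrak{B}}^{n}$. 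So $F_{\mathfrak{B}}$ sends $V$ to a bounded complex whose terms are direct summands of finite sums of $X^*_{\mathfrak{B}}$, and the output is bounded coherent. This is where restricting to $\on{Perf}$ pays off: we avoid any infinite cohomological amplitude coming from resolutions of arbitrary discrete modules.

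Full faithfulness then reduces to one computation. $\on{Perf}(\tilde{E}_{\mathfrak{B}})$ is the thick subcategory generated by $\tilde{E}_{\mathfrak{B}}$, and $F_{\mathfrak{B}}$ is exact. By the standard dévissage (five lemma on triangles in both variables, together with closure under direct summands), it suffices to check that $F_{\mathfrak{B}}$ induces isomorphisms
\begin{align*}
\Hom_{D(\tilde{E}_{\mathfrak{B}})}(\tilde{E}_{\mathfrak{B}}, \tilde{E}_{\mathfrak{B}}[i]) \to \Hom_{D^b(\mathfrak{X})}(X^*_{\mathfrak{B}}, X^*_{\mathfrak{B}}[i]) \quad \text{for all } i \in \BZ.
\end{align*}
The left side is $\tilde{E}_{\mathfrak{B}}$ (acting by right multiplication) for $i = 0$ and vanishes otherwise. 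So what we need is:
\begin{itemize}
\item $\Ext^i_{\mathfrak{X}}(X^*_{\mathfrak{B}}, X^*_{\mathfrak{B}}) = 0$ for $i \neq 0$; this is automatic for $i < 0$, since $X^*_{\mathfrak{B}}$ is a sheaf.
\item The map on $\Hom$ in degree $0$ is the given isomorphism $\tilde{E}_{\mathfrak{B}}^{\on{op}} \simeq \End(X^*_{\mathfrak{B}})$.
\end{itemize}
The $\on{op}$ is handled by letting $\tilde{E}_{\mathfrak{B}}$ act on the right of $X^*_{\mathfrak{B}}$.

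It remains to supply the kernel $X^*_{\mathfrak{B}}$, with both properties, case by case over the blocks for $p \geq 3$.

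\begin{itemize}
\item \textbf{Steinberg block, $p = 3$.} Take $X^*_{\mathfrak{B}} = \mathfrak{W}_{\on{tf}}$. Theorem \ref{thm:intro-thm-1}(a) gives the endomorphism algebra and Theorem \ref{thm:intro-thm-1}(b) gives Ext vanishing.
\item \textbf{Steinberg block, $p \geq 5$.} Use either the same construction or the kernel of \cite{JNW}.
\item \textbf{Generic blocks.} Take the universal vector bundle. The categorical argument gives the endomorphism algebra for all reducible multiplicity-free cases, and \cite{paskunas16} covers small $p$. Ext vanishing comes from its maximal Cohen--Macaulay and locally free nature, as in \cite{JNW}.
\item \textbf{Scalar and supersingular blocks.} Import the kernels and computations of \cite{JNW}, checking that they never used $p \geq 5$ beyond what is remarked in the introduction.
\end{itemize}

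The main obstacle I expect is in the supersingular (and possibly scalar) cases at $p = 3$, where the framed deformation ring can be singular. There the arguments of \cite[\S4.2]{JNW} identifying $\on{REnd}$ might fail, because they pass through unbounded resolutions. I would first try to verify the two needed properties directly for the twisted structure sheaf: $\End = \tilde{E}_{\mathfrak{B}}^{\on{op}}$ via \cite{paskunas-tung}'s identification of the center with $R$, and higher Ext vanishing via a computation on the quotient-stack presentation of $\mathfrak{X}_{\mathfrak{B}}$ by a reductive group in characteristic-zero-like fashion. These involve only $\Ext$ between coherent objects, not the full ind-completed theory, so the restriction to perfect complexes should let these cases go through.
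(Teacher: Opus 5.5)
Your proposal is correct and is essentially the paper's argument: for the $p=3$ Steinberg block, Theorem~\ref{thm:fully-faithful-perf} does exactly your d\'evissage to the generator $\tilde{E}_{\mathfrak{B}}$ and then invokes Theorems~\ref{thm:endomorphism-algebra-isomorphism} and~\ref{thm:ext-vanishing} for $\mathfrak{W}_{\on{tf}}$, while the other blocks are imported from \cite{JNW} (via \S5.3 for $p=3$). For the singular $p=3$ supersingular block the paper likewise uses the twisted structure sheaf $R(1)$; note that there you need $\tilde{E}_{\mathfrak{B}}\simeq R$ itself, which the paper takes from \cite[Theorem 1.2]{paskunas16}, and not merely the identification of the center with $R$.
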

\begin{proof}
   The $p = 3$ Steinberg case is Theorem \ref{thm:fully-faithful-perf}. The other $p = 3$ cases follow from the discussion in \S5.3, and the $p \geq 5$ cases follow from \cite[Theorem 1.1]{JNW}. 
\end{proof}

A shortcoming of this formulation is that, by Proposition \ref{prop:infinite-cohom-dim}, the category $\on{Perf}(\tilde{E}_{\mathfrak{B}})$ does not include all objects corresponding to the irreducible representations. To get a more complete formulation in the vein of \cite[Theorem 1.1]{JNW}, we would require a more detailed analysis of the image of finite-length (or, by d\'evissage, just irreducible) representations under the tensor functor, which is a direction for future work. 

\subsection{Relation to the work of Dotto-Emerton-Gee}

We mainly follow the program set out in \cite{JNW}. The paper \cite{DEG26} meanwhile establishes a different categorical $p$-adic local Langlands correspondence: rather than having the target category lie over an (algebraized) moduli stack of Galois representations $\mathfrak{X}$, the target category is the Emerton-Gee stack $\on{EG}_{\BQ_p, 2}$; see \cite[Theorem 1.1.1]{DEG26}, where the Emerton-Gee stack is denoted as $\mathcal{X}$. One advantage of using the Emerton-Gee stack is that it interpolates $p$-adic Galois representation into a connected geometric object, whereas the moduli stack of Galois deformations decomposes into components based on the residual pseudorepresentation as discussed previously. The two formulations are related by \cite[Proposition 5.2.18]{DEG26}, which recovers the functor constructed in \cite{JNW} as a ``pointwise'' version of the one in \cite{DEG26}. 

Conversely, the properties of the embedding from \cite{DEG26} ``lying over the Steinberg locus'' are deduced directly from the results of \cite{JNW}. The main argument in \cite{DEG26} decomposes $\on{EG}_{\BQ_p, 2}$ into a ``good'' dense open locus and a ``bad'' finite locus, constructs the fully faithful embedding for these loci independently, and then glues; see \cite[Theorem 1.1.8]{DEG26}. The proof for the bad locus refers to \cite{JNW} to deal with the points of Steinberg type; see \cite[Theorem 5.2.24]{DEG26}, which depends on Theorem 3.6.44 of loc.\ cit., which imports the results of \cite[\S3.5]{JNW}. It therefore seems reasonable to expect that a generalization of \cite{DEG26} to $p = 3$ would require the corresponding results of \cite{JNW} for the $p = 3$ Steinberg block, and that is what this paper intends to achieve. 

To be clear, there is more to be done to generalize \cite{DEG26} to $p = 3$. In particular, the geometry of $\on{EG}_{\BQ_3, 2}$ fundamentally changes due to the nonexistence of very generic Serre weights for $p = 3$; see \cite[Remark 7.25]{EGH}. In future work, we hope to analyze the geometry of $\on{EG}_{\BQ_3, 2}$ and perform the necessary modifications to fully complete the program of \cite{DEG26} and realize their version of a categorical $3$-adic local Langlands correspondence. 

\subsection{Outline}

In \S2, we begin by defining the relevant Galois deformation moduli problems and giving presentations of certain representing rings in the $p = 3$ Steinberg case. Although these presentations seem too complicated to write out explicitly in full, the general shape of these rings will be helpful, and we also list some algebraic properties of these rings. In \S3, we analyze the category $\on{Mod}_{G/Z}^{\on{lfin}}(\O)^{\mathfrak{B}}$ for the $p = 3$ Steinberg block $\mathfrak{B}$. Since this analysis was omitted in \cite{paskunas13}, we obtain some new results, but not many new ideas, since we largely adhere to the blueprint established in \S10 of loc.\ cit.\ We do not aim for an explicit description of the block, but we deduce enough abstract properties about the structure of $\tilde{E}_{\mathfrak{B}}$ for the purposes of the proof of the main result. In \S4.1, we define sheaves $\mathfrak{W}$ and $\mathfrak{W}_{\on{tf}}$ generalizing the construction in \cite[\S3.6]{DEG26}. In \S4.2, which contains our main novelty, we prove the isomorphism of endomorphism algebras stated in our main theorem, and in \S4.3 we separately prove the vanishing of the higher self-Ext groups. We conclude in \S5 by discussing the implications of our main result for a fully faithful embedding of derived categories in the vein of the main results of \cite{JNW}, i.e.\ a categorical $3$-adic local Langlands correspondence, especially addressing the difficulties introduced by infinite cohomological amplitude in some of the $p = 3$ blocks.

\subsection{Notation and conventions}

Throughout, we fix a prime $p$ and a field extension $L/\BQ_p$; we specialize to $p = 3$ as needed on a subsection-by-subsection basis. We let $\O$ be the ring of integers of $L$, with uniformizer $\varpi$ and residue field $k$. The main groups we work with are $\Gamma := \Gal(\conj{\BQ}_p/\BQ_p)$ and $G := \GL_2(\BQ_p)$. See \S\ref{sec:moduli} for notation and definitions we use for Galois deformation theory and \S\ref{sec:rep-theory} for notation and definitions we use for $p$-adic representation theory. We write $\omega: \Gamma \to k^\times$ for the mod $p$ cyclotomic character, and we write $\varepsilon: \Gamma \to \O^\times$ for the $p$-adic cyclotomic character. Following the conventions of \cite{JNW} and \cite{paskunas13}, we normalize local class field theory so that uniformizers correspond to geometric Frobenii, and we conflate continuous (resp. smooth) characters of $\Gamma$ with continuous (resp. smooth) characters of $\BQ_p^\times$ using local class field theory.

Our notation and conventions surrounding graded modules and sheaves on stacks follow those established in the notation section of \cite{JNW}. Let $S$ be a $\BZ$-graded commutative ring, and let $M, N$ be graded $S$-modules. We write $M_d$ for the $d$-th graded component of $M$, and we write $M(i)$ for the graded $S$-module with underlying ungraded $S$-module $M$ and grading $M(i)_d := M_{i + d}$. The category of coherent sheaves on the stack $\mathfrak{X} := [\on{Spec} S/\mathbb{G}_m]$ is equivalent to the category of $\mathbb{G}_m$-equivariant graded $S$-modules, so that the morphisms are graded $S$-module homomorphisms. A morphism $M \to N$ in the latter category is an $S$-module homomorphism sending $M_d \to N_d$ for all $i \in \BZ$. We will identify these categories without further comment, writing $\Hom_{\mathfrak{X}}(M, N)$ for graded $S$-module homomorphisms from $M$ to $N$. 

\subsection{Acknowledgments}

I would like to thank Carl Wang-Erickson for introducing me to his paper with Christian Johansson and James Newton and to the categorical $p$-adic local Langlands program more generally, for encouragement to pursue the $3$-adic case, and for several helpful discussions about the problem. In turn, I thank Alex Youcis for facilitating my introduction to Carl Wang-Erickson and for encouraging me to take up a new problem to complete my PhD. I thank Toby Gee for helpful correspondence and discussion on \cite{DEG26} and the possibilities and pitfalls of the $p = 3$ case. Finally, I thank my advisor, Sug Woo Shin, for mathematical, financial, and moral support during my PhD. 

Generative AI was used to sanity check individual claims, search the literature, and consult for commutative algebra facts, using the free online version of GPT-5.6 Luna. This consultation was primarily concerned with whether there was a simple proof of the depth result in Proposition \ref{prop:alg-properties}(a), which was ultimately unsuccessful. The suggestion to use Nagata's lemma in the proof of Lemma \ref{lem:s'-factorial} is due to AI consultation. No part of this document was prepared using AI tools. 

I was supported during my PhD at Berkeley at several points by an NSF RTG grant. I was also supported by departmental grants allocated to my advisor for travel support.

\section{Galois deformation theory}
\subsection{Moduli problems}\label{sec:moduli}

We recount the moduli problems that form the setting on the Galois side. This subsection mostly recapitulates \cite[\S2]{JNW}; see this reference or \cite[\S 3]{CWE18} for an exposition in greater generality and detail. See also \cite{chenevier-pseudoreps} for the definitions and basic properties of pseudorepresentations, which we consider as multiplicative polynomial (determinant) laws. For ease of exposition, we assume $p > 2$ so that we may identify pseudorepresentations with their underlying pseudocharacter by \cite[Proposition 1.29]{chenevier-pseudoreps}. 

Let $\Gamma$ be a profinite group satisfying Mazur's finiteness condition $\Phi_p$, and let $k$. Fix a $2$-dimensional residual pseudorepresentation $\conj{D}$ of $\Gamma$, and fix a character $\psi : \Gamma \to \O^\times$. Since we exclusively work with representations of fixed determinant, we shall omit $\psi$ from most of our notation. 

\begin{definition}
    We write $\on{PsR}_{\conj{D}}$ for the moduli functor sending a topologically finite-type formal $\O$-algebra $B$ to the groupoid of $2$-dimensional pseudorepresentation $D: \Gamma \to B$ with determinant $\psi$. By \cite[Proposition E]{chenevier-pseudoreps}, the functor $\on{PsR}^{\psi}$ is representable by a formal affine scheme. We denote the representing formal scheme by $\on{Spf} R$ with universal pseudodeformation $D^{\on{univ}}$, and we call $R$ the \emph{universal pseudodeformation ring}.
\end{definition}

\begin{definition}
    Write $E$ for the \emph{universal Cayley-Hamilton algebra} over $\conj{D}$ with fixed determinant $\psi$, which is
    \begin{align*}
        E := \frac{R\lb\Gamma\rb}{\on{CH}({D}^{\on{univ}})}
    \end{align*}
    where the Cayley-Hamilton ideal $\on{CH}({D}^{\on{univ}})$ denotes the minimal two-sided ideal generated by $g^2 - D^{\on{univ}}(g) + \psi(g)$ ranging over $g\in \Gamma$. We let $\rho^{\on{univ}}: \Gamma \to E$ denote the natural map and we let $D: E \to R$ denote the pseudorepresentation attached to $E$ (uniquely determined by the image of $\Gamma$ under $D^{\on{univ}}$).  
\end{definition}

The morphism $D: E \to R$ is universal among Cayley-Hamilton representations deforming $\conj{D}$; see \cite[Definition 2.1.3]{JNW} for the precise formulation of this universal property. 

\begin{definition}
    Let $C$ be an $R$-algebra. A $C$-valued \emph{compatible representation} of $E$ is a homomorphism of $R$-algebras $E \to M_{2}(C)$ such that the following diagram commutes:
    \begin{center}
        \begin{tikzcd}
            E \ar[r]\ar[d, "D"] & M_2(C)\ar[d, "\det"] \\
            R \ar[r] & C
        \end{tikzcd}
    \end{center}
    We denote by $\on{Rep}^{\square}(E)$ the $\Spec R$-functor of compatible representations of $E$, we denote by $\on{Rep}(E)$ the $\Spec B$-groupoid sending an $R$-algebra $C$ to the groupoid of projective rank $2$ $C$-modules $V$ equipped with an isomorphism $\bigwedge^2 V \simeq C$ and a compatible representation of $E$ on $V$, i.e. a map $E \to \End_C(V)$ making the same diagram as above commute with $\End_C(V)$ in place of $M_2(C)$. 
\end{definition}

By \cite[Proposition 3.6]{CWE18}, $E$ is finitely generated as an $R$-module, from which we obtain:
\begin{proposition}[\cite{JNW}, Proposition 2.2.3] $\on{Rep}^{\square}(E)$ is an affine $R$-scheme of finite type, which we denote by $\Spec A$, and $\on{Rep}(E)$ is a $\Spec B$-algebraic stack of finite type given by $[\Spec B/\SL_2]$ under the adjoint action of $\SL_2$ on $\on{Rep}^{\square}(E)$. 
\end{proposition}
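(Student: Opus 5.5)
The plan is to prove both halves directly from the finiteness of $E$ over $R$ (\cite[Proposition 3.6]{CWE18}) together with the universal property of $R\lb\Gamma\rb$. For $\on{Rep}^{\square}(E)$: since $E$ is a finitely generated $R$-module, choose generators $e_1,\dots,e_n$ of $E$ as an $R$-module. Then $E$ is a quotient of a free $R$-module, and its $R$-algebra structure is recorded by structure constants expressing each product $e_ie_j$ and the unit $1$ as $R$-linear combinations of the $e_k$, together with the $R$-linear relations among the $e_i$. A compatible representation $E \to M_2(C)$ is then the same as four entries in $C$ for each $e_i$, i.e. a point of $\BA^{4n}_R$, subject to closed conditions:
\begin{itemize}
\item the $R$-linear relations among the $e_i$;
\item multiplicativity and unitality, expressed through the structure constants;
\item the determinant compatibility $\det(\rho(x)) = D(x)$.
\end{itemize}
The last condition is a priori a condition for all $x\in E$, which is not obviously finitely many equations.

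To reduce the determinant condition to finitely many equations, I would use that $D$ is a multiplicative polynomial law of degree $2$. Its compatibility with $\det\circ\rho$ is then equivalent to the equality of the associated degree-$2$ polynomials on the generic element $\sum t_i e_i$. Concretely, the condition becomes
\[
\det\Bigl(\sum_i t_i\rho(e_i)\Bigr) = D\Bigl(\sum_i t_i e_i\Bigr) \quad \text{in } C[t_1,\dots,t_n].
\]
Comparing coefficients of this quadratic identity gives finitely many polynomial equations in the matrix entries. This is the step I expect to be the main subtlety. One must check that agreement on the generic element suffices for agreement after arbitrary base change, as polynomial laws require, and this is exactly Roby's description of degree-$2$ laws via their values on generic linear combinations. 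Representability of the functor by $\Spec A$, with $A$ of finite type over $R$, then follows.

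For $\on{Rep}(E)$: a rank $2$ projective $C$-module $V$ with a trivialization $\bigwedge^2 V\simeq C$ is Zariski-locally isomorphic to $(C^2, \text{standard})$. The automorphism group of this trivialized datum is $\SL_2$. Framing therefore identifies $\on{Rep}(E)$ with the quotient stack $[\Spec A/\SL_2]$, where $\SL_2$ acts on $\Spec A$ by conjugation. Here $\Spec A$ is the scheme written $\Spec B$ in the statement. This uses the standard torsor argument: the frame bundle of $(V,\bigwedge^2 V\simeq C)$ is an $\SL_2$-torsor, and conjugation preserves the compatibility with $\det$. Since $\SL_2$ is smooth affine and $A$ is of finite type over $R$, the quotient is an algebraic stack of finite type over $\Spec R$.
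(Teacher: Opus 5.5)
Your proposal is correct and is essentially the argument the paper relies on: the paper gives no proof of its own, only citing \cite[Proposition 2.2.3]{JNW} and deducing the statement from the finite generation of $E$ over $R$ \cite[Proposition 3.6]{CWE18}. That finiteness is exactly your starting point, and your generic-element reduction of the determinant condition and your $\SL_2$-frame-bundle description of $\on{Rep}(E)$ (correctly reading the statement's $\Spec B$ as $\Spec A$, over $\Spec R$) supply the standard details. One minor point: finite type never depended on getting finitely many equations, since $R[x_{ij}]$ is noetherian; what the generic-element step actually does is turn the polynomial-law condition, which a priori involves all base changes, into polynomial equations in the matrix entries.
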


We now assume that $\conj{D}$ is multiplicity-free and reducible, arising from a direct sum of distinct characters $\chi_1 \oplus \chi_2: \Gamma \to \GL_2(k)$. Fix a choice of lifts of the two standard idempotents under the natural map $E \to M_{2 \times 2}(k)$, which induces a $2\times 2$  generalized matrix algebra (GMA) structure on $E$; see \cite[Theorem 2.22]{chenevier-pseudoreps}. 

\begin{definition}
    An \emph{adapted representation} of the GMA $E$ valued in an $R$-algebra $C$ is an $R$-algebra homomorphism $E \to M_{2 \times 2}(C)$ preserving the GMA structure, i.e. sending the chosen idempotents in $E$ to the standard idempotents in $M_{2
    \times 2}(C)$.

    We denote by $\on{Rep}^{\square, \on{Ad}}(E)$ the $\Spec R$-functor of adapted representations of $E$, and we denote by $\on{Rep}^{\on{Ad}}$ the $\Spec B$-groupoid sending $C$ to ordered pairs of rank $1$ projective $C$-modules $(V_1, V_2)$ equipped with an isomorphism $V_1 \otimes V_2 \simeq C$ and a homomorphism of $R$-GMAs $E \to \End(V_1 \oplus V_2)$. 
\end{definition}

The standard diagonal torus $T \subset \SL_2$ acts by its adjoint action on $\on{Rep}^{\square, \on{Ad}}(E)$, inducing an isomorphism $\on{Rep}^{\on{Ad}}(E) \simeq \on{Rep}^{\square, \on{Ad}}(E)$. The $R$-GMA structure on $E$ induces coordinates
\begin{align*}
 E \simeq \begin{bmatrix}
     E_{1, 1} & E_{1, 2} \\
     E_{2, 1} & E_{2, 2}
 \end{bmatrix}.
\end{align*}
We fix an isomorphism $T \simeq \mathbb{G}_m$ such that $\mathbb{G}_m$ acts on the upper-right corner $E_{1, 2}$ with weight $2$ and the lower right $E_{2, 1}$ with weight $-2$. 

\begin{theorem}[\cite{JNW} Theorem 2.3.2(1)-(2)]
    Adapted representations of $E$ are compatible representations, and the resulting map $\on{Rep}^{\square, \on{Ad}}(E) \to \on{Rep}^{\square}$ is a closed immersion of affine $R$-schemes.  
\end{theorem}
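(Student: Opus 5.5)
We must show two things: that every adapted representation $E \to M_{2\times 2}(C)$ is compatible, i.e.\ $\det \circ \rho = D$ after base change to $C$, and that the forgetful map $\on{Rep}^{\square, \on{Ad}}(E) \to \on{Rep}^{\square}(E)$ is a closed immersion. The plan is to treat both as statements about the coordinates $E \simeq \begin{bmatrix} E_{1,1} & E_{1,2} \\ E_{2,1} & E_{2,2}\end{bmatrix}$ of the GMA structure from \cite[Theorem 2.22]{chenevier-pseudoreps}.

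\textbf{Step 1: adapted representations are compatible.} Write $e_1, e_2$ for the chosen idempotents. For a GMA over $R$ attached to the multiplicity-free residual pseudorepresentation $\conj{D} = \chi_1 \oplus \chi_2$, Chenevier's structure theory provides $R$-algebra isomorphisms $E_{i,i} = e_i E e_i \simeq R$ for $i = 1, 2$. In these coordinates the Cayley--Hamilton pseudorepresentation is given by
\begin{align*}
D\begin{pmatrix} a & b \\ c & d\end{pmatrix} = ad - m(b \otimes c),
\end{align*}
where $m : E_{1,2} \otimes_R E_{2,1} \to R$ is the multiplication map. Let $\rho : E \to M_2(C)$ be an $R$-algebra map with $\rho(e_i)$ equal to the standard idempotents. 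Then $\rho$ is $R$-linear on the diagonal coordinates, so it sends $a \in E_{1,1} = R$ to $a$ in the $(1,1)$ entry, and similarly for $d$. Writing $\rho(b) = \beta E_{12}$ and $\rho(c) = \gamma E_{21}$, multiplicativity gives $\rho(bc) = \beta\gamma E_{11}$, so $\beta\gamma = m(b \otimes c)$ in $C$. Since $E$ is the direct sum of its four coordinate pieces, this yields $\det \rho(x) = ad - \beta\gamma = D(x)$ for every $x \in E$.

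\textbf{Step 2: the map is a closed immersion.} A compatible representation $\rho$ is adapted exactly when $\rho(e_1) = \on{diag}(1,0)$ and $\rho(e_2) = \on{diag}(0,1)$. Each of these is a vanishing condition on finitely many matrix entries, which are regular functions on $\on{Rep}^{\square}(E) = \Spec A$. Hence $\on{Rep}^{\square, \on{Ad}}(E)$ is the closed subscheme $\Spec A/I$, where $I$ is generated by the entries of $\rho^{\on{univ}}(e_1) - \on{diag}(1,0)$ and $\rho^{\on{univ}}(e_2) - \on{diag}(0,1)$. Since the functor is defined by a closed condition on the representing scheme, the map is monic and the identification is a closed immersion. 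Both objects are affine of finite type over $R$ by \cite[Proposition 2.2.3]{JNW}.

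\textbf{Main obstacle.} Step 1 is where the work lies. It requires the identification $e_i E e_i = R$ with $D$ taking the displayed form, and this rests on the fixed-determinant conventions and on the multiplicity-free hypothesis. I would cite \cite[\S 2]{chenevier-pseudoreps} and \cite[\S 3]{CWE18} for this; in particular, $D$ restricted to $e_i E e_i$ is a one-dimensional determinant law lifting $\chi_i$, and such a law is an isomorphism onto $R$.
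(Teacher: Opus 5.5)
Your proof is correct, and it is the standard argument behind \cite[Theorem 2.3.2]{JNW}, which the paper cites rather than reproving: compatibility comes from the GMA determinant formula $D = ad - m(b \otimes c)$ together with $\rho(e_1 E e_2) \subseteq C \cdot E_{12}$, and the closed immersion comes from the closed conditions on the entries of $\rho^{\on{univ}}(e_i)$. One small point of rigor: compatibility is an identity of polynomial laws, so the computation should be run on $E \otimes_R B$ for every commutative $C$-algebra $B$; it goes through verbatim there, because the GMA decomposition and the determinant formula are stable under base change.
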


\begin{definition}
    We denote by $S$ the ring representing $\on{Rep}^{\square, \on{Ad}}(E)$ and $A$ the ring representing $\on{Rep}^{\square}(E)$.
\end{definition}

The $\mathbb{G}_m$-action on $\on{Rep}^{\square, \on{Ad}}(E)$ induces a $\mathbb{G}_m$-action on $S$ and hence the structure of a $\BZ$-graded ring. We write $S_{i}$ for the $i$-th graded piece of $S$ for $i \in \BZ$. 

\begin{theorem}[\cite{JNW}, Theorem 2.3.2(4)-(6)]
    We have natural isomorphisms $R \simeq A^{\on{SL}_2} \simeq S^{\mathbb{G}_m} = S_0$, and $A$ and $S$ are finite type over $R$. 
\end{theorem}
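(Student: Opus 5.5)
To prove that $R \simeq A^{\SL_2} \simeq S^{\mathbb{G}_m} = S_0$ and that $A$ and $S$ are of finite type over $R$, I would first handle the finiteness claims directly from the structure of $E$ and then build the chain of isomorphisms one map at a time.

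\textbf{Finiteness.} By \cite[Proposition 3.6]{CWE18}, $E$ is a finitely generated $R$-module; choose generators $e_1,\dots,e_n$. A compatible representation $E \to M_2(C)$ is determined by the images of the $e_i$. Hence $A$ is a quotient of $R[x_{i,jk}]$ by the relations expressing:
\begin{itemize}
\item $R$-linearity,
\item multiplicativity (via the structure constants of $E$ over $R$),
\item compatibility $\det = D$.
\end{itemize}
This shows $A$ is of finite type over $R$. Since $\on{Rep}^{\square,\on{Ad}}(E)\to\on{Rep}^{\square}(E)$ is a closed immersion by the previous theorem, $S$ is a quotient of $A$ and so is also of finite type.

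\textbf{The map $R\to A^{\SL_2}$.} The structure map $R\to A$ lands in the invariants because conjugation does not change $\det$, and $\det\circ\rho = D$ recovers $R$. This map is injective: $R$ is the pseudodeformation ring, and the universal Cayley--Hamilton algebra satisfies $D = \det$ after embedding into a matrix algebra over some faithfully flat-ish extension. Concretely, I would show injectivity by checking that $R \to S$ is already injective, exhibiting the GMA $E$ inside $M_2$ of a suitable $R$-algebra.

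\textbf{The isomorphism $R\simeq S_0$.} I would use the GMA coordinates directly. $S$ is generated over $R$ by coordinates on $E_{1,1}, E_{2,2}$, which are rank-one over $R$ modulo the idempotent and hence give no new variables in degree $0$. It is also generated by coordinates on $E_{1,2}$ and $E_{2,1}$, of weights $\pm 2$. Thus $S_0$ is generated by products $b\cdot c$ of an $E_{1,2}$-coordinate and an $E_{2,1}$-coordinate. Each such product equals the image of the multiplication map $E_{1,2}\otimes E_{2,1}\to E_{1,1}\cong R$, so it already lies in $R$. This gives $S_0 = R$, with injectivity coming from the universal adapted representation $E\hookrightarrow M_2(S)$ being faithful, i.e.\ the standard GMA embedding.

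\textbf{The isomorphism $A^{\SL_2}\simeq S^{\mathbb{G}_m}$.} The map is restriction along the closed immersion. Every compatible representation is, fppf-locally, $\SL_2$-conjugate to an adapted one, by lifting the idempotents. So $\on{Rep}(E)\simeq\on{Rep}^{\on{Ad}}(E)$ as stacks over $\Spec R$, namely $[\Spec A/\SL_2]\simeq[\Spec S/\mathbb{G}_m]$. Global functions on both sides then agree.

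\textbf{Main obstacle.} I expect the hardest step to be the stack equivalence, specifically constructing local conjugation to adapted form functorially. To do this, I would use that the images of the lifted idempotents in $\End(V)$ are rank-one projectors, since their determinants and traces are fixed by $D$. These projectors split $V=V_1\oplus V_2$ Zariski-locally, which gives the adapted structure.
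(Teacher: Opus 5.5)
Your proposal is correct. The paper gives no argument of its own here (it only cites \cite{JNW}), and yours is the standard route behind that citation: finite type from finite generation of $E$ over $R$; $R\simeq S_0$ from the GMA coordinates together with faithfulness of the universal adapted representation (the \cite{BC09} input the paper invokes immediately afterwards); and $A^{\SL_2}\simeq S^{\mathbb{G}_m}$ from global functions on $[\Spec S/\mathbb{G}_m]\simeq[\Spec A/\SL_2]$, so that $R\simeq A^{\SL_2}$ follows because the triangle of structure maps commutes. The step you flag as the main obstacle is in fact immediate: $\rho(e_1)$ and $\rho(e_2)$ are complementary idempotents of trace $1$, so they split $V$ globally into rank-one summands with $\rho(e_1)V\otimes\rho(e_2)V\simeq\wedge^2V\simeq C$, and no conjugation or fppf localization is needed.
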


The $\BZ$-graded structure on $S$ provides a convenient presentation of $E$:

\begin{proposition}
    [\cite{BC09}, Proposition 1.3.13, Remark 1.3.15] Writing $E_{i, j}$ for the $R$-GMA coordinates of $E$ there are canoncial isomorphisms of $R$-modules $E_{i,j} \simeq S_{2(j - i)}$ such that the coordinate-wise multiplication maps $E_{i, j} \otimes_R E_{j, k} \to E_{i, k}$ are compatible with the multiplication law of $S$, and $S$ is generated as an $R$-algebra by $E_{1,2}$ and $E_{2,1}$.
\end{proposition}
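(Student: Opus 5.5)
This is the Bellaïche–Chenevier structure result for generalized matrix algebras (GMAs), specialized to our graded situation. The plan is to compute the universal adapted representation explicitly and read off the grading.

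\textbf{Step 1: write down the universal adapted representation.} Write $E = \bigoplus_{i,j} e_i E e_j$ with $E_{i,j} := e_i E e_j$. Because $\conj{D}$ is multiplicity-free, each diagonal corner $E_{i,i}$ is identified with $R$ via the pseudocharacter. This is \cite[Theorem 2.22]{chenevier-pseudoreps} together with the Cayley--Hamilton property.

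An adapted representation $E \to M_2(C)$ sends $e_i$ to the standard idempotents. It is therefore determined by two $R$-linear maps
\begin{align*}
f_{12}: E_{1,2} \to C, \qquad f_{21}: E_{2,1} \to C.
\end{align*}
These are subject to exactly one family of conditions: multiplicativity, namely
\begin{align*}
f_{12}(x)\, f_{21}(y) = \text{image of } xy \in E_{1,1} = R,
\end{align*}
and symmetrically for $yx \in E_{2,2}$. Compatibility with determinants is then automatic by \cite[Theorem 2.3.2]{JNW}.

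It follows that $S$ is the quotient of $\on{Sym}_R(E_{1,2} \oplus E_{2,1})$ by the ideal generated by
\begin{align*}
x \otimes y - m(x, y), \qquad x \in E_{1,2},\ y \in E_{2,1},
\end{align*}
where $m: E_{1,2} \otimes_R E_{2,1} \to R$ is the GMA multiplication. In particular $S$ is generated over $R$ by $E_{1,2}$ and $E_{2,1}$, which gives the final claim.

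\textbf{Step 2: match the grading.} The torus $\mathbb{G}_m$ acts on $E_{1,2}$ with weight $2$ and on $E_{2,1}$ with weight $-2$. Hence $S$ is graded, with $E_{1,2}$ sitting in degree $2$ and $E_{2,1}$ in degree $-2$.

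Using the relation above, every monomial that contains both an $E_{1,2}$ factor and an $E_{2,1}$ factor reduces to lower degree in the generators. Consequently:
\begin{itemize}
\item $S_0 = R$, consistent with the previous theorem;
\item $S_{2n}$ for $n > 0$ is spanned by products of $n$ elements of $E_{1,2}$;
\item $S_{2n}$ for $n < 0$ is spanned by products of $|n|$ elements of $E_{2,1}$.
\end{itemize}
The natural maps $E_{1,2} \to S_2$ and $E_{2,1} \to S_{-2}$ respect multiplication, because the relation in $S$ is exactly the GMA multiplication rule $E_{1,2} \otimes E_{2,1} \to E_{1,1} = R$.

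\textbf{Step 3: injectivity, the main obstacle.} The hard part is showing that $E_{1,2} \to S_2$ (and dually $E_{2,1} \to S_{-2}$) is an isomorphism rather than merely surjective.

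For this I would follow \cite[Proposition 1.3.13]{BC09}. Use the tautological adapted representation of $E$ into $M_2$ of the ring
\begin{align*}
R \oplus E_{1,2} \oplus E_{2,1} \oplus \cdots
\end{align*}
built from $E$ itself. Concretely, $E$ embeds into a matrix algebra over a commutative $R$-algebra in which the off-diagonal entries $E_{1,2}$ and $E_{2,1}$ appear faithfully in degrees $\pm 2$. By universality this representation factors through $S$. The composite $E_{1,2} \to S_2 \to E_{1,2}$ is then the identity, which proves injectivity.

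Constructing this faithful commutative model is exactly the content of \cite[Remark 1.3.15]{BC09}. There one takes the symmetric algebra modulo the multiplication relations and checks, via a splitting argument, that no element of degree $\pm 2$ is killed.
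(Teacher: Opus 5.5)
Your Steps 1 and 2 are correct. They give the presentation $S=\on{Sym}_R(E_{1,2}\oplus E_{2,1})/(xy-m(x,y))$, generation by $E_{1,2}$ and $E_{2,1}$, the grading, compatibility with multiplication, and surjectivity of $E_{1,2}\to S_2$ and $E_{2,1}\to S_{-2}$. The paper itself gives no argument beyond the citation of \cite{BC09}, so citing would be fine. The gap is that your Step 3, which is the real content of the statement, does not work as a substitute for that citation.

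\begin{itemize}
\item You never put a multiplication on ``$R\oplus E_{1,2}\oplus E_{2,1}\oplus\cdots$''. It cannot consist of those three summands alone: in any commutative $R$-algebra receiving an adapted representation, $(f_{12}(x)f_{12}(x'))f_{21}(y)=m(x',y)f_{12}(x)$. This is nonzero already for $E=M_2(R)$, so higher-degree pieces are forced.
\item Your closing description of the model, ``the symmetric algebra modulo the multiplication relations'', is $S$ itself. So ``checking that no element of degree $\pm 2$ is killed'' just restates the injectivity you need.
\item The composite $E_{1,2}\to S_2\to E_{1,2}$ presupposes a map $S_2\to E_{1,2}$ that you never construct.
\end{itemize}

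The missing idea is an identity in $E$. Once you have it, no auxiliary ring is needed: you can write down an $R$-linear retraction of $E_{1,2}\to S_2$ directly. For $x_1,x_2\in E_{1,2}$ and $y\in E_{2,1}$, compute $x_1yx_2$ in two ways, using $yx_2=m(x_2,y)\in E_{2,2}=R$. This gives $m(x_1,y)x_2=m(x_2,y)x_1$ in $E_{1,2}$. Pairing with $y'$ then gives $m(x_1,y)m(x_2,y')=m(x_2,y)m(x_1,y')$ in $R$. These two identities say exactly that
\[
x_0x_1\cdots x_k\,y_1\cdots y_k\longmapsto m(x_1,y_1)\cdots m(x_k,y_k)\,x_0
\]
is a well-defined $R$-linear map on $\bigoplus_{k\ge 0}\on{Sym}^{k+1}_R E_{1,2}\otimes_R\on{Sym}^k_R E_{2,1}$, which is the degree-$2$ part of $\on{Sym}_R(E_{1,2}\oplus E_{2,1})$. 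This map kills $(xy-m(x,y))\mu$ for every degree-$2$ monomial $\mu$, so it descends to a map $S_2\to E_{1,2}$. Its $k=0$ term is the identity of $E_{1,2}$. Combined with your surjectivity, this gives $E_{1,2}\simeq S_2$. The same construction gives $E_{2,1}\simeq S_{-2}$, and dropping $x_0$ it also gives $R\simeq S_0$.
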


That is, we have
\begin{align*}
    E \simeq \begin{bmatrix}
        R & S_2 \\ 
        S_{-2} & R
    \end{bmatrix}
\end{align*}
as an $R$-algebra with the cross-multiplication given by multiplication in $S$.

\subsection{Galois deformation theory in the $3$-adic Steinberg case}\label{sec:explicit-S}
Let $\Gamma$ be the absolute Galois group of $\BQ_3$, and let $\mu_3$ denote the group of third roots of unity. Let $F := \BQ_3(\mu_3)$ and $H := \Gal(\BQ_3(\zeta_3), \BQ_3) \simeq \BZ/2\BZ$. Write $F'$ for the maximal pro-$3$ extension of $\BQ_3(\zeta_3)$, which is a normal field extension of $\BQ_3$, and set $\Gamma' := \Gal(L'/L)$. By the Schur-Zassenhaus lemma, there exist a splitting of $\Gal(F'/\BQ) \to H$, which induces an isomorphism $\Gal(F'/\BQ) \rtimes H$.  Let $\omega: \Gamma \to k$ denote the mod $3$ cyclotomic character, which factors through $H$, and let $\conj{\rho}: \Gamma \to \GL_2(k)$ be the direct sum of characters $\omega \oplus 1$, with associated pseudorepresentation $\conj{D}$. Let $R, E$, and $S$ be the universal algebras associated to $\conj{D}$ as in \S \ref{sec:moduli}, where the fixed determinant $\psi$ is the multiplicative lift $\tilde{\omega}$ of $\omega$ to $\O$.  

We review some facts from \cite{bockle-3adic} as recounted in the proof of \cite[Proposition A.12]{paskunas-tung}), modifying the arguments to work with the algebraic versions of moduli functors instead of the formal versions. By \cite[Lemma 3.2]{bockle-3adic}, $\Gamma'$ may be presented as the pro-$3$ group with generators $x_1, x_2, x_3, x_4$ and relation $$r = x_1[x_1, x_2][x_3, x_4][x_4, x_3^{-1}][x_2x_1^{-1}x^3_1],$$
where $[x_i, x_j] := x_ix_jx_i^{-1}x_j^{-1}$, such that action of $H$ on $\Gamma'$ in the semidirect product $\Gal(F'/\BQ_p) \simeq \Gamma' \rtimes H$ is given by $\sigma(x_i) = x_i^{(-1)^i}$, where $\sigma$ is the nontrivial element of $H$. (Unlike the situation of \cite[Lemma 3.4.2]{JNW}, this is a genuine presentation of this group, not just an approximate one.)

We write matrices in indeterminate variables
\begin{align*}
    A_1 &= \begin{bmatrix}
        (1 + b_0c_0)^{1/2} & b_0 \\ c_0 & (1 + b_0 c_0)^{1/2} 
    \end{bmatrix} \\
    A_2 &= \begin{bmatrix}
        (1 + a_0)^{1/2} & 0 \\ 0 & (1 + a_0)^{-1/2}
    \end{bmatrix}\\
    A_3 &= \begin{bmatrix}
        (1 + b_1c_1)^{1/2} & b_1 \\ c_1 & (1 + b_1 c_1)^{1/2} 
    \end{bmatrix} \\
    A_4 &= \begin{bmatrix}
        (1 + a_1)^{1/2} & 0 \\ 0 & (1 + a_1)^{-1/2}
        \end{bmatrix}
\end{align*}
and define the $\BZ$-graded ring
\begin{align*}
    S' := \frac{\O\lb a_0, a_1, b_0c_0, b_0c_1, b_1c_0, b_1c_1\rb [b_0, b_1, c_0, c_1]}{(A_1^3[A_1, A_2][A_3, A_4][A_4, A_3^{-1}][A_2, A_1^{-1}]A^3_1 - \on{id}_2)}
\end{align*}
with $\deg(a_i) = 0, \deg(b_i) = 2, \deg(c_i) = -2$, where the ideal of relations is generated by the entries of the matrix
\begin{align*}
    A_1^3[A_1, A_2][A_3, A_4]&[A_4, A_3^{-1}][A_2, A_1^{-1}]A^3_1 - \on{id}_2 =: M - \on{id}_2.
\end{align*}
We may check that $M$ is invariant under antidiagonal transposition and has determinant $1$, so $M - \on{id}_2$ is of the form 
$$\begin{bmatrix}
    \sqrt{1 + XY} - 1 & X \\ Y & \sqrt{1 + XY} - 1\\ 
\end{bmatrix}$$
and so in fact we only need the relations coming from the off-diagonal entries. The upper right (resp. lower left) entry lies in degree $2$ (resp. degree $-2$), so we let $x_0, x_1$ be the coefficients of $b_1, b_0$ in $X$ respectively (resp. let $y_0, y_1$ be the coefficients of $c_1, c_0$ in $Y$ respectively).\footnote{Note the swapped order: we do this to more closely match the analogous presentation in \cite[\S 3.4]{JNW}, which will be relevant in our \S\ref{sec:ext-computations}.} We may therefore present 
\begin{align*}
    S' = \frac{\O\lb a_0, a_1, b_0c_0, b_0c_1, b_1c_0, b_1c_1\rb [b_0, b_1, c_0, c_1]}{(x_0b_1 + x_1b_0, y_0c_1 + y_1c_0)}.
\end{align*}
Modulo third powers in the variables, we have
\begin{align*}
    M - \on{id}_2 \equiv \begin{bmatrix}
        18b_0c_0 & 6b_0 - 2a_0b_0 - 2a_1b_1 \\ 
        6c_0 + 2a_0c_0 + 2a_1c_1 & 18b_0c_0
    \end{bmatrix},
\end{align*}
so we have approximations \begin{align*}
    x_1 &\equiv 6 - 2a_0 \\
    x_0 &\equiv -2a_1 \\
    y_1 &\equiv 6 + a_0 \\
    y_0 &\equiv 2a_1.
\end{align*} 

We note that $S$ has a natural involution induced by swapping the first and second basis vectors in the matrix presentation, i.e. conjugating all matrices $A_i$ by $\begin{bmatrix}
    0 & 1 \\ 1 & 0
\end{bmatrix}$. This involution sends $b_i \leftrightarrow c_i$ and sends $(1 + a_i)^{1/2} \leftrightarrow (1 + a_i)^{-1/2}$. This symmetry corresponds to twisting a deformation by $\omega$, which leaves the residual pseudodeformation unchanged. We will later exploit this symmetry to show that certain statements that hold for $b_i$ also hold automatically for $c_i$. 

We define $\rho': \Gamma \to M_2(S')$ by passing to the quotient $\Gamma \twoheadrightarrow \Gamma' \rtimes H$, sending $x_i \in \Gamma'$ to $X_i$, and sending the nontrivial element of $H$ to $\begin{bmatrix}
    -1 & 0 \\ 0 & 1
\end{bmatrix}.$ The relations we have imposed on $S'$ are precisely the relations necessary to make $\rho'$ well-defined, noting that
\begin{align*}
    \begin{bmatrix} -1 & 0 \\ 0 & 1\end{bmatrix} X_i \begin{bmatrix} -1 & 0 \\ 0 & 1\end{bmatrix}^{-1} = X_i^{(-1)^i}.
\end{align*}
The representation $\rho'$ has residual pseudocharacter $\conj{D}$ (reducing modulo the maximal ideal of $S'$ generated by $\varpi$ and the variables), so universal properties induce a ring homomorphism $R \to S'_0$ and an $R$-algebra homomorphism $\eta: E \to M_2(S')$ inducing a morphism of Cayley-Hamilton algebras such that $\rho' = \eta \circ \rho^{\on{univ}}$. Pulling back the standard idempotents in $M_2(S')$ along $\eta$ induces an $R$-GMA structure on $E$, and we let $S$ denote the ring representing the associated representation adapted moduli functor. The universal property of $S$ then induces a graded $R$-algebra homomorphism $\phi: S \to S'$, with degree $0$ part $R = S_0 \to S_0'$. 

\begin{proposition}
    $\varphi: S \to S'$ is an isomorphism.
\end{proposition}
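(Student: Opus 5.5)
The natural strategy is to construct an inverse $\psi: S' \to S$ from the universal property of $S$ applied to explicit matrices, and then check that $\varphi$ and $\psi$ are mutually inverse by comparing universal objects. By definition, $S$ carries a universal adapted representation $\rho^{\on{ad}}: E \to M_2(S)$, and composing with $\rho^{\on{univ}}$ gives a continuous representation $\rho_S: \Gamma \to \GL_2(S)$. This representation has determinant $\tilde{\omega}$ and residual pseudocharacter $\omega \oplus 1$.

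\textbf{Step 1: putting $\rho_S$ in normal form.} The goal is to show that $\rho_S$ factors through $\Gamma' \rtimes H$ and has the shape used to define $\rho'$.
\begin{itemize}
\item Since $p = 3$ is odd and $H$ has order $2$, the element $\rho_S(\sigma)$ for the chosen lift $\sigma$ of the nontrivial element of $H$ is a semisimple involution.
\item Because the representation is adapted, $\rho_S(\sigma)$ reduces to $\on{diag}(-1, 1)$. After adjusting the choice of GMA idempotents, which are lifts of the standard idempotents, we may assume $\rho_S(\sigma) = \on{diag}(-1, 1)$ exactly. Here one uses that the idempotents of $E$ can be chosen as $(1 \mp \rho^{\on{univ}}(\sigma))/2$, and that the GMA structure on $E$ was defined by pulling back from $S'$, where this holds.
\item The kernel of $\Gamma \to \Gamma' \rtimes H$ acts trivially, since the image of $\ker(\Gamma \to H)$ is pro-$3$ (it lands in $1 + \mathfrak{m}$-type matrices) and is therefore a quotient of the maximal pro-$3$ Galois group $\Gamma'$.
\item The relation $\sigma x_i \sigma^{-1} = x_i^{(-1)^i}$ then forces $\rho_S(x_2), \rho_S(x_4)$ to be diagonal of determinant $1$, and forces $\rho_S(x_1), \rho_S(x_3)$ to be conjugation-anti-invariant. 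These are therefore of the form $A_1, A_3$, with the diagonal entries equal and given by the square roots. Any element $g$ with $\sigma g \sigma^{-1} = g^{-1}$ and $\det g = 1$ has this form, because $g$ and $g^{-1}$ have the same diagonal.
\end{itemize}
Reading off the entries $a_i, b_i, c_i$ gives elements of $S$ of the correct degrees: off-diagonal entries lie in $S_{\pm 2}$ and diagonal entries in $S_0 = R$. The relation $r$ holds in $\Gamma'$, so these elements satisfy the defining relations of $S'$.

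\textbf{Step 2: defining $\psi$.} We define $\psi: S' \to S$ by sending the variables to these entries. One must check that the completed variables $a_i$ and $b_ic_j$ map into $R = S_0$ and that $\psi$ is continuous for the $\mathfrak{m}_R$-adic topology, which holds because these elements reduce to $0$ residually. The map $\psi$ is graded.

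\textbf{Step 3: the two compositions.}
\begin{itemize}
\item For $\varphi \circ \psi = \on{id}_{S'}$: by construction $\varphi$ carries the universal adapted representation to $\eta$, and hence carries $\rho_S$ to $\rho'$. So $\varphi\psi$ sends each generator to the corresponding entry of $\rho'(x_i)$, which is that generator itself.
\item For $\psi \circ \varphi = \on{id}_S$: both this map and the identity are graded $R$-algebra maps $S \to S$ inducing the same adapted representation of $E$. This follows because the two representations agree on the image of $\Gamma$, and $\rho^{\on{univ}}(\Gamma)$ topologically generates $E$ over $R$. Universality of $S$ then gives equality.
\end{itemize}
We also need $\psi$ to be $R$-linear, i.e.\ the composite $R \to S'_0 \to S_0 = R$ must be the identity. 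This holds because both sides classify the same pseudorepresentation $\det \circ \rho_S$.

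\textbf{Main obstacle.} The hard part is Step 1: showing that the universal adapted representation really lands in this normal form, and that its entries define a continuous map out of the completed ring $S'$. Completeness is the issue, since $S$ is only finite type over the complete ring $R$. First I would use that $R$ is $\mathfrak{m}_R$-adically complete and that the products $b_ic_j \in S_0 = R$. This makes evaluation of the power series in $a_i, b_ic_j$ legitimate, while the $b_i, c_i$ themselves enter only polynomially. The rest is formal universal-property bookkeeping.
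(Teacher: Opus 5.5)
Your proposal is correct and follows the same route as the paper. Both use the fixed determinant and the relation $\sigma x_i \sigma^{-1} = x_i^{(-1)^i}$, with $\sigma \mapsto \on{diag}(-1,1)$ in the chosen GMA coordinates, to force the universal adapted representation into the shape of $\rho'$, so that the relation $r$ yields a map $S' \to S$ inverse to $\varphi$; you just spell out more bookkeeping than the paper does (the factorization through the pro-$3$ quotient, continuity and $R$-linearity of $\psi$, and the check of both composites).
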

\begin{proof}
    The universal adapted representation of $\conj{D}$ factors through $\Gamma' \rtimes H$. We have a couple constraints on the image of $\Gamma'$ under ${\rho}^{\on{univ}}$: 
    \begin{itemize}
        \item Since we work with fixed determinant, the image of $\Gamma'$ must have determinant $1$. 
        \item Letting $x_i \in \Gamma'$ be the generators from the Demu\v{s}kin presentation of $\Gamma'$ above, for the nontrivial element $\sigma \in H$ we must have $$\rho^{\on{univ}}(\sigma)\rho^{\on{univ}}(x_i)\rho^{\on{univ}}(\sigma)^{-1} = \rho^{\on{univ}}(x_i)^{(-1)^{i}}.$$ 
    \end{itemize}
    Using the chosen GMA structure to put coordinates on $E$, $\rho^{\on{univ}}$ sends $H$ to $\begin{bmatrix}
        \pm 1 & 0 \\ 0 & 1
    \end{bmatrix}$. The two constraints above then force $\rho^{\on{univ}}(x_i)$ to be of the form $\begin{bmatrix}
        (1 + bc)^{1/2} & b \\ c & (1 + bc)^{1/2}
    \end{bmatrix}$ for odd $i$ and $\begin{bmatrix}
        1 + a & 0 \\ 0 & (1 + a)^{-1}
    \end{bmatrix}$ 
    for even $i$. We may make a change of variables to express the latter matrix as $\begin{bmatrix}
        (1 + a)^{1/2} & 0 \\ 0 & (1 + a)^{-1/2}
    \end{bmatrix}.$ 
    
    Since the $x_i \in \Gamma'$ satisfy the relation $r$, we conclude that the universal adapted representation factors through $\rho'$, inducing an inverse to $\varphi$. 
\end{proof}

\subsection{Algebraic properties of deformation rings}

Proposition \ref{prop:alg-properties} lists the main properties of $R$ and $S$ that we will require. First, we prepare for the depth computation for $R$:

\begin{lemma}\label{lem:regular-sequence-s-to-r}
    Let $r_1, \dots, r_n$ be a regular sequence in a noetherian $\BZ$-graded ring $S$ with all $r_i \in R := S_0$, and suppose furthermore that $R$ is local noetherian. Then $r_1, \dots, r_n$ is also a regular sequence in $R$. 
\end{lemma}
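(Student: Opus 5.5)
The plan is to exploit the fact that taking the degree-$0$ part of a graded module is exact and commutes with multiplication by elements of degree $0$. Regularity of a sequence in $R$ has two parts: each $r_i$ must be a nonzerodivisor on $R/(r_1,\dots,r_{i-1})R$, and $R/(r_1,\dots,r_n)R \neq 0$. I will verify each part separately.

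First, I will identify $R/(r_1,\dots,r_{i-1})R$ with the degree-$0$ part of $S/(r_1,\dots,r_{i-1})S$. The ideal $I_i := (r_1,\dots,r_{i-1})S$ is homogeneous because each $r_j$ is homogeneous of degree $0$. Its degree-$0$ component is therefore $\sum_j r_j S_0 = (r_1,\dots,r_{i-1})R$. Since quotients of graded modules by homogeneous submodules are computed degreewise, this gives $(S/I_i)_0 = R/(r_1,\dots,r_{i-1})R$.

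Next, multiplication by $r_i$ on $S/I_i$ is a graded endomorphism of degree $0$. It is injective by hypothesis, so its restriction to the degree-$0$ summand $(S/I_i)_0 = R/(r_1,\dots,r_{i-1})R$ is also injective. Hence $r_i$ is a nonzerodivisor there.

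For the nonvanishing condition, regularity in $S$ gives $S/(r_1,\dots,r_n)S \neq 0$, but this does not immediately give $(S/(r_1,\dots,r_n)S)_0 \neq 0$: a priori the degree-$0$ part could vanish while other graded pieces survive. I will instead use the locality hypothesis on $R$. The sequence is regular in $S$, so $r_1 \in S$ is not a unit, since otherwise $S/r_1S = 0$. It then suffices to check that all $r_i$ lie in the maximal ideal $\mathfrak{m}$ of $R$, because then $R/(r_1,\dots,r_n)R \neq 0$. An element $r_i \in R$ that is a unit of $R$ is also a unit of $S$, which would contradict regularity in $S$ at step $i$. So each $r_i$ is a nonunit of $R$, and because $R$ is local, each $r_i$ lies in $\mathfrak{m}$. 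Then $(r_1,\dots,r_n)R \subseteq \mathfrak{m} \neq R$, so $R/(r_1,\dots,r_n)R \neq 0$.

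If the paper's notion of regular sequence requires the $r_i$ to lie in the maximal ideal rather than only requiring a nonzero quotient, the same unit argument supplies that as well. The one point I expect to need care is this nonvanishing condition, since the naive transfer from $S$ to its degree-$0$ part fails there; locality of $R$ is exactly what resolves it. The remaining steps are formal.
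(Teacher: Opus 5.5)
Your proof is correct, but it takes a different route from the paper's. The paper uses the Koszul complex. $K_\bullet(r_1,\dots,r_n;S)$ is graded with degree-preserving differentials, and its degree-$0$ strand is $K_\bullet(r_1,\dots,r_n;R)$. Since taking degree-$0$ parts is exact, the higher Koszul homology over $R$ vanishes, and the noetherian local hypothesis on $R$ then turns Koszul acyclicity back into regularity.

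You instead verify the definition directly. You use $(S/(r_1,\dots,r_{i-1})S)_0=R/(r_1,\dots,r_{i-1})R$ and the fact that an injective degree-$0$ map stays injective on each graded piece. This is the same device the paper itself uses in Corollary~\ref{cor:si-depth}.

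Your route is more elementary and proves more: it needs no noetherian or local hypothesis at all. Your concern about the nonvanishing condition is unfounded. If $R/(r_1,\dots,r_n)R=0$, then $1\in(r_1,\dots,r_n)R\subseteq(r_1,\dots,r_n)S$, which forces $S/(r_1,\dots,r_n)S=0$. So the degree-$0$ part of this cyclic module vanishes only if the whole module does, and your locality argument is valid but superfluous.

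The Koszul route, by contrast, genuinely depends on $R$ being noetherian local. Its last step also tacitly requires $r_i\in\mathfrak{m}_R$, since a unit $r_i$ makes the Koszul complex acyclic without the sequence being regular. Your observation that no $r_i$ can be a unit of $R$ is exactly what makes that step of the paper's argument airtight.
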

\begin{proof}
    Let $K_\bullet(r_1, \dots, r_n; S)$ be the Koszul complex associated to the regular sequence in $S$, which by regularity is exact away from cohomological degree $0$. The $\BZ$-grading on $S$ induces a $\BZ$-grading on $K_\bullet(r_1, \dots, r_n; S)$, and since all of the $r_i$ lie in degree $0$, the maps in this complex preserve the $\BZ$-grading. Hence $K_\bullet(r_1, \dots, r_n; S)_0 = K_\bullet(r_1, \dots, r_n; R)$, where the latter denotes the Koszul complex taken with respect to the ambient ring $R$. Taking degree $0$ components is an exact functor, so we conclude that $K_\bullet(r_1, \dots, r_n; R)$ is exact away from cohomological degree $0$. Since $R$ is noetherian local, this implies that $r_1, \dots, r_n$ is also a regular sequence in $R$. 
\end{proof}

\begin{proposition}\label{prop:alg-properties}~
    \begin{enumerate}[(a)]
        \item $R$ is an $\O$-torsion-free domain of dimension $4$ and depth at least $3$. 
        \item $S$ is an $\O$-flat local complete intersection domain of dimension $5$. 
    \end{enumerate}
\end{proposition}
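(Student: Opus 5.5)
Prove (b) first, using the explicit presentation of $S'$ (which the previous proposition identifies with $S$), then deduce (a) from it.

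\emph{Part (b).} Put $P := \O\lb a_0, a_1, b_0c_0, b_0c_1, b_1c_0, b_1c_1\rb [b_0, b_1, c_0, c_1]$. The variables $b_0,b_1,c_0,c_1$ are subject to the relations expressing $b_ic_j$ as the completed coordinates, so $P$ is the completion of the affine cone over $\mathbb{P}^1\times\mathbb{P}^1$ along the degree-$0$ part, tensored with power series in $a_0,a_1$.
\begin{itemize}
\item $P$ is a normal Cohen--Macaulay (even Gorenstein) domain, flat over $\O$, of relative dimension $2+3=5$, hence of absolute dimension $6$.
\item $S$ is the quotient of $P$ by the two homogeneous elements $f = x_0b_1+x_1b_0$ and $g = y_0c_1+y_1c_0$.
\item We will show that $\dim S = 5$ and that $S$ is a domain. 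Since $\O$ contributes one to dimension, $\dim S$ counts $\O$; so although $S$ is cut out by two equations in a ring of dimension $6$, the expected dimension would be $4$, not $5$, and this needs care.
\end{itemize}
The resolution is the relation $M-\on{id}_2$: the determinant-one and antidiagonal symmetry give a relation $\det M = 1$ among the entries. Since $\sqrt{1+XY}$ appears, the ideal generated by $X$ and $Y$ has height at most $2$. The better count is that $S$ should be the cone over the Galois deformation space. Bellaïche--Chenevier / Böckle give $\dim R = 4$ (one for $\O$ plus $3 = 1+\dim H^1$ tracelessly minus $H^2$ corrections), and $S$ adds one fiber dimension for $\mathbb{G}_m$-orbits, giving $5$.

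To make this rigorous, work directly from the approximations $x_1\equiv 6-2a_0$ and $y_1\equiv 6+a_0$. Since $6 = 2\cdot 3$ is not a unit when $p=3$, neither $x_1$ nor $y_1$ is a unit. Instead, localize/complete and use $x_0\equiv -2a_1$ and $x_1\equiv 6-2a_0$: the pair $(x_0,x_1)$ is a regular sequence in the power series part, part of a system of parameters together with $\varpi$ and $a_0,a_1$ modifications.

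The plan is then:
\begin{enumerate}
\item Show $f,g$ is a regular sequence in $P$. Reduce to the associated graded / special fiber, where $f\equiv -2a_1b_1 -2a_0b_0 + 6b_0$ and $g$ is similar. Check that these leading forms form a regular sequence in $\operatorname{gr}P$ by a direct Gröbner-type computation.
\item Conclude that $S$ is a complete intersection of dimension $\dim P - 2$. Then $\O$-flatness follows because $\varpi$ is a nonzerodivisor, by checking $\varpi,f,g$ is regular.
\item Prove $S$ is a domain. $S$ is a complete intersection, hence Cohen--Macaulay, so by Serre's criterion it suffices to show $S$ is regular in codimension $0$ (reduced) and that $\Spec S$ is irreducible. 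For this, invert $b_0$: on $b_0\neq0$ the equation $f$ solves $x_1 = -x_0b_1/b_0$, or more usefully eliminates a coordinate via the Jacobian criterion. Then show that the loci $b_0=b_1=0$ and $c_0=c_1=0$ have codimension $\ge 2$. This is essentially the reducible locus, of dimension about $3$.
\end{enumerate}
Honestly, the dimension bookkeeping ($6$ versus $5$) must be settled first by computing $\dim P$ carefully. I expect the correct statement to be $\dim P = 7$, namely $1+2+4$, since the cone over $\mathbb{P}^1\times\mathbb{P}^1$ has dimension $3$ plus the graded direction. In that case two relations give $5$, consistent with the claim.

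\emph{Part (a).} $R = S_0$ is a direct summand of $S$ as an $R$-module (the degree-$0$ projection), so it inherits being a domain and $\O$-torsion-free. For dimension, $\Spec S\to\Spec R$ is a good quotient by $\mathbb{G}_m$ with generic orbits of dimension $1$ (generic stabilizer $\mu_2$, acting through weight $\pm 2$), so $\dim R = 5-1 = 4$.

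For depth, produce three elements of $R$ forming a regular sequence in $S$ and apply Lemma \ref{lem:regular-sequence-s-to-r}. Take $\varpi, a_0, a_1$. Since $S$ is Cohen--Macaulay, it suffices that $\dim S/(\varpi,a_0,a_1)S = 2$. Modulo these, $f\equiv 6b_0 + (\text{higher})$, which is $\equiv 0 \bmod 3$; this makes the quotient $k[b,c\text{-cone}]$ modulo higher-order terms. I expect verifying that this quotient has dimension exactly $2$ (the cone over $\mathbb{P}^1\times\mathbb{P}^1$ has dimension $3$, so a further $b_0c_0$-type parameter cut is needed) to be the main obstacle. The first thing to try is to pick a nonzero $R$-element such as $b_0c_0+b_1c_1$ to replace one of $a_0,a_1$, and test regularity on the associated graded ring.
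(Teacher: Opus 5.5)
\textbf{The dimension and complete-intersection part of (b) is fine in outline, but integrality and the depth bound both have genuine gaps.}

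Your plan for the dimension and complete-intersection part of (b) can be made to work, and $\dim P = 7$ is the right count. $P$ is regular at $\mathfrak{M} = (\varpi, a_i, b_i, c_i)$, and $\operatorname{gr}_{\mathfrak{M}} P$ is a polynomial ring in seven variables. There the initial forms of $f$ and $g$ are coprime irreducible quadrics; for $\O$ unramified they are $2(\bar\varpi - a_0)b_0 - 2a_1b_1$ and $2(\bar\varpi + a_0)c_0 + 2a_1c_1$. So $\varpi, f, g$ is a regular sequence. This differs from the paper, which bounds $\dim S$ through $S/(b_0,b_1)$.

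\textbf{Integrality.} You deduce that $R$ is a domain from $S$ being one, so you must prove $S$ is a domain directly, and your sketch does not.
\begin{enumerate}[(i)]
\item The codimension claim is false for $V(b_0,b_1)$ and $V(c_0,c_1)$. Indeed $S/(b_0,b_1) \cong \O\lb a_0,a_1\rb[c_0,c_1]/(\bar y_1c_0 + \bar y_0c_1)$ has dimension $4 = \dim S - 1$, so these loci of triangular adapted representations are divisors. Only their intersection $\Spec \O\lb a_0,a_1\rb$ has codimension $2$.
\item Even granting a codimension-$2$ bad locus, you would still need each chart $S[1/b_0]$, $S[1/b_1]$, $S[1/c_0]$, $S[1/c_1]$ to be a domain. ``Solving $f$ for a coordinate'' does not establish that.
\end{enumerate}
The paper goes the other way. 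It imports from \cite[Corollary 5.11]{bip} that $R$ is an $\O$-flat domain. It then shows each $b_i, c_i$ is a nonzerodivisor on $S$. A relation between homogeneous zero divisors is multiplied by powers of $b_0$ or $c_0$ to land in $R = S_0$, and a top-degree-component argument handles the general case.

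\textbf{Depth.} The sequence $\varpi, a_0, a_1$ is not regular in $S$.
\begin{enumerate}[(i)]
\item Modulo $(a_0,a_1)$ we have $A_2 = A_4 = \on{id}_2$, so every commutator in $M$ is trivial and $M = A_1^6$.
\item Write $A_1 = s\,\on{id}_2 + N$ with $s = (1+b_0c_0)^{1/2}$ and $N^2 = b_0c_0\,\on{id}_2$. Then $A_1^3 = \alpha\,\on{id}_2 + (3+4b_0c_0)N$ with $\alpha$ a unit.
\item Hence both off-diagonal relations of $A_1^6 - \on{id}_2$ are multiples of the non-unit $3+4b_0c_0$. Modulo $\varpi$ they are unit multiples of $b_0^2c_0$ and $b_0c_0^2$.
\item Therefore $\dim S/(\varpi,a_0,a_1) = 3$, not $2$.
\end{enumerate}
This is the degeneration you noticed, and it is fatal rather than technical.

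The missing idea is to test regularity with an element of nonzero degree. The paper proves $\varpi, a_0, b_1$ is regular in $S$ as follows:
\begin{enumerate}[(i)]
\item It reduces the relations modulo $(\varpi,a_0,b_1)+\mathfrak{m}^4$, where their initial forms are $-b_0^2c_0$ and $a_1c_1$.
\item It shows the two relations are coprime in the UFD $S'' = k\lb a_1, b_0c_0, b_0c_1\rb[b_0,c_0,c_1]$, via Nagata's criterion.
\item Hence $\dim S/(\varpi,a_0,b_1) = 2$, and Cohen--Macaulayness of $S$ gives regularity.
\end{enumerate}
By the $b \leftrightarrow c$ symmetry, $c_1$ is also a nonzerodivisor on $S/(\varpi,a_0)$, and hence so is $b_1c_1$. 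The degree-$0$ sequence $\varpi, a_0, b_1c_1$ then descends to $R$ by Lemma \ref{lem:regular-sequence-s-to-r}; compare Corollary \ref{cor:si-depth}. Your fallback $b_0c_0 + b_1c_1$ would need exactly this kind of computation, which you do not carry out.
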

\begin{proof}
The fact that $R$ is $\O$-flat and a domain is \cite[Corollary 5.11]{bip}. To see that $R$ has dimension $4$, we note that the images of $b_0c_1, b_1c_0, b_1c_1$ in $R/(b_0c_0)$ are zero divisors, so that 
\begin{align*}\dim R &= \dim(R/(b_0c_0)) + 1 \\
    &=\dim(R/(b_0c_0, b_0c_1, b_1c_0, b_1c_1)) + 1 \\
    &= \dim(\O\lb a_0, a_1 \rb) + 1\\
    &= 4.
\end{align*}
    
Before proving the statement about the depth of $R$, we prove part (b) first.  To show that $S$ is a domain, we first show that the elements $b_i, c_i$ are regular. Suppose $b_0$ is a zero divisor in $S$; then by symmetry of the matrix relations, $c_0$ is also a zero divisor, and so $\dim S/(b_0, c_0) = \dim S$. However, $b_1$ and $c_1$ are also nonregular in $S/(b_0, c_0)$ since we have relations $x_1b_0 + x_0b_1 = y_1c_0 + y_0c_1 = 0$ in $S$, so we conclude that $\dim S = \dim S/(b_0, b_1, c_0, c_1)$. But this ring is just $\O\lb a_0, a_1\rb$, which has dimension $3$, whereas $S$ has dimension at least $5$ since it is a quotient of a $7$-dimensional ring by two relations; contradiction. Hence $b_0$ and $c_0$ are regular. The same argument shows that $b_1$ and $c_1$ are regular. 
    
    This lets us deduce that homogeneous elements of $S$ (with respect to the $\BZ$-grading) are regular. If $f$ is homogeneous and we have $fg = 0$, then without loss of generality $g$ is also homogeneous. We can multiply $f$ and $g$ by powers of $b_0$ or $c_0$ to get multiples $f', g'$ lying in $R$, again with $f'g' = 0$. Since $R$ is a domain, we conclude that $f'$ or $g'$ is $0$, and since the $b_i, c_i$ are regular we conclude that the corresponding $f$ or $g$ is also $0$. 

    Since all homogeneous elements of $S$ are regular, we conclude that all elements of $S$ are regular: if $fg = 0$ with $f, g \in S$ arbitrary, then by considering the maximal degree component of $f$ and $g$, we conclude that either $f = 0$ or $g = 0$. Hence $S$ is a domain. We also conclude that $S$ is $\O$-flat, since $S$ contains $\O \subset R$ as a subring. 

    To finish part (b), we show that $S$ is locally complete intersection. Since $S$ is a quotient of a $7$-dimensional ring by 2 relations, it suffices to show that $S$ has dimension at most $5$. Since we have established that $b_0$ is regular in $S$, $\dim S = \dim S/(b_0) + 1$, and $b_1$ is a zero divisor in $S/(b_0)$, so furthermore $\dim S = \dim S/(b_0, b_1) + 1$. Now,
    \begin{align*}
        S/(b_0, b_1) = \frac{\O\lb a_0, a_1\rb [c_0, c_1]}{(\conj{y}_1c_0 + \conj{y}_0c_1)}
    \end{align*}
    where $\conj{y}_0, \conj{y}_1$ are the images of $y_0, y_1$ in $S/(b_0, b_1)$. We have approximated $y_0 \equiv 2a_1, y_1 \equiv 6 + 2a_0$ modulo third powers, so $\conj{y}_1c_0 + \conj{y}_0c_1 \neq 0$ in $\O\lb a_0, a_1 \rb [c_0, c_1]$. Hence 
    $$\dim \frac{\O\lb a_0, a_1\rb [c_0, c_1]}{(\conj{y}_1c_0 + \conj{y}_0c_1)} = 4,$$
    so we conclude that $\dim S = 5$.     
 
    We return to finding a lower bound for the depth of $R$. We will show that $\varpi, a_0, b_1$ is a regular sequence in $S$. Once this is known, we may also conclude by symmetry that $\varpi, a_0, c_1$ is a regular sequence in $S$, so that in particular $b_0c_0$ is regular in $S/(\varpi, a_0)$ and hence $\varpi, a_1, b_0c_0$ is also regular in $S$. Then by Lemma \ref{lem:regular-sequence-s-to-r} the sequence $\varpi, a_1, b_0c_0$ is also a regular sequence in $R$. 
    
    Since $S$ is a noetherian local complete intersection, the claim will follow if we can show that $S/(\varpi, a_0, b_1)$ has dimension at most $5 - 3 = 2$. Reducing modulo $\varpi$ and setting $a_0 = b_1 = 0$, we may compute the relation modulo fourth powers:\begin{comment}directly 
    \begin{align*}
        [A_3, A_4][A_4, A_3^{-1}] \equiv \begin{bmatrix}
            1 & 0 \\
            \frac{-a_1c_1}{1 + a_1} & 1
        \end{bmatrix} \bmod (\varpi, a_0, b_1)
    \end{align*}
    and
    \begin{align*}
        A_1^3 \equiv \begin{bmatrix}
            (1 + b_0c_0)^{3/2} & b_0^2c_0 \\
            b_0 c_0^2 & (1 + b_0c_0)^{3/2}
        \end{bmatrix} \bmod (3), 
    \end{align*}
    and we may compute out \end{comment}
    \begin{align*}
    A_1^3[A_1, A_2][A_3, A_4]&[A_4, A_3^{-1}][A_2, A_1^{-1}]A^3_1\\
    &\equiv \begin{bmatrix}
        * & - b_0^2c_0 \\
        \frac{a_1c_1 + b_0c_0^2}{1 + a_1} & *
    \end{bmatrix} \bmod (\varpi, a_0, b_1) + \mideal^4
    \end{align*}
    where $\mideal = (a_0, a_1, b_0, b_1, c_0, c_1)$. 
    
    Write $f = -b_0^2c_0 + \dots$ for the upper right entry of the relation and $g = a_1c_1 + b_0c_0^2 + \dots$ for $1+a_1$ times the lower left entry of the relation, both considered as elements of $S'' := k\lb a_1, b_0c_0, b_0 c_1\rb [b_0, c_0, c_1]$, so that $S/(\varpi, a_0, b_1) \simeq S''/(f, g)$. The ring $S''$ inherits the $\BZ$-grading from $S$, with $f$ and $g$ of degree $2$ and $-2$ respectively. Any element $s$ of $S''$ may be decomposed into the sum of its $\BZ$-graded components $s = \sum_{i \in \BZ} s_i$, where the sum is finitely supported. 

    The ring $S''$ is a UFD by Lemma \ref{lem:s'-factorial} below. Suppose there exists $h \in S'$ dividing both $f$ and $g$. The initial forms\footnote{With respect to the standard monomial grading, not the $\BZ$-grading} $-b_0^2c_0$, $a_1c_1$ of $f$, $g$, respectively, are coprime in $S''$. The initial form of $h$ must divide the initial forms of both $f$ and $g$, so the initial form of $h$ must be a constant unit. Writing $f = hh'$, both $h$ and $h'$ must be concentrated in a single $\BZ$-degree: else either the maximal or minimal $\BZ$-graded component of $hh'$ would not be degree $2$. In particular, since we know that $h$ has a nonzero constant term, $h$ must be concentrated in $\BZ$-degree $0$, and therefore $h$ is itself a unit since $S'_0$ is local. 
    
    We conclude that the ideal $(f, g)$ must have height $2$: in a UFD, all height $1$ prime ideals are principal, so $(f, g)$ cannot be contained in any height $1$ prime ideal since $f$ and $g$ have no proper common divisor. Therefore $S''/(f, g) \simeq S/(\varpi, a_0, b_1)$ has dimension $2$, and thus $R$ has depth at least $3$ as explained previously.
    %\footnote{This proof seems pretty tortured. It's hard to write down a regular sequence of two elements that nicely simplify the matrix relations mod $\varpi$; for example, setting $a_0 = a_1 = 0$ kills both relations, meaning that $a_1$ is a zero divisor in $S/(a_0)$ and vice versa. I would hope that there is simpler proof for depth $\geq 3$, but it's not clear how well-behaved the ring $R$ is. Standard results on the Cohen-Macaulayness of the invariant subring of a CM ring under a reductive group action (i.e. Hochster-Roberts-type results) don't seem to hold in this generality, and I have no idea whether $R$ is CM or not. }  
\end{proof}

\begin{lemma}\label{lem:s'-factorial}
    The ring $S''$ in the previous proof is a UFD.
\end{lemma}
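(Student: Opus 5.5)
The ring in question is
\[
S'' = k\lb a_1, b_0c_0, b_0c_1\rb[b_0, c_0, c_1].
\]
This is not a power series or polynomial ring in independent variables. The bracketed ``power series variables'' $u := b_0c_0$ and $v := b_0c_1$ are products of the polynomial variables. Concretely, $S''$ is the subring (or rather completion along the degree-$0$ part) of a ring containing $k\lb a_1\rb[b_0,c_0,c_1]$, completed with respect to the ideal generated by degree-$0$ monomials in $b_0, c_0, c_1$. I would first make this precise. The degree-$0$ part is
\[
S''_0 = k\lb a_1, b_0c_0, b_0c_1\rb,
\]
and this is a regular local ring of dimension $3$: the degree-$0$ monomials in $b_0,c_0,c_1$ are exactly $b_0^{m+n}c_0^m c_1^n$, so the degree-$0$ subring of $k[b_0,c_0,c_1]$ is the polynomial ring $k[u,v]$ with $u,v$ algebraically independent. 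Each graded piece $S''_d$ is then a finitely generated $S''_0$-module: it is free with basis $b_0^{d/2}$ for $d>0$, and generated by $c_0^{j}c_1^{-d/2-j}$ for $d<0$.

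The plan is to exhibit $S''$ as a localization of a UFD, or to use Nagata's lemma (as the acknowledgments suggest). The key element is $b_0$. After inverting $b_0$ we get $c_0 = u/b_0$ and $c_1 = v/b_0$, so
\[
S''[b_0^{-1}] = k\lb a_1,u,v\rb[b_0,b_0^{-1}],
\]
a Laurent polynomial ring over a regular local ring. Regular local rings are UFDs, and polynomial and Laurent extensions of UFDs are UFDs, so this ring is a UFD. Nagata's lemma then says: if $S''$ is a noetherian domain, $b_0$ is a prime element, and $S''[b_0^{-1}]$ is a UFD, then $S''$ is a UFD.

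It remains to check the hypotheses in order:
\begin{enumerate}[(1)]
\item $S''$ is noetherian. It is finitely generated over the noetherian ring $k\lb a_1,u,v\rb$ by $b_0,c_0,c_1$.
\item $S''$ is a domain. It embeds into $S''[b_0^{-1}]$, provided $b_0$ is a nonzerodivisor. This is clear because $S''$ sits inside a ring of the form $k\lb a_1,u,v\rb[b_0^{\pm1}]$ via the identification above. Justifying that the natural map is injective is part of the presentation step below.
\item $b_0$ is prime. I would compute
\[
S''/(b_0) = k\lb a_1\rb[c_0,c_1],
\]
since $u$ and $v$ become $0$ there. This quotient is a domain, so $b_0$ is prime.
\end{enumerate}

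The main obstacle is making the presentation of $S''$ rigorous. The ring $k\lb a_1, b_0c_0, b_0c_1\rb[b_0,c_0,c_1]$ is defined implicitly, and one must confirm there are no hidden relations beyond those forced by $u = b_0c_0$ and $v = b_0c_1$. Presumably the intended definition is
\[
S'' \cong k\lb a_1,u,v\rb[b_0,c_0,c_1]/(u-b_0c_0,\; v-b_0c_1).
\]
With this presentation, verifying $S''/(b_0) \cong k\lb a_1\rb[c_0,c_1]$ is immediate, since setting $b_0=0$ kills $u$ and $v$. Injectivity into the localization follows from $b_0$ being a nonzerodivisor. To see that, I would use the $\BZ$-grading: homogeneous components are $S''_0$-modules, and one checks that $b_0\cdot$ is injective on each graded piece via the explicit monomial description above. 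Alternatively, a dimension count works: $S''$ is a complete intersection of dimension $4$, and $S''/(b_0)$ has dimension $3$, so $b_0$ lies in no minimal prime. Since a complete intersection is Cohen--Macaulay and has no embedded primes, $b_0$ is then regular. I would take this dimension-count route.
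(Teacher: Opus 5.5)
Your proposal is correct and follows the paper's proof essentially step for step: the same presentation $S''\cong k\lb a_1,u_0,u_1\rb[b_0,c_0,c_1]/(b_0c_0-u_0,\,b_0c_1-u_1)$, the same localization $S''[b_0^{-1}]\cong k\lb a_1,u_0,u_1\rb[b_0^{\pm1}]$ (a UFD), primality of $b_0$ from $S''/(b_0)\cong k\lb a_1\rb[c_0,c_1]$, and Nagata's criterion. You are right that injectivity of $S''\to S''[b_0^{-1}]$ needs an argument (the paper simply asserts that the kernel is the relation ideal), but run your dimension count with heights rather than global dimensions, because $S''$ is not local and its global dimension does not control its minimal primes: a prime of $k\lb a_1,u_0,u_1\rb[b_0,c_0,c_1]$ containing $b_0$ and both relations also contains $(b_0,u_0,u_1)$, which has height $3$, so by Krull's height theorem it is not minimal over the two relations; and since the two relations are non-associate irreducibles in this regular UFD, they generate an unmixed ideal of height $2$, so $b_0$ avoids every associated prime and is a nonzerodivisor on $S''$.
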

\begin{proof}
    Let $B := k\lb a_1, u_0, u_1\rb$, where $a_1, u_0, u_1$ are indeterminates. We may present $S''$ as $B[b_0, c_0, c_1]/(b_0c_1 - u_0, b_0c_1 - u_1)$. We have a localization map $S'' \to B[b_0, b_0^{-1}]$ given by sending $b_0 \mapsto b_0$, $c_0 \mapsto u_0/b_0$, $c_1 \mapsto u_1/b_0$. The kernel of this map is $(b_0c_0 - u_0, b_0c_1 - u_1) = 0$ in $S''$, so we conclude that we have an injection $S'' \hookrightarrow B[b_0, b_0^{-1}]$. In particular, $S''$ is a domain. 

    The ring $B[b_0, b_0^{-1}]$ is a UFD, being a Laurent extension of a formal power series ring. The given map $S'' \hookrightarrow B[b_0, b_0^{-1}]$ is the localization map for $S''$ obtained by inverting $b_0$. The element $b_0$ is prime in $S''$, since $S''/(b_0) = k\lb a_1 \rb[c_0, c_1]$ is a domain. We conclude by applying Nagata's criterion for factoriality (\cite[Tag 0AFU]{stacks-project}), which states that if a ring $S''$ has multiplicative subset $T$ generated by prime elements, then $S''$ is factorial if and only if every element in $S''$ has a factorization into irreducibles and $T^{-1}S''$ is factorial. $S''$ is noetherian, so the former condition holds, and we have shown that $T^{-1}S''$ is factorial with $T = \{b_0\}$. Hence $S''$ is factorial. 
\end{proof}

\begin{corollary}\label{cor:si-depth}
    For any $i \in \BZ$, $S_i$ has $R$-depth at least $3$.
\end{corollary}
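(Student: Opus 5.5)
The plan is to reuse the regular sequence from the proof of Proposition \ref{prop:alg-properties}(a), namely $\varpi, a_1, b_0c_0$, which lies in $R = S_0$ and is regular on $S$, and to pass from $S$ to each graded piece $S_i$ by the same Koszul-complex argument as in Lemma \ref{lem:regular-sequence-s-to-r}. The only change from that lemma is that we now take the degree $i$ component of the complex instead of the degree $0$ component.

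\textbf{Step 1.} Recall that the proof of Proposition \ref{prop:alg-properties} shows that $\varpi, a_1, b_0c_0$ is a regular sequence in $S$, and that all three elements have $\BZ$-degree $0$. Consider the Koszul complex $K_\bullet(\varpi, a_1, b_0c_0; S)$.
\begin{itemize}
    \item Since the sequence is $S$-regular, this complex is exact away from cohomological degree $0$.
    \item Since the three elements are homogeneous of degree $0$, the differentials preserve the $\BZ$-grading, so the complex splits as a direct sum of its graded pieces.
    \item The degree $i$ piece is exactly $K_\bullet(\varpi, a_1, b_0c_0; S_i)$, the Koszul complex of the $R$-module $S_i$.
\end{itemize}
Taking degree $i$ components is exact, so $K_\bullet(\varpi, a_1, b_0c_0; S_i)$ is exact away from degree $0$.

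\textbf{Step 2.} We conclude that the sequence is $S_i$-regular. By Step 1, $H_j(\varpi, a_1, b_0c_0; S_i) = 0$ for all $j > 0$. We need two inputs:
\begin{itemize}
    \item $S_i$ is a finitely generated module over the noetherian local ring $R$. This holds because $S$ is finite type over $R$ and the $\mathbb{G}_m$-invariants of a finitely generated graded module are finite over $R$. Alternatively, $S_{\pm 2}$ appear as GMA coordinates of $E$, which is finite over $R$ by \cite[Proposition 3.6]{CWE18}, and the general case follows from Hilbert's finiteness for isotypic components under the reductive group $\mathbb{G}_m$.
    \item All three elements lie in the maximal ideal of $R$.
\end{itemize}
Given these, vanishing of the first Koszul homology implies that the sequence is $S_i$-regular, by the standard criterion for finite modules over a noetherian local ring.

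\textbf{Step 3.} We also need $S_i/(\varpi, a_1, b_0c_0)S_i \neq 0$. This follows from Nakayama's lemma once we know $S_i \neq 0$. For this, note that $S$ is a domain, and either $b_0^{i/2}$ or $c_0^{-i/2}$ is a nonzero element of $S_i$ when $i$ is even. For odd $i$, $S_i = 0$, and its depth is conventionally $\infty$, so the statement holds trivially.

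Hence $\operatorname{depth}_R S_i \geq 3$ for every $i$.

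The main obstacle is the finiteness of $S_i$ over $R$ in Step 2, which is needed for the local-ring Koszul criterion. I expect it to follow directly from $S$ being finite type over $R$, together with the standard fact that each graded piece of a finitely generated graded algebra over its degree-$0$ part is a finite module.
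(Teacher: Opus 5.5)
Your argument is correct, but it takes a heavier route than the paper.

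\textbf{The paper's route.} The paper argues directly with quotients. Since $x_1,\dots,x_n\in R=S_0$ have degree $0$, we have $(x_1,\dots,x_n)S=\bigoplus_i (x_1,\dots,x_n)S_i$. Hence $S_i/(x_1,\dots,x_{n-1})S_i$ is a graded piece, and in fact an $R$-direct summand, of $S/(x_1,\dots,x_{n-1})S$. Multiplication by $x_n$ preserves this decomposition, so a nonzerodivisor on the whole quotient is a nonzerodivisor on each piece. This transfers regularity element by element and needs no finiteness of $S_i$ over $R$, no locality, and no Koszul criterion.

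\textbf{Your route.} You take the degree-$i$ strand of the Koszul complex and then apply the local criterion ``$H_1=0\Rightarrow$ regular''. That criterion forces you to prove that $S_i$ is a finite $R$-module and that the elements lie in $\mathfrak{m}_R$. Both hold, so nothing breaks. For finiteness: $S$ is generated over $R$ by the finite modules $S_{\pm 2}$, and $S_2S_{-2}\subseteq R$. Pairing factors therefore shows $S_{2k}$ is generated by $S_2^{k}$ (respectively $S_{-2}^{|k|}$). Still, these inputs are avoidable. In exchange, your Step 3 makes explicit the nonvanishing $S_i\neq \mathfrak{m}_R S_i$ for even $i$, which the paper leaves implicit.

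\textbf{Choice of sequence.} You use $\varpi, a_1, b_0c_0$ because the proof of Proposition \ref{prop:alg-properties} literally names it. However, that argument actually proves something else. It shows $\varpi,a_0,b_1$ is $S$-regular, and by the $b_i\leftrightarrow c_i$ symmetry so is $\varpi,a_0,c_1$. Hence $b_1c_1$ is regular on $S/(\varpi,a_0)$, and $\varpi,a_0,b_1c_1$ is $S$-regular. This is the sequence the paper uses for this corollary, and the ``$a_1,b_0c_0$'' in the proposition is an index slip. Your argument goes through verbatim with $\varpi,a_0,b_1c_1$.
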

\begin{proof}
     If $x_1, \dots, x_n \in R = S_0$, then $(S/(x_1, \dots, x_n)S)_i \simeq S_i/(x_1, \dots, x_n)S_i$, so if $x_n$ is regular in $S/(x_1, \dots, x_{n - 1})S$, then $x_n$ is also regular in $S_i/(x_1, \dots, x_{n - 1})S_i$. 
     
     In the proof of Proposition \ref{prop:alg-properties}, we found a regular sequence $\varpi, a_0, b_1c_1$ of length $3$ in $R$ that is also a regular sequence in $S$. By the reasoning in the previous paragraph, we conclude that $\varpi, a_0, b_1 c_1$ is also a regular sequence for $S_i$. 
\end{proof}
 Of course, $S_i = 0$ for odd $i$, but the depth of the zero module is infinity. 

\section{Representation theory of the 3-adic Steinberg block of $\GL_2(\BQ_3)$}

\subsection{Notation and preliminaries}\label{sec:rep-theory}

We establish the notation and setting of $p$-adic representation theory for $\GL_2(\BQ_p)$, mostly following the conventions of \cite[\S 2]{paskunas13}. For the moment, we let $G$ be an arbitrary locally pro-$p$ topological group. 

An $\O$-representation $V$ of $G$ is said to be \emph{smooth} if
\begin{align*}
    V = \bigcup_{i, H} V^H[\varpi^i]
\end{align*}
where the union is over compact open subgroups $H$ of $G$ and integers $i > 0$, and $(-)[\varpi^i]$ denotes the submodule of $\varpi^i$-torsion elements. $V$ is said to be \emph{locally of finite length} if each $v \in V$ generates a finite-length $G$-module. 

Let $Z$ denote the center of $G$, and fix a central character $\zeta: Z \to \O$. We write $\on{Mod}_{G}^{\on{lfin}}(\O)$ to denote the category of locally finite smooth $G$-modules with coefficients in $\O$, and we write $\on{Mod}_{G, \zeta}^{\on{lfin}}(\O)$ for the full subcategory of representations with central character $\zeta$. We write $\on{Mod}_{G}^{\on{lfin}}(k)$ and $\on{Mod}_{G, \zeta}^{\on{lfin}}(k)$ for the respecitve full subcategories consisting of $\varpi$-torsion objects. 

Let $H$ be an arbitrary open compact subgroup of $G$ with complete group algebra $\O\lb H \rb$. We let $\on{Mod}_{G}^{\on{pro-aug}}(\O)$ denote the category of profinite linearly topological $\O\lb H \rb$-modules with an action of $\O [G]$ that agrees with the restricted action of $\O[H]$, with morphisms being $G$-equivariant continuous homomorphisms of topological $A\lb H \rb$-modules. Since any two open compacts in $G$ are commensurable, this definition is independent of the choice of $H$ up to equivalence of categories. 

For a smooth $G$-module $V$ with $\O$-coefficients, we define the \emph{Pontryagin dual} $$V^\vee := \Hom_{\O}^{\on{cont}}(V, L/\O),$$ where $L/\O$ has the discrete topology and $V$ has the compact-open topology. The module $V^\vee$ is naturally an object of $\on{Mod}_{G}^{\on{oro-aug}}(\O)$, and moreover $V \mapsto V^\vee$ induces an anti-equivalence between the category of smooth $G$-modules over $\O$ and $\on{Mod}_{G}^{\on{pro-aug}}(\O)$. We write $\mathfrak{C}(\O)$ (resp. $\mathfrak{C}(k)$) for the essential image of $\on{Mod}_{G, \zeta}^{\on{lfin}}(\O)$ (resp. $\on{Mod}_{G, \zeta}^{\on{lfin}}(k)$) under this anti-equivalence. When we wish to clarify the ambient group, we may write $\mathfrak{C}_G(\O)$, etc. We will primarily consider $\mathfrak{C}(\O)$ simply to be the dual category to $\on{Mod}_{G, \zeta}^{\on{lfin}}(\O)$, without referring to the description of $\on{Mod}_{G}^{\on{pro-aug}}(\O)$ given above. 

By \cite[Corollary 2.3]{paskunas13}, injective envelopes exist both in $\on{Mod}_{G, \zeta}^{\on{lfin}}(\O)$ and in $\on{Mod}_{G, \zeta}^{\on{lfin}}(k)$, so projective envelopes exist in $\mathfrak{C}(\O)$ and $\mathfrak{C}(k)$. For $\pi\in \on{Mod}_{G, \zeta}^{\on{lfin}}(\O)$, we write $\tilde{J}_\pi$ for the injective envelope of $\pi$ taken inside $\on{Mod}_{G, \zeta}^{\on{lfin}}(\O)$. If furthermore $\pi \in \on{Mod}_{G, \zeta}^{\on{lfin}}(k)$, we write $J_{\pi}$ for the injective envelope of $\pi$ taken inside $\on{Mod}_{G, \zeta}^{\on{lfin}}(k)$. Likewise we write $\Ppi$ for the projective envelope of $\pi^\vee$ taken inside $\mathfrak{C}(\O)$ and we write $P_{\pi^\vee}$ for the projective envelope of $\pi^\vee$ taken inside $\mathfrak{C}(k)$. By the properties characterizing projective and injective envelopes, we have $J_{\pi}^\vee \simeq P_{\pi^\vee}$ and $\tilde{J}_{\pi}^\vee \simeq \Ppi$, and moreover by \cite[Lemma 2.11]{paskunas13}, we have $\Ppi \otimes_\O k \simeq P_{\pi^\vee}$ and $\tilde{J}_\pi[\varpi] \simeq J_{\pi}$. 

From now on, we set $G = \GL_2(\BQ_p)$ and write $Z$ for the center of $G$. Let $T \subset G$ be the standard diagonal torus, and let $P \subset G$ be the standard parabolic (Borel) subgroup consisting of upper-triangular matrices. If $\tau \in \on{Mod}_{T/Z}^{\on{lfin}}(\O)$, then we write $\Ind_P^G \tau \in \on{Mod}_{G/Z}^{\on{lfin}}(\O)$ for smooth parabolic induction, which is an exact functor $\on{Mod}_{T/Z}^{\on{lfin}}(\O) \to \on{Mod}_{G/Z}^{\on{lfin}}(\O)$ that restricts to an exact functor $\on{Mod}_{T/Z}^{\on{lfin}}(k) \to \on{Mod}_{G/Z}^{\on{lfin}}(k)$. If $\chi$ is a character of $\BQ_p^\times$, then by abuse of notation we also let $\chi$ (resp. $\chi_T$) refer to the character $\chi \circ \det$ of $G$ (resp. $T$), depending on context; note that we reserve the unadorned notation for $G$.  We let $\omega: \BQ_p^\times \to k$ denote the character $x \mapsto x|x| \bmod \varpi$, which corresponds to the mod $p$ cyclotomic character of $\Gal(\conj{\BQ}_p/\BQ_p)$ under the normalization of local class field theory sending geometric Frobenius to a uniformizer.

The irreducible objects in $\on{Mod}_{G}^{\on{lfin}}(\O)$, which necessarily lie in $\on{Mod}_{G}^{\on{lfin}}(k)$, have been classified in \cite[Theorem 33]{barthel-livne} and \cite[Theorem 1.1]{breuil}. They are:
\begin{itemize}
    \item The supersingular representations, which are of the form $$\pi(r, \lambda, \chi):= (\on{c-Ind}_{KZ}^G \on{Sym} \BF^r)/(T) \otimes \chi$$ for $r \in \{0, \dots, p - 1\}$, where $T$ is a certain Hecke operator acting on the induction and $\chi$ is a character;
    \item Principal series $\Ind_{P}^G \chi_1 \otimes \chi_2$, where $\chi_1, \chi_2$ are distinct characters;
    \item Characters $\chi$; and
    \item The special representations $\on{Sp} \otimes \chi$, which can be characterized as the unique irreducible quotient of $\Ind_{P}^G \chi_T$, fitting into a non-split exact sequence
    \begin{align*}
        0 \to \chi \to \Ind_P^G \chi_T \to \on{Sp} \otimes \chi \to 0.
    \end{align*}
\end{itemize}

This classification allows us to write down a block decomposition of $\on{Mod}_{G}^{\on{lfin}}(\O)$. In an abelian category, a \emph{block} $\mathfrak{B}$ is an equivalence class of irreducible objects, where equivalence the is generated by declaring $\pi \sim \tau$ if $\Ext^1(\pi, \tau) \neq 0$ or $\Ext^1(\tau, \pi) \neq 0$. By \cite[Proposition 5.42, Remark 5.43]{paskunas13}, after passing to a sufficiently large extension $L/\BQ_p$, the blocks of $\on{Mod}_{G}^{\on{lfin}}(\O)$ are:
\begin{enumerate}[(i)]
    \item Supersingular: $\mathfrak{B} = \{\pi\}$ for supersingular $\pi$;
    \item Generic principal series: $\mathfrak{B} = \{\Ind_{P}^G \chi_1 \otimes \chi_2 \omega^{-1}, \Ind_{P}^G \chi_2 \otimes \chi_1 \omega^{-1}\}$ with $\chi_1\chi_2^{-1} \neq 1, \omega^{\pm 1}$. 
    \item Non-generic principal series: If $p > 2$, $\mathfrak{B} = \{\Ind_{P}^G \chi \otimes \chi\omega^{-1}\}$
    \item Steinberg: If $p > 3$, $\mathfrak{B} = \{1, \Sp, \Ind_P^G \omega \otimes \omega^{-1}\} \otimes \chi$ for a character $\chi$;
    \item $p = 3$ Steinberg: If $p = 3$, $\mathfrak{B} = \{1, \Sp, \omega, \Sp \otimes \omega\} \otimes \chi$ for a character $\chi$; and
    \item $p = 2$ Steinberg/non-generic: If $p = 2$, $\mathfrak{B} = \{1, \Sp\} \otimes \chi$ for a character $\chi$. 
\end{enumerate}

We will always assume that $L$ is large enough so that all irreducible representations are absolutely irreducible. The mod $p$ local Langlands correspondence establishes a bijection of these blocks with semisimple residual representations (equivalently, pseudocharacters) of $\Gamma$. See \cite[\S4.1]{JNW} for a dictionary of this correspondence as well as further exposition of the blocks of $\on{Mod}_{G}^{\on{lfin}}(\O)$. Most importantly for our purposes, the Steinberg blocks correspond to the residual pseudorepresentations of the form $\omega \chi \oplus \chi$. 

For the remainder of this paper, we restrict to representations with trivial central character, since we may always twist by an appropriate character to translate to this case. We write $\on{Mod}_{G/Z}^{\on{lfin}}(\O)$, etc.\ instead of $ \on{Mod}_{G, 1}^{\on{lfin}}(\O)$, etc.\ in this case. For a block $\mathfrak{B}$, we write $\on{Mod}_{G/Z}^{\on{lfin}}(\O)^{\mathfrak{B}}$ for the full subcategory of $\on{Mod}_{G/Z}^{\on{lfin}}(\O)$ consisting of objects whose irreducible subquotients lie in $\mathfrak{B}$, and we write $\mathfrak{C}(\O)^{\mathfrak{B}}$ for the essential image of this subcategory in $\mathfrak{C}(\O)$. By \cite[Proposition 5.34, Corollary 5.35]{paskunas13}, we have a decomposition
\begin{align*}
    \on{Mod}_{G/Z}^{\on{lfin}}(\O) = \bigoplus_{\mathfrak{B}}  \on{Mod}_{G/Z}^{\on{lfin}}(\O)^{\mathfrak{B}}
\end{align*}
as a direct sum of categories, where the sum ranges over blocks $\mathfrak{B}$ with trivial central character. We may therefore analyze the category $\on{Mod}_{G/Z}^{\on{lfin}}(\O)$ by analyzing each block category independently. We write $\mathfrak{C}(\O)^{\mathfrak{B}}$ for the essential image of $\on{Mod}_{G/Z}^{\on{lfin}}(\O)^{\mathfrak{B}}$ under Pontryagin duality. 

\begin{remark} We pause to comment on our notation for Hom sets. If $M, N$ both lie in one of the categories we have discussed, then to avoid cluttering notation we simply write $\Hom(M, N)$ for the morphisms from $M$ to $N$ in this category, with the ambient category (usually one of $\on{Mod}_{G/Z}^{\on{lfin}}(\O), \on{Mod}_{G/Z}^{\on{lfin}}(k), \mathfrak{C}(\O), \mathfrak{C}(k)$ or the corresponding categories over $T/Z$) implied by context. To help avoid confusion, we point out that we only ever consider injective envelopes $J_{\pi}$ or $\tilde{J}_{\pi}$ over a category of locally finite length smooth representations, and we only consider projective envelopes $P_{\pi^\vee}$ or $\tilde{P}_{\pi^\vee}$ over the dual category.
\end{remark}

We now fix a block $\mathfrak{B}$. We write 
\begin{align*}
    P := \bigoplus_{\pi \in \mathfrak{B}} P_{\pi^\vee}\\
    \tilde{P} := \bigoplus_{\pi \in \mathfrak{B}} \tilde{P}_{\pi^\vee}
\end{align*}
for the direct sum of all projective envelopes of irreducibles in $\mathfrak{C}(k)^{\mathfrak{B}}$ and $\mathfrak{C}(\O)^{\mathfrak{B}}$, respectively. One of our central objects of study is 
\begin{align*}
    \tilde{E}_{\mathfrak{B}} := \End(\tilde{P}). 
\end{align*}
The importance of this algebra comes from Morita theory. There is a natural topology on $\tilde{E}_{\mathfrak{B}}$ making it into a compact algebra, and by \cite[Proposition 5.45]{paskunas13} there is an equivalence of categories between $\mathfrak{C}(\O)^{\mathfrak{B}}$ and the category of compact right $\tilde{E}_{\mathfrak{B}}$-modules. Dually, $\on{Mod}_{G/Z}^{\on{lfin}}(\O)^{\mathfrak{B}}$ is equivalent to the category of discrete left $\tilde{E}_{\mathfrak{B}}$-modules. See \cite[\S4.1]{JNW} and the discussion preceding \cite[Corollary 2.5]{paskunas13} for more details. We have
\begin{align*}
    \tilde{E}_{\mathfrak{B}} = \bigoplus_{\pi, \tau \in \mathfrak{B}} \Hom(\Ppi, \Ptau),
\end{align*}
so much of our analysis will focus on the properties of the individual $\Hom(\Ppi, \Ptau)$ for each pair of irreducibles $\pi, \tau$. 

We will utilize Emerton's functor of ordinary parts $\on{Ord}_P(-)$, which is right adjoint to $\Ind_{\conj{P}}^G(-)$, where $\conj{P}$ denotes the opposite standard parabolic subgroup. See \cite[\S 7.1]{paskunas13} for a brief discussion of this functor and its relevant properties. We note that since $\on{Ord}_P$ is adjoint to $\Ind_{\conj{P}}^G$, rather than $\Ind_{P}^G$, we must perform a certain twist to translate between the two types of induction. Letting $s := \begin{bmatrix}
    0 & 1 \\ 1 & 0
\end{bmatrix}$, for $M \in \on{Mod}^{\on{lfin}}_{T/Z}(\O)$ we write $M^s$ for the representation obtained by conjugating by $s$. Then we have a natural isomorphism
\begin{align*}
    \Ind_{\conj{P}}^G M \simeq \Ind_P^G M^s.
\end{align*}
A small mercy of the $p = 3$ Steinberg case is that the only characters $1, \omega$ of $T/Z$ we consider are scalar (i.e. factor through the determinant), and for such characters we have $\chi^s = \chi$, which effectively means that we need not worry about twisting by $s$ at all. 

\subsection{Useful resolutions in the Steinberg block}
In this section, we set $p = 3$ and let $\mathfrak{B} = \{1, \Sp, \omega, \Sp \otimes \omega\}$ be the Steinberg block of $\on{Mod}_{G/Z}^{\on{lfin}}(\O)$. 

The $\Ext^1$-groups between irreducibles in $\mathfrak{B}$ have already been computed in \cite[Theorem 11.4]{paskunas_supersingular}. The $k$-dimensions of these groups are:
\begin{center}\begin{table}[h]\begin{center}\begin{tabular}{|c||c|c|c|c|} 
    \hline $e^1(\tau, \pi)$ & $1$ & $\on{Sp}$ & $\omega$ & $\on{Sp}\otimes \omega$ \\ 
    \hline \hline $1$ & 0 & 2 & 0 & 0\\
    \hline $\on{Sp}$ & 1& 0 & 1 & 0 \\
    \hline $\omega$ & 0 & 0 & 0& 2 \\
    \hline $\on{Sp} \otimes \omega $ &1& 0& 1& 0\\
    \hline
\end{tabular}
\end{center}
\caption{$\Ext^1$ dimensions}
\label{table:ext1}
\end{table}
\end{center}
where the entry in row $\tau$ and column $\pi$ is $e^1(\tau, \pi) := \dim_k \Ext^1(\tau, \pi)$.  Half of these entries are not stated directly in loc.\ cit., but they follow from the other half by twisting by $\omega$. 

\begin{remark}
    Here and in the rest of this section, we write an unadorned $\Ext^i(-, -)$ to refer to the Ext group computed in the category over $k$, with the exact ambient category implied by context (one of $\on{Mod}_{G/Z}^{\on{lfin}}(k)$, $\mathfrak{C}(k)$, or the versions of these categories over $T/Z$). We will have no need to directly consider Ext groups computed in the larger categories over $\O$. 
\end{remark}

Write $1_T \in \on{Mod}_{T/Z}^{\on{lfin}}(k)$ for the trivial representation of $T$, and let $J_{1_T}$ denote its injective envelope in this category. We reserve unadorned notation for characters in $\on{Mod}_{G/Z}^{\on{lfin}}(k)$.
\begin{lemma}
    The only absolutely irreducible objects in $\on{Mod}_{T/Z}^{\on{lfin}}(\O)$ are characters, and $\Ext^1(\chi_1, \chi_2) = 0$ if $\chi_1, \chi_2$ are distinct characters of $T$. Hence the blocks of $\on{Mod}_{T/Z}^{\on{lfin}}(\O)$ consist of a singleton characters. 
\end{lemma}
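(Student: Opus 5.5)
The plan is to use that $T/Z \simeq \BQ_p^\times$: the map $\on{diag}(a,d) \mapsto a/d$ is an isomorphism $T/Z \xrightarrow{\sim} \BQ_p^\times$. So $T/Z$ is an abelian locally pro-$p$ group, isomorphic to $\BZ \times \BZ_p^\times \simeq \BZ \times \mu_{p-1} \times (1+p\BZ_p)$ for $p>2$. An irreducible object of $\on{Mod}_{T/Z}^{\on{lfin}}(\O)$ is killed by $\varpi$, since $\varpi\pi$ is a proper subrepresentation. It is smooth, so it is fixed pointwise by some open subgroup of $1+p\BZ_p$.

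\textbf{Step 1 (irreducibles are characters).} Take an irreducible $\pi$ and a nonzero vector $v \in \pi$. Its fixer is open, and the pro-$p$ group $1+p\BZ_p$ acts on the finite $k$-vector space spanned by the orbit of $v$ under $1+p\BZ_p$ through a finite $p$-group, so there is a nonzero fixed vector. Since $T/Z$ is abelian, the subspace $\pi^{1+p\BZ_p}$ is $T/Z$-stable, hence equals $\pi$ by irreducibility. So $\pi$ is an irreducible representation of the discrete abelian group $\BZ \times \mu_{p-1}$ over $k$ that is locally finite, i.e. of finite length, hence finite-dimensional. Commuting operators on a finite-dimensional irreducible module have a common eigenvector after a finite extension of $k$, so absolute irreducibility forces $\dim \pi = 1$, i.e. $\pi$ is a character.

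\textbf{Step 2 (Ext vanishing).} Let $0 \to \chi_2 \to V \to \chi_1 \to 0$ be an extension in $\on{Mod}_{T/Z}^{\on{lfin}}(k)$ with $\chi_1 \neq \chi_2$, and pick $t \in T/Z$ with $\chi_1(t) \neq \chi_2(t)$. The operator $t$ on the $2$-dimensional space $V$ has distinct eigenvalues $\chi_1(t), \chi_2(t)$, so $V$ splits as a direct sum of its eigenlines. Because $T/Z$ is abelian, each eigenline of $t$ is $T/Z$-stable, and the $\chi_1(t)$-eigenline maps isomorphically onto $\chi_1$. This gives a $T/Z$-equivariant splitting, so $\Ext^1(\chi_1,\chi_2)=0$; here $\Ext^1$ means Yoneda extensions in $\on{Mod}_{T/Z}^{\on{lfin}}(k)$, which matches the convention of the paper's earlier Remark on unadorned Ext groups. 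The same argument applied to extensions in the category over $\O$ gives the corresponding statement there.

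\textbf{Step 3 (blocks).} By Step 2 no two distinct irreducibles are linked by a nonzero $\Ext^1$, so the equivalence relation generating the blocks is trivial and every block is a singleton $\{\chi\}$.

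The only point needing care is Step 1: deducing finite-dimensionality and then one-dimensionality from local finiteness and absolute irreducibility. This is handled by the fixed-point argument for pro-$p$ groups together with commutativity, and I expect no real obstacle beyond it.
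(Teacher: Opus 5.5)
Your argument is correct, but it is not the route the paper takes. The paper gives no argument of its own and simply cites Pa\v{s}k\={u}nas (\cite[Corollary 3.35, Proposition 5.46]{paskunas13}) for both assertions. You instead prove the lemma from scratch, using only that $T/Z \simeq \BQ_p^\times$ is abelian with open pro-$p$ subgroup $1+p\BZ_p$. Taking fixed points under the pro-$p$ part reduces irreducibles to simple modules over the commutative group algebra $k[\BZ\times\mu_{p-1}]$. Then an element $t$ with $\chi_1(t)\neq\chi_2(t)$ splits any extension via its eigenspace decomposition. Your version makes the lemma self-contained and transparent. The citation buys brevity and keeps the paper tied to the source it relies on throughout for the structure of $\on{Mod}^{\on{lfin}}_{T/Z}(\O)$ (e.g.\ for $\tilde{J}_{1_T}$).

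A few steps should be tightened.

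First, ``of finite length, hence finite-dimensional'' is not a valid inference in general, since irreducible principal series of $G$ are infinite-dimensional. Here it does hold: an irreducible $\pi$ is cyclic over the finitely generated commutative $k$-algebra $A = k[\BZ\times\mu_{p-1}]$. So $\pi\simeq A/\mathfrak{m}$ for a maximal ideal $\mathfrak{m}$, which is finite over $k$ by Zariski's lemma.

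Second, over $\O$ the middle term $V$ of an extension of $\chi_1$ by $\chi_2$ could a priori be $\O/\varpi^2$, where ``eigenlines'' make no sense. In that case $T/Z$ acts through a character valued in $(\O/\varpi^2)^\times$ whose reduction is both $\chi_1$ and $\chi_2$. So $\chi_1\neq\chi_2$ forces $\varpi V=0$, and your argument over $k$ applies.

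Third, Step 3 uses that \emph{every} irreducible object is a character. That is the paper's standing assumption that $L$ is large enough, not something your Step 1 proves, since Step 1 only treats absolutely irreducible objects.
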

\begin{proof}
    See \cite[Corollary 3.35, Proposition 5.46]{paskunas13}.  
\end{proof}
\begin{lemma}\label{lem:exts-to-indJ1}~
    \begin{enumerate}[(a)]
        \item $\Ext^1(\on{Sp} \otimes \omega, \Ind_P^G J_{1_T})$ is $1$-dimensional.
        \item $\Ext^1(\pi, \Ind_P^G J_{1_T}) = 0$ for $\pi \in \{1, \omega, \on{Sp}\}$.
        \item $\Ext^i(\pi, \Ind_P^G J_{1_T}) = 0$ for all $i \geq 2$ and $\pi \in \mathfrak{B}$. 
    \end{enumerate}
\end{lemma}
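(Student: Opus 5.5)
The natural tool is Emerton's functor of ordinary parts together with the adjunction $\Ind_{\conj{P}}^G \dashv \on{Ord}_P$ and its derived version. Since $J_{1_T}$ is injective in $\on{Mod}_{T/Z}^{\on{lfin}}(k)$ and $1_T^s = 1_T$, I would identify $\Ind_P^G J_{1_T} \simeq \Ind_{\conj{P}}^G J_{1_T}$. I would then use the spectral sequence from \cite[\S7.1]{paskunas13} (Emerton's derived ordinary parts, as exploited in \cite[Proposition 7.x]{paskunas13}):
\begin{align*}
    \Ext^i_T(\mathbb{R}^j\on{Ord}_P \pi, J_{1_T}) \Rightarrow \Ext^{i+j}_G(\pi, \Ind_{\conj{P}}^G J_{1_T}).
\end{align*}
Injectivity of $J_{1_T}$ kills all $i > 0$ terms, so the spectral sequence degenerates to
\begin{align*}
    \Ext^n_G(\pi, \Ind_P^G J_{1_T}) \simeq \Hom_T(\mathbb{R}^n\on{Ord}_P \pi, J_{1_T}).
\end{align*}
By the lemma above on blocks of $T$, the right-hand side is $\Hom_T$ into the injective envelope of $1_T$. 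Its dimension therefore equals the multiplicity of $1_T$ in the $1_T$-block part of $\mathbb{R}^n\on{Ord}_P\pi$, and it is nonzero exactly when $1_T$ occurs there.

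The key step is therefore computing $\mathbb{R}^n\on{Ord}_P\pi$ for $\pi \in \{1, \omega, \Sp, \Sp\otimes\omega\}$. Emerton's $\mathbb{R}^n\on{Ord}_P$ vanishes for $n \geq 2$ for $\GL_2(\BQ_p)$, which gives (c). For $n = 0$, $\on{Ord}_P$ of a character $\chi$ is $\chi_T$, while $\on{Ord}_P(\Sp\otimes\chi)$ is $\chi_T$ twisted by $\omega^{-1}$ in the appropriate normalization. Degree $0$ is not part of the statement, so these only serve as sanity checks.

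For $n = 1$, I would use Emerton's formula $\mathbb{R}^1\on{Ord}_P\pi \simeq (\pi^{\conj{N}_0}\text{-type term})$, namely $\mathbb{R}^1\on{Ord}_P \chi \simeq \chi_T\otimes\alpha$ for characters, with $\alpha = \omega\otimes\omega^{-1}$, which is $\omega^{\pm}$ as a character of $T$. Alternatively, I would apply $\on{Ord}_P$ to $0\to\chi\to\Ind_P^G\chi_T\to\Sp\otimes\chi\to0$ and use $\mathbb{R}^1\on{Ord}_P\Ind_{\conj{P}}^G\tau \simeq \tau^s\otimes\alpha$ together with $\on{Ord}_P\Ind_{\conj P}^G\tau\simeq\tau$. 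Since $p = 3$, $\alpha$ restricted to $T/Z$ is the scalar character $\omega$, using $\omega^2 = 1$. This yields $\mathbb{R}^1\on{Ord}_P 1 = \omega_T$, $\mathbb{R}^1\on{Ord}_P\omega = 1_T$, and, from the long exact sequence, $\mathbb{R}^1\on{Ord}_P(\Sp\otimes\chi)=0$.

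This is where I expect the main obstacle: the bookkeeping of normalizations ($\alpha$ versus $\alpha^{-1}$, the twist by $s$, and which of $1$ or $\omega$ contributes). The signs must be fixed so that the result matches the Ext table in Table~\ref{table:ext1}. The target is $\Ext^1(\pi,\Ind J_{1_T})\neq 0$ only for $\pi = \Sp\otimes\omega$, so the bookkeeping must produce exactly that.

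The expected resolution is the following. Since $\Ind_P^G J_{1_T}$ has socle $\Ind_P^G 1_T$, which contains $1$ with quotient $\Sp$, the long exact sequence for $0\to\Ind 1_T\to \Ind J_{1_T}\to \Ind(J_{1_T}/1_T)\to0$ is the cleaner route, and I would use it as a cross-check. In that sequence, $\Ext^1(\Sp\otimes\omega, 1)\neq0$ from the table survives, while $\Ext^1(\omega,\Sp)$ and $\Ext^1(1,\Sp)$ are absorbed by the injective hull. Whichever route is used, the dimension count for (a) reduces to $\dim\Hom_T(1_T,J_{1_T}) = 1$.
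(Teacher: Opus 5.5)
Your proposal has a genuine gap: the spectral sequence it rests on points the wrong way. Emerton's $\on{Ord}_P$ is \emph{right} adjoint to $\Ind_{\conj{P}}^G$, so the spectral sequence it produces is
\[
\Ext^i_T\bigl(U,\mathbb{R}^j\on{Ord}_P V\bigr)\Rightarrow\Ext^{i+j}_G\bigl(\Ind_{\conj{P}}^G U, V\bigr).
\]
The induced representation must be in the \emph{source}, and ordinary parts are taken of the \emph{target}. There is no isomorphism $\Ext^n_G(\pi,\Ind_{\conj{P}}^G J_{1_T})\simeq\Hom_T(\mathbb{R}^n\on{Ord}_P\pi,J_{1_T})$.

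It already fails for $n=0$ and $\pi=1$. Since $1\subset\Ind_P^G 1_T\subset\Ind_P^G J_{1_T}$, we have $\Hom_G(1,\Ind_P^G J_{1_T})\neq 0$. But $\on{Ord}_P(1)=0$: the Hecke operator acts on the invariants of the trivial representation by an index divisible by $p$, hence by $0$, and the paper itself uses $\on{Ord}_P\omega=0$. Your claim that $\on{Ord}_P$ of a character $\chi$ is $\chi_T$ is what hid this.

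In degree $1$ you can see the failure from your own values $\mathbb{R}^1\on{Ord}_P\omega=1_T$ and $\mathbb{R}^1\on{Ord}_P(\Sp\otimes\omega)=0$, which are in fact correct. Your formula would give $\Ext^1(\omega,\Ind_P^G J_{1_T})\simeq k$ and $\Ext^1(\Sp\otimes\omega,\Ind_P^G J_{1_T})=0$, exactly the opposite of (a) and (b). No choice of $\alpha^{\pm1}$ or $s$-twist fixes this, because $\mathbb{R}^1\on{Ord}_P(\Sp\otimes\chi)=0$ for every $\chi$, so your formula can never give (a). Your argument for (c) rests on the same nonexistent spectral sequence. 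The cross-check through the filtration of $\Ind_P^G J_{1_T}$ by copies of $\Ind_P^G 1_T$ is only heuristic: ``absorbed by the injective hull'' is not an argument, and it says nothing about $\Ext^{\geq 2}$.

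The fix, which is the paper's route, is to use the correct adjunction with a principal series as the source. By \cite[Lemma 7.4]{paskunas13}, $\Ext^1(\Ind_P^G 1_T,\Ind_P^G J_{1_T})\simeq\Hom_T(1_T,J_{1_T}\otimes\omega_T)=0$. Likewise $\Ext^1(\Ind_P^G\omega_T,\Ind_P^G J_{1_T})\simeq\Hom_T(\omega_T,J_{1_T}\otimes\omega_T)\simeq k$. All $\Ext^{i}$ out of these principal series into $\Ind_P^G J_{1_T}$ vanish for $i\geq 2$.

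These groups must then be distributed over the two constituents $\chi$ and $\Sp\otimes\chi$ of $\Ind_P^G\chi_T$. The long exact sequence of $0\to\chi\to\Ind_P^G\chi_T\to\Sp\otimes\chi\to 0$ alone does not force each term to vanish. The paper invokes \cite[Proposition 10.6]{paskunas13} to conclude the following, writing $-$ for $\Ind_P^G J_{1_T}$:
\begin{enumerate}[(i)]
\item $\Ext^1(1,-)$ and $\Ext^1(\Sp,-)$ vanish;
\item all $\Ext^{\geq 2}(\pi,-)$ vanish;
\item exactly one of $\Ext^1(\omega,-)$ and $\Ext^1(\Sp\otimes\omega,-)$ is one-dimensional.
\end{enumerate}
Finally, \cite[Lemma 10.50]{paskunas13} kills $\Ext^1(\omega,\Ind_P^G J_{1_T})$; this is where $\on{Ord}_P\omega=0$ legitimately enters. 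So the surviving class belongs to $\Sp\otimes\omega$.
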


\begin{proof}
    \cite[Lemma 7.4]{paskunas13} gives $$\Ext^1(\Ind_P^G 1_T, \Ind_P^G J_{1_T}) = \Hom_T(1_T^s, J_{1_T} \otimes \omega_T) = \Hom_T(1_T, J_{1_T} \otimes \omega_T) = 0,$$ since the only irreducible subrepresentations of $J_{1_T} \otimes \omega_T$ are isomorphic to $\omega_T$. The vanishing of $\Ext^1_{G/Z}(1, \Ind_P^G J_{1_T})$ and $\Ext^1_{G/Z}(\on{Sp}, \Ind_P^G J_{1_T})$ then follows from \cite[Proposition 10.6]{paskunas13}.

    Likewise, we have $$\Ext^1(\Ind_P^G \omega_T, \Ind_P^G J_{1_T}) = \Hom_T(\omega_T, J_{1_T} \otimes \omega_T),$$ which is $1$-dimensional, so \cite[Proposition 10.6]{paskunas13} implies that one of $\Ext^1(\omega, \Ind_P^G J_{1_T})$, $\Ext^1(\on{Sp} \otimes \omega, \Ind_P^G J_{1_T})$ is $1$-dimensional and the other is $0$. But \cite[Lemma 10.50]{paskunas13} gives $\Ext^1(\omega, \Ind_P^G J_{1_T}) = 0$, since $\on{Ord}_P \omega = 0$, so we conclude (a) and (b). 

    Part (c) follows from \cite[Lemma 7.4]{paskunas13}, which shows that $$\Ext^i(\Ind_P^G 1_T, \Ind_P^G J_{1_T}) = \Ext^i(\Ind_P^G \omega_T, \Ind_P^G J_{1_T}) = 0$$ for all $i \geq 2$, and \cite[Proposition 10.6]{paskunas13} again. 
\end{proof}
\begin{remark}
    We note that all of the results cited in the previous proof are indeed valid for $p = 3$, even those coming from \cite[\S 10]{paskunas13}. 
\end{remark}

We write $\tilde{J}_{1_T}$ for the injective envelope of $1_T$ in $\on{Mod}_{T/Z}^{\on{lfin}}(\O)$, which, by \cite[Corollary 5.21]{paskunas13}, satisfies $\tilde{J}_{1_T}[\varpi] = J_{1_T}$. 

\begin{lemma}\label{lem:subquots-J1}
    All irreducible subquotients of $\Ind_P^G \tilde{J}_{1_T}$ are isomorphic to either $1$ or $\Sp$. 
\end{lemma}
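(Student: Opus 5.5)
Every irreducible subquotient of $\tilde{J}_{1_T}$ is $1_T$, since $\tilde{J}_{1_T}$ is the injective envelope of $1_T$ in the block $\{1_T\}$ of $\on{Mod}_{T/Z}^{\on{lfin}}(\O)$ (blocks of $T/Z$ being singletons by the preceding lemma). The plan is to push this through the exact functor $\Ind_P^G$.

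First, $\tilde{J}_{1_T}$ is locally of finite length, hence the union of its finite-length subrepresentations $W$. Since $\Ind_P^G$ is exact and commutes with this filtered union (smooth induction commutes with directed colimits of smooth representations, as vectors in the induction are locally constant functions on the compact $P\backslash G$), $\Ind_P^G \tilde{J}_{1_T}$ is the union of the subrepresentations $\Ind_P^G W$.

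Next, any irreducible subquotient $\sigma$ of $\Ind_P^G \tilde{J}_{1_T}$ is of the form $U_2/U_1$, and we can choose $v \in U_2 \setminus U_1$ lying in some $\Ind_P^G W$. Then $\sigma$ is a subquotient of $\Ind_P^G W$, namely of $(\Ind_P^G W \cap U_2)/(\Ind_P^G W \cap U_1)$, which is nonzero and maps into the irreducible $\sigma$, so it surjects onto $\sigma$.

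Each finite-length $W$ has a composition series whose factors are all $1_T$; the $\varpi$-torsion layers already lie in $\on{Mod}^{\on{lfin}}_{T/Z}(k)$, where irreducibles are characters. By exactness, $\Ind_P^G W$ has a finite filtration with graded pieces $\Ind_P^G 1_T$. From the nonsplit sequence $0 \to 1 \to \Ind_P^G 1_T \to \Sp \to 0$, the Jordan–Hölder factors of $\Ind_P^G W$ are copies of $1$ and $\Sp$. By Jordan–Hölder for finite-length objects, $\sigma$ is isomorphic to $1$ or $\Sp$.

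The only point needing care is the reduction to finite-length pieces, i.e.\ that induction commutes with the union. If that is awkward, one can instead argue that $\Ind_P^G \tilde{J}_{1_T}$ is locally finite and that any $G$-equivariant map into an irreducible restricts nontrivially to some $\Ind_P^G W$, which gives the same conclusion.
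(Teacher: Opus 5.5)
Your proof is correct, but it takes a different route from the paper.

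\textbf{The paper's route.} The paper never looks inside $\tilde{J}_{1_T}$. It proceeds in three steps:
\begin{enumerate}
\item It uses the criterion that an irreducible $\pi$ is a subquotient of $M$ if and only if $\Hom(M, \tilde{J}_\pi) \neq 0$.
\item It applies the adjunction between $\Ind_{\conj{P}}^G$ and $\on{Ord}_P$ to rewrite $\Hom(\Ind_P^G \tilde{J}_{1_T}, \tilde{J}_\pi)$ as $\Hom(\tilde{J}_{1_T}, \on{Ord}_P \tilde{J}_\pi)$.
\item It cites \cite[Lemma 4.4]{paskunas-tung} to identify $\on{Ord}_P \tilde{J}_\pi$, for $\pi \in \{\omega, \Sp \otimes \omega\}$, with an injective envelope of $\omega_T$, which lies in a different block of $T/Z$ than $1_T$.
\end{enumerate}

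\textbf{Your route.} Your d\'evissage uses only three things: exactness of $\Ind_P^G$, its compatibility with directed unions, and the Jordan--H\"older factors of $\Ind_P^G 1_T$. The commutation with unions is sound as you state it: by the Iwasawa decomposition, a vector in the induction is determined by finitely many values on $\GL_2(\BZ_p)$. So your fallback argument is not needed. Beyond that, you rely only on the singleton-block fact for $T/Z$, which both proofs share, and need no input about ordinary parts. Your argument is therefore more elementary.

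It also proves slightly more. It shows directly that $\Ind_P^G \tilde{J}_{1_T}$ is locally of finite length and that no irreducible outside $\mathfrak{B}$ occurs. The paper only excludes $\omega$ and $\Sp \otimes \omega$, tacitly assuming membership in the Steinberg block.

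\textbf{What the paper's method buys.} The $\on{Ord}_P$-adjunction technique is reused later, e.g.\ in Lemma \ref{lem:1-mone}, where it computes Hom spaces themselves rather than just which irreducibles can occur.
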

\begin{proof}
    We first note that if $\pi$ is an irreducible object of $\on{Mod}_{G/Z}^{\on{lfin}}(\O)$, then $\pi$ appears as an irreducible subquotient of another object $M \in \on{Mod}_{G/Z}^{\on{lfin}}(\O)$ if and only if $\Hom(M, \tilde{J}_\pi) \neq 0$. Indeed, if we have a diagram of the form
    \begin{center}
        \begin{tikzcd}
            N \ar[r, hook]\ar[d, two heads] & M \\
            \pi
        \end{tikzcd}
    \end{center}
    expressing $\pi$ as a subquotient of $M$, then by injectivity of $\tilde{J}_{\pi}$ we may extend this diagram to 
    \begin{center}
        \begin{tikzcd}
            N \ar[r, hook]\ar[d, two heads] & M \ar[ldd, dashed] \\
            \pi \ar[d] \\
            \tilde{J}_{\pi}
        \end{tikzcd}
    \end{center}
    with the dashed arrow necessarily nonzero. Conversely, given a nonzero morphism $\varphi: M \to \tilde{J}_{\pi}$, by the definition of an essential monomorphism the image of $M$ under $\varphi$ must contain $\pi$, so that $\pi$ appears as a subrepresentation of $M/\ker(\varphi)$. 
    
    Now fix $\pi = \omega$ or $\on{Sp} \otimes \omega$. Then $\pi$ is not a subquotient of $\Ind_P^G \tilde{J}_{1_T}$ if and only if $\Hom(\Ind_P^G \tilde{J}_{1_T}, \tilde{J}_{\pi}) = 0$. By the adjunction from Emerton's functor of ordinary parts, this is equivalent to vanishing of $\Hom(\tilde{J}_{1_T}^s, \on{Ord}_P(\tilde{J}_\pi)) \simeq \Hom(\tilde{J}_{1_T}, \on{Ord}_P(\tilde{J}_\pi))$, using the properties of injective envelopes to deduce $\tilde{J}_{1_T}^s \simeq \tilde{J}_{1_T}$. Then the desired result follows from \cite[Lemma 4.4]{paskunas-tung}: $\on{Ord}_{P}(\tilde{J}_\pi)$ is an injective envelope of $\omega_T$ (regardless of whether $\pi = \omega$ or $\on{Sp} \otimes \omega$), so it lies in a different block of $\on{Mod}_{T/Z}^{\on{lfin}}(\O)$ than $1_T$. Hence $\Hom(\tilde{J}_{1_T}, \on{Ord}_P(\tilde{J}_\pi)) = 0$.
\end{proof}

\begin{proposition}\label{prop:injective-resolutions}
    We have exact sequences
    \begin{center}
    \begin{tikzcd}
        0 \ar[r] & \Ind_{P}^G J_{1_T} \ar[r] & J_1 \ar[r] & J_{\on{Sp} \otimes \omega} \ar[r] & 0\\
        0 \ar[r] & (\Ind_P^G J_{1_T})/1 \ar[r] & J_{\on{Sp}}
    \end{tikzcd}
    \end{center}
    in the category $\on{Mod}_{G/Z}^{\on{lfin}}(k)$. 
\end{proposition}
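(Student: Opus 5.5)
The plan is to identify socles and use the Ext computations of Lemma \ref{lem:exts-to-indJ1}. Throughout I use that, since $J_{1_T}\subset \tilde{J}_{1_T}$ and $\Ind_P^G$ is exact, $\Ind_P^G J_{1_T}\subset \Ind_P^G\tilde{J}_{1_T}$. Hence by Lemma \ref{lem:subquots-J1} every irreducible subquotient of $\Ind_P^G J_{1_T}$ is $1$ or $\on{Sp}$. In particular $\Ind_P^G J_{1_T}$ and all its quotients lie in $\on{Mod}_{G/Z}^{\on{lfin}}(k)^{\mathfrak{B}}$.

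\emph{Step 1: $\on{soc}(\Ind_P^G J_{1_T})=1$, with multiplicity one.} Write $J_{1_T}$ as the union of a filtration whose graded pieces are all $1_T$; this is possible since $1_T$ is its own block in $\on{Mod}_{T/Z}^{\on{lfin}}(k)$. Exactness of $\Ind_P^G$ then filters $\Ind_P^G J_{1_T}$ by copies of $\Ind_P^G 1_T$. If $\pi\hookrightarrow \Ind_P^G J_{1_T}$ is irreducible, take the first filtration step whose image meets $\pi$ nontrivially. Then $\pi$ embeds into a copy of $\Ind_P^G 1_T$, whose socle is $1$, so $\pi\simeq 1$. For the multiplicity, I would check that $\Hom(1,\Ind_P^G J_{1_T})$ is one-dimensional. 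The route I would take is through $\Hom(1,\Ind_P^G 1_T)=k$ together with the vanishing $\Ext^1(1,\Ind_P^G 1_T)$-contribution coming from $\Ext^1(\Ind_P^G 1_T,\Ind_P^G J_{1_T})=0$ (\cite[Lemma 7.4, Proposition 10.6]{paskunas13}); alternatively one can use a Frobenius-reciprocity computation $\Hom(1,\Ind_P^G J)\simeq \Hom_T(1_T,J)$. Pinning this multiplicity down cleanly is the step I expect to be the main obstacle. Given Step 1, the inclusion $1\hookrightarrow \Ind_P^G J_{1_T}$ is essential, and $\Ind_P^G J_{1_T}$ embeds into $J_1$.

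\emph{Step 2: the cokernel $Q$ of $\Ind_P^G J_{1_T}\hookrightarrow J_1$ is $J_{\on{Sp}\otimes\omega}$.} The module $Q$ lies in the block $\mathfrak{B}$. Apply $\Hom(\pi,-)$ for $\pi\in\mathfrak{B}$ and use that $J_1$ is injective. For $i\ge 1$ the long exact sequence gives $\Ext^i(\pi,Q)\simeq \Ext^{i+1}(\pi,\Ind_P^G J_{1_T})$, which vanishes by Lemma \ref{lem:exts-to-indJ1}(c). Thus $\Ext^1(\pi,Q)=0$ for all irreducibles in the block, so $Q$ is injective in the block category, and hence in $\on{Mod}_{G/Z}^{\on{lfin}}(k)$ by the block decomposition.

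For the socle of $Q$, consider the exact sequence
\[0\to\Hom(\pi,\Ind_P^G J_{1_T})\to\Hom(\pi,J_1)\to\Hom(\pi,Q)\to\Ext^1(\pi,\Ind_P^G J_{1_T})\to 0.\]
For $\pi=1$, the first map is an isomorphism $k\to k$, and $\Ext^1(1,\Ind_P^G J_{1_T})=0$ by Lemma \ref{lem:exts-to-indJ1}(b), so $\Hom(1,Q)=0$. For $\pi\neq 1$ we have $\Hom(\pi,J_1)=0$, so $\Hom(\pi,Q)\simeq \Ext^1(\pi,\Ind_P^G J_{1_T})$. By Lemma \ref{lem:exts-to-indJ1}(a),(b) this is $k$ for $\pi=\on{Sp}\otimes\omega$ and $0$ otherwise. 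So $Q$ is injective with socle $\on{Sp}\otimes\omega$, i.e.\ $Q\simeq J_{\on{Sp}\otimes\omega}$.

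\emph{Step 3: the second sequence.} Apply $\Hom(\pi,-)$ to $0\to 1\to \Ind_P^G J_{1_T}\to (\Ind_P^G J_{1_T})/1\to 0$, which gives
\[\Hom(\pi,1)\to\Hom(\pi,\Ind_P^G J_{1_T})\to \Hom(\pi,(\Ind_P^G J_{1_T})/1)\to\Ext^1(\pi,1)\to \Ext^1(\pi,\Ind_P^G J_{1_T}).\]
For $\pi=1$: the first map is an isomorphism by Step 1 and $\Ext^1(1,1)=0$ by Table \ref{table:ext1}, so the middle term vanishes. For $\pi=\on{Sp}$: $\Hom(\on{Sp},\Ind_P^G J_{1_T})=0$ by Step 1, $\Ext^1(\on{Sp},1)=k$, and $\Ext^1(\on{Sp},\Ind_P^G J_{1_T})=0$, so the middle term is $k$. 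For $\pi=\omega,\on{Sp}\otimes\omega$ the middle term vanishes directly, because these are not subquotients of $\Ind_P^G J_{1_T}$. Hence $(\Ind_P^G J_{1_T})/1$ has socle $\on{Sp}$ with multiplicity one, is an essential extension of $\on{Sp}$, and therefore embeds into $J_{\on{Sp}}$.
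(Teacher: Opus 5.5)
Your proof is correct. For the second sequence it coincides with the paper's argument, which also computes $\Hom(\pi,\kappa)$ from the long exact sequence attached to $0\to 1\to \Ind_P^G J_{1_T}\to\kappa\to 0$.

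For the first sequence you take a different route. The paper never computes the socle of $\Ind_P^G J_{1_T}$. Instead it imports the embedding $\Ind_P^G J_{1_T}\hookrightarrow J_1$ from \cite[Proposition 7.1]{paskunas13} via Emerton's ordinary parts, identifying $\Ind_{\conj{P}}^G\on{Ord}_P J_1$ with $\Ind_P^G J_{1_T}$. It then applies the procedure of \cite[Remark 10.11]{paskunas13}, using the Ext groups of Lemma \ref{lem:exts-to-indJ1}, to read off the cokernel.

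You prove the socle statement directly. You filter $J_{1_T}$ so that the graded pieces are sums of $1_T$, use that $\Ind_P^G$ is exact and commutes with sums and unions, and use $\on{soc}\,\Ind_P^G 1_T=1$. You then carry out the Remark 10.11 procedure by hand. This needs only the standard Baer-type criterion: in a locally finite category, an object $Q$ with $\Ext^1(\pi,Q)=0$ for all irreducible $\pi$ is injective.

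Your version is more self-contained and avoids $\on{Ord}_P$ at this step. Your filtration also recovers, mod $\varpi$, the content of Lemma \ref{lem:subquots-J1}, so you need not pass through $\tilde{J}_{1_T}$ in Step 3. The paper's version is shorter. It also records the identification $\on{Ord}_P J_1\simeq J_{1_T}$, which is used again later in Lemma \ref{lem:1-mone} and Corollary \ref{End-is-R}.

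The multiplicity-one point you flagged as the main obstacle is not really one. Your Frobenius reciprocity route settles it in one line:
\[
\Hom_G(1,\Ind_P^G J_{1_T})\simeq \Hom_P(1,J_{1_T}) = J_{1_T}^{T}=\Hom_T(1_T,J_{1_T})\simeq k,
\]
because $J_{1_T}$ is an essential extension of $1_T$. The other route you sketch, via $\Ext^1(\Ind_P^G 1_T,\Ind_P^G J_{1_T})=0$, is vague as written and does not obviously give the bound. You can drop it.
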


\begin{proof}
    \cite[Proposition 7.1(ii)]{paskunas13} gives an injection $$\Ind_{\conj{P}}^G \on{Ord}_P J_{\Ind_{\conj{P}}^G 1_T} \hookrightarrow J_{\Ind_{\conj{P}}^G 1_T}.$$ Meanwhile \cite[Proposition 7.1(i)]{paskunas13} and the immediately preceding formulas (125), (126) let us identify the source of this injection with $\Ind_P^G J_1$, and $J_{\Ind_{\conj{P}}^G 1_T} \simeq J_{1}$ follows from the fact that an injective envelope is uniquely determined by the socle. 

    We may extend this injection to an injective resolution of $\Ind_{P}^G J_{1_T}$ via the procedure outlined in \cite[Remark 10.11]{paskunas13}; computing an injective resolution amounts to computing $\Ext^i(\pi, \Ind_{P}^G J_{1_T})$ for all $i \geq 1$ and all irreducibles $\pi \in \mathfrak{B}$. These Ext groups are supplied by Lemma \ref{lem:exts-to-indJ1}, and we obtain the injective resolution
    \begin{align*}
        0 \to \Ind_{P}^G J_{1_T} \to J_1 \to J_{\on{Sp} \otimes \omega} \to 0.
    \end{align*}

    Write $\kappa := (\Ind_P^G J_{1_T})/1$. It suffices to show that the socle of $\kappa$ is $\on{Sp}$, or equivalently that $\Hom(\pi, \kappa) = 0$ for $\pi \neq \on{Sp}$ and $\Hom(\Sp, \kappa) = k$. By Lemma \ref{lem:subquots-J1}, we need only check for the irreducibles $1$ and $\on{Sp}$. 
    
    By applying $\Hom(1, -)$ to the short exact sequence defining $\kappa$, we get exact sequence
    \begin{align*} \Hom(1, 1) \to \Hom(1, \Ind_P^G J_{1_T})\to \Hom(1, \kappa) \to \Ext^1(1, 1) 
\end{align*}
    From Table \ref{table:ext1}, $\Ext^1(1, 1) = 0$, and $\Hom(1, \Ind_P^G J_{1_T}) \simeq k$ by the first exact sequence in the proposition statement, so the arrow $\Hom(1, 1)  \to  \Hom(1, \Ind_P^G J_{1_T})$ is an isomorphism and we deduce that $\Hom(1, \kappa) = 0$. 

    The long exact sequence for $\Hom(\on{Sp}, -)$ gives an exact sequence
    \begin{align*}
         \Hom(\on{Sp}, \Ind_P^G J_{1_T}) = 0 \to  \Hom(\on{Sp}, \kappa) \to
         \Ext^1(\on{Sp}, 1) \to \Ext^1(\on{Sp}, \Ind_P^G J_{1_T})  
        \end{align*}
    From Table \ref{table:ext1} we have $e^1(\on{Sp}, 1) = 1$, and Lemma \ref{lem:exts-to-indJ1} shows that $\Ext^1(\on{Sp}, \Ind_P^G J_{1_T}) = 0$, so we deduce $\Hom(\on{Sp}, \kappa) = k$. 
\end{proof}

We now pass to the dual category $\mathfrak{C}(\O) = \on{Mod}_{G/Z}^{\on{lfin}}(\O)^{\on{op}}$. Write $\tilde{M}_{1^\vee} := (\Ind_{P}^G \tilde{J}_{1_T})^\vee$. We let $\Monenot$ denote the kernel of the natural surjection $\Mone \to \O$ (lifting the dual of the natural inclusion $1 \hookrightarrow \Ind_{P}^G J_{1_T}$). See also the discussion at the beginning of \cite[\S10.5]{paskunas13}.

We obtain a commutative diagram
\begin{equation}
        \label{seq:main} \begin{tikzcd}
        & & & \O &\\
        0 \ar[r] &  \PSpw\ar[r, "\alpha"] & \Ptriv \ar[r] &  \Mone \ar[r] \ar[u, two heads]& 0\\
&& \PSp \ar[u, "\beta", dashed]\ar[r]  & \Monenot \ar[u, hook] \ar[r] & 0
\end{tikzcd}
\end{equation}
with exact rows. This diagram follows from the mod $p$ injective resolutions in Proposition \ref{prop:injective-resolutions}, dualizing, and then using \cite[Proposition 5.21]{paskunas13} to lift the resolutions to characteristic $0$. The arrow $\beta$ is induced by inclusion $\Monenot \hookrightarrow \Mone$ and the standard fact that morphisms between objects can be extended to a morphism between their projective resolutions. 

\begin{remark}
    These resolutions are not canonical, so we fix a particular choice of the Diagram (\ref{seq:main}) from now on.
\end{remark}

\definition{We let $\alpha: \PSpw \hookrightarrow \Ptriv$ and $\beta: \PSp \to \Ptriv$ be the labeled morphisms in Diagram (\ref{seq:main}).}

\begin{proposition}\label{prop:hom-identifications}
    The morphism $\alpha$ induces by postcomposition:
    \begin{enumerate}[(a)]
        \item An isomorphism $\End(\PSpw) \simeq \Hom(\PSpw, \Ptriv)$;
        \item An isomorphism $\Hom(\Pw, \PSpw) \simeq \Hom(\Pw, \Ptriv)$; and
        \item An injection $\Hom(\Ptriv, \PSpw) \hookrightarrow \End(\Ptriv)$. 
    \end{enumerate}
\end{proposition}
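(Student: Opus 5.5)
Apply the functors $\Hom(Q,-)$ for $Q \in \{\PSpw, \Pw, \Ptriv\}$ to the short exact sequence
\begin{align*}
0 \to \PSpw \xrightarrow{\alpha} \Ptriv \to \Mone \to 0
\end{align*}
from Diagram (\ref{seq:main}). Since each $Q$ is projective in $\mathfrak{C}(\O)$, we get short exact sequences
\begin{align*}
0 \to \Hom(Q, \PSpw) \xrightarrow{\alpha\circ -} \Hom(Q, \Ptriv) \to \Hom(Q, \Mone) \to 0 .
\end{align*}
Part (c) is then immediate from the left exactness of $\Hom(\Ptriv, -)$. For (a) and (b) it remains to show that $\Hom(\PSpw, \Mone) = 0$ and $\Hom(\Pw, \Mone) = 0$.

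To prove these vanishings we use the standard property of projective envelopes in $\mathfrak{C}(\O)$. For an irreducible $\pi$ and $M \in \mathfrak{C}(\O)$, we have $\Hom(\Ppi, M) \neq 0$ if and only if $\pi^\vee$ occurs as a subquotient of $M$. Dually, $\Hom(\Ppi, M)=\Hom(M^\vee, \tilde{J}_\pi)$ under Pontryagin duality, and the argument in the proof of Lemma \ref{lem:subquots-J1} shows that this is nonzero exactly when $\pi$ is a subquotient of $M^\vee$. Here $M = \Mone = (\Ind_P^G \tilde{J}_{1_T})^\vee$, so $M^\vee = \Ind_P^G \tilde{J}_{1_T}$. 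Lemma \ref{lem:subquots-J1} says that every irreducible subquotient of this is $1$ or $\Sp$. Hence neither $\omega$ nor $\Sp\otimes\omega$ occurs, and therefore $\Hom(\Pw, \Mone) = \Hom(\PSpw, \Mone) = 0$.

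In fact Lemma \ref{lem:subquots-J1} already computes $\Hom(\Ind_P^G\tilde J_{1_T}, \tilde J_\pi) = 0$ for $\pi \in \{\omega, \Sp\otimes\omega\}$ through the ordinary parts adjunction, so we can quote that computation directly.

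The only point needing care is the projectivity and exactness bookkeeping. The sequence in Diagram (\ref{seq:main}) is genuinely exact in $\mathfrak{C}(\O)$, being the lift to $\O$ of the dual of the resolution in Proposition \ref{prop:injective-resolutions}. Also, $\Hom$ out of a projective in this abelian category is exact, and the maps in the short exact sequences above are precisely postcomposition with $\alpha$. With this checked, the exact sequences give isomorphisms for $Q = \PSpw$ and $Q = \Pw$, which are (a) and (b), and an injection for $Q = \Ptriv$, which is (c).
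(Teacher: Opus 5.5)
Your proposal is correct and follows the paper's proof: apply $\Hom(Q,-)$ for $Q\in\{\PSpw,\Pw,\Ptriv\}$ to the top row of Diagram (\ref{seq:main}). Then use Lemma \ref{lem:subquots-J1}, via $\Hom(\Ppi,\Mone)\simeq\Hom(\Ind_P^G\tilde{J}_{1_T},\tilde{J}_\pi)$, to get $\Hom(\PSpw,\Mone)=\Hom(\Pw,\Mone)=0$. Only left exactness of $\Hom(Q,-)$ is needed here, so the projectivity of $Q$ plays no essential role.
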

\begin{proof}
    Part (c) follows immediately from applying $\Hom(\Ptriv, -)$ to the first exact sequence in Diagram \ref{seq:main}. Parts (a) and (b) follow by similarly applying $\Hom(\PSpw, -)$ and $\Hom(\Pw, -)$ and using Lemma \ref{lem:subquots-J1} to deduce that $\Hom(\PSpw, \Mone) = \Hom(\Pw, \Mone) = 0$. 
\end{proof}

\begin{proposition}\label{prop:alpha-beta-image}
    The image of $\alpha \oplus \beta: \PSpw \oplus \PSp \to \Ptriv$ is equal to the kernel of the composition $\Ptriv \to \Mone \to \O$ in Diagram (\ref{seq:main}). 
\end{proposition}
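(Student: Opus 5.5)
We first identify the kernel of the composite $\Ptriv \to \Mone \to \O$. Because $\Ptriv \to \Mone$ is surjective with kernel $\on{im}(\alpha)$, the preimage of $\Monenot = \ker(\Mone \to \O)$ under $\Ptriv \to \Mone$ is exactly the kernel of the composite. It therefore contains $\on{im}(\alpha)$, and its image in $\Mone$ is $\Monenot$. The proposition then reduces to two containments:
\begin{enumerate}[(i)]
\item $\on{im}(\alpha \oplus \beta)$ lies inside this preimage;
\item $\on{im}(\alpha\oplus\beta)$ surjects onto $\Monenot$ after composing with $\Ptriv \to \Mone$.
\end{enumerate}
Given (i) and (ii), a standard argument finishes: any $x$ in the kernel of the composite maps to some element of $\Monenot$, which by (ii) is hit by some $y \in \on{im}(\alpha\oplus\beta)$. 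Then $x - y$ maps to $0$ in $\Mone$, so $x - y \in \on{im}(\alpha) \subseteq \on{im}(\alpha \oplus \beta)$.

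For (i): $\alpha$ composed with $\Ptriv \to \Mone$ is zero by exactness of the top row of Diagram (\ref{seq:main}). For $\beta$, commutativity of the square in Diagram (\ref{seq:main}) says the composite $\PSp \xrightarrow{\beta} \Ptriv \to \Mone$ equals $\PSp \to \Monenot \hookrightarrow \Mone$. This lands in $\Monenot$ and hence dies in $\O$.

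For (ii): again by commutativity, the composite of $\beta$ with $\Ptriv \to \Mone$ factors as the surjection $\PSp \twoheadrightarrow \Monenot$ (bottom row exactness) followed by the inclusion. So the image of $\beta$ alone already surjects onto $\Monenot$.

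The only point needing care is that ``image'' is taken in $\mathfrak{C}(\O)$, where images of morphisms between compact modules are closed. This causes no trouble here because all maps are continuous maps of compact modules, so images are the set-theoretic images and the element-chasing above is legitimate. Alternatively, one can run the same argument purely diagrammatically in the abelian category $\mathfrak{C}(\O)$. I expect no substantial obstacle: the content lies entirely in the construction of Diagram (\ref{seq:main}), in particular the commutativity of the square defining $\beta$ and the surjectivity $\PSp \twoheadrightarrow \Monenot$ coming from dualizing Proposition \ref{prop:injective-resolutions}. Both are taken as given.
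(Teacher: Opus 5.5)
Your argument is correct and is exactly the ``straightforward diagram chase'' the paper invokes (modeled on Pa\v{s}k\={u}nas's Lemma 10.75). Exactness of the top row gives $\ker(\Ptriv \to \Mone) = \on{im}(\alpha)$, and the commuting square defining $\beta$ together with the surjection $\PSp \twoheadrightarrow \Monenot$ gives both containments; your remark that images of continuous maps of compact modules are closed, so the element chase is legitimate in $\mathfrak{C}(\O)$, is a harmless extra check.
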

\begin{proof}
    This is a straightforward diagram chase. See also the analogous result \cite[Lemma 10.75]{paskunas13}.
\end{proof}

\subsection{The center} \label{sec:paskunas-tung} Fix $p \geq 2$, and let $\mathfrak{B}$ be an arbitrary block. Let $$\tilde{P}' := \bigoplus_{\substack{\pi \in  \mathfrak{B}\\ \dim \pi > 1}} \Ppi$$ be the direct sum of the projective envelopes of irreducible representations in $\mathfrak{B}$ that are not characters, and let $\tilde{E}'_{\mathfrak{B}} := \End(\tilde{P}')$. Let $R$ be the universal pseudodeformation ring of fixed determinant with residual pseudorepresentation corresponding to $\mathfrak{B}$, and let $E$ be the universal Cayley-Hamilton algebra. The foundation for our work is the following result:
\begin{theorem}[\cite{paskunas-tung}, Theorem 1.4] \label{thm:paskunas-tung} For $p \geq 3$, Colmez's Montreal functor induces isomorphisms \begin{align*} E_{\on{tf}}^{\on{op}} &\simeq \tilde{E}'_{\mathfrak{B}} \\
R_{\on{tf}} &\simeq Z(\mathfrak{C}(\O)^{\mathfrak{B}}),
\end{align*}
where the subscript $\on{tf}$ denotes the maximal $\O$-torsion-free quotient and $Z(\mathfrak{C}(\O)^{\mathfrak{B}})$ is the Bernstein center of the category $\mathfrak{C}(\O)^{\mathfrak{B}}$. 
\end{theorem}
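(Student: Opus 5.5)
The plan is not to reprove this from scratch. I will reduce it to the results of \cite{paskunas-tung} and check that their hypotheses cover every block we need, especially the $p = 3$ Steinberg block $\mathfrak{B} = \{1, \Sp, \omega, \Sp \otimes \omega\}$. The main steps of their argument, in the form we will use it, are as follows.

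First, I would recall the normalized Montreal functor $\check{\mathbf{V}}$ from $\mathfrak{C}(\O)^{\mathfrak{B}}$ to linearly compact $\O\lb\Gamma\rb$-modules. It is exact and covariant on the dual category, and it kills exactly the objects all of whose irreducible subquotients are characters. On the summand $\tilde{P}'$ it is fully faithful up to $\O$-torsion, meaning $\End(\tilde{P}')$ maps isomorphically onto $\End_{\O\lb\Gamma\rb}(\check{\mathbf{V}}(\tilde{P}'))$ modulo torsion. The key input is that $\Hom(\tilde{P}', -)$ does not see the character constituents. Since no irreducible in $\tilde{P}'$'s cosocle is a character, this kernel causes no loss.

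Second, I would identify $\check{\mathbf{V}}(\tilde{P}')$ with the universal Cayley--Hamilton module. The Galois action on $\check{\mathbf{V}}(\tilde{P}')$ satisfies the Cayley--Hamilton identity for a $2$-dimensional pseudocharacter with determinant $\zeta\varepsilon$; Paškūnas--Tung obtain this from the description of the Galois side of the image. By universality this gives maps $R \to Z(\mathfrak{C}(\O)^{\mathfrak{B}})$ and $E \to \End(\tilde{P}')^{\on{op}}$. Surjectivity comes from comparing generators modulo the maximal ideal, using the $\Ext^1$ computations (Table \ref{table:ext1} in the $p=3$ Steinberg case) against Galois $H^1$. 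Injectivity on torsion-free quotients comes from density of crystabelline or otherwise nice points in $\Spec R[1/p]$, where both sides act faithfully by the classical $p$-adic correspondence.

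Third, for the center, I would show that an element of $Z(\mathfrak{C}(\O)^{\mathfrak{B}})$ is determined by its action on $\tilde{P}'$. For this I would use resolutions like Diagram (\ref{seq:main}): $\Ptriv$ sits in an extension with $\PSpw$ and $\Mone$, and $\Mone$ itself is an $\O$-torsion-free object controlled by $\Ind_P^G$. This lets the action of $R$ on $\tilde{P}'$ be extended uniquely to the character projectives.

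The main obstacle is the $p = 3$ Steinberg case. There the density and generator-count arguments interact with the non-trivial $H^2$ and the singular deformation rings of \S\ref{sec:explicit-S}, which is why only torsion-free quotients appear. For that case I would rely on \cite[Proposition A.12]{paskunas-tung}, which treats exactly this residual pseudocharacter via \cite{bockle-3adic}, together with the domain property of $R$ in Proposition \ref{prop:alg-properties}.
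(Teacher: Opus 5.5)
Your proposal matches the paper. The paper gives no argument of its own for this statement and imports it directly as \cite[Theorem 1.4]{paskunas-tung}, which already covers every block for $p \geq 3$, including the $3$-adic Steinberg block; the $\O$-torsion-freeness and domain property from Proposition~\ref{prop:alg-properties} are used only afterwards, in Corollary~\ref{cor:E-vs-E}, to drop the subscript $\on{tf}$. Your sketch of the internal argument of \cite{paskunas-tung} is not needed and is only heuristic: in this block Table~\ref{table:ext1} gives $e^1(\Sp, \Sp \otimes \omega) = e^1(\Sp \otimes \omega, \Sp) = 0$, so the off-diagonal generators of $\tilde{E}'_{\mathfrak{B}}$ arise only through paths via the characters, and a direct comparison of these $\Ext^1$ groups with Galois $H^1$ would not give surjectivity.
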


Hence by Proposition \ref{prop:alg-properties} (for $p = 3$, with torsion-freeness already known for $p \geq 5$ from \cite{paskunas13}):
\begin{corollary}\label{cor:E-vs-E}
    In fact, the maps in Theorem \ref{prop:alg-properties} are already isomorphisms $E^{\on{op}} \simeq \tilde{E}_\mathfrak{B}'$ and $R \simeq Z(\mathfrak{C}(\O))$. 
\end{corollary}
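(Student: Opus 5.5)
The plan is to show that the torsion-free quotients appearing in Theorem \ref{thm:paskunas-tung} are already the full objects, i.e.\ $R_{\on{tf}} = R$ and $E_{\on{tf}} = E$. Then the isomorphisms of loc.\ cit.\ become $E^{\on{op}} \simeq \tilde{E}'_{\mathfrak{B}}$ and $R \simeq Z(\mathfrak{C}(\O)^{\mathfrak{B}})$. For $p \geq 5$ the required torsion-freeness is in \cite{paskunas13}, so I concentrate on the $p = 3$ Steinberg block, where $R$, $S$, $E$ are the rings of \S\ref{sec:explicit-S}. For the other $p=3$ blocks I would cite the corresponding known flatness results, noting that Theorem \ref{thm:paskunas-tung} applies uniformly.

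\textbf{Step 1 ($R$).} By Proposition \ref{prop:alg-properties}(a), $R$ is $\O$-torsion-free, so $R \to R_{\on{tf}}$ is the identity. This gives $R \simeq Z(\mathfrak{C}(\O)^{\mathfrak{B}})$ at once.

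\textbf{Step 2 ($E$).} I would use the GMA presentation recalled after \cite[Proposition 1.3.13]{BC09}:
\[
E \simeq \begin{bmatrix} R & S_2 \\ S_{-2} & R \end{bmatrix}
\]
as $R$-modules. Hence $E$ is $\O$-torsion-free if and only if $R$, $S_2$ and $S_{-2}$ are. The $R$ entries are handled by Step 1. Each $S_{\pm 2}$ is an $R$-submodule of $S$, and $S$ is an $\O$-flat domain by Proposition \ref{prop:alg-properties}(b). So $\varpi$ acts injectively on $S$, and hence on each graded piece. (Alternatively, Corollary \ref{cor:si-depth} shows directly that $\varpi$ is $S_i$-regular, since $\varpi$ begins the regular sequence used there.) Therefore $E = E_{\on{tf}}$. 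This also gives $E^{\on{op}} = E^{\on{op}}_{\on{tf}}$, because passing to the opposite algebra does not change the underlying $\O$-module.

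\textbf{Main obstacle.} The only delicate point is that the identification of $E$ with the GMA built from $S$ must be an isomorphism of $R$-modules, so that torsion-freeness of $E$ follows from that of $S$. This is exactly the content of the cited \cite[Proposition 1.3.13]{BC09} together with the isomorphism $\varphi : S \simeq S'$ established in \S\ref{sec:explicit-S}. Given these, the corollary follows formally: the Montreal functor maps of Theorem \ref{thm:paskunas-tung} factor through the identity quotient maps, so they are isomorphisms already before taking torsion-free quotients.
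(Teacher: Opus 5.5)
Your proposal is correct and follows essentially the same route as the paper. The paper's justification is the single line preceding the corollary, which deduces it from Theorem~\ref{thm:paskunas-tung} using the $\O$-torsion-freeness in Proposition~\ref{prop:alg-properties} (and \cite{paskunas13} for $p \geq 5$): $R$ is $\O$-torsion-free and $S$ is $\O$-flat, hence so is $E$, whose GMA entries are $R$, $S_2$, $S_{-2}$, so $R_{\on{tf}} = R$ and $E_{\on{tf}} = E$. Your Step 2 simply spells out the GMA argument that the paper leaves implicit.
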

This means that $R$ acts canonically on every object in $\mathfrak{C}(\O)$, inducing an $R$-module structure on every Hom group $\Hom(X, Y)$, where the $R$-module structures coming from acting on the source and the target agree. We will use this canonical $R$-module structure frequently without further comment. 

We may identify certain Hom modules using this result:
\begin{corollary}\label{cor:spw-sp-homs}
    In the generic or Steinberg cases, the isomorphism $E^{\on{op}} \to \tilde{E}_{\mathfrak{B}}'$ can be upgraded into an isomorphism of GMAs. In particular, if $p=3$ and $\mathfrak{B}$ is the Steinberg block, We obtain isomorphisms \begin{align*}
        \End(\PSpw) \simeq \End(\PSp) &\simeq R \\
        \Hom(\PSpw, \PSp) &\simeq S_2 \\
        \Hom(\PSp, \PSpw) &\simeq S_{-2}
    \end{align*}
    coming from an isomorphism of $R$-algebras
    \begin{align*}
    \tilde{E}_{\mathfrak{B}}' \simeq \begin{bmatrix}
        \End(\PSpw) & \Hom(\PSpw, \PSp) \\
        \Hom(\PSp, \PSpw) & \End(\PSp)
    \end{bmatrix} \simeq \begin{bmatrix}
        R & S_2 \\ S_{-2} & R
    \end{bmatrix} \simeq E
    \end{align*}
    preserving the standard idempotents on each side. 
\end{corollary}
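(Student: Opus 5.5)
The plan is to pass the GMA structure on $E$ through the anti-isomorphism $E^{\on{op}} \simeq \tilde{E}'_{\mathfrak{B}}$ of Corollary \ref{cor:E-vs-E}. Then we match the idempotents and read off the corners using the presentation $E \simeq \begin{bmatrix} R & S_2 \\ S_{-2} & R\end{bmatrix}$ from \S\ref{sec:moduli}.

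First, on the representation side, $\tilde{P}' = \PSpw \oplus \PSp$, since $\on{Sp}$ and $\on{Sp}\otimes\omega$ are the non-characters in $\mathfrak{B}$. The two projections give orthogonal idempotents $e_{\on{Sp}}, e_{\on{Sp}\otimes\omega}$ in $\tilde{E}'_{\mathfrak{B}}$ summing to $1$. This decomposes $\tilde{E}'_{\mathfrak{B}}$ as the displayed $2\times 2$ matrix of Hom groups. Since each $\Ppi$ is a projective envelope of an absolutely irreducible object, $\End(\Ppi)$ is local with residue field $k$. Reducing modulo the radical therefore gives $\tilde{E}'_{\mathfrak{B}} \to k \times k$, so these idempotents are lifts of the standard idempotents of the semisimplification.

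Next, I would transport $e_{\on{Sp}}, e_{\on{Sp}\otimes\omega}$ along $\tilde{E}'_{\mathfrak{B}} \simeq E^{\on{op}}$. This gives orthogonal idempotents in $E$ lifting the two idempotents of $E \to E/\mathfrak{m}_R E \twoheadrightarrow M_{2}(k)$, or rather of its semisimplification $k\times k$ corresponding to $\omega \oplus 1$. Orthogonality and completeness are preserved under the anti-isomorphism. The point to check is that these are \emph{lifts of the standard idempotents}, i.e. that they map to the idempotents attached to $\omega$ and $1$. This needs compatibility of the Montreal functor with residual data. Under Colmez's functor, $\on{Sp}\otimes\omega$ and $\on{Sp}$ go to $\omega$ and $1$ in some order, up to the normalization twist. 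Hence the idempotent $e_\pi$ maps to the idempotent cutting out the corresponding character. I expect this identification of which idempotent goes to which character to be the main obstacle. I would handle it by applying the functor to the irreducible $\pi^\vee$ and using that $e_\pi$ acts as identity on $\Hom(\Ppi, \pi^\vee)\neq 0$ and as zero on the other.

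By \cite[Theorem 2.22]{chenevier-pseudoreps}, any choice of lifts gives a GMA structure, and different choices are conjugate by a unit. So we may take the GMA structure on $E$ to be the one induced by these idempotents. This is compatible with the earlier convention, where the GMA structure was fixed by a choice of lifts; replacing that choice by conjugation changes $S$ only by an isomorphism. Now an isomorphism of algebras carrying idempotents to idempotents respects corners. The anti-isomorphism sends $e_i E e_j$ to $e_j \tilde{E}' e_i$, and the order-reversal of $\Hom$ composition accounts for the transpose. The diagonal corners give $\End(\PSpw)\simeq \End(\PSp) \simeq R$, using $E_{i,i} \simeq S_0 = R$. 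The off-diagonal corners give $\Hom(\PSpw,\PSp) \simeq E_{1,2} \simeq S_2$ and $\Hom(\PSp,\PSpw) \simeq E_{2,1}\simeq S_{-2}$, by \cite[Proposition 1.3.13]{BC09}. Here the order of the indices is fixed by which character corresponds to index $1$, and I would match it by choosing the ordering of the idempotents accordingly. The same argument applies verbatim in the generic case with two distinct principal series.
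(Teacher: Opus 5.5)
Your proposal is correct and follows essentially the same route as the paper. The paper also uses that the isomorphism factors through $\End(\check{\mathbf{V}}(\tilde{P}'))$ with $\check{\mathbf{V}}$ sending $(\Sp\otimes\chi)^\vee$ to $\chi$, so that $\on{id}_{\PSpw}$ and $\on{id}_{\PSp}$ lift the residual idempotents for $\omega$ and $1$ (with $\omega$ ordered first). It then concludes, as you do, from the uniqueness of the GMA structure up to conjugation by a unit, which identifies the corners with $R$, $S_2$ and $S_{-2}$.
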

\begin{proof}
    We only prove the first part for the $p = 3$ Steinberg case; the other cases (for generic blocks or $p \geq 5$ Steinberg) are similar. We already know $\tilde{E}'_{\mathfrak{B}} \simeq E$ as continuous $R$-algebras. From the proof \cite[Theorem 1.4]{paskunas-tung}, this isomorphism passes through an isomprphism $E \simeq \End(\check{\mathbf{V}}(\tilde{P}'))$ in the Galois category, where $\check{\mathbf{V}}$ is the contravariant version of the Montreal functor used in loc.\ cit. Under the normalizations used in loc.\ cit, $\check{\mathbf{V}}$ sends $(\Sp \otimes \chi)^\vee$ to the character $\chi$ for $\chi \in \{1, \omega\}$, so the standard idempotents on $\End(\check{\mathbf{V}}(\tilde{P}')) \simeq \tilde{E}_{\mathfrak{B}}^{', \on{op}}$ lift the standard idempotents on the residual pseudorepresentation $\omega \oplus 1$. 
    
    The claim then follows since 
    the GMA structure on $E$ is unique up to conjugacy (assuming a fixed order for the residual idempotents), as any pair of idempotent lifts differ by conjugation by an element of $1 + \mideal_R E$, where $\mideal_R$ is the maximal ideal of $R$. We note that we have fixed residual idempotents so that the idempotent associated to the character $\omega$ is first. 
    \begin{comment}Letting $K := \on{Frac}(R)$, we note that $E_K := E \otimes_R K$, hence also $\tilde{E}'_{\mathfrak{B}, K} := \tilde{E}'_{\mathfrak{B}} \otimes_R K$ is isomorphic to $M_2(K)$, so by change of basis over $K$ we may adjust any isomorphism $\tilde{E}'_{\mathfrak{B}, K} \to E_K$ to send the idempotents $\on{id}_{\PSpw}, \on{id}_{\PSp}$ to the standard idempotents $e_{11}, e_{22} \in E_K$ coming from the matrix coordinates we have fixed. Since these idempotents already lie in $\tilde{E}'_{\mathfrak{B}}$ and $E$, respectively, these isomorphisms descend to a coordinate-preserving isomorphism as desired. 
    \end{comment}
\end{proof}

\begin{corollary}\label{End-is-R}
    For $p = 3$ Steinberg block, the maps $R \simeq Z(\mathfrak{C}(\O)) \to \End(\Ppi)$ are isomorphisms for all $\pi \in \mathfrak{B}$.
\end{corollary}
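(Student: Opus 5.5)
For $\pi \in \{\on{Sp}, \on{Sp}\otimes\omega\}$ the claim is already in hand. The isomorphism $E^{\on{op}} \simeq \tilde{E}'_{\mathfrak{B}}$ of Corollary \ref{cor:spw-sp-homs} is an isomorphism of $R$-algebras preserving idempotents, so it identifies $\End(\PSp)$ and $\End(\PSpw)$ with the corner $R$ of $E$. The structure map $R \to \End(\tilde{\pi}^\vee)$ coming from the center is compatible with this identification, since the $R$-action on $\tilde{E}'_{\mathfrak{B}}$ is the central one by Theorem \ref{thm:paskunas-tung} and Corollary \ref{cor:E-vs-E}. It therefore remains to treat the characters $\pi = 1$ and $\pi = \omega$. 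Twisting by $\omega$ is an autoequivalence of the block that swaps $1 \leftrightarrow \omega$ and $\on{Sp} \leftrightarrow \on{Sp}\otimes\omega$, and it is compatible with the center up to the automorphism of $R$ induced by twisting by $\omega$. So it suffices to treat $\pi = 1$, i.e.\ to show that $R \to \End(\Ptriv)$ is an isomorphism.

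For $\Ptriv$ the plan is to use the top row of Diagram (\ref{seq:main}),
\[
0 \to \PSpw \xrightarrow{\alpha} \Ptriv \to \Mone \to 0.
\]
Applying $\Hom(\Ptriv, -)$ gives the exact sequence
\[
0 \to \Hom(\Ptriv, \PSpw) \to \End(\Ptriv) \to \Hom(\Ptriv, \Mone) \to 0,
\]
which is right exact because $\Ptriv$ is projective. The third term is $\Hom(\Ptriv,\Mone) = \Mone$ evaluated at $1^\vee$, i.e.\ the dual of $\Hom(1, \Ind_P^G \tilde{J}_{1_T})$, and by the ordinary parts adjunction and \cite[Lemma 4.4]{paskunas-tung} / \cite[\S10.5]{paskunas13} this should be $\O$. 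In other words, $1$ occurs in $\Ind_P^G \tilde{J}_{1_T}$ only through the socle inclusion, and the composite $\End(\Ptriv) \to \O$ is the augmentation $R \to \O$ (reduction of the center acting on $1^\vee$). For the first term, $\Hom(\Ptriv, \PSpw)$ records the multiplicity of $1^\vee$ in $\PSpw$. I would compute it via Proposition \ref{prop:hom-identifications}(c) and the $\on{Sp}\otimes\omega$-analogue of Proposition \ref{prop:alpha-beta-image}, which describes $\PSpw$ in terms of $\PSp$ and $\Pw$. The expected outcome is that $\Hom(\Ptriv, \PSpw)$ is identified with the maximal ideal of $R$, or with the kernel of $R \to \O$ acting through $\alpha$.

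A cleaner way to organize this is by $\End(\Ptriv) \to \End(\Mone)$ and dévissage. By Theorem \ref{thm:paskunas-tung} the center $R$ acts faithfully on $\Ptriv$, since the center of the block embeds into $\prod_\pi \End(\Ppi)$ and is detected on $\tilde{P}$. I would first show that $\End(\Ptriv)$ is commutative and that $R \to \End(\Ptriv)$ is injective. Surjectivity then reduces modulo $\mideal_R$ to a statement about $\Hom(P_{1^\vee}, P_{1^\vee}/\mideal_R)$, namely that $1^\vee$ appears exactly once in $\Ptriv \otimes_R R/\mideal_R$. I expect that multiplicity computation to be the main obstacle. My approach to it is to tensor the resolution $0\to\PSpw\to\Ptriv\to\Mone\to0$ with $k$ and combine the mod $p$ injective resolutions of Proposition \ref{prop:injective-resolutions} with Lemma \ref{lem:exts-to-indJ1}. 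This should give $\dim_k \Hom(P_{1^\vee}, \Ptriv\otimes_R k) = 1$, and Nakayama's lemma then yields surjectivity of $R \to \End(\Ptriv)$.
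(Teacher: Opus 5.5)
You handle $\pi=\Sp$ and $\pi=\Sp\otimes\omega$ as the paper does, and your twisting reduction from $\omega$ to $1$ also matches. The case $\pi=1$, however, carries all the content and has a genuine gap.

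Your first sketch rests on false identifications. By Lemma \ref{lem:1-mone}(b), which comes from the ordinary parts adjunction, $\Hom(\Ptriv,\Mone)\simeq\End(\Mone)\simeq\O\lb x,y\rb$, not $\O$. So $1$ does not occur in $\Ind_P^G\tilde J_{1_T}$ only through its socle. Correspondingly, $\Hom(\Ptriv,\PSpw)$ is the reducibility ideal $I$ with $R/I\simeq\O\lb a_0,a_1\rb$ (Lemma \ref{lem:hom-ptriv-pspw-reducibility}), not the kernel of an augmentation $R\to\O$. Moreover, regarding it as an ideal of $R$ at all already presupposes $\End(\Ptriv)\simeq R$. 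Separately, faithfulness of the center on $\tilde P$ does not imply faithfulness on the summand $\Ptriv$. Injectivity of $R\to\End(\Ptriv)$ should instead come from the $R$-linear injection $\alpha$ together with $R\simeq\End(\PSpw)$.

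Your Nakayama reduction to $\dim_k \End(\Ptriv)\otimes_R k=1$ is correct, but it only restates the claim, and the computation you propose does not reach it. Proposition \ref{prop:injective-resolutions} and Lemma \ref{lem:exts-to-indJ1} describe $P_{1^\vee}=\Ptriv\otimes_{\O}k$, not $\Ptriv\otimes_R k$. Tensoring $0\to\Hom(\Ptriv,\PSpw)\to\End(\Ptriv)\to\Hom(\Ptriv,\Mone)\to 0$ with $R/\mideal_R$, you would need two things:
\begin{enumerate}[(i)]
\item that $R$ surjects onto $\End(\Mone)$, which the paper itself only obtains via $\End(\Ptriv)\simeq R$;
\item that every endomorphism $\alpha\circ\psi$ with $\psi\in\Hom(\Ptriv,\PSpw)$ lies in $\mideal_R\End(\Ptriv)$.
\end{enumerate}
For (ii), write $\psi\circ\alpha=r\in\mideal_R\subset\End(\PSpw)$; it is a non-unit since $\alpha$ is not split. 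Then $\alpha\psi-r$ kills $\alpha(\PSpw)$, so you must control the endomorphisms of $\Ptriv$ that factor through $\Mone$, i.e.\ $\Hom(\Mone,\Ptriv)$. Nothing in your plan does this.

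The missing idea is the paper's. Every endomorphism of $\Ptriv$ preserves $\alpha(\PSpw)$, since $\Hom(\PSpw,\Mone)=0$ by Lemma \ref{lem:subquots-J1}. This gives a restriction map $\End(\Ptriv)\to\End(\PSpw)\simeq R$ through which the central map factors. One proves this restriction is bijective by reducing mod $\varpi$, as in \cite[Proposition 10.76]{paskunas13}, to the bijectivity of $\End(J_1)\to\End(J_{\Sp\otimes\omega})$. That bijectivity is \cite[Lemma 10.60]{paskunas13} applied to $J=J_1$ with $J/J^1\simeq J_{\Sp\otimes\omega}$ (Proposition \ref{prop:injective-resolutions}), using $1_T^s=1_T\neq\omega_T$.
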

\begin{proof}
    We already know this for $\pi = \Sp, \Sp \otimes \omega$ by the previous corollary. To deduce the claim for the other irreducibles, we argue as in \cite[Proposition 10.76]{paskunas13}.
    
    The claim follows for $\pi = 1$ if we can show that $\alpha: \PSpw \hookrightarrow \Ptriv$ in Diagram \ref{seq:main} induces an isomorphism $\End(\PSpw) \simeq \End(\Ptriv)$. Every endomorphism of $\Ptriv$ preserves $\PSpw$ since $\Hom(\PSpw, \Mone) = 0$, so the restriction map $\End(\Ptriv) \to \End(\PSpw)$ is well-defined. The argument in \cite[Proposition 10.76]{paskunas13} goes through to reduce to showing $\End(J_1) \to \End(J_{\Sp \otimes \omega})$ (in the mod $\varpi$ category) is an isomorphism. This follows from \cite[Lemma 10.60]{paskunas13} with $J = J_1$, where now we have $J/J^1 \simeq J_{\Sp \otimes \omega}$ in our case by Proposition \ref{prop:injective-resolutions}. We note that the assertion $\chi^s \neq \chi \alpha$ with $\chi = 1_T$ used to prove loc.\ cit.\ remain true for $p = 3$, since $1_T^s = 1_T$ and $\alpha = \omega_T \neq 1_T$ (but this fails for $p = 2$). 

    The claim for $\pi = \omega$ then follows by twisting. 
\end{proof}

\subsection{Matrix embedding of the endomorphism algebra}
For the remainder of this section, we fix $p = 3$ and let $\mathfrak{B}$ be the Steinberg block. 
 The $R$-modules $\Hom(\Ppi, \Ptau)$ for $\pi, \tau \in \mathfrak{B}$ seem tricky to present explicitly. Fortunately, our argument will not require the explicit structure of these modules. Inspired by \cite[Lemma 3.6.18]{DEG26}, we will show that $\tilde{E}_{\mathfrak{B}} := \End(\bigoplus_{\pi \in \mathfrak{B}} \Ppi)$ embeds into a matrix algebra over $K := \on{Frac}(R)$, which helps to rigidify the structure of this endomorphism algebra and more easily identify morphisms.

\begin{proposition}[\cite{paskunas-tung} Proposition 4.3]\label{prop:no-socle}
    Let $\tilde{P}, \tilde{M} \in \mathfrak{C}(\O)$ with $\tilde{P}$ projective and $\tilde{M}/\varpi \tilde{M}$ of finite length. Then $\Hom(\tilde{M}, \tilde{P}) = 0.$
\end{proposition}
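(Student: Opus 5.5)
By Pontryagin duality, write $\tilde{P} = \tilde{J}^\vee$ with $\tilde{J}$ injective in $\on{Mod}_{G/Z}^{\on{lfin}}(\O)$, and $\tilde{M} = \mu^\vee$ with $\mu$ a smooth locally finite representation. The hypothesis $\tilde{M}/\varpi\tilde{M}$ of finite length translates to $\mu[\varpi]$ of finite length, and the statement becomes $\Hom(\tilde{J}, \mu) = 0$. The plan is to take a morphism $\phi:\tilde{P}\to\ldots$, more precisely a nonzero $\phi: \tilde{M} \to \tilde{P}$, reduce to the case of a finite-length torsion source, and derive a contradiction from the structure of projectives in $\mathfrak{C}(\O)$.

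\emph{Step 1: reduce to finite length mod $\varpi$.} Since $\tilde{P}$ is $\O$-torsion-free (projectives in $\mathfrak{C}(\O)$ are $\O$-flat, \cite[Corollary 5.21]{paskunas13}), the image $\phi(\tilde{M})$ is a nonzero $\O$-torsion-free object $N \subset \tilde{P}$. Its reduction $N/\varpi N$ is a quotient of $\tilde{M}/\varpi\tilde{M}$, so it has finite length. Dually, $N^\vee$ is an $\O$-divisible smooth representation with $N^\vee[\varpi]$ of finite length; hence $N^\vee$ is a nonzero divisible $\O$-module of finite corank carrying a $G$-action, roughly $(L/\O)^r$ twisted.

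\emph{Step 2: such $N$ cannot sit inside a projective.} Here I would use that $\tilde{P}$ is a direct summand of a product of projective envelopes. The key input is that the projective envelopes $\Ppi$ are not ``small'': by the results of \cite{paskunas13} and \cite{paskunas-tung}, $\tilde{P}$ is a faithful module over the Bernstein center $R$, which has dimension $\geq 2$. By contrast, $N$ is finitely generated over $\O$: Nakayama in the compact category, with $N/\varpi N$ finite length, implies that $N$ is finitely generated over $\O\lb K\rb$ and that $R$ acts on it through a finite quotient, i.e.\ a quotient of Krull dimension $\leq 1$. So $N$ is killed by some ideal $I\subset R$ with $\dim R/I \le 1$. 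I would then show that $\tilde{P}$ has no nonzero submodule annihilated by a nonzero element of $\mathfrak{m}_R$-primary-up-to-$\varpi$ type. For example, one could pick $x\in I$ not in any minimal prime and use that $x$ acts injectively on $\tilde{P}$. Torsion-freeness of $\tilde{P}$ over $R$ would come from Corollary \ref{cor:E-vs-E}, since $\tilde{P}$ is flat over its endomorphism algebra's center in the appropriate sense (for $\Ppi$ with $\pi$ non-character, $\Hom(\tilde{P}',\Ppi)$ is a column of $E$, which is $R$-torsion-free).

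\emph{Step 3: conclude.} With $x$ acting injectively on $\tilde{P}$ but killing $N\subset \tilde{P}$, we get $N=0$, contradicting $\phi\ne 0$.

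The main obstacle is Step 2: justifying that $\tilde{P}$ has no $R$-torsion in general, which relies on the full structure theory. I would actually prefer to cite the argument of \cite[Proposition 4.3]{paskunas-tung} directly, since the statement is quoted from there. That argument shows that the injective envelope $\tilde{J}$ admits no nonzero map to a representation whose $\varpi$-torsion has finite length, via the infinite-dimensionality of $\tilde{J}[\varpi]$ and the essential-extension property.
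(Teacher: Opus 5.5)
\textbf{There is a genuine gap, and it sits in Step 2.}

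To reach a contradiction you need every nonzero $x\in R$ to act injectively on $\tilde P$. Applying $\Hom(\bigoplus_{\pi\in\mathfrak B}\Ppi,-)$, this is equivalent to $R$-torsion-freeness of every $\Hom(\Ppi,\Ptau)$, including the ones involving $\Ptriv$ and $\Pw$.

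\begin{itemize}
\item Corollary \ref{cor:E-vs-E} gives this only for the corner $\End(\tilde P')$, and your parenthetical does not close the difference. Torsion-freeness of $\Hom(\tilde P',\Ppi)$ only shows that $\ker(x|_{\Ppi})$ admits no nonzero map from $\tilde P'$. That rules out nothing whose cosocle is built from $1^\vee$ and $\omega^\vee$, and nothing at all is offered for $\Ptriv$ or $\Pw$.
\item More seriously, the argument is circular relative to the paper. There, torsion-freeness of the remaining Hom modules (Lemma \ref{lem:Hom-1-Sp}, Proposition \ref{prop:homs-are-ideals}) is deduced \emph{from} Proposition \ref{prop:no-socle}, applied to $\tilde M=\O$ and $\tilde M=\tilde{\Sp}^\vee$. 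You are assuming a stronger statement that is only obtained downstream of the one you want to prove.
\item The proposition concerns arbitrary projectives of $\mathfrak C(\O)$. Your route would therefore need, in every block, a centre of dimension $\ge 2$ acting torsion-freely on projectives. That is deep, block-specific input.
\end{itemize}

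Step 1 also contains a false claim. Finite length in $\mathfrak C(k)$ does not mean finite $k$-dimension: only characters are finite-dimensional, and $\tilde{\Sp}^\vee\otimes_\O k\simeq\Sp^\vee$ is exactly a case the paper uses. So $N$ is not finitely generated over $\O$, and $N^\vee$ does not have finite corank. The bound $\dim R/\on{Ann}_R(N)\le 1$ can be rescued by passing, block by block, to $\Hom(\bigoplus_{\pi\in\mathfrak B}\Ppi,N)$, which \emph{is} finitely generated over $\O$ by topological Nakayama.

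Your fallback appeal to essentiality of $\tilde J$ is not an argument. Essentiality controls subobjects of $\tilde J$, not maps out of it.

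\textbf{The missing idea is dimension theory over an Iwasawa algebra.} It is independent of the block and uses nothing about the centre.

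\begin{enumerate}
\item Reduce to $k$-coefficients. Let $f:\tilde M\to\tilde P$ be nonzero and take the largest $n$ with $f(\tilde M)\subseteq\varpi^n\tilde P$; this exists because $\tilde P$ is $\O$-torsion-free and $\bigcap_n\varpi^n\tilde P=0$. Rescaling gives a nonzero map $\tilde M/\varpi\tilde M\to\tilde P/\varpi\tilde P$. So it suffices to show $\Hom_{\mathfrak C(k)}(M,P)=0$ for $M$ of finite length and $P$ projective.
\item Restrict to a small subgroup. Take a torsion-free open pro-$p$ subgroup $H$, for example $H=1+pM_2(\BZ_p)$. One must shrink below $I_1$, since $I_1/Z_1$ has $3$-torsion, as the paper observes in \S5.2.
\item Injectives of $\on{Mod}^{\on{lfin}}_{G/Z}(k)$ stay injective on restriction to compact open subgroups; this is the input the paper uses in \S5.1 via \cite[Lemma 5.22]{paskunas13}. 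Hence $P$ is a direct summand of a product of copies of $\Lambda=k\lb H/(Z\cap H)\rb$, an Auslander-regular ring of global dimension $3$.
\item Finite-length smooth mod $p$ representations of $\GL_2(\BQ_p)$ are admissible of Gelfand--Kirillov dimension at most $1$. So $M$ is finitely generated over $\Lambda$ of dimension $\le 1$, hence has grade $\ge 2$.
\item Therefore $\Hom_\Lambda(M,\Lambda)=\Ext^0_\Lambda(M,\Lambda)=0$, and so $\Hom(M,P)=0$.
\end{enumerate}
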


\begin{lemma}
    $\Hom(\Ptriv, \O) = \O$.
\end{lemma}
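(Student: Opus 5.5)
Here $\O$ denotes the dual $1^\vee$ of the trivial character, viewed in $\mathfrak{C}(\O)$ as the rank-one lattice with trivial $G$-action. I would compute $\Hom(\Ptriv,\O)$ from the projectivity of $\Ptriv$ together with the fact that $\Ptriv$ is a projective envelope of $1^\vee = k$.

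The first step is the mod $\varpi$ reduction. Since $\Ptriv$ is projective in $\mathfrak{C}(\O)$, the functor $\Hom(\Ptriv,-)$ is exact. Applying it to
\[
0 \to \O \xrightarrow{\varpi} \O \to k \to 0
\]
gives
\[
\Hom(\Ptriv,\O)/\varpi \simeq \Hom(\Ptriv,k).
\]
Because $\Ptriv$ is the projective envelope of the irreducible $1^\vee=k$, we have $\Hom(\Ptriv,\pi^\vee)=0$ for irreducible $\pi\neq 1$. The hypothesis that $k$ is large enough makes $1$ absolutely irreducible, so $\Hom(\Ptriv,k)\simeq \End(k)=k$, which is one-dimensional. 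Hence $\Hom(\Ptriv,\O)/\varpi$ is one-dimensional over $k$.

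The second step shows that $H:=\Hom(\Ptriv,\O)$ is $\varpi$-torsion-free and $\varpi$-adically separated and complete. Torsion-freeness is immediate from the injectivity of multiplication by $\varpi$ on $\O$ and the left exactness of $\Hom$. For separatedness and completeness I would use that $\O=\varprojlim \O/\varpi^n$ in $\mathfrak{C}(\O)$. Hom out of $\Ptriv$ commutes with this limit, so $H=\varprojlim \Hom(\Ptriv,\O/\varpi^n)$, and exactness identifies $\Hom(\Ptriv,\O/\varpi^n)$ with $H/\varpi^n H$. A torsion-free, $\varpi$-adically complete and separated $\O$-module whose reduction is one-dimensional is free of rank one, so $H\simeq\O$. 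A generator is any lift of the nonzero map $\Ptriv\to k$; concretely, one can take the surjection $\Ptriv\to\Mone\to\O$ in Diagram (\ref{seq:main}).

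An alternative route uses Diagram (\ref{seq:main}) directly. Apply $\Hom(-,\O)$ to $0\to\PSpw\to\Ptriv\to\Mone\to0$. The group $\Hom(\PSpw,\O)$ vanishes because $\PSpw$ has cosocle $\Sp\otimes\omega$. So $\Hom(\Ptriv,\O)=\Hom(\Mone,\O)$, which is the $\O$-linear dual of the $G$-invariants of $\Ind_P^G\tilde J_{1_T}$. This is less clean, so I would use the first argument.

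The main obstacle I expect is justifying that $\Hom$ commutes with the inverse limit, equivalently that $\O$ is the limit of its finite-length quotients in $\mathfrak{C}(\O)$. This follows from the compact (profinite) topology in $\mathfrak{C}(\O)$: the category is dual to locally finite smooth representations, and $\O^\vee = L/\O = \varinjlim \varpi^{-n}\O/\O$, so dualizing turns the colimit into the needed limit.
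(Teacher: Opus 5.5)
Your argument is correct, but it takes a different route from the paper's. The paper works from the specific resolution in Diagram (\ref{seq:main}). By Proposition \ref{prop:alpha-beta-image}, the kernel $\tilde{P}_{1^\vee,0}$ of the natural surjection $\Ptriv \to \O$ is a quotient of $\PSpw \oplus \PSp$. Also $\Hom(\PSpw \oplus \PSp, \O) = 0$, since every irreducible subquotient of $\O$ is $1^\vee$. So every map $\Ptriv \to \O$ factors through $\Ptriv/\tilde{P}_{1^\vee,0} \simeq \O$, giving $\Hom(\Ptriv,\O) = \End(\O) = \O$.

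You instead use only that $\Ptriv$ is a projective envelope of $1^\vee = \O/\varpi$, plus $\varpi$-adic completeness of $\Hom(\Ptriv,\O)$. You justify that completeness correctly: $\O = \varprojlim \O/\varpi^n$ in $\mathfrak{C}(\O)$ because it is dual to the union $L/\O = \varinjlim \varpi^{-n}\O/\O$ in $\on{Mod}^{\on{lfin}}_{G/Z}(\O)$, and $\Hom(\Ptriv,-)$ preserves limits.

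Your route is more formal and more general. The same reasoning gives $\Hom(\tilde{P}_{\pi^\vee}, M) \simeq \O$ for any $\O$-torsion-free $M \in \mathfrak{C}(\O)$ with $M/\varpi M \simeq \pi^\vee$; for instance, $\Hom(\PSp, \tilde{\Sp}^\vee) \simeq \O$. It also avoids the diagram chase behind Proposition \ref{prop:alpha-beta-image}. The paper's route directly exhibits the natural surjection $\Ptriv \to \Mone \to \O$ as the map through which everything factors. That is what is used immediately afterwards to define $\mathfrak{a}$ as the kernel of $R = \End(\Ptriv) \to \Hom(\Ptriv,\O) \simeq \O$. You recover this as well, since that surjection has nonzero reduction modulo $\varpi$ and hence generates.

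Your sketched alternative is essentially the paper's argument, but it is incomplete as written. After killing $\Hom(\PSpw,\O)$ you would still need $\Hom(\Mone,\O) = \O$. The paper effectively gets this by also using $\PSp \twoheadrightarrow \Monenot$.
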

\begin{proof}
    By Proposition \ref{prop:alpha-beta-image}, the kernel $\tilde{P}_{1^\vee, 0}$ of the natural map $\Ptriv \to \O$ is a quotient of $\PSpw \oplus \PSp$. Since every irreducible subquotient of $\O$ is isomorphic to $1^\vee$, $\Hom(\PSpw \oplus \PSp, \O) = 0$, hence also $\Hom(\tilde{P}_{1^\vee, 0}, \O) = 0$. Consequently, every homomorphism $\Ptriv \to \O$ factors through $\Ptriv/\tilde{P}_{1^\vee, 0} \simeq \O$, so that $\Hom(\Ptriv, \O) = \End(\O) = \O$. 
\end{proof}
\begin{lemma}\label{lem:1-mone}~
    \begin{enumerate}[(a)]
        \item There is a continuous surjection of rings (and $R$-modules) $R = \End(\Ptriv) \to \End(\Mone) \simeq \O\lb x, y\rb$. 
        \item We have $R$-module isomorphisms $$\End(\Mone) \simeq \Hom(\Ptriv, \Mone) \simeq \Hom(\PSp, \Mone).$$ 
        \item Any $R$-module of the form $\Hom(\Mone, N)$ or $\Hom(N, \Mone)$ with $N \in \mathfrak{C}(\O)$ is $R$-torsion.
    \end{enumerate}
\end{lemma}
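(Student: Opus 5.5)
For part (a), I will compute $\End(\Mone)$ directly from its definition $\Mone = (\Ind_P^G \tilde{J}_{1_T})^\vee$. Full faithfulness of parabolic induction on this object, as in \cite[\S 7]{paskunas13} and using the $\on{Ord}_P$ adjunction together with $\on{Ord}_P \Ind_P^G \tilde{J}_{1_T} \simeq \tilde{J}_{1_T}$ (with $1_T^s = 1_T$), gives
\begin{align*}
\End(\Mone) \simeq \End_{G}(\Ind_P^G \tilde{J}_{1_T})^{\on{op}} \simeq \End_T(\tilde{J}_{1_T})^{\on{op}}.
\end{align*}
The ring $\End_T(\tilde{J}_{1_T})$ is the universal deformation ring of the trivial character of $T/Z \simeq \BQ_3^\times$. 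This is $\O\lb x, y\rb$, the completed group ring of the maximal pro-$3$ quotient $\BZ_3^2$ of $\BQ_3^\times$ (cf.\ \cite[Corollary 3.35]{paskunas13}); it is commutative, so the $\on{op}$ is harmless.

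To get the surjection, I will use the surjection $\Ptriv \twoheadrightarrow \Mone$ from Diagram (\ref{seq:main}). Its kernel is $\alpha(\PSpw)$, and every endomorphism of $\Ptriv$ preserves this kernel because $\Hom(\PSpw, \Mone) = 0$ by Lemma \ref{lem:subquots-J1}. Hence each endomorphism descends, giving a ring map $\End(\Ptriv) \to \End(\Mone)$. By Corollary \ref{End-is-R}, $\End(\Ptriv) = R$. The map is $R$-linear because the center acts compatibly on all objects. It is surjective because $\Ptriv$ is projective: any endomorphism of $\Mone$, composed with the quotient map, lifts to an endomorphism of $\Ptriv$. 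Continuity follows from the compact topologies involved.

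For part (b), apply $\Hom(-, \Mone)$ to the exact sequence $0 \to \PSpw \to \Ptriv \to \Mone \to 0$. Since $\Hom(\PSpw, \Mone) = 0$, this gives $\End(\Mone) \simeq \Hom(\Ptriv, \Mone)$. For the second isomorphism, I will compare $\Hom(\Ptriv, \Mone)$ and $\Hom(\PSp, \Mone)$ through the bottom row of Diagram (\ref{seq:main}) and the map $\beta$. The object $\Mone$ has only $1$ and $\Sp$ as irreducible subquotients, each with multiplicity behaving like that in $\O\lb x,y\rb$. I expect $\Hom(\PSp, \Mone)$ to be identified with $\Hom(\Ptriv, \Monenot)$ via $\beta$. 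I also expect $\Monenot = \mideal' \Mone$ for the augmentation ideal $\mideal'$ of $\O\lb x,y\rb$, which is a shifted copy of $\Mone$ in a suitable sense. This identification is the step I expect to be the main obstacle. The approach I would try first is to dualize and compute $\Hom(\PSp,\Mone)$ as the $\Sp$-multiplicity space $\Hom_G(\Ind_P^G\tilde J_{1_T},\tilde J_{\Sp})^\vee$. I would then use the exact sequence $0 \to 1 \to \Ind_P^G 1_T \to \Sp \to 0$ to compare it with the $1$-multiplicity, showing both are free of rank one over $\O\lb x,y\rb$.

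For part (c), I will show that any such Hom module is a module over $\End(\Mone) \simeq \O\lb x,y\rb$, with the $R$-action factoring through the surjection in (a). That surjection is not injective, since $\dim R = 4 > 3 = \dim \O\lb x, y\rb$ and $R$ is a domain (Proposition \ref{prop:alg-properties}). So there is a nonzero $r \in R$ killing $\End(\Mone)$, hence acting as zero on $\Mone$. Then $r$ annihilates both $\Hom(\Mone, N)$ and $\Hom(N, \Mone)$, because the central action can be computed on either the source or the target. Therefore both modules are $R$-torsion.
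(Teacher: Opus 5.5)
Your parts (a) and (c) are fine and agree with the paper, which simply cites \cite[Corollary 7.2]{paskunas13} for (a). Your proof of the first isomorphism in (b) is also correct: it uses $0\to\PSpw\to\Ptriv\to\Mone\to0$ together with $\Hom(\PSpw,\Mone)=0$, which is a bit more elementary than the paper's adjunction argument.

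The gap is the second isomorphism $\Hom(\PSp,\Mone)\simeq\End(\Mone)$. You give no argument for it, and the identifications you expect are false.
\begin{itemize}
\item $\Monenot\neq\mideal'\Mone$. Indeed $\Mone/\mideal'\Mone$ is dual to $(\Ind_P^G\tilde J_{1_T})[\mideal']=\Ind_P^G(L/\O)$, so $\Monenot/\mideal'\Mone\simeq\tilde{\Sp}^\vee\neq0$.
\item $\Hom(\Ptriv,\Monenot)$ is the kernel of $\Hom(\Ptriv,\Mone)\simeq\O\lb x,y\rb\to\Hom(\Ptriv,\O)\simeq\O$. That kernel is the augmentation ideal of $\O\lb x,y\rb$, which is not free of rank one. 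So it cannot be identified with $\Hom(\PSp,\Mone)\simeq\O\lb x,y\rb$.
\item Comparing multiplicities through $0\to1\to\Ind_P^G1_T\to\Sp\to0$ does not by itself produce an isomorphism of $\O\lb x,y\rb$-modules.
\item Minor: Pontryagin duality gives $\Hom(\PSp,\Mone)=\Hom_G(\Ind_P^G\tilde J_{1_T},\tilde J_{\Sp})$ directly, with no extra dual.
\end{itemize}

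The missing input is $\on{Ord}_P\tilde J_{\Sp}\simeq\tilde J_{1_T}$ \cite[Lemma 4.4]{paskunas-tung}. Given this, the adjunction you already used in (a) finishes the proof.
\begin{enumerate}
\item Let $f\colon\Ind_P^G\tilde J_{1_T}\to\tilde J_{\Sp}$ be adjoint to an isomorphism $\tilde J_{1_T}\to\on{Ord}_P\tilde J_{\Sp}$, using $\tilde J_{1_T}^s\simeq\tilde J_{1_T}$.
\item By naturality of the adjunction, postcomposition with $f$, namely $\End_G(\Ind_P^G\tilde J_{1_T})\to\Hom_G(\Ind_P^G\tilde J_{1_T},\tilde J_{\Sp})$, corresponds to postcomposition with $\on{Ord}_P(f)\colon\on{Ord}_P\Ind_P^G\tilde J_{1_T}\to\on{Ord}_P\tilde J_{\Sp}$.
\item $\on{Ord}_P(f)$ is an isomorphism, since it equals the chosen isomorphism composed with the inverse of the unit isomorphism $\tilde J_{1_T}\simeq\on{Ord}_P\Ind_P^G\tilde J_{1_T}$. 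Hence postcomposition with $f$ is bijective.
\item It is $R$-linear because it is postcomposition with a morphism.
\item Dualizing gives $\End(\Mone)\simeq\Hom(\PSp,\Mone)$.
\end{enumerate}
This is the paper's argument, which it runs uniformly for $\tilde J_1$ and $\tilde J_{\Sp}$.
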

\begin{proof}
\begin{enumerate}[(a)]
    \item This part follows by \cite[Corollary 7.2]{paskunas13}. 
    \item This follows by the same methods used to prove \cite[Lemma 7.1(iii)]{paskunas13}. By \cite[Lemma 4.4.]{paskunas-tung} and \cite[Lemma 7.1]{paskunas13}, $\on{Ord}_{P} \tilde{J}_1$, $\on{Ord}_P \tilde{J}_{\Sp}$, and $\on{Ord}_P (\Ind_{P}^G \tilde{J}_{1_T})$ are all isomorphic to $\tilde{J}_{1_T}$ (noting that $\tilde{J}_{1_T}^s \simeq \tilde{J}_1$). Therefore, we have a morphism $\Ind_P^G \tilde{J}_{1_T} \to \tilde{J}_{1}$  over $G$ adjoint to the identity on $\tilde{J}_{1_T}$ over $T$. Applying $\Hom(\Ind_P^G \tilde{J}_{1_T}, -)$ to this morphism gives a commutative diagram
    \begin{center}
        \begin{tikzcd}
            \Hom_{T/Z}(\tilde{J}_{1_T}, \tilde{J}_{1_T}) \ar[d, equals] \ar[r, equals] & \Hom_{T/Z}(\tilde{J}_{1_T}, \tilde{J}_{1_T}) \ar[d, equals] \\
            \Hom_{G/Z}(\Ind_P^G \tilde{J}_{1_T},\Ind_P^G \tilde{J}_{1_T}) \ar[r] & \Hom_{G/Z}(\Ind_P^G \tilde{J}_{1_T}, \tilde{J}_1),
        \end{tikzcd}
            \end{center}
        where the vertical arrows are the adjunction isomorphisms, so the bottom arrow is also an isomorphism; see the similar proof of \cite[Lemma 7.1(iii)]{paskunas13}. Dualizing yields an isomorphism $$\Hom(\Ptriv, \Mone) \simeq \End(\Mone).$$ The same argument applies with $\tilde{J}_1$ replaced by $\tilde{J}_{\Sp}$, yielding $$\Hom(\PSp, \Mone) \simeq \End(\Mone).$$
    \item By part (a), $R$ acts on $\Mone$ through a proper quotient (noting $\O\lb x, y\rb \not \simeq R$). Since $R$ acts centrally on $\mathfrak{C}(\O)$, it must also act through that quotient on $\Hom(\Mone, N)$ and $\Hom(N, \Mone)$. 
\end{enumerate}
    
\end{proof}
\begin{lemma}\label{lem:Sp-lift}~
    \begin{enumerate}[(a)]
        \item There exists a lift of $\Sp^\vee$ to an $\O$-torsion-free representation $\tilde{\Sp}^\vee$ such that $\tilde{\Sp}^\vee \otimes_\O k \simeq \Sp^\vee$.  
        \item For any lift $\tilde{\Sp}^\vee$ as in part (a), we have a partial projective resolution
        \begin{center}
            \begin{tikzcd}
                \Ptriv^{\oplus 2} \ar[r] & \PSp \ar[r] & \tilde{\Sp}^\vee \ar[r] & 0.
            \end{tikzcd}
        \end{center}
    \end{enumerate}
\end{lemma}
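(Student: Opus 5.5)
For part (a), I will build the lift from Diagram (\ref{seq:main}). The natural candidate is $\tilde{\Sp}^\vee := \Monenot$, or more precisely the image of $\beta$ inside it. By Proposition \ref{prop:injective-resolutions}, the socle of $\kappa = (\Ind_P^G J_{1_T})/1$ is $\Sp$. Dually, $\Monenot \otimes_\O k \simeq \kappa^\vee$ has cosocle $\Sp^\vee$. Since $\Monenot$ is a submodule of the $\O$-torsion-free module $\Mone$, it is itself $\O$-torsion-free. This gives a torsion-free object with $\Sp^\vee$ as a quotient, but I still need one whose reduction is exactly $\Sp^\vee$.

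A cleaner route to the lift is via principal series. The representation $\Ind_P^G \O$ is $\O$-torsion-free, and it contains $1$ with quotient lifting $\Sp$. So I take $\tilde{\Sp} := (\Ind_P^G L/\O)/(L/\O)$, where $\Ind_P^G L/\O$ denotes the induction of the divisible trivial character and $L/\O$ sits inside it as the constants. Pontryagin duality then produces a torsion-free $\tilde{\Sp}^\vee$. The reduction $\tilde{\Sp}^\vee\otimes_\O k$ is dual to $\tilde{\Sp}[\varpi]$. Exactness of $\Ind_P^G$ and the snake lemma applied to multiplication by $\varpi$ identify $\tilde{\Sp}[\varpi]$ with $\Sp$; this uses that multiplication by $\varpi$ is surjective on $L/\O$.

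For part (b), the projective cover step is straightforward. The cosocle of $\tilde{\Sp}^\vee$ is $\Sp^\vee$ by Nakayama, since reduction mod $\varpi$ gives $\Sp^\vee$. Hence $\PSp \to \tilde{\Sp}^\vee$ is a projective envelope with some kernel $N$.

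The remaining task is to show that $N$ is a quotient of $\Ptriv^{\oplus 2}$. This amounts to proving that the cosocle of $N$ is $(1^\vee)^{\oplus 2}$, i.e., to computing $\Hom(N, \pi^\vee)$ for each $\pi\in\mathfrak{B}$. Apply $\Hom(-, \pi^\vee)$ to $0\to N\to \PSp\to\tilde{\Sp}^\vee\to 0$. Since $\PSp$ is projective and $\Hom(\PSp, \pi^\vee)\to\Hom(\tilde{\Sp}^\vee,\pi^\vee)$ is surjective onto the one-dimensional space when $\pi=\Sp$ (and both vanish otherwise), we get
\[
\Hom(N,\pi^\vee)\simeq \Ext^1_{\mathfrak{C}(\O)}(\tilde{\Sp}^\vee,\pi^\vee).
\]
Next I will use the reduction sequence $0\to\tilde{\Sp}^\vee\xrightarrow{\varpi}\tilde{\Sp}^\vee\to\Sp^\vee\to0$. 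Because $\varpi$ kills $\pi^\vee$, it yields an injection $\Ext^1(\tilde{\Sp}^\vee,\pi^\vee)\hookrightarrow \Ext^2_{\mathfrak{C}(\O)}(\Sp^\vee,\pi^\vee)$ and a surjection from $\Ext^1_{\mathfrak{C}(\O)}(\Sp^\vee,\pi^\vee)$. Equivalently, this is the standard comparison $\Ext^1_{\mathfrak{C}(\O)}(\tilde{\Sp}^\vee,\pi^\vee)\simeq \Ext^1_{\mathfrak{C}(k)}(\Sp^\vee,\pi^\vee)$, valid for torsion-free lifts, which follows by base change of the projective resolution since $\tilde{P}\otimes_\O k \simeq P$.

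By Table \ref{table:ext1} (dualized), $\Ext^1(\Sp^\vee,\pi^\vee)=\Ext^1(\pi,\Sp)$, which is $2$-dimensional for $\pi = 1$ and $0$ for $\pi=\Sp,\omega,\Sp\otimes\omega$. Hence the cosocle of $N$ is $(1^\vee)^{\oplus2}$, giving a surjection $\Ptriv^{\oplus 2}\twoheadrightarrow N$ and the claimed partial resolution.

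The main obstacle I expect is justifying the identification $\Ext^1_{\mathfrak{C}(\O)}(\tilde{\Sp}^\vee,\pi^\vee)\simeq\Ext^1_{\mathfrak{C}(k)}(\Sp^\vee,\pi^\vee)$. I would prove it by tensoring a projective resolution $\tilde{P}_\bullet\to\tilde{\Sp}^\vee$ with $k$. This stays exact because $\tilde{\Sp}^\vee$ is $\O$-flat, and it gives a projective resolution of $\Sp^\vee$ in $\mathfrak{C}(k)$ by \cite[Lemma 2.11]{paskunas13}. Then, for $\varpi$-torsion $\pi^\vee$, we have $\Hom(\tilde{P}_i,\pi^\vee)=\Hom(\tilde{P}_i\otimes k,\pi^\vee)$, which identifies the two Ext groups.
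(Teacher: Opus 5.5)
Your proposal is correct and follows the paper's route. Your $(\Ind_P^G L/\O)/(L/\O)$ is exactly the paper's $C^\infty(\BP^1(\BQ_p),\Lambda)/\Lambda$: you check $\tilde{\Sp}[\varpi]\simeq\Sp$ by the snake lemma where the paper argues elementwise, and the torsion-freeness of the dual rests on the $\varpi$-divisibility of $\tilde{\Sp}$, which follows from exactness of $\Ind_P^G$. For (b) you unwind the paper's citation of Table \ref{table:ext1} and \cite[Corollary 5.21, Remark 10.11]{paskunas13} into an explicit cosocle computation for the kernel via $\Ext^1_{\mathfrak{C}(\O)}(\tilde{\Sp}^\vee,\pi^\vee)\simeq\Ext^1_{\mathfrak{C}(k)}(\Sp^\vee,\pi^\vee)$, with one directional slip: the relevant map is $\Hom(\tilde{\Sp}^\vee,\pi^\vee)\to\Hom(\PSp,\pi^\vee)$, and its being an isomorphism is what makes $\Hom(N,\pi^\vee)\to\Ext^1(\tilde{\Sp}^\vee,\pi^\vee)$ an isomorphism.
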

\begin{proof}
    \begin{enumerate}[(a)]
        \item By Pontryagin duality it is equivalent to show that there is a $p$-divisible object $\tilde{\Sp}$ in $\on{Mod}_{G/Z}^{\on{lfin}}(\O)$ such that $\tilde{\Sp}[\varpi] \simeq \Sp$. This essentially amounts to showing that the standard construction of $\Sp$ behaves well with respect to base change and taking $\varpi$-torsion. 
    
    Let $\Lambda := L/\O$ and set $$\tilde{\Sp} := C^\infty(\BP^1(\BQ_p), \Lambda)/\Lambda,$$ where the submodule $\Lambda$ is identified with the constant functions. We claim that $\tilde{\Sp}[\varpi] \simeq C^\infty(\BP^1(\BQ_p), k)/k = \Sp$. Fix an inclusion $k \simeq \left(\frac{1}{\varpi} \O\right)/\O \hookrightarrow  \Lambda$. 
    
    The inclusion $k\hookrightarrow \Lambda$ induces a map $\Sp \to \tilde{\Sp}[\varpi]$, which we claim is an isomorphism. This map is injective since a nonconstant function remains so after the inclusion. Surjectivity follows from the fact that if $\varpi \cdot f = c$ for $f \in C^\infty(\BP^1(\BQ_p), \Lambda)$ and a constant $c \in \Lambda$, then $f$ must be of the form $\frac{1}{\varpi} c + g$ for a function $g$ valued in $(\frac{1}{\varpi} \O)/\O$, so that $f$ lies in the image of $\Sp \to \tilde{\Sp}[\varpi]$. Hence we have an isomorphism as desired. 
    \item As with our previous resolutions, this resolution follows from Table \ref{table:ext1} and \cite[Corollary 5.21, Remark 10.11]{paskunas13}. 
    \end{enumerate}
\end{proof}

\begin{lemma}\label{lem:Hom-1-Sp}
    Precomposition with the inclusion $\alpha: \tilde{P}_{(\on{Sp} \otimes \omega)^\vee} \hookrightarrow \tilde{P}_{1^\vee}$ induces an isomorphism $\Hom(\tilde{P}_{1^\vee}, \tilde{P}_{\on{Sp}}) \simeq \Hom(\tilde{P}_{(\on{Sp} \otimes \omega)^\vee}, \tilde{P}_{\on{Sp}})$. 
\end{lemma}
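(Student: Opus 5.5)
Apply the exact functor $\Hom(-, \PSp)$ to the first row of Diagram (\ref{seq:main}). Since $\PSp$ is not necessarily injective, the correct tool is the long exact sequence of Ext groups in $\mathfrak{C}(\O)$:
\begin{align*}
0 \to \Hom(\Mone, \PSp) \to \Hom(\Ptriv, \PSp) \xrightarrow{\circ \alpha} \Hom(\PSpw, \PSp) \to \Ext^1(\Mone, \PSp).
\end{align*}
The map in the middle is precomposition with $\alpha$, and we must show it is bijective. The proof therefore splits into two steps: killing the first term (injectivity) and showing that the connecting map to $\Ext^1(\Mone, \PSp)$ vanishes (surjectivity).

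\textbf{Injectivity.} We must show $\Hom(\Mone, \PSp) = 0$.
\begin{itemize}
\item One route is Proposition \ref{prop:no-socle}, but $\Mone/\varpi$ is not of finite length, so it does not apply directly.
\item A better route is to note that $\PSp$ is $R$-torsion-free. By Corollary \ref{cor:spw-sp-homs}, $\End(\PSp) \simeq R$ and $\Hom(\PSpw, \PSp) \simeq S_2$, and both are torsion-free over the domain $R$ (Proposition \ref{prop:alg-properties}). Meanwhile $\Hom(\Mone, \PSp)$ is $R$-torsion by Lemma \ref{lem:1-mone}(c).
\item To conclude, take a nonzero $f: \Mone \to \PSp$ and an element $0 \neq r \in R$ with $rf = 0$. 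Composing $f$ with the surjection $\Ptriv \to \Mone$ gives a nonzero element of $\Hom(\Ptriv, \PSp)$ killed by $r$.
\item This element then restricts along $\alpha$, or else we argue directly that $\Hom(\Ptriv, \PSp)$ is torsion-free. For the direct argument, note that $r$ acts on $\PSp$ injectively: $\PSp$ is a projective object whose endomorphism ring is the domain $R$, so multiplication by $r \neq 0$ is a monomorphism. This follows since $\PSp$ is a pro-object with $\Hom(\PSp, \PSp) = R$ torsion-free, together with a dual version of the Paškūnas flatness results. Postcomposition with a monomorphism is injective, so the torsion element must vanish.
\end{itemize}

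\textbf{Surjectivity.} We must show that every $\phi: \PSpw \to \PSp$ extends along $\alpha$ to $\Ptriv$. I would use the Morita/dual picture: pass to $\tilde{J}$'s and show $\Ext^1(\PSp\text{-relevant})$ vanishes. More concretely, the obstruction lies in $\Ext^1_{\mathfrak{C}(\O)}(\Mone, \PSp)$, which is dual to $\Ext^1(\tilde{J}_{\Sp}, \Ind_P^G \tilde{J}_{1_T})$ in the smooth category. This group should be computable by d\'evissage from the mod $\varpi$ groups $\Ext^1(\pi, \Ind_P^G J_{1_T})$ with $\pi$ running over subquotients. Lemma \ref{lem:exts-to-indJ1} shows these vanish except for $\pi = \Sp \otimes \omega$. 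So I would try to show that the composite obstruction only sees the $\Sp\otimes\omega$-part, and that this part is exactly hit by $\Hom(\PSpw,\PSp)$ coming from $\Ptriv$.

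An alternative for surjectivity, which I would try first, is to generalize the argument of Corollary \ref{End-is-R}. Compare ranks over $K = \on{Frac}(R)$ and use depth: after inverting $R$-torsion, $\Mone$ dies (Lemma \ref{lem:1-mone}(c)), so $\alpha$ becomes an isomorphism over $K$ and the map is an isomorphism generically. The cokernel $C$ of $\Hom(\Ptriv,\PSp) \hookrightarrow S_2$ then embeds into $\Ext^1(\Mone,\PSp)$, which is a module over $\O\lb x,y\rb = \End(\Mone)$ and so is supported in dimension at most $3$ on $\Spec R$. Since $S_2$ has depth at least $3$ (Corollary \ref{cor:si-depth}), I would try to show that $\Hom(\Ptriv,\PSp)$ is reflexive, or at least satisfies Serre's $S_2$ condition, so that a map which is an isomorphism in codimension $\le 1$ is an isomorphism. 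The expected main obstacle is precisely controlling this $\Ext^1(\Mone,\PSp)$ term: either proving it vanishes, or proving its support has codimension at least $2$ while $\Hom(\Ptriv,\PSp)$ has enough depth.
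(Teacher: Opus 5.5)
You reduce injectivity correctly to $R$-torsion-freeness of $\Hom(\Ptriv,\PSp)$ via Lemma \ref{lem:1-mone}(c). There is, however, a genuine gap in each half.

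\textbf{Surjectivity is not proved.} Both of your routes end in ``I would try''.

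The depth/reflexivity route cannot close as stated. $\Ext^1(\Mone,\PSp)$ is killed by the kernel $\mathfrak{q}$ of $R\twoheadrightarrow\End(\Mone)\simeq\O\lb x,y\rb$. Since $\dim\O\lb x,y\rb=3=\dim R-1$, the prime $\mathfrak{q}$ has height one. So a possible cokernel sits exactly in codimension one, where reflexivity or Serre-$S_2$ arguments say nothing.

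The paper itself shows the method fails. In the exact sequence $0\to\Hom(\PSp,\PSpw)\to\Hom(\PSp,\Ptriv)\to\Hom(\PSp,\Mone)\to0$, both left-hand modules have depth $\ge 3$ (Corollary \ref{cor:si-depth}, Proposition \ref{prop:Hom-depths}), and the first map is a generic isomorphism. Yet the cokernel $\O\lb x,y\rb$ is nonzero.

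Your d\'evissage route is also only a hope. $\tilde J_{\Sp}$ has infinitely many subquotients, including copies of $\Sp\otimes\omega$, which is exactly where Lemma \ref{lem:exts-to-indJ1} gives a nonzero $\Ext^1$. You give no reason the connecting map vanishes.

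\textbf{The missing idea} is to factor through the lift $\tilde{\Sp}^\vee$ rather than analyze $\Mone$. Lemma \ref{lem:Sp-lift}(b) gives an exact sequence $\Ptriv^{\oplus 2}\xrightarrow{(\gamma_1,\gamma_2)}\PSp\to\tilde{\Sp}^\vee\to 0$. Any $\varphi\colon\PSpw\to\PSp$ composes to zero in $\tilde{\Sp}^\vee$: the only irreducible subquotient of $\tilde{\Sp}^\vee$ is $\Sp^\vee$, while every nonzero quotient of $\PSpw$ surjects onto $(\Sp\otimes\omega)^\vee$. So by projectivity $\varphi=(\gamma_1,\gamma_2)\circ\psi$ for some $\psi\colon\PSpw\to\Ptriv^{\oplus 2}$. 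By Proposition \ref{prop:hom-identifications}(a), each component of $\psi$ is $\alpha\circ r_i=r_i\circ\alpha$ with $r_i\in R$. Hence $\varphi=(r_1\gamma_1+r_2\gamma_2)\circ\alpha$.

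\textbf{Injectivity also has a hole.} ``$\End(\PSp)\simeq R$ is a domain, so $r\neq 0$ acts monomorphically on $\PSp$'' does not follow. The kernel of $r$ need only receive nonzero maps from $\Ptriv$ or $\Pw$, and such maps give no contradiction with the domain property. In fact, since $\End(\PSp)$ and $\Hom(\PSpw,\PSp)$ are already torsion-free, your claim is equivalent to torsion-freeness of $\Hom(\Ptriv,\PSp)$ and $\Hom(\Pw,\PSp)$. That is what you need to prove, and an unspecified ``flatness result'' does not supply it.

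The paper instead applies $\Hom(-,\PSp)$ to the partial resolution $\PSpw\oplus\PSp\to\Ptriv\to\O\to 0$. It then uses $\Hom(\O,\PSp)=0$ (Proposition \ref{prop:no-socle}) to embed $\Hom(\Ptriv,\PSp)$ into the torsion-free module $S_2\oplus R$.
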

\begin{proof}
     We use the resolution for $\tilde{\on{Sp}}^\vee$ from Lemma \ref{lem:Sp-lift}(b);
    let $\tilde{P}_{\on{Sp}^\vee, 0}$ denote the kernel of $\tilde{P}_{\on{Sp}^\vee} \to \tilde{\on{Sp}}^\vee$, and let $\varphi: \tilde{P}_{(\on{Sp} \otimes \omega)^\vee} \to \tilde{P}_{\on{Sp}^\vee}$ be an arbitrary morphism. Since every irreducible subquotient of $\tilde{\on{Sp}}^\vee$ is isomorphic to $\on{Sp}^\vee$, the image of $\varphi$ is contained in $\tilde{P}_{\on{Sp}^\vee, 0}$. By exactness, the image of $(\gamma_1, \gamma_2): \Ptriv^{\oplus 2} \to \PSp$ from Lemma \ref{lem:Sp-lift}(b) is $\tilde{P}_{\on{Sp}^\vee, 0}$. Consider the diagram
    \begin{center}
        \begin{tikzcd}
            & \PSpw \ar[d, "\varphi"] \ar[ld, dashed, swap,  "\psi"] & \\
            \Ptriv^{\oplus 2} \ar[r, two heads, "{(\gamma_1, \gamma_2)}"] & {\tilde{P}_{\on{Sp}^\vee, 0}}
        \end{tikzcd}
    \end{center}
   By projectivity, we obtain a morphism $\psi: \PSpw \to \Ptriv^{\oplus 2}$ completing the diagram. We know already that $\End(\PSpw) = \Hom(\PSpw, \Ptriv) \simeq R$ by Proposition \ref{prop:hom-identifications}, so the projection of $\psi$ to either factor of $\tilde{P}_{1^\vee}^{\oplus 2}$ must be of the form $\alpha \circ r = r \circ \alpha$ for some $r \in R = Z(\mathfrak{C}(\O)) = \End(\PSpw) = \End(\Ptriv)$. Hence, $\varphi \in \Hom(\tilde{P}_{(\on{Sp} \otimes \omega)^\vee}, \tilde{P}_{\on{Sp}})$ is of the form $$\varphi = r_1(\gamma_1 \circ \alpha) + r_2 (\gamma_2 \circ \alpha) = (r_1 \gamma_1 + r_2 \gamma_2) \circ \alpha$$ for some $r_1, r_2 \in R$, so the map $\Hom(\Ptriv, \PSp) \to \Hom(\PSpw, \PSp)$ induced by precomposition with $\alpha$ is surjective.

   Applying $\Hom(-, \PSp)$ to the first row of Diagram \ref{seq:main} yields an exact sequence
    \begin{center}
    \begin{tikzcd}
        0 \ar[r] & \Hom(\Mone, \PSp) \ar[r] & \Hom(\Ptriv, \PSp) \ar[r, "- \circ \alpha"] & \Hom(\PSpw, \PSp)
    \end{tikzcd}
    \end{center}
    so injectivity of the map $- \circ \alpha: \Hom(\Ptriv, \PSp) \to \Hom(\PSpw, \PSp)$ is equivalent to vanishing of $\Hom(\Mone, \PSp)$. This module is $R$-torsion by Lemma \ref{lem:1-mone}(c), so vanishing follows if we can show that $\Hom(\Ptriv, \PSp)$ is $R$-torsion-free. 
    
    Again by Table \ref{table:ext1} and \cite[Corollary 5.21, Remark 10.11]{paskunas13}, we have the start of a projective resolution for $\O$:
    \begin{center}
        \begin{tikzcd}
          \PSpw \oplus \PSp \ar[r] & \Ptriv \ar[r] & \O.
        \end{tikzcd}
    \end{center}
    Applying $\Hom(-, \PSp)$ to this yields an exact sequence
\begin{align*}
          0 \to \Hom(\O, \PSp) \to \Hom(\Ptriv, \PSp) \to \Hom(\PSpw, \PSp) \oplus \End(\PSp).
\end{align*}
    The first term vanishes by Proposition \ref{prop:no-socle}, so we conclude that $\Hom(\Ptriv, \PSp)$ embeds into $\Hom(\PSpw, \PSp) \oplus \End(\PSp)$, which is the torsion-free $R$-module $S_{2} \oplus R$ by Corollary \ref{cor:spw-sp-homs}. Hence $\Hom(\Ptriv, \PSp)$ is also $R$-torsion-free, as desired. 
\end{proof}

Let $K := \on{Frac}(R)$. We say that an $R$-module $M$ has \emph{rank} $n$ if $\dim_K (M \otimes_R K) = n$. Note that this notion is distinct from the minimum number of generators for $M$.

\begin{proposition}\label{prop:homs-are-ideals}
    For each $\pi, \tau \in \mathfrak{B}$, the $R$-module $\Hom(\Ppi, \Ptau)$ is a rank $1$ torsion-free $R$-module.
\end{proposition}
\begin{remark}
    Since $R$ is an integral domain, being torsion-free of rank $1$ is equivalent to being isomorphic to a nonzero ideal of $R$. 
\end{remark}
\begin{proof}
    This is true when $\pi = \tau$, since $\Hom(\Ppi, \Ppi) = R$ by Corollary \ref{End-is-R}. It is also true for $\Ppi, \Ptau \in \{\PSpw, \PSp\}$, since these Hom modules are isomorphic to $R, S_{2},$ or $S_{-2}$ by Corollary \ref{cor:spw-sp-homs}, which can be shifted to a nonzero ideal of $R$ by multiplication by an element in $S$ of appropriate degree (since $S$ is an integral domain). We can then deduce the claim for $\Hom(\Ptriv, \PSp)$ from Lemma \ref{lem:Hom-1-Sp}, and Proposition \ref{prop:hom-identifications} lets us deduce the claim for $\Hom(\Ptriv, \PSpw)$ and $\Hom(\PSpw, \Ptriv)$. The claim for $\Hom(\Pw, \Ptriv)$ also follows from Proposition \ref{prop:hom-identifications} and the fact that $$\Hom(\Pw, \PSpw) \simeq \Hom(\Ptriv, \PSp)$$ by twisting.  

    This covers all cases up to twist except for the module $$\Hom(\PSp, \Ptriv)\simeq \Hom(\PSpw, \Pw).$$ Apply $\Hom(\PSp, -)$ to the first row of Diagram \ref{seq:main} to get an exact sequence
    \begin{center}
    \begin{tikzcd}
        0 \ar[r] & \Hom(\PSp, \PSpw) \ar[r] & \Hom(\PSp, \Ptriv) \ar[r] & \Hom(\PSp, \Mone) \ar[r] & 0. 
    \end{tikzcd}
    \end{center}
    We may tensor this sequence of $R$-modules with $K = \on{Frac}(R)$ to get an exact sequence of $K$-vector spaces
    \begin{center}
    \begin{tikzcd}
        0 \ar[r] & \Hom(\PSp, \PSpw)_K \ar[r] & \Hom(\PSp, \Ptriv)_K \ar[r] & \Hom(\PSp, \Mone)_K \ar[r] & 0. 
    \end{tikzcd}
    \end{center}
    We already concluded that the first term is isomorphic to $K$, and the last term is $0$ by Lemma \ref{lem:1-mone}(c). Hence $\Hom(\PSp, \Ptriv)_K \simeq K$. Finally, we check that $\Hom(\PSp, \Ptriv)$ has no $R$-torsion: apply $\Hom(-, \Ptriv)$ to the exact sequence in Lemma \ref{lem:Sp-lift}(b) to get an exact sequence
    \begin{center}
    \begin{tikzcd}
        0 \ar[r] & \Hom(\tilde{\Sp}^\vee, \Ptriv) \ar[r] & \Hom(\PSp, \Ptriv) \ar[r] & \Hom(\Ptriv, \Ptriv^{\oplus 2})
    \end{tikzcd}
    \end{center}
    Since $\Hom(\tilde{\Sp}^\vee, \Ptriv) = 0$ by Proposition \ref{prop:no-socle}, we conclude that $\Hom(\PSp, \Ptriv)$ embeds into $\End(\Ptriv)^{\oplus 2} = R^{\oplus 2}$ and is therefore $R$-torsion-free. 
\end{proof}
\begin{lemma}\label{lem:same-center}
    Let $\varphi \in \Hom(\Ppi, \Ptau)$, $\psi \in \Hom(\Ptau, \Ppi)$. Then $\varphi \circ \psi$ and $\psi \circ \varphi$ map to the same element of $R$ under the isomorphisms $R \simeq Z(\mathfrak{C}(\O)) \simeq \End(\Ppi) \simeq \End(\Ptau)$. 
\end{lemma}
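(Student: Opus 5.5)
Write $r_\pi \in R$ for the element with $r_\pi \cdot \on{id}_{\Ppi} = \psi\circ\varphi \in \End(\Ppi)$, and $r_\tau \in R$ for the element with $r_\tau\cdot\on{id}_{\Ptau} = \varphi\circ\psi \in \End(\Ptau)$. Both exist by Corollary \ref{End-is-R}. The plan is to compare these two central elements after composing once more with $\varphi$, and then cancel $\varphi$ using torsion-freeness from Proposition \ref{prop:homs-are-ideals}.

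The key identity is associativity together with centrality of the $R$-action (Corollary \ref{cor:E-vs-E}). On one hand,
\begin{align*}
\varphi\circ(\psi\circ\varphi) = \varphi\circ (r_\pi \cdot \on{id}_{\Ppi}) = r_\pi\cdot \varphi .
\end{align*}
On the other hand,
\begin{align*}
(\varphi\circ\psi)\circ\varphi = (r_\tau\cdot\on{id}_{\Ptau})\circ\varphi = r_\tau\cdot\varphi .
\end{align*}
Here we use that the $R$-module structures on $\Hom(\Ppi,\Ptau)$ induced from the source and from the target agree, since $R$ is the Bernstein center. This gives $(r_\pi - r_\tau)\varphi = 0$ in $\Hom(\Ppi,\Ptau)$. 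The same argument applied to $\psi$ gives $(r_\pi - r_\tau)\psi = 0$ in $\Hom(\Ptau,\Ppi)$.

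If $\varphi \neq 0$, torsion-freeness of $\Hom(\Ppi,\Ptau)$ as an $R$-module (Proposition \ref{prop:homs-are-ideals}) forces $r_\pi = r_\tau$, since $R$ is a domain. If $\varphi = 0$, then both composites are zero, so $r_\pi = r_\tau = 0$ directly. In either case the two compositions correspond to the same element of $R$.

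The only point needing care is that the identifications $R \simeq \End(\Ppi)$ and $R\simeq\End(\Ptau)$ are both given by the action of the same central ring $Z(\mathfrak{C}(\O))$, rather than by some unrelated abstract isomorphisms. This holds by Corollary \ref{End-is-R}, since both maps are induced from $R \simeq Z(\mathfrak{C}(\O))$. Beyond this compatibility, I expect no real obstacle.
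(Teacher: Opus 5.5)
Your proof is correct and matches the paper's argument: compose $\varphi$ with $\psi\circ\varphi$ and with $\varphi\circ\psi$, use centrality of the $R$-action to get $(r_\pi - r_\tau)\varphi = 0$, and cancel using torsion-freeness from Proposition~\ref{prop:homs-are-ideals}, treating $\varphi = 0$ separately. Your version is also cleaner than the written one in the paper, which has small typos in the indices and compositions.
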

\begin{proof}
    Let $r, r'$ be the elements of $R \simeq Z(\mathfrak{C}(\O))$ restricting to $\varphi \circ \tau, \tau \circ \varphi$, respectively. Then: 
    \begin{align*}
        0 &= (\varphi \circ \psi - r) \circ \on{id}_{\Ptau}  \\
        &= (\varphi \circ \psi - r) \circ \varphi\\
        &= \varphi \circ \psi \circ \varphi - r \varphi \\
        &= \varphi \circ r' - r \circ \varphi \\
        &= \varphi \circ (r' - r)
    \end{align*}
    using centrality for the last step. If $\varphi = 0$, then clearly $r = r' = 0$. Otherwise, since $\Hom(\Ppi, \Ptau)$ is $R$-torsion-free by Proposition \ref{prop:homs-are-ideals}, we conclude that $r = r'$.  
\end{proof}

\begin{lemma}\label{lem:non-degen-pairing}
    For any $\Ppi, \Ptau \in \{\PSpw, \PSp, \Ptriv, \Pw\}$, the composition map $\Hom(\Ppi, \Ptau) \otimes_R \Hom(\Ptau, \Ppi) \to \End(\Ppi)$ is nonzero. 
\end{lemma}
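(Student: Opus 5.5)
We need to show that for each pair of irreducibles $\pi, \tau \in \mathfrak{B}$ there exist $\varphi \in \Hom(\Ppi, \Ptau)$ and $\psi \in \Hom(\Ptau, \Ppi)$ with $\psi \circ \varphi \neq 0$. By Lemma \ref{lem:same-center} it does not matter which order we compose in. We will proceed case by case, up to twisting by $\omega$, and at each step use the fact that all Hom modules are torsion-free of rank $1$ (Proposition \ref{prop:homs-are-ideals}). Because $R$ is a domain, once nonzero morphisms are in hand, a composite of nonzero maps can only vanish for structural reasons; we rule these out by factoring through pairs already handled.

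\textbf{Base case.} For $\pi = \pi'$ the claim is trivial, since $\End(\Ppi) = R$. For $\{\pi, \tau\} = \{\Sp \otimes \omega, \Sp\}$ we use the GMA isomorphism of Corollary \ref{cor:spw-sp-homs}. Under it, the composition $\Hom(\PSpw, \PSp) \otimes \Hom(\PSp, \PSpw) \to R$ becomes the multiplication $S_2 \otimes_R S_{-2} \to S_0 = R$. This is nonzero because $S$ is a domain (Proposition \ref{prop:alg-properties}(b)) and $S_{\pm 2} \neq 0$; for instance $b_0 c_0 \neq 0$.

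\textbf{Pairs involving $\Ptriv$.} First consider $(\Ptriv, \PSpw)$. Take $\alpha \in \Hom(\PSpw, \Ptriv)$ and any nonzero $\psi \in \Hom(\Ptriv, \PSpw)$; the latter is nonzero because the module has rank $1$. Then $\alpha \circ \psi \neq 0$, because postcomposition with $\alpha$ is injective (Proposition \ref{prop:hom-identifications}(c)).

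Next consider $(\Ptriv, \PSp)$. Choose $u \in \Hom(\PSpw, \PSp)$ and $v \in \Hom(\PSp, \PSpw)$ with $v \circ u = r \neq 0$, as in the base case. By Lemma \ref{lem:Hom-1-Sp}, $u = \varphi \circ \alpha$ for some $\varphi \in \Hom(\Ptriv, \PSp)$. Then
\[
(\alpha \circ v) \circ \varphi \circ \alpha = \alpha \circ r = r\alpha \neq 0,
\]
using that $\alpha$ is injective and that $R$ acts torsion-freely on $\Hom(\PSpw, \Ptriv) \simeq R$. Hence $(\alpha \circ v) \circ \varphi \neq 0$ in $\End(\Ptriv)$.

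\textbf{Pairs involving $\Pw$.} The pairs $(\Pw, \PSp)$ and $(\Pw, \PSpw)$ follow from the two previous cases by twisting by $\omega$, which swaps $1 \leftrightarrow \omega$ and $\Sp \leftrightarrow \Sp \otimes \omega$.

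The remaining pair is $(\Ptriv, \Pw)$, which I expect to be the main obstacle, since there is no direct resolution linking these two. The plan is to route through $\PSpw$. By Proposition \ref{prop:hom-identifications}(b), $\Hom(\Pw, \PSpw) \simeq \Hom(\Pw, \Ptriv)$ via $\alpha \circ -$. By the twisted version of the $(\Ptriv, \PSp)$ case, there exist $f \in \Hom(\Pw, \PSpw)$ and $g \in \Hom(\PSpw, \Pw)$ with $f \circ g = s \neq 0$ in $\End(\PSpw)$. Similarly, the $(\Ptriv, \PSpw)$ case gives $h \in \Hom(\Ptriv, \PSpw)$ with $\alpha \circ h = t \neq 0$ in $\End(\Ptriv)$. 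Set
\[
\varphi := \alpha \circ f \in \Hom(\Pw, \Ptriv), \qquad \psi := g \circ h \in \Hom(\Ptriv, \Pw).
\]
Then $\varphi \circ \psi = \alpha \circ (f \circ g) \circ h = \alpha \circ s \circ h = s\,(\alpha \circ h) = st$, using centrality of $s$. This is nonzero because $R$ is a domain. The only care needed is bookkeeping: checking that each factorization uses Hom modules in the correct direction.
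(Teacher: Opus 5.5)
Your proof is correct, and its overall structure matches the paper's. Both arguments use Lemma \ref{lem:same-center} to ignore the order of composition and use twisting by $\omega$. Both treat the diagonal trivially, get $(\Ptriv,\PSpw)$ from injectivity of $\alpha\circ-$, and get $(\PSpw,\PSp)$ from the GMA description together with $S$ being a domain.

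The routes differ in two cases.

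For $(\Ptriv,\PSp)$, the paper uses the partial resolution $\Ptriv^{\oplus 2}\to\PSp\to\tilde{\Sp}^\vee$ of Lemma \ref{lem:Sp-lift} and Proposition \ref{prop:no-socle}. These give an injection $\Hom(\PSp,\Ptriv)\hookrightarrow\End(\Ptriv)^{\oplus 2}$, so precomposing with one of the two components is already nonzero. You instead bootstrap from the $(\PSpw,\PSp)$ case. You lift $u$ through $\alpha$ using only the surjectivity half of Lemma \ref{lem:Hom-1-Sp}, then detect nonvanishing after precomposing with $\alpha$ via Proposition \ref{prop:hom-identifications}(a).

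For $(\Ptriv,\Pw)$, both arguments use a map $\Pw\to\PSpw\xrightarrow{\alpha}\Ptriv$. The paper shows that $-\circ\alpha$ is injective on $\Hom(\Ptriv,\Pw)$, which needs the torsion statement of Lemma \ref{lem:1-mone}(c), so any nonzero partner works. You build the partner $g\circ h$ explicitly and compute the composite as $st$ using centrality, so you only need that $R$ is a domain. One citation in that case is idle: your appeal to Proposition \ref{prop:hom-identifications}(b) is never actually used.

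What each buys: your version reduces every case to the GMA case, the $\alpha$-identifications, and the integrality of $R$ and $S$, with explicit witnesses. It avoids both the $\tilde{\Sp}^\vee$ resolution and the $\Mone$-torsion argument. The paper's version phrases each case as a single fixed morphism inducing a nonzero (hence, by rank one, injective) composition map, which is the form of its preliminary reduction. Given Proposition \ref{prop:homs-are-ideals}, the two conclusions are equivalent.
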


\begin{proof}
    We first note that since all of modules $\Hom(\Ppi, \Ptau)$ are torsion-free of rank 1 by Proposition \ref{prop:homs-are-ideals}, any nonzero $R$-module homomorphism between them must be an injection. Moreover, to prove the claim, it suffices to show that there is a either nonzero morphism $\Hom(\Ppi, \Ptau) \to \End(\Ppi)$ induced by postcomposing with a particular map $\Ptau \to \Ppi$ or that there is a nonzero morphism $\Hom(\Ptau, \Ppi) \to \End(\Ppi)$ induced by precomposing with a particular map $\Ptau \to \Ppi$. Finally, if $\psi: \Ppi \to \Ptau, \varphi: \Ptau \to \Ppi$, we note that Lemma \ref{lem:same-center} implies that if $\varphi \circ \psi \neq 0$, then $\psi \circ \varphi \neq 0$ too, so the order of $\Ppi$ and $\Ptau$ does not matter for verifying the claim.

    We verify the claim for each pair $\pi, \tau \in \mathfrak{B}$, using the above observations to reduce the casework.  
    \begin{itemize}
    \item $\Ppi = \Ptau$: The claim is clearly true due to the existence of the identity map. 
    \item $\Ppi = \Ptriv, \Ptau = \PSpw$: Proposition \ref{prop:hom-identifications}(a) shows that the map $\alpha$ induces an injection $\Hom(\Ptriv, \PSpw) \to \End(\Ptriv)$. 
        \item $\Ppi = \Ptriv, \Ptau = \PSp$: The exact sequence at the end of the proof of Proposition \ref{prop:homs-are-ideals} shows that there exist a pair of maps $\Ptriv \to \PSp$ inducing an injection $\Hom(\PSp, \Ptriv) \to \End(\Ptriv^{\oplus 2})$ by precomposition, so at least one of the corresponding maps $\Hom(\PSp, \Ptriv) \to \End(\Ptriv)$ must be nonzero. 
        \item $\Ppi = \Pw, \Ptau = \Ptriv$: Applying $\Hom(-, \Pw)$ to the first row of Diagram \ref{seq:main} gives an exact sequence
        \begin{center}
            \begin{tikzcd}
               0 \ar[r] & \Hom(\Mone, \Pw) \ar[r] & \Hom(\Ptriv, \Pw) \ar[r] & \Hom(\PSpw, \Pw).
            \end{tikzcd}
        \end{center}
        Since $\Hom(\Mone, \Pw)$ is $R$-torsion and $\Hom(\Ptriv, \Pw)$ has no $R$-torsion, the former must be $0$. Hence precomposition with $\alpha$ induces an injection $$\Hom(\Ptriv, \Pw) \hookrightarrow \Hom(\PSpw, \Pw).$$ From the previous case, we know that we can choose a morphism $\varphi: \Ptriv \to \PSp$ inducing an injection $\Hom(\PSp, \Ptriv) \to \End(\Ptriv)$, so that $\varphi \otimes \omega$ induces an injection $\Hom(\PSpw, \Pw) \to \End(\Pw)$. Hence the composition $(\varphi \otimes \omega)\circ \alpha$ induces an injection $\Hom(\Ptriv, \Pw) \to \End(\Pw)$ by precomposition. 
        \item $\Ppi = \PSpw, \Ptau = \PSp$: This case follows from the identification $$\End(\PSpw \oplus \PSp) \simeq \begin{bmatrix}
            R & S_2 \\ S_{-2} & R
        \end{bmatrix}$$ and the fact that the image of the cross-multiplication is the (nonzero) reducibility ideal $(b_0c_0, b_0c_1, b_1c_0, b_1c_1)$ in $R$. 
    \end{itemize}
    The claim for all other pairs $\Ppi, \Ptau$ follows from the above cases by twisting and/or swapping the arguments. 
\end{proof}

We arrive at the main result of this section, which strongly constrains the behavior of $\tilde{E}_{\mathfrak{B}} = \End\left(\bigoplus_{\pi \in \mathfrak{B}} \Ppi \right)$.
\begin{proposition}\label{cor:embed-matrix}
    Let $K = \on{Frac}(R)$. Then $\tilde{E}_{\mathfrak{B}}$ embeds into $M_{4 \times 4}(K)$ as an $R$-algebra.
\end{proposition}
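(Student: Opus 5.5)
The embedding comes from tensoring with $K$: show that $\tilde{E}_{\mathfrak{B}} \otimes_R K$ is a central simple algebra isomorphic to $M_{4\times 4}(K)$, and that $\tilde{E}_{\mathfrak{B}}$ injects into it.

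\emph{Injectivity.} Write
\[
\tilde{E}_{\mathfrak{B}} = \bigoplus_{\pi,\tau\in\mathfrak{B}} \Hom(\Ppi,\Ptau).
\]
By Proposition \ref{prop:homs-are-ideals} each summand is torsion-free of rank $1$. Hence $\tilde{E}_{\mathfrak{B}}$ is a torsion-free $R$-module of rank $16$, and the natural map $\tilde{E}_{\mathfrak{B}} \to \tilde{E}_{\mathfrak{B},K} := \tilde{E}_{\mathfrak{B}}\otimes_R K$ is an injective $R$-algebra homomorphism. Since $R$ is central, $\tilde{E}_{\mathfrak{B},K}$ is a $16$-dimensional $K$-algebra. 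It carries the four orthogonal idempotents $e_\pi = \on{id}_{\Ppi}$, which sum to $1$, and each corner $e_\tau \tilde{E}_{\mathfrak{B},K} e_\pi = \Hom(\Ppi,\Ptau)_K$ is one-dimensional.

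\emph{Building matrix units.} For each $\pi$, fix a nonzero $u_\pi \in \Hom(\PSpw,\Ppi)_K$ and a nonzero $v_\pi \in \Hom(\Ppi,\PSpw)_K$. By Lemma \ref{lem:non-degen-pairing}, composition $\Hom(\Ppi,\Ptau)\otimes_R\Hom(\Ptau,\Ppi)\to\End(\Ppi)=R$ is nonzero. Since both factors have rank $1$ and $R$ is a domain, after tensoring with $K$ this pairing becomes $K\otimes K\to K$, nonzero and therefore perfect. Thus $v_\pi u_\pi = c_\pi\,\on{id}_{\PSpw}$ with $c_\pi\in K^\times$, and we rescale $v_\pi$ so that $c_\pi=1$. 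By Lemma \ref{lem:same-center}, $u_\pi v_\pi$ then equals $1\cdot\on{id}_{\Ppi}$.

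Set $e_{\tau\pi} := u_\tau v_\pi \in \Hom(\Ppi,\Ptau)_K$. These satisfy $e_{\tau\pi}e_{\pi\sigma} = u_\tau (v_\pi u_\pi) v_\sigma = e_{\tau\sigma}$, the products $e_{\tau\pi}e_{\pi'\sigma}$ vanish for $\pi\neq\pi'$ (non-composable morphisms), and $e_{\pi\pi} = \on{id}_{\Ppi}$. Each $e_{\tau\pi}$ is nonzero, since $v_\tau e_{\tau\pi} u_\pi = \on{id}_{\PSpw}$, so it spans the one-dimensional corner $\Hom(\Ppi,\Ptau)_K$. Hence $\{e_{\tau\pi}\}$ is a full system of $4\times4$ matrix units spanning $\tilde{E}_{\mathfrak{B},K}$, which gives $\tilde{E}_{\mathfrak{B},K}\simeq M_{4\times4}(K)$. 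Composing with the injection above proves the proposition.

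\emph{Main obstacle.} The substantive input is the non-degeneracy of the composition pairing over $K$. Without it, some composite $v_\pi u_\pi$ could vanish and the algebra would only be a triangular-type subalgebra of $M_4(K)$. This input is supplied by Lemma \ref{lem:non-degen-pairing} together with Lemma \ref{lem:same-center}, so what remains is bookkeeping: routing every matrix unit through the fixed base object $\PSpw$ ensures the units are consistent without a cocycle check.
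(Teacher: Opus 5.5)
Your proof is correct, but it reaches $\tilde{E}_{\mathfrak{B}} \otimes_R K \simeq M_{4\times 4}(K)$ by a different route than the paper.

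The paper argues abstractly in three steps:
\begin{enumerate}
\item $\tilde{E}_K$ is central over $K$ because $R = Z(\tilde{E}_{\mathfrak{B}})$.
\item It is simple by Lemma~\ref{lem:non-degen-pairing} together with the four idempotents $\on{id}_{\Ppi}$.
\item Artin--Wedderburn then forces a $16$-dimensional central simple $K$-algebra with four orthogonal idempotents to be $M_{4\times 4}(K)$.
\end{enumerate}

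You instead write down explicit matrix units $e_{\tau\pi} = u_\tau v_\pi$, all routed through the base object $\PSpw$. This buys several things:
\begin{itemize}
\item It needs neither the centrality of $\tilde{E}_K$ nor the classification of central simple algebras.
\item It spells out the content behind the paper's rather terse ``hence simple'' step.
\item It uses less of Lemma~\ref{lem:non-degen-pairing}: only the pairings with $\PSpw$ on one side. The remaining pairings then follow formally from the matrix-unit relations.
\item Your isomorphism automatically sends each $\on{id}_{\Ppi}$ to a standard idempotent and each $\Hom(\Ppi,\Ptau)$ into a single matrix entry. This is exactly the normalization the paper only obtains afterwards, by adjusting the embedding in Remark~\ref{rem:rigidity}.
\end{itemize}
The paper's version is shorter; yours is more explicit and self-contained.

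One small point: Lemma~\ref{lem:same-center} is stated for morphisms over $R$, so applying it to $u_\pi, v_\pi$ over $K$ requires clearing denominators. Alternatively, you can avoid it. Once $v_\pi u_\pi = 1$, the element $u_\pi v_\pi$ is idempotent in the field $\End(\Ppi)_K = K$. It is also nonzero, since $v_\pi (u_\pi v_\pi) u_\pi = 1$, so it must equal $1$.
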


\begin{proof}
    Proposition \ref{prop:homs-are-ideals} implies that $\tilde{E}_K := \tilde{E}_{\mathfrak{B}} \otimes_R K$ is a $16$-dimensional $K$-algebra.  Since $\tilde{E}_{\mathfrak{B}}$ is $R$-torsion-free, we have a natural injection $\tilde{E}_{\mathfrak{B}} \hookrightarrow \tilde{E}_K$, so it suffices to prove that the latter is isomorphic to $M_{4 \times 4}(K)$ as a $K$-algebra. 
    
    Since $R = Z(\tilde{E}_{\mathfrak{B}})$, we conclude that $\tilde{E}_K$ is a central $K$-algebra. Lemma \ref{lem:non-degen-pairing}, along with the existence of the four idempotents in $\tilde{E}_K$ coming from the identity maps on each $\Ppi$, implies that this algebra must be simple. By the Artin-Wedderburn theorem, the only central simple $K$-algebra of dimension $16$ with a system of four orthogonal idempotents is $M_{4 \times 4}(K)$. 
\end{proof}
\begin{remark}\label{rem:rigidity}
    We may adjust the embedding given in Corollary \ref{cor:embed-matrix} to send the four idempotents $\on{id}_{\Ppi}$ in $\tilde{E}_\mathfrak{B}$ to the four standard idempotents in $M_{4\times 4}(K)$, so that the embedding preserves the decomposition $$\tilde{E}_{\mathfrak{B}} \simeq \bigoplus_{\pi, \tau \in \mathfrak{B}} \Hom(\Ppi, \Ptau)$$ by mapping each component $\Hom(\Ppi, \Ptau)$ into a particular matrix entry, yielding a specific choice of embedding $\Hom(\Ppi, \Ptau) \hookrightarrow K$. Fixing one such embedding $\tilde{E}_{\mathfrak{B}}\hookrightarrow M_{4 \times 4}(K)$ once and for all, we may identify a given $\varphi \in \Hom(\Ppi, \Ptau)$ with a fixed element $x_{\varphi}\in K$, and moreover \emph{any} (post- or pre-!) composition map between Hom modules induced by $\varphi$ may be identified with multiplication by this same element $x_{\varphi}$. 
\end{remark}

\begin{remark}
    While sufficient for our purposes, a choice of embedding from Corollary \ref{cor:embed-matrix} does not seem very natural. One might hope to write down a more canonical-looking embedding involving the ring $S$, e.g. \`a la \cite[Remark 3.6.26]{DEG26}. 
\end{remark}

\subsection{Depths of Hom modules}

A key input for our computation of $\tilde{E}_{\mathfrak{B}}$ will be that the $R$-depths of the modules $\Hom(\Ppi, \Ptau)$ are sufficiently large. We recall from Proposition \ref{prop:alg-properties} that $R$ has depth at least $3$. 

\begin{lemma}\label{lemma:depth-lemma}
    (Depth lemma.) Let $R$ be a noetherian local ring and let $M$ be a finitely generated $R$-module. Let
    $$0 \to N \to M \to P \to 0$$
    be an exact sequence of $R$-modules. Then:
    \begin{enumerate}[(a)]
        \item $\on{depth}_R(N) \geq \min\{\on{depth}_R(M), \on{depth}_R(P) + 1 \}$.
        \item $\on{depth}_R(M) \geq \min\{\on{depth}_R(N), \on{depth}_R(P)\}$.
        \item $\on{depth}_R(P) \geq \min\{\on{depth}_R(M), \on{depth}_R(N) - 1\}$.
    \end{enumerate}
\end{lemma}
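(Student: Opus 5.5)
This is the standard depth lemma. The plan is to derive all three inequalities from the long exact sequence of $\Ext_R^i(k,-)$, using the characterization
\[
\on{depth}_R(X)=\min\{i : \Ext^i_R(k,X)\neq 0\}
\]
for a finitely generated module $X$ over the noetherian local ring $(R,\mideal,k)$. For this to apply to all three modules, I will assume (as the statement implicitly intends) that $N$ and $P$ are also finitely generated. For $N$ this follows from $M$ being finitely generated and $R$ noetherian; for $P$ it is immediate as a quotient of $M$. I also adopt the convention that the zero module has depth $\infty$, which is consistent with the characterization.

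First I will recall why the characterization holds. If $x\in\mideal$ is $X$-regular, the sequence $0\to X\xrightarrow{x} X\to X/xX\to 0$ gives a long exact sequence in $\Ext_R^\bullet(k,-)$. Since $x$ annihilates $k$, multiplication by $x$ acts as zero on each $\Ext^i_R(k,X)$, which yields short exact pieces
\[
0\to \Ext^i_R(k,X)\to \Ext^i_R(k,X/xX)\to \Ext^{i+1}_R(k,X)\to 0 .
\]
By induction on the length of a maximal regular sequence, this reduces to the base case of depth $0$. In that case $\mideal\in\on{Ass}(X)$, so $\Hom_R(k,X)\neq 0$. This is the standard Rees argument, which I would cite rather than reprove.

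Then I write the long exact sequence
\[
\cdots\to\Ext^{i-1}_R(k,P)\to\Ext^i_R(k,N)\to\Ext^i_R(k,M)\to\Ext^i_R(k,P)\to\Ext^{i+1}_R(k,N)\to\cdots
\]
and read each inequality off from exactness at the appropriate spot.

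\textbf{(b)} For $i<\min\{\on{depth}N,\on{depth}P\}$, the term $\Ext^i_R(k,M)$ sits between $\Ext^i_R(k,N)$ and $\Ext^i_R(k,P)$, both of which vanish. So $\Ext^i_R(k,M)=0$ in this range.

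\textbf{(a)} For $i<\min\{\on{depth}M,\on{depth}P+1\}$, we have $\Ext^{i-1}_R(k,P)=0$ and $\Ext^i_R(k,M)=0$. Hence $\Ext^i_R(k,N)=0$.

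\textbf{(c)} For $i<\min\{\on{depth}M,\on{depth}N-1\}$, we have $\Ext^i_R(k,M)=0$ and $\Ext^{i+1}_R(k,N)=0$. Hence $\Ext^i_R(k,P)=0$.

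In each case the vanishing of $\Ext^i_R(k,-)$ for all $i$ below the stated bound gives the depth inequality, with the conventions for $\infty$ handled trivially. I expect no real obstacle here. The only point needing care is the finite generation of $N$ and $P$ so that the $\Ext$ characterization of depth applies; with noetherianity this is immediate.
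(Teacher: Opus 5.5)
Your proposal is correct and takes the same approach as the paper, which cites Stacks Tag 00LX and reads off all three inequalities from the long exact $\Ext_R^\bullet(R/\mideal_R,-)$ sequence, using the characterization of depth as the first index with nonvanishing $\Ext^i_R(R/\mideal_R,-)$. Your index checks at each spot of the sequence are right. You also note that $N$ and $P$ are finitely generated, which is needed for that characterization to apply and which the paper leaves implicit.
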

\begin{proof}
    This is  \cite[{Tag 00LX}]{stacks-project}. All of these inequalities follow from the long exact sequence obtained by applying $\Hom(R/\mideal_R, -)$ to the short exact sequence and the characterization of $\on{depth}_R(M)$ as the smallest integer $i$ such that $\Ext_R^i(R/\mideal_R, M) \neq 0$.
\end{proof}
The hypotheses of Lemma \ref{lemma:depth-lemma}
will hold in our applications because Proposition \ref{prop:homs-are-ideals} implies that all the modules $\Hom(\Ppi, \Ptau)$ are finite over $R$ since they are isomorphic to ideals in the noetherian local ring $R$. 
\begin{comment}
\begin{lemma}\label{lem:spw-sp-depth}
    $\Hom(\PSpw, \PSp)$ and $\Hom(\PSp, \PSpw)$ have $R$-depth at least $3$. 
\end{lemma}

\begin{proof}
    By Corollary \ref{cor:spw-sp-homs}, these $R$-modules are isomorphic to $S_2$ and $S_{-2}$, and they are abstractly isomorphic to each other, so it suffices to prove that $S_2$ has depth $4$. 

    We have a presentation
    \begin{center}
        \begin{tikzcd}
            0 \ar[r] &R \ar[r, "\begin{bmatrix} x_1 \\ x_0
            \end{bmatrix}"] & R^{\oplus 2} \ar[r, "(b_0 \; b_1)"] \ar[r] & S_2 \ar[r] & 0
        \end{tikzcd}
    \end{center}
    The depth lemma then immediately gives $\on{depth}_{R} S_2 \geq \on{depth}_R R - 1 = 3$. 
\end{proof}
\end{comment}
\begin{proposition}\label{prop:Hom-depths}
    For any $\pi \in \mathfrak{B}$, the $R$-modules $\Hom(\Ppi, \Ptriv)$ have $R$-depth at least $3$. 
\end{proposition}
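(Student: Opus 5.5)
The plan is a case-by-case check over the four irreducibles $\pi \in \mathfrak{B} = \{1, \Sp \otimes \omega, \omega, \Sp\}$. In each case I would identify $\Hom(\Ppi, \Ptriv)$, or fit it into a short exact sequence, with $R$-modules whose depth is already known: $R$ itself by Proposition \ref{prop:alg-properties}(a), the graded pieces $S_i$ by Corollary \ref{cor:si-depth}, and $\End(\Mone)$. Then I would apply Lemma \ref{lemma:depth-lemma}. Finiteness over $R$, needed for the depth lemma, is supplied by Proposition \ref{prop:homs-are-ideals}.

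\textbf{The three easy cases.}
\begin{enumerate}[(i)]
\item For $\pi = 1$, Corollary \ref{End-is-R} gives $\End(\Ptriv) \simeq R$, which has depth at least $3$.
\item For $\pi = \Sp \otimes \omega$, Proposition \ref{prop:hom-identifications}(a) gives $\Hom(\PSpw, \Ptriv) \simeq \End(\PSpw) \simeq R$.
\item For $\pi = \omega$, Proposition \ref{prop:hom-identifications}(b) gives $\Hom(\Pw, \Ptriv) \simeq \Hom(\Pw, \PSpw)$. Twisting by $\omega$ identifies this with $\Hom(\Ptriv, \PSp)$, and the twist is $R$-linear up to the involution of $R$ induced by twisting, which preserves depth. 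Lemma \ref{lem:Hom-1-Sp} then gives $\Hom(\Ptriv, \PSp) \simeq \Hom(\PSpw, \PSp)$. By Corollary \ref{cor:spw-sp-homs} this is $S_2$, which has depth at least $3$ by Corollary \ref{cor:si-depth}.
\end{enumerate}

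\textbf{The case $\pi = \Sp$.} Here there is no direct identification, so I would use the exact sequence
\begin{align*}
0 \to \Hom(\PSp, \PSpw) \to \Hom(\PSp, \Ptriv) \to \Hom(\PSp, \Mone) \to 0,
\end{align*}
obtained by applying the exact functor $\Hom(\PSp, -)$ (exact since $\PSp$ is projective) to the first row of Diagram (\ref{seq:main}).
\begin{itemize}
\item The left term is $S_{-2}$ by Corollary \ref{cor:spw-sp-homs}, of depth at least $3$ by Corollary \ref{cor:si-depth}.
\item By Lemma \ref{lem:1-mone}(a),(b), the right term is $\End(\Mone) \simeq \O\lb x, y\rb$, with $R$ acting through the continuous surjection $R \twoheadrightarrow \O\lb x,y\rb$.
\item Depth along a surjection of local rings can be computed over the target: a regular sequence in $\O\lb x,y\rb$ lifts to $R$ with the same residue field. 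So the $R$-depth of this module equals its depth as a module over itself. That is $3$, since $\O\lb x,y\rb$ is regular local of dimension $3$.
\end{itemize}
Lemma \ref{lemma:depth-lemma}(b) then gives $\on{depth}_R \Hom(\PSp, \Ptriv) \geq \min\{3, 3\} = 3$.

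The main point requiring care is this last case, in particular making sure the $R$-module structure on $\End(\Mone)$ really factors through the surjection of Lemma \ref{lem:1-mone}(a). This holds because $R$ acts centrally on $\mathfrak{C}(\O)$. It also relies on the identifications in Lemma \ref{lem:1-mone}(b) being $R$-linear, which is how that lemma is stated. The remaining cases are bookkeeping with the identifications already established.
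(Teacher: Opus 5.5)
Your proposal is correct and follows the paper's proof essentially verbatim. For $\pi = 1, \Sp\otimes\omega, \omega$ you use the same identifications, reducing to $R$ or $S_2$. For $\pi = \Sp$ you use the same short exact sequence $0 \to \Hom(\PSp,\PSpw) \to \Hom(\PSp,\Ptriv) \to \Hom(\PSp,\Mone) \to 0$ together with the depth lemma. Your extra remarks, that the $\omega$-twist acts on $R$ through a depth-preserving involution and that depth can be computed through $R \twoheadrightarrow \O\lb x,y\rb$, make explicit points the paper leaves implicit.
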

\begin{proof}
    The claim is true for $\Hom(\Ptriv, \Ptriv) = R$ by Proposition \ref{prop:alg-properties}, and it is likewise true for $\Hom(\PSpw, \Ptriv) = \Hom(\PSpw, \PSpw) = R$ by the same proposition and Proposition \ref{prop:hom-identifications}. Proposition \ref{prop:hom-identifications} also tells us that $\Hom(\Pw, \Ptriv) = \Hom(\Pw, \PSpw) = \Hom(\Ptriv, \PSp)$, which by Lemma \ref{lem:Hom-1-Sp} is isomorphic to $$\Hom(\PSpw, \PSp) \simeq S_2,$$ which has $R$-depth at least $3$ by Corollary \ref{cor:si-depth}. 

    Hence the only remaining case is $\Hom(\PSp, \Ptriv)$. From Diagram \ref{seq:main}, we obtain an exact sequence
    \begin{center}
        \begin{tikzcd}
            0 \ar[r] & \Hom(\PSp, \PSpw) \ar[r] & \Hom(\PSp, \Ptriv) \ar[r] & \Hom(\PSp, \Mone) \ar[r] & 0.
        \end{tikzcd}
    \end{center}
    By Corollaries \ref{cor:si-depth} and \ref{cor:spw-sp-homs}, $\Hom(\PSp, \PSpw)$ has depth at least $3$, and by Lemma \ref{lem:1-mone}, $\Hom(\PSp, \Mone) \simeq \O\lb x, y\rb$ has depth $3$. Therefore, the depth lemma yields
    \begin{align*}
        \on{depth}_R(\Hom(\PSp, \Ptriv)) &\geq \min \{\on{depth}_R(\Hom(\PSp, \PSpw)), \on{depth}_R(\Hom(\PSp, \Mone))\} \\
        &= 3,
    \end{align*}
    as desired.

    \begin{comment}true for $ = S_{-2}$ by Corollaries \ref{cor:si-depth} and \ref{cor:spw-sp-homs}, and true for $$\Hom(\Pw, \PSpw) \simeq \Hom(\Ptriv, \PSp) \simeq \Hom(\PSpw, \PSp) \simeq S_{2}$$ by Lemma \ref{lem:Hom-1-Sp} and Corollary \ref{cor:spw-sp-homs}. It remains to prove the claim for $\Hom(\Ptriv, \PSpw)$. From Diagram \ref{seq:main}, we obtain an exact sequence
    \begin{align*}
        0 \to \Hom(\Ptriv, \PSpw) \to \End(\Ptriv) \to \Hom(\Ptriv, \Mone) \to 0.
    \end{align*}
    By Lemma \ref{lem:1-mone}, we know that $\Hom(\Ptriv, \Mone)$ is isomorphic to $\O\lb x, y \rb$ with $R$ acting through a quotient isomorphic to this ring, so $\on{depth}_R \Hom(\Ptriv, \Mone) = \on{depth}_{\O\lb x, y \rb} \O\lb x, y \rb = 3$. Hence the depth lemma yields
    \begin{align*}
        \on{depth}_R \Hom(\Ptriv, \PSpw) &\geq \min\{\on{depth}_R \End(\Ptriv), \on{depth}_R \Hom(\Ptriv, \Mone) + 1\}\\
        &= \min\{\on{depth}(R), 4\}\\
        &\geq 3.
    \end{align*}
    \end{comment}
\end{proof}

\section{The tensor kernel and its endomorphism algebra}

\subsection{The candidate sheaf}\label{sec:candidate}

Let $\mathfrak{V}$ be the universal vector bundle on $\mathfrak{X} = [\Spec(S)/\mathbb{G}_m]$, so that we may identify $\mathfrak{V} \simeq S(-1) \oplus S(1)$, where if $M$ is a $\BZ$-graded $S$-module then $M(i)$ denotes the $S$-module $M$ with new grading $M(i)_d := M_{i + d}$. 

For this subsection, we work in some generality. Let $p \geq 3$ be prime, and let $\mathfrak{B}$ be either the generic or Steinberg block of $\on{Mod}_{G/Z}^{\on{lfin}}(\O)$. Let 
\begin{align*}
\tilde{P} &:= \bigoplus_{\pi \in \mathfrak{B}} \Ppi \\
    \tilde{P}' &:= \bigoplus_{\substack{\pi \in \mathfrak{B} \\  
\dim_k(\pi) > 1}}\Ppi \\
\tilde{P}'' &:= \bigoplus_{\substack{\pi \in \mathfrak{B} \\  
\dim_k(\pi) = 1}} \Ppi
\end{align*}
where the direct sum for $\tilde{P}''$ (resp. $\tilde{P}'$) ranges over irreducibles $\pi \in \mathfrak{B}$ that are characters (resp. not characters). In all cases, $\tilde{P} = \tilde{P}' \oplus \tilde{P}''$ and $\tilde{P}'$ is the direct sum of two projective envelopes of irreducible non-characters. 

\begin{itemize}
    \item If $\mathfrak{B}$ is the generic block, then $\tilde{P}'' = 0$ and $\tilde{P} = \tilde{P}'$. 
    \item If $\mathfrak{B}$ is the Steinberg block for $p \geq 5$, then $\tilde{P}' = \tilde{P}_{(\Ind_{P^G}(\omega \otimes \omega^{-1}))^\vee} \oplus \PSp$ and $\tilde{P}'' = \Ptriv$.
    \item Finally, if $\mathfrak{B}$ is the Steinberg block and $p = 3$, then $\tilde{P}' = \PSpw \oplus \PSp$ and $\tilde{P}'' = \Ptriv \oplus \Pw$. 
\end{itemize} 

Let $\tilde{E}_{\mathfrak{B}} := \End(\tilde{P})$ and $\tilde{E}_\mathfrak{B}' := \End(\tilde{P}')$. By Corollary \ref{cor:E-vs-E}
for the $p = 3$ Steinberg block, \cite[\S5.3] {JNW} for the generic block\footnote{As we discuss in \S5.1, the results in loc.\ cit.\ remains valid for $p = 3$.}, and \cite[\S5.5]{JNW} for the $p \geq 5$ Steinberg block, we have $\End_{\mathfrak{X}}(\mathfrak{V})^{\on{op}} \simeq {\tilde{E}_{\mathfrak{B}}}'$. (This isomorphism is presented in a uniform way in \cite{paskunas-tung}.)

The following definition generalizes \cite[Definition 3.6.12]{DEG26}. 
\begin{definition}
    Write
    \begin{align*}
        \mathfrak{W} := \mathfrak{V} \otimes_{\tilde{E}_{\mathfrak{B}}'} \Hom(\tilde{P}, \tilde{P}')
    \end{align*}
    and for $\pi \in \mathfrak{B}$, write
    \begin{align*}
        \mathfrak{W}^{\pi} := \mathfrak{V} \otimes_{\tilde{E}_{\mathfrak{B}}'} \Hom(\Ppi, \tilde{P}'),
    \end{align*}
    where $\tilde{E}_{\mathfrak{B}}'$ acts on $\mathfrak{V}$ from the right via the left action of $\End_{\mathfrak{X}}(\mathfrak{V}) \simeq E \simeq (\tilde{E}'_{\mathfrak{B}})^{\on{op}}$, and $\tilde{E}_{\mathfrak{B}}'$ acts on $\Hom(-, \tilde{P}')$ from the left by its natural action on $\tilde{P}'$.

    We write $\mathfrak{W}_{\on{tf}}$ or $\mathfrak{W}_{\on{tf}}^{\pi}$ for the maximal $R$-torsion-free quotient\footnote{Note that this differs from our usage of the subscript $\on{tf}$ in \S\ref{sec:paskunas-tung}, where it referred to the maximal $\O$-torsion-free quotient.} of these respective modules.
\end{definition}

Our candidate for our tensor kernel, playing the role of $X^*$ in \cite[Theorem 5.1]{JNW}, is $\mathfrak{W}_{\on{tf}}$. The main goal of this paper is to show that $$\End_{\mathfrak{X}}(\mathfrak{W}_{\on{tf}}) \simeq \tilde{E}_{\mathfrak{B}}^{\on{op}}$$ and to show that $$\Ext^i_{\mathfrak{X}}(\mathfrak{W}_{\on{tf}}, \mathfrak{W}_{\on{tf}}) = 0$$ for $i > 0$, so that the sheaf $\mathfrak{W}_{\on{tf}}$ satisfies the hypotheses of \cite[Assumption 4.2.10]{JNW}. 

\subsection{Endomorphism algebra isomorphism}

For the remainder of this section, we take $p = 3$ and we take $\mathfrak{B}$ to be the Steinberg block. We do this for simplicity of exposition and because the results are already known in the generic and $p \geq 5$ Steinberg cases. However, we note that the methods of this section may be adapted to give a proof of \cite[Theorem 3.6.44]{DEG26} for $p \geq 5$ without requiring the explicit identifications of the respective endomorphism algebras computed in \cite[Theorem 3.5.5]{JNW} and \cite[\S 10.5]{paskunas13}, using the setup established in \S\ref{sec:candidate}. The key facts we use, such as the matrix embedding of the endomorphism algebra, depth results, and identifications of certain Hom modules, all have appropriate known analogues deducible from the explicit presentations in \cite{JNW} and \cite{paskunas13}. Our methods and setup may similarly be adapted to obtain the known results for the generic blocks, although many parts of the argument become trivial in this case.  
\begin{proposition}\label{prop:categorical-interpretation}
    As a graded $S$-module, $\mathfrak{W} = \mathfrak{V} \otimes_{\tilde{E}'} \Hom(\tilde{P}, \tilde{P}')$ has graded components
    \begin{align*}
        \mathfrak{W}_i = \begin{cases}
            S(-1)_i \otimes_R \Hom(\tilde{P}, \PSp) : & i \geq 0 \\ 
            S(1)_i \otimes_R \Hom(\tilde{P}, \PSpw) : & i < 0
        \end{cases}
    \end{align*}
    where the $S$-action is defined by the action on the left factor induced by the following transition maps between degrees $-1$ and $1$:
    \begin{align*}
        c : S(-1)_{1} \otimes_R \Hom(\tilde{P}, \PSp) &\to S(1)_{-1} \otimes_R \Hom(\tilde{P}, \PSpw) \\ 
        r \otimes \varphi &\mapsto r \otimes (c \circ \varphi) \\
        b : S(1)_{-1} \otimes_R \Hom(\tilde{P}, \PSpw) &\to S(-1)_{1} \otimes_R \Hom(\tilde{P}, \PSp) \\ 
        r \otimes \psi &\mapsto r \otimes (b \circ \psi)
    \end{align*}
for $b \in S_{2}, c \in S_{-2}, r \in R = S(1)_{-1} = S(-1)_1$, using the identifications $$S_2 \simeq \Hom(\PSpw, \PSp), S_{-2} \simeq \Hom(\PSp, \PSpw).$$ 
\end{proposition}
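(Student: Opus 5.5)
The plan is to compute the tensor product idempotent-by-idempotent, using the GMA structure of $\tilde{E}' := \tilde{E}'_{\mathfrak{B}}$ from Corollary \ref{cor:spw-sp-homs}. Write $e_1 = \on{id}_{\PSpw}$ and $e_2 = \on{id}_{\PSp}$ for the two orthogonal idempotents of $\tilde{E}'$, with $e_1 + e_2 = 1$. The left $\tilde{E}'$-module $\Hom(\tilde{P}, \tilde{P}')$ then decomposes as
\[
e_1 \Hom(\tilde{P}, \tilde{P}') \oplus e_2 \Hom(\tilde{P}, \tilde{P}') = \Hom(\tilde{P}, \PSpw) \oplus \Hom(\tilde{P}, \PSp).
\]
Dually, the right $\tilde{E}'$-module $\mathfrak{V} = S(-1) \oplus S(1)$ splits via the idempotents of $E = \End_{\mathfrak{X}}(\mathfrak{V})$. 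These idempotents project onto the two rank-one summands $S(1)$ and $S(-1)$, and the identification $E \simeq (\tilde{E}')^{\on{op}}$ matches $e_1$ with the $\omega$-idempotent. Here I must check from the weight convention that $e_1$ corresponds to $S(1)$ and $e_2$ to $S(-1)$.

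For a ring $A$ with idempotents $e_1 + e_2 = 1$, a right module $M$ and a left module $N$, the standard presentation is
\[
M \otimes_A N = \bigl( Me_1 \otimes_{e_1Ae_1} e_1N \oplus Me_2 \otimes_{e_2Ae_2} e_2N \bigr) / \bigl( me_1 a e_2 \otimes n - m \otimes e_1 a e_2 n,\ \text{and symmetrically} \bigr).
\]
Since $e_iAe_i = R$ here, this presents $\mathfrak{W}$ as the quotient of $S(1) \otimes_R \Hom(\tilde{P}, \PSpw) \oplus S(-1) \otimes_R \Hom(\tilde{P}, \PSp)$ by the relations identifying $s b \otimes \psi$ with $s \otimes (b \circ \psi)$ for $b \in S_2 \simeq \Hom(\PSpw, \PSp)$, and similarly for $c \in S_{-2}$. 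This uses the matching, from Corollary \ref{cor:spw-sp-homs}, between the action of $S_{\pm 2}$ on $\mathfrak{V}$ and composition in $\tilde{E}'$.

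Next I work degree by degree. For $i \geq 0$, I claim the whole degree-$i$ part is $S(-1)_i \otimes_R \Hom(\tilde{P}, \PSp)$. The piece $S(1)_i = S_{i+1}$ comes from the other summand, but every element of $S_{i+1}$ with $i+1 > 0$ is a sum of products $s \cdot b$ with $s \in S_{i-1} = S(-1)_i$ and $b \in S_2$. This holds because $S$ is generated over $R$ by $S_{\pm 2}$ with $S_{2k} = S_2^k$ for $k > 0$; I will also have to check this against the relations. The tensor relation then rewrites each such element as $s \otimes (b \circ \psi)$ in the other summand. Symmetrically, for $i < 0$ everything moves into $S(1)_i \otimes_R \Hom(\tilde{P}, \PSpw)$.

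The main obstacle is showing that no further collapse occurs. I need the map
\[
S(-1)_i \otimes_R \Hom(\tilde{P}, \PSp) \to \mathfrak{W}_i
\]
to be an isomorphism, and not merely surjective. My approach is to construct the inverse directly: define $\mathfrak{W}_i \to S(-1)_i \otimes_R \Hom(\tilde{P}, \PSp)$ by sending $sb \otimes \psi \mapsto s \otimes b\psi$, and check that this is well defined. Well-definedness reduces to associativity of multiplication in $S$ and of composition in $\tilde{E}'$. It also needs compatibility of the relations $sbc \otimes \psi$, where $bc \in R$ acts centrally by Lemma \ref{lem:same-center}. Once the pieces are identified, the $S$-action is read off. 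Multiplication by $c$ from degree $1$ to degree $-1$ lands outside the description used for $i \geq 0$, and the tensor relation converts it into $r \otimes (c \circ \varphi)$, giving exactly the stated transition maps. The same reasoning applies to $b$.
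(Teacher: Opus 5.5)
\paragraph{Where the argument breaks.} Your idempotent presentation of $\mathfrak{W}_i$ and your surjectivity step match the paper's proof. The surjectivity step writes $s_{i+1}=\sum_j s_jb_j$ with $s_j\in S_{i-1}$, $b_j\in S_2$ and moves $b_j$ across the tensor sign. The paper also proves only surjectivity; it calls the maps ``inclusions'' without justification.

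The step that fails is your inverse $sb\otimes\psi\mapsto s\otimes(b\circ\psi)$. It is prescribed only on products. To descend to $S_{i+1}\otimes_R\Hom(\tilde{P},\PSpw)$, you need $\ker\bigl(S_{i-1}\otimes_R S_2\to S_{i+1}\bigr)$ to vanish after applying $\on{id}\otimes(-\circ\psi)$. Associativity says nothing about this kernel, and for $|i|\ge 3$ it does not vanish.

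In fact injectivity is false there. Take the summand $\PSpw$ of $\tilde{P}$. Since $\Hom(\PSpw,\tilde{P}')\cong\tilde{E}'e_1$ is projective, $\mathfrak{W}^{\Sp\otimes\omega}=\mathfrak{V}e_1=S(1)$ exactly, so $\mathfrak{W}^{\Sp\otimes\omega}_3=S_4$. The claimed formula instead gives $S(-1)_3\otimes_R\Hom(\PSpw,\PSp)=S_2\otimes_R S_2$, with the natural map being multiplication.
\begin{itemize}
\item Setting $\varpi=a_0=a_1=c_0=c_1=0$ kills both relations, because then $A_2=A_4=1$ and $M=A_1^6\equiv 1$. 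So $S/(\varpi,a_0,a_1,c_0,c_1)\cong k[b_0,b_1]$.
\item Hence $S_2/\mideal_R S_2=kb_0\oplus kb_1$, and $(S_2\otimes_R S_2)/\mideal_R$ is $4$-dimensional.
\item But $S_4=Rb_0^2+Rb_0b_1+Rb_1^2$ needs at most three generators.
\end{itemize}
Explicitly, $b_0\otimes b_1-b_1\otimes b_0$ is nonzero in $S_2\otimes_R S_2$ but zero in $\mathfrak{W}_3$, since both terms equal $b_0b_1\otimes\on{id}_{\PSpw}$. The same count in degree $2k+1$ (at least $2(k+1)$ generators versus at most $k+2$) gives failure in every odd degree $\ge 3$, and symmetrically in degrees $\le -3$.

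\paragraph{What can be proved.} No argument establishes the statement as written. What is true is the following. The natural map is surjective in every degree. Its kernel is $R$-torsion, since both sides have the same rank after $\otimes_R K$. In degrees $\pm1$ it is an isomorphism, and there your inverse does work. In degree $1$ it is the composition pairing $S_2\otimes_R\Hom(\tilde{P},\PSpw)\to\Hom(\tilde{P},\PSp)$, which is well defined by bilinearity. It respects the relation $(mc)\otimes n\sim m\otimes(c\circ n)$ because products $S_2\times S_{-2}\to R$ match compositions under Corollary \ref{cor:spw-sp-homs}.

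This degree-$\pm1$ statement, together with generation in degrees $\pm1$, is all that is used later. Corollary \ref{cor:compatible-diagram-hom} follows, and indeed follows directly from $\Hom_{\mathfrak{X}}(\mathfrak{V}\otimes_{\tilde{E}'}N,M)\cong\Hom_{\tilde{E}'}(N,M_{-1}\oplus M_1)$. The subsequent identifications only need $\mathfrak{W}$ modulo $R$-torsion. You should therefore prove the corrected statement rather than the one given.
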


\begin{proof}
    We write elements of $\mathfrak{V} \simeq S(1) \oplus S(-1)$ as row vectors with $$\tilde{E}' \simeq E^{\on{op}} \simeq \begin{bmatrix}
        R & S_{-2} \\
        S_2 & R
    \end{bmatrix}$$ acting by matrix multiplication from the right, and we write elements of $$\Hom(\tilde{P}, \PSpw \oplus \PSp) \simeq \Hom(\tilde{P}, \PSpw) \oplus \Hom(\tilde{P}, \PSp)$$ as column vectors with $\tilde{E}'$ acting by matrix multiplication from the left using the identifications in Corollary \ref{cor:spw-sp-homs}.
    We have inclusions
    \begin{align}
        \label{map:inclusion-tensor} S(-1)_{i} \otimes_R \Hom(\tilde{P}, \PSp) &\hookrightarrow \mathfrak{W}_i \\
        s \otimes_R \varphi &\mapsto \begin{bmatrix}
            0 & s
        \end{bmatrix}\otimes_{\tilde{E}'} \begin{bmatrix}
            0 \\ 
            \varphi
        \end{bmatrix}
    \end{align}
    in degrees $i \geq 0$ and
    \begin{align}\label{map:inclusion-tensor-b}
    S(1)_{i} \otimes_R \Hom(\tilde{P}, \PSpw) &\hookrightarrow \mathfrak{W}_i \\
        s \otimes_R \psi &\mapsto \begin{bmatrix}
            s & 0
        \end{bmatrix}\otimes_{\tilde{E}'} \begin{bmatrix}
            \psi \\ 
            0
        \end{bmatrix}
    \end{align}
    in degrees $i < 0$. These inclusions are compatible with the action of $S$: for $c \in S_{-2}, r \in S(-1)_{1} = R, \varphi \in \Hom(\tilde{P}, \PSp)$, we have
    \begin{align*}
        c \cdot \left(\begin{bmatrix}
            0 & r
        \end{bmatrix}\otimes_{\tilde{E}'} \begin{bmatrix}
            0 \\ 
            \varphi
        \end{bmatrix}\right) &= \begin{bmatrix}
            0 & cr
        \end{bmatrix}\otimes_{\tilde{E}'} \begin{bmatrix}
            0 \\ 
            \varphi
        \end{bmatrix}\\ 
        &= \begin{bmatrix}
              r & 0
        \end{bmatrix}\begin{bmatrix}
            0 & c \\
            0 & 0
        \end{bmatrix}\otimes_{\tilde{E}'} \begin{bmatrix}
            0 \\ 
            \varphi
        \end{bmatrix}\\
        &= \begin{bmatrix}
              r & 0
        \end{bmatrix}\otimes_{\tilde{E}'} \begin{bmatrix}
            0 & c \\
            0 & 0
        \end{bmatrix}\begin{bmatrix}
            0 \\ 
            \varphi
        \end{bmatrix} \\
        &= \begin{bmatrix}
              r & 0
        \end{bmatrix}\otimes_{\tilde{E}'} \begin{bmatrix}
            c \circ \varphi \\ 
            0
        \end{bmatrix},
    \end{align*}
    which is the image of $c \cdot (r \otimes_R \varphi) = r \otimes_R (c \circ \varphi)$ under the inclusion (\ref{map:inclusion-tensor-b}), and a similar calculation shows compatibility for multiplication by $b \in S_2$ under the transition map from degree $-1$ to degree $1$. 

    We claim that these inclusions are in fact isomorphisms. Without loss of generality assume $i \geq 0$; the same argument will work for $i \leq 0$ by symmetry. An arbitrary element of $\mathfrak{W}_i$ is a sum of terms of the form
    \begin{align*}
        \begin{bmatrix}
            s_{i + 1} & s_{i - 1}
        \end{bmatrix}\otimes_{\tilde{E}'} \begin{bmatrix}
            \psi \\ 
            \varphi
        \end{bmatrix}
    \end{align*}
    with $s_j \in S_j, \psi \in \Hom(\tilde{P}, \PSpw), \varphi \in \Hom(\tilde{P}, \PSp)$, so we must show that such an element lies in the image of the map (\ref{map:inclusion-tensor}). Since $i + 1 \geq 2$, the element $s_{i + 1}$ may be written as 
    \begin{align*}
        s_{i + 1} = \sum_j b'_j q_j'
    \end{align*} for some $b'_j \in S_2$ and $q_{j}' \in S_{i - 1}$, yielding
    \begin{align*}\begin{bmatrix}
            s_{i + 1} & s_{i - 1}
        \end{bmatrix}\otimes_{\tilde{E}'} \begin{bmatrix}
            \psi \\ 
            \varphi
        \end{bmatrix} &= \sum_j\left(\begin{bmatrix}
            b_j'q_j' & 0
        \end{bmatrix}\otimes_{\tilde{E}'} \begin{bmatrix}
            \psi \\ 
            \varphi
        \end{bmatrix}\right) + \begin{bmatrix}
            0 & s_{i - 1}
        \end{bmatrix}\otimes_{\tilde{E}'} \begin{bmatrix}
            \psi \\ 
            \varphi
        \end{bmatrix}\\
        &= \sum_j \left(\begin{bmatrix}
            0 & q_j'
        \end{bmatrix} \begin{bmatrix}
            0 & 0 \\
            b_j' & 0 
        \end{bmatrix}\otimes_{\tilde{E}'} \begin{bmatrix}
            \psi \\ 
            \varphi
        \end{bmatrix}\right) + \begin{bmatrix}
            0 & s_{i - 1}
        \end{bmatrix}\begin{bmatrix}
            0 & 0 \\ 0 & 1
        \end{bmatrix}\otimes_{\tilde{E}'} \begin{bmatrix}
            \psi \\ 
            \varphi
        \end{bmatrix}\\
        &= \sum_j\left(\begin{bmatrix}
            0 & q_j'
        \end{bmatrix} \otimes_{\tilde{E}'} \begin{bmatrix}
            0 & 0 \\
            b_j' & 0 
        \end{bmatrix}\begin{bmatrix}
            \psi \\ 
            \varphi
        \end{bmatrix}\right) + \begin{bmatrix}
            0 & s_{i - 1}
        \end{bmatrix}\otimes_{\tilde{E}'} \begin{bmatrix}
            0 & 0 \\ 0 & 1
        \end{bmatrix}\begin{bmatrix}
            \psi \\ 
            \varphi
        \end{bmatrix}\\
        &= \sum_j\left(\begin{bmatrix}
            0 & q_j'
        \end{bmatrix} \otimes_{\tilde{E}'} \begin{bmatrix}
            0 \\ 
            b_j' \circ \psi
        \end{bmatrix}\right) + \begin{bmatrix}
            0 & s_{i - 1}
        \end{bmatrix}\otimes_{\tilde{E}'} \begin{bmatrix}
            0 \\ 
            \varphi
        \end{bmatrix},
        \end{align*}
        and the terms in the final expression are visibly in the image of the map (\ref{map:inclusion-tensor}). 
\end{proof}

\begin{corollary}\label{cor:compatible-diagram-hom}
    If $M$ is a graded $S$-module, then the $R$-module $\Hom_{\mathfrak{X}}(\mathfrak{W}, M)$ is naturally isomorphic to the $R$-module consisting of pairs of $R$-module homomorphisms $x: \Hom(\tilde{P}, \PSp) \to M_1, y: \Hom(\tilde{P}, \PSpw) \to M_{-1}$
    making the following squares commute for every choice of $b \in S_{2}, c \in S_{-2}$:
    \begin{center}
        \begin{tikzcd}
            \Hom(\tilde{P}, \PSp) \ar[d, "c"]\ar[r, "x"] & M_1 \ar[d, "c"] \\
            \Hom(\tilde{P}, \PSpw) \ar[r, "y"]  & M_{-1} 
        \end{tikzcd}
    \end{center}

    \begin{center}
        \begin{tikzcd}
            \Hom(\tilde{P}, \PSp) \ar[r, "x"] & M_1  \\
            \Hom(\tilde{P}, \PSpw) \ar[r, "y"] \ar[u, "b"] & M_{-1} \ar[u, "b"]
        \end{tikzcd}
    \end{center}
    The same holds for $\Hom_{\mathfrak{X}}(\mathfrak{W}^\pi, M)$ if we replace $\tilde{P}$ with $\Ppi$.
\end{corollary}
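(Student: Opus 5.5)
The plan is to read this off directly from Proposition \ref{prop:categorical-interpretation}, which presents $\mathfrak{W}$ as a graded $S$-module determined by its components in degrees $1$ and $-1$. The key observation is that $\mathfrak{W}$ is generated as a graded $S$-module by $\mathfrak{W}_1 \oplus \mathfrak{W}_{-1}$. Indeed, Proposition \ref{prop:categorical-interpretation} identifies
\begin{align*}
\mathfrak{W} &\simeq S(-1)\otimes_R \Hom(\tilde{P},\PSp)\ \text{in degrees } i\ge 0,\\
\mathfrak{W} &\simeq S(1)\otimes_R \Hom(\tilde{P},\PSpw)\ \text{in degrees } i<0,
\end{align*}
with $\mathfrak{W}_1 = R\otimes_R \Hom(\tilde{P},\PSp)=\Hom(\tilde{P},\PSp)$ and $\mathfrak{W}_{-1}=\Hom(\tilde{P},\PSpw)$. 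Equivalently, $\mathfrak{W} = \mathfrak{V}\otimes_{\tilde{E}'}\Hom(\tilde{P},\tilde{P}')$, and $\mathfrak{V}=S(1)\oplus S(-1)$ is generated by its two unit basis vectors in degrees $-1$ and $1$.

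First I will define the map. A graded homomorphism $f:\mathfrak{W}\to M$ restricts to $R$-linear maps $x=f_1:\Hom(\tilde{P},\PSp)\to M_1$ and $y=f_{-1}:\Hom(\tilde{P},\PSpw)\to M_{-1}$. $S$-linearity applied to multiplication by $c\in S_{-2}$ and $b\in S_2$, whose action on $\mathfrak{W}_{\pm1}$ is given by the transition maps of Proposition \ref{prop:categorical-interpretation}, forces both squares to commute. This assignment is $R$-linear and natural in $M$.

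Next, for the inverse, I will use the tensor-hom adjunction. Graded $S$-maps $\mathfrak{V}\otimes_{\tilde{E}'} N\to M$, with $N=\Hom(\tilde{P},\tilde{P}')$, are the same as left $\tilde{E}'$-linear maps $N\to \Hom_{\mathfrak{X}}(\mathfrak{V},M)$. Since $\mathfrak{V}=S(1)\oplus S(-1)$ is free, $\Hom_{\mathfrak{X}}(\mathfrak{V},M)\simeq M_{-1}\oplus M_{1}$. The right $\tilde{E}'$-action on $\mathfrak{V}$, via $E\simeq\begin{bmatrix} R & S_2\\ S_{-2}& R\end{bmatrix}$, then becomes a left $\tilde{E}'$-module structure on $M_{-1}\oplus M_1$: the diagonal entries act by $R$, and the off-diagonal entries act by multiplication $b:M_{-1}\to M_1$ and $c:M_1\to M_{-1}$.

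I then decompose $N=\Hom(\tilde{P},\PSpw)\oplus\Hom(\tilde{P},\PSp)$ using the idempotents of $\tilde{E}'$. An $\tilde{E}'$-linear map $N\to M_{-1}\oplus M_1$ commuting with the idempotents is exactly a pair $(y,x)$ of $R$-linear maps. Commuting with the off-diagonal entries $S_{\pm2}$ is exactly the two commuting squares. This gives the bijection, and it is inverse to the restriction map because $\mathfrak{W}$ is generated in degrees $\pm1$.

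The main bookkeeping obstacle will be matching conventions. I will need to check that the left/right and $\mathrm{op}$ conventions, together with the sign/degree shifts $S(\pm1)$, make $b$ go $M_{-1}\to M_1$ and $c$ go $M_1\to M_{-1}$, consistently with Corollary \ref{cor:spw-sp-homs}. I will do this by redoing the matrix computation from the proof of Proposition \ref{prop:categorical-interpretation}.

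The statement for $\mathfrak{W}^\pi$ follows by the identical argument with $\Ppi$ in place of $\tilde{P}$, since Proposition \ref{prop:categorical-interpretation} holds verbatim for any projective source.
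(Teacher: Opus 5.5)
Your proof is correct, but it handles the converse direction by a different mechanism than the paper. The forward direction (restrict a graded map to degrees $\pm 1$) is the same in both.

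The paper's converse defines the map by $x$ and $y$ on $\mathfrak{W}_{\pm1}$ and extends $S$-linearly, using that $\mathfrak{W}$ is generated in degrees $\pm 1$. It then asserts that the commuting squares make this extension well-defined. That assertion implicitly uses a fact the paper does not spell out: every $S$-linear relation among the degree $\pm1$ generators is generated by the transition relations for $b\in S_2$ and $c\in S_{-2}$. One can extract this from the tensor presentation, or from the componentwise description in Proposition \ref{prop:categorical-interpretation} by writing elements of $S_{2k}$ as sums of products of elements of $S_{\pm 2}$.

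Your tensor--hom adjunction $\Hom_{\mathfrak{X}}(\mathfrak{V}\otimes_{\tilde{E}'}N, M)\simeq \Hom_{\tilde{E}'}(N,\Hom_{\mathfrak{X}}(\mathfrak{V},M))$, with $N$ placed in degree $0$, packages exactly this fact. It is legitimate because $E$ acts on $\mathfrak{V}$ by degree-preserving maps. The universal property of the tensor product then gives well-definedness for free. The GMA decomposition of $\tilde{E}'$ into $R\oplus S_2\oplus S_{-2}\oplus R$ via the two idempotents converts $\tilde{E}'$-linearity into three conditions: $R$-linearity, compatibility with the idempotents, and commutativity of the two squares. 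Your argument uses nothing about $N$ beyond its being a left $\tilde{E}'$-module, so the $\mathfrak{W}^\pi$ case is immediate rather than obtained by restricting to a summand.

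The convention check you deferred does work out. $\on{id}_{\PSpw}$ corresponds to the $S(1)$ summand, whose graded Hom to $M$ is $M_{-1}$, matching $y$. Likewise $c\in S_{-2}=\Hom(\PSp,\PSpw)$ acts $M_1\to M_{-1}$, consistent with the matrix computation in the proof of Proposition \ref{prop:categorical-interpretation}.

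In short, the paper's route stays closer to the degreewise picture it uses afterwards, for example in Corollary \ref{cor:endw=endwtf}. Yours is cleaner and makes the well-definedness step rigorous rather than asserted.
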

\begin{remark}
    From now on, we will write such collections of commutative squares more compactly as
    \begin{center}
        \begin{tikzcd}
            \Hom(\tilde{P}, \PSp) \ar[d, bend left, "c"]\ar[r, "x"] & M_1 \ar[d, bend left, "c"] \\
            \Hom(\tilde{P}, \PSpw) \ar[r, "y"] \ar[u, bend left, "b"] & M_{-1} \ar[u, bend left, "b"]
        \end{tikzcd}
    \end{center}
    with $x$ and $y$ fixed but $b$ and $c$ ranging over all $b \in S_2, c \in S_{-2}$. 
\end{remark}
\begin{proof}
    By Proposition \ref{prop:categorical-interpretation}, any $S$-graded homomorphism $\mathfrak{W} \to M$ yields such a pair $x, y$ by restricting to degrees $1$ and $-1$; commutativity of the diagrams follows from the transition law between these degrees as described in the proposition. 
    
    Conversely, given a pair $x, y$ as stated, we may define a homomorphism $\mathfrak{W} \to M$ given by $x$ in degree $1$ and $y$ in degree $-1$ and then extend $S$-linearly to the rest of $\mathfrak{W}$, since Proposition \ref{prop:categorical-interpretation} implies $\mathfrak{W}$ is generated over $S$ in degrees $1$ and $-1$. Commutativity of the given squares ensures that this extension is well-defined. 

    The claim for $\mathfrak{W}^\pi$ follows by restricting to the appropriate component of $\mathfrak{W}$. 
\end{proof}

\begin{corollary}\label{cor:endw=endwtf}
    $\End_{\mathfrak{X}}(\mathfrak{W}) \simeq \End_{\mathfrak{X}}(\mathfrak{W}_{\on{tf}})$.
\end{corollary}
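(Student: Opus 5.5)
We aim to show that the quotient map $q: \mathfrak{W} \to \mathfrak{W}_{\on{tf}}$ induces an isomorphism on endomorphisms. Let $T \subset \mathfrak{W}$ be the $R$-torsion submodule, so $\mathfrak{W}_{\on{tf}} = \mathfrak{W}/T$. Since $R = S_0$ and $S$ is a domain, $T$ is a graded $S$-submodule: the $R$-torsion elements are stable under the $S$-action because $R$ is central. Any graded endomorphism $f$ of $\mathfrak{W}$ is $R$-linear, so $f(T) \subset T$. Hence $f$ descends to an endomorphism $\bar f$ of $\mathfrak{W}_{\on{tf}}$, giving a ring homomorphism $\End_{\mathfrak{X}}(\mathfrak{W}) \to \End_{\mathfrak{X}}(\mathfrak{W}_{\on{tf}})$. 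We then check injectivity and surjectivity.

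For surjectivity, composition with $q$ gives an injection $\End_{\mathfrak{X}}(\mathfrak{W}_{\on{tf}}) \hookrightarrow \Hom_{\mathfrak{X}}(\mathfrak{W}, \mathfrak{W}_{\on{tf}})$, because $q$ is surjective. So it suffices to lift every $g: \mathfrak{W} \to \mathfrak{W}_{\on{tf}}$ to an endomorphism of $\mathfrak{W}$ and to show that the lift is unique. We use Corollary \ref{cor:compatible-diagram-hom}: maps out of $\mathfrak{W}$ are pairs $(x, y)$ of $R$-linear maps
\begin{align*}
x: \Hom(\tilde{P}, \PSp) \to M_1, \qquad y: \Hom(\tilde{P}, \PSpw) \to M_{-1}
\end{align*}
compatible with the $b, c$ squares. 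By Proposition \ref{prop:categorical-interpretation}, $\mathfrak{W}_1 \simeq \Hom(\tilde{P}, \PSp)$ and $\mathfrak{W}_{-1} \simeq \Hom(\tilde{P}, \PSpw)$, since $S(-1)_1 = S(1)_{-1} = R$. By Proposition \ref{prop:homs-are-ideals} these are direct sums of rank one torsion-free $R$-modules, so they are already $R$-torsion-free. Thus $T_1 = T_{-1} = 0$, and $q$ is an isomorphism in degrees $\pm 1$. The pair $(x, y)$ attached to $g$ then takes values in $(\mathfrak{W}_{\on{tf}})_{\pm1} = \mathfrak{W}_{\pm 1}$. The commuting squares for $M = \mathfrak{W}_{\on{tf}}$ become commuting squares for $M = \mathfrak{W}$, because the $b, c$ transition maps between degrees $\pm1$ are literally the same maps on both sides. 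By Corollary \ref{cor:compatible-diagram-hom}, the pair $(x, y)$ therefore defines an endomorphism $f$ of $\mathfrak{W}$ with $q \circ f = g$.

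For injectivity, an endomorphism of $\mathfrak{W}$ is determined by its $(x, y)$ in degrees $\pm1$ by Corollary \ref{cor:compatible-diagram-hom}. If $\bar f = 0$, then $q \circ f = 0$, so $f$ has image in $T$. In degrees $\pm 1$ this image lies in $T_{\pm1} = 0$, so $(x, y) = 0$ and hence $f = 0$. The same degree-$\pm1$ argument shows that the lift in the previous paragraph is unique, so the map is a ring isomorphism.

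The main point to verify carefully is that $T_{\pm 1} = 0$, i.e. that the identifications in Proposition \ref{prop:categorical-interpretation} of the degree $\pm1$ pieces with $R \otimes_R \Hom(\tilde{P}, -)$ are honest isomorphisms onto torsion-free modules. A secondary check is that $T$ is a graded submodule, so that $\mathfrak{W}_{\on{tf}}$ is a graded $S$-module in each degree. Both follow from the earlier results cited above, so I expect no serious obstacle.
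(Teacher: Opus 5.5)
Your proposal is correct and follows essentially the same route as the paper. Both arguments descend endomorphisms along the quotient by the $R$-torsion and use Corollary \ref{cor:compatible-diagram-hom} to reduce to degrees $\pm 1$, where Propositions \ref{prop:categorical-interpretation} and \ref{prop:homs-are-ideals} show $\mathfrak{W}_{\pm 1}$ is already torsion-free. Your surjectivity step lifts $h \circ q \in \Hom_{\mathfrak{X}}(\mathfrak{W}, \mathfrak{W}_{\on{tf}})$ via the corollary with $M = \mathfrak{W}_{\on{tf}}$, which is a more explicit version of the paper's brief remark that $\mathfrak{W}_{\on{tf}}$ is generated in degrees $\pm 1$.
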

\begin{proof}
    Every endomorphism of $\mathfrak{W}$ descends to $\mathfrak{W}_{\on{tf}}$ because the submodule of $R$-torsion elements is invariant under any endomorphism of $\mathfrak{W}$. By Corollary \ref{cor:compatible-diagram-hom}, every endomorphism is uniquely determined by its restriction to $\mathfrak{W}_{\pm 1}$. Since $$\Hom(\tilde{P}, \tilde{P}') = \Hom(\tilde{P}, \PSp) \oplus \Hom(\tilde{P}, \PSpw)$$ is $R$-torsion-free by Proposition \ref{prop:homs-are-ideals}, both $\mathfrak{W}_{1}$ and $\mathfrak{W}_{-1}$ are also $R$-torsion-free since by Proposition \ref{prop:categorical-interpretation} these modules are
    \begin{align*}
        \mathfrak{W}_1 \simeq S(-1)_{1} \otimes_R \Hom(\tilde{P}, \PSpw) \simeq R \otimes_R \Hom(\tilde{P}, \PSpw) \simeq \Hom(\tilde{P}, \PSpw)
    \end{align*}
    and likewise $\mathfrak{W}_{-1} \simeq \Hom(\tilde{P}, \PSp)$.
    Hence the restriction of the quotient $\mathfrak{W} \to \mathfrak{W}_{\on{tf}}$ to degrees $\pm 1$ is an isomorphism. By Corollary \ref{cor:compatible-diagram-hom}, every endomorphism of $\mathfrak{W}$ is uniquely determined by its restriction to $\mathfrak{W}_{\pm 1}$, so we conclude that the descent map $\End_{\mathfrak{X}}(\mathfrak{W}) \to \End_{\mathfrak{X}}(\mathfrak{W}_{\on{tf}})$ is injective. Conversely, $\mathfrak{W}_{\on{tf}}$ is also generated over $S$ in degrees $\pm 1$, so any endomorphism of $\mathfrak{W}_{\on{tf}}$ is determined by its restriction to these degrees. Hence $\End_{\mathfrak{X}}(\mathfrak{W}) \to \End_{\mathfrak{X}}(\mathfrak{W}_{\on{tf}})$ is surjective and hence an isomorphism. 
\end{proof}

Let $\mathfrak{a}$ denote the kernel of $R \simeq \End(\Ptriv) \to \End(\Ptriv, \O) \simeq \O$.

\begin{lemma}\label{lem:depth-extension}
    Let $M$ be an $R$-torsion-free finitely generated $R$-module, and let $\mathfrak{a} \to M$ be an $R$-module homomorphism. If $M$ has $R$-depth at least $3$, then this homomorphism extends uniquely to a homomorphism $R \to M$. 
\end{lemma}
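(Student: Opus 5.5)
We will deduce this from the short exact sequence $0 \to \mathfrak{a} \to R \to R/\mathfrak{a} \to 0$, where $R/\mathfrak{a} \simeq \O$ since $\mathfrak{a}$ is the kernel of the surjection $R \simeq \End(\Ptriv) \to \End(\O) \simeq \O$. Applying $\Hom_R(-, M)$ gives the exact sequence
\begin{align*}
0 \to \Hom_R(\O, M) \to \Hom_R(R, M) \to \Hom_R(\mathfrak{a}, M) \to \Ext^1_R(\O, M).
\end{align*}
Existence and uniqueness of the extension together say that restriction $\Hom_R(R, M) \to \Hom_R(\mathfrak{a}, M)$ is bijective. It therefore suffices to prove $\Hom_R(\O, M) = 0$ and $\Ext^1_R(\O, M) = 0$, where $\O$ is regarded as the $R$-module $R/\mathfrak{a}$.

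Both vanishings follow from the grade characterization of depth. Since $R$ is noetherian local and $M$ is finitely generated, the smallest $i$ with $\Ext^i_R(R/\mathfrak{a}, M) \neq 0$ equals the length of a maximal $M$-regular sequence inside $\mathfrak{a}$, i.e.\ $\operatorname{grade}(\mathfrak{a}, M)$. By \cite[Tag 0AUK]{stacks-project}, this in turn equals $\min_{\mathfrak{p} \supseteq \mathfrak{a}} \operatorname{depth} M_{\mathfrak{p}}$. So I will aim to show $\operatorname{grade}(\mathfrak{a}, M) \geq 2$.

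\textbf{Step 1: the height of $\mathfrak{a}$.} $R$ is a domain of dimension $4$ (Proposition \ref{prop:alg-properties}(a)) and $R/\mathfrak{a} \simeq \O$ has dimension $1$. Since $R$ is a catenary complete local domain, $\operatorname{ht}\mathfrak{a} = 3$. The same holds for every prime $\mathfrak{p} \supseteq \mathfrak{a}$: the only such primes are $\mathfrak{a}$ itself and $\mathfrak{m}_R$, because $R/\mathfrak{a} \simeq \O$ is a DVR.

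\textbf{Step 2: depth at the two primes.} At $\mathfrak{m}_R$ we have $\operatorname{depth} M \geq 3$ by hypothesis. At $\mathfrak{a}$ we need $\operatorname{depth} M_{\mathfrak{a}} \geq 2$. This is the step I expect to be the main obstacle, because depth can drop under localization by at most $\dim R/\mathfrak{a} = 1$. The inequality I would use is $\operatorname{depth} M_{\mathfrak{p}} \geq \operatorname{depth} M - \dim R/\mathfrak{p}$ (\cite[Tag 0AUK]{stacks-project} or Bruns--Herzog 1.2.10-type bounds via $\Ext$). It gives $\operatorname{depth} M_{\mathfrak{a}} \geq 3 - 1 = 2$.

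Alternatively, one can argue directly. Since $\mathfrak{a} \neq 0$ and $M$ is torsion-free, any nonzero $r \in \mathfrak{a}$ is $M$-regular, which kills $\Hom_R(\O, M)$. For $\Ext^1$, I would check that $\operatorname{grade}(\mathfrak{a}, M) \geq \operatorname{depth} M - \dim R/\mathfrak{a} \geq 2$, using the standard inequality $\operatorname{grade}(I, M) \geq \operatorname{depth} M - \dim M/IM$. Concretely, choose $r_1 \in \mathfrak{a}$ nonzero; then $M/r_1M$ has depth $\geq 2$. Because $\dim (M/r_1M)/\mathfrak{a} \leq 1 <$ its depth, $\mathfrak{m}_R \notin \operatorname{Ass}$, and so $\mathfrak{a}$ is not contained in any associated prime of $M/r_1M$: these primes have dimension at most the dimension of $M/r_1M$ and at least the depth, but must avoid $\mathfrak{a}$ unless they equal $\mathfrak{a}$ or $\mathfrak{m}_R$, and $\mathfrak{a}$ itself would force $\operatorname{depth} \leq \dim R/\mathfrak{a} = 1$. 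So prime avoidance produces $r_2 \in \mathfrak{a}$ regular on $M/r_1 M$, and $\operatorname{grade} \geq 2$.

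With $\operatorname{grade}(\mathfrak{a}, M) \geq 2$, both $\Hom_R(R/\mathfrak{a}, M)$ and $\Ext^1_R(R/\mathfrak{a}, M)$ vanish, and the exact sequence above finishes the proof.
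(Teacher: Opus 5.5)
Your proof is correct, but it reaches the key vanishing $\Ext^1_R(\O,M)=0$ by a different route from the paper.

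\textbf{The paper's route.} It never localizes. It applies $\Hom_R(-,M)$ to $0\to\O\xrightarrow{\varpi}\O\to k\to 0$ and uses $\Ext^1_R(k,M)=\Ext^2_R(k,M)=0$, which is the depth hypothesis at $\mathfrak{m}_R$. This shows that $\varpi$ acts bijectively on the finitely generated module $\Ext^1_R(\O,M)$, which therefore vanishes by Nakayama. The vanishing of $\Hom_R(\O,M)$ comes from torsion-freeness.

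\textbf{Your route.} You use Rees's characterization of $\operatorname{grade}(\mathfrak{a},M)$ as the first $i$ with $\Ext^i_R(R/\mathfrak{a},M)\neq 0$. You then bound the grade below by $\operatorname{depth}M-\dim R/\mathfrak{a}=2$, in one of two ways:
\begin{itemize}
\item via $\operatorname{grade}(\mathfrak{a},M)=\min_{\mathfrak{p}\supseteq\mathfrak{a}}\operatorname{depth}M_{\mathfrak{p}}$ together with $\operatorname{depth}M_{\mathfrak{p}}\geq\operatorname{depth}M-\dim R/\mathfrak{p}$; or
\item by building an $M$-regular sequence $r_1,r_2\in\mathfrak{a}$ directly, using prime avoidance and the bound $\operatorname{depth}N\leq\dim R/\mathfrak{p}$ for $\mathfrak{p}\in\operatorname{Ass}N$.
\end{itemize}

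\textbf{Comparison.} The paper's argument is more elementary and self-contained. It exploits that $R/\mathfrak{a}\simeq\O$ has a one-step filtration with quotient $k$, so only $\Ext^\ast_R(k,M)$ is needed. Yours imports more standard commutative algebra but is more general: it gives unique extension from any ideal $\mathfrak{a}$ with $\dim R/\mathfrak{a}\leq\operatorname{depth}M-2$. It also shows transparently why depth $3$ is the right threshold, since you need grade $2$ on an ideal with one-dimensional quotient.

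\textbf{Minor points.}
\begin{itemize}
\item Step 1 is never used. Its sentence ``the same holds for every prime $\mathfrak{p}\supseteq\mathfrak{a}$'' is false for $\mathfrak{m}_R$, which has height $4$.
\item In your direct argument, the justification for $\mathfrak{m}_R\notin\operatorname{Ass}(M/r_1M)$ is garbled. The actual reason is simply $\operatorname{depth}(M/r_1M)\geq 1$, or equivalently the same bound $\operatorname{depth}N\leq\dim R/\mathfrak{p}$ that you use to exclude $\mathfrak{a}$. The conclusion is right.
\end{itemize}
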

\begin{proof}
    The short exact sequence
    \begin{align*}
        0 \to \mathfrak{a} \to R \to \O \to 0
    \end{align*} induces the long exact sequence
    \begin{align*}
            0 \to \Hom_R(\O, M) \to  \Hom_R(R, M) \to \Hom_R(\mathfrak{a}, M) \to \Ext^1_R(\O, M).
    \end{align*}
    We wish to show that the arrow $\Hom_R(R, M) \to \Hom_R(\mathfrak{a}, M)$ is an isomorphism. Since $\O$ is $R$-torsion, we have $\Hom_R(\O, M) = 0$, so to prove the claim it suffices to show that $\Ext^1_R(\O, M) = 0$. The short exact sequence
    \begin{center}
        \begin{tikzcd}
            0 \ar[r] & \O \ar[r, "\cdot \varpi"] \ar[r] & \O \ar[r] & k \ar[r] & 0
        \end{tikzcd}
    \end{center}
    induces the exact sequence
    \begin{center}
        \begin{tikzcd}
            \Ext^1_R(k, M) \ar[r] & \Ext^1_R(\O, M) \ar[r, "\cdot \varpi"] & \Ext^1_R(\O, M) \ar[r] & \Ext^2_R(k, M).
        \end{tikzcd}
    \end{center}
    Since $M$ has depth at least $3$, we have $\Ext^1_R(k, M) = \Ext^2_R(k, M) = 0$, so multiplication by $\varpi$ is an automorphism on $\Ext^1_R(\O, M)$. But this is a finitely generated $R$-module, so by Nakayama's lemma we conclude that $\Ext^1_R(\O, M) = 0$, as desired. 
\end{proof}

For $\pi, \tau \in \mathfrak{B}$, there is a natural map $\Hom(\Ppi, \Ptau) \to \Hom_{\mathfrak{X}}(\mathfrak{W}^\tau, \mathfrak{W}^\pi)$ induced by precomposition on the second factor of the tensor product: an element $\varphi \in \Hom(\Ppi, \Ptau)$ acts by
\begin{align*}
    \varphi: \mathfrak{V} \otimes_{E'} \Hom(\Ptau, \tilde{P}') &\to \mathfrak{V} \otimes_{E'} \Hom(\Ppi, \tilde{P}')\\
        v \otimes \psi \mapsto v \otimes (\psi \circ \varphi).
\end{align*}

\begin{proposition}\label{prop:hom-iso}
    The natural map $\Hom(\Ppi, \Ptau) \to  \Hom_{\mathfrak{X}}(\mathfrak{W}^\tau, \mathfrak{W}^\pi)$ is an isomorphism.
\end{proposition}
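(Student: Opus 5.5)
By Corollary \ref{cor:compatible-diagram-hom}, a morphism $\mathfrak{W}^\tau \to \mathfrak{W}^\pi$ is the same as a pair of $R$-linear maps
\begin{align*}
x&: \Hom(\Ptau, \PSp) \to \Hom(\Ppi, \PSp),\\
y&: \Hom(\Ptau, \PSpw) \to \Hom(\Ppi, \PSpw),
\end{align*}
commuting with postcomposition by every $b \in S_2 \simeq \Hom(\PSpw,\PSp)$ and every $c \in S_{-2}\simeq \Hom(\PSp,\PSpw)$. Here we use Proposition \ref{prop:categorical-interpretation} to identify $\mathfrak{W}^\pi_{1}\simeq \Hom(\Ppi,\PSp)$ and $\mathfrak{W}^\pi_{-1} \simeq \Hom(\Ppi, \PSpw)$. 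In this language the natural map sends $\varphi$ to the pair $(x,y) = (-\circ\varphi, -\circ\varphi)$. So the task is to show that every compatible pair $(x,y)$ is precomposition by a unique $\varphi \in \Hom(\Ppi,\Ptau)$.

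The plan is to pass to $K = \on{Frac}(R)$ and use the rigidified matrix embedding of Remark \ref{rem:rigidity}. There every $\Hom(\Ppi,\Ptau)$ is an $R$-submodule (a fractional ideal) of $K$, and every composition is multiplication in $K$. Since the four modules involved are rank $1$ torsion-free (Proposition \ref{prop:homs-are-ideals}), $x$ and $y$ are multiplications by scalars $\lambda, \mu \in K$. The compatibility with $b$ and $c$ then reads $b\lambda = \mu b$ and $c\mu = \lambda c$ inside $K$, so $\lambda = \mu$ because $S_{\pm 2}\neq 0$.

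Injectivity follows immediately. If $\varphi$ acts by zero, then $x_\varphi$ annihilates the nonzero module $\Hom(\Ptau,\PSp)\subset K$, so $x_\varphi=0$ and $\varphi = 0$.

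For surjectivity we must show that $\lambda \in K$ actually lies in the image of $\Hom(\Ppi,\Ptau)\hookrightarrow K$; this is the main obstacle. The case $\tau \in \{\Sp, \Sp\otimes\omega\}$ is easy: $\lambda$ maps $\on{id}_{\Ptau}$ into $\Hom(\Ppi,\Ptau)$, so $\lambda$ is the image of an honest morphism. For $\tau$ a character, say $\tau = 1$ (the case $\tau = \omega$ follows by twisting), we only know that $\lambda\cdot \Hom(\Ptriv,\PSp) \subset \Hom(\Ppi,\PSp)$ and $\lambda\cdot\Hom(\Ptriv,\PSpw)\subset\Hom(\Ppi,\PSpw)$. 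The idea is to produce an ideal on which $\lambda$ lands correctly and then extend across it using Lemma \ref{lem:depth-extension}. By Proposition \ref{prop:alpha-beta-image}, the composites $\Ptriv \to \PSpw \oplus \PSp \xrightarrow{\alpha\oplus\beta}\Ptriv$ built from morphisms in $\Hom(\Ptriv,\PSpw)$ and $\Hom(\Ptriv,\PSp)$ should generate (or at least lie in, with the same $K$-span) the kernel $\mathfrak{a}$ of $R=\End(\Ptriv)\to \O$. For such a composite $\psi'\circ\psi$, with $\psi \in \Hom(\Ptriv,\PSp)$ or $\Hom(\Ptriv, \PSpw)$ and $\psi'$ the corresponding $\beta$ or $\alpha$, we get
\begin{align*}
\lambda\cdot x_{\psi'\circ\psi} = x_{\psi'}\,(\lambda x_\psi) \in x_{\psi'}\Hom(\Ppi,\PSp \text{ or } \PSpw)\subset \Hom(\Ppi,\Ptriv).
\end{align*}
Thus multiplication by $\lambda$ defines an $R$-linear map $\mathfrak{a}\to \Hom(\Ppi,\Ptriv)$. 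I would verify that $\mathfrak{a}$ is exactly the ideal generated by these composites: every element of $\mathfrak{a}$ factors through $\alpha\oplus\beta$ by projectivity of $\Ptriv$ and Proposition \ref{prop:alpha-beta-image}.

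The target $\Hom(\Ppi,\Ptriv)$ is torsion-free of depth at least $3$ by Proposition \ref{prop:Hom-depths}. Lemma \ref{lem:depth-extension} therefore extends this map uniquely to $R \to \Hom(\Ppi,\Ptriv)$. Since everything sits inside $K$, the extension must again be multiplication by $\lambda$, so $\lambda = \lambda\cdot 1$ lies in $\Hom(\Ppi,\Ptriv)$. This gives the desired $\varphi$, which realizes $(x,y)$ because both agree with multiplication by $\lambda$ in $K$. The delicate points I expect are making sure the fixed embedding is compatible with the identifications used in Corollary \ref{cor:compatible-diagram-hom}, and the surjectivity onto $\mathfrak{a}$ via Proposition \ref{prop:alpha-beta-image}.
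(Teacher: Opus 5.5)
Your proposal is correct and follows essentially the paper's argument. You reduce via Corollary \ref{cor:compatible-diagram-hom} and the rigidified embedding of Remark \ref{rem:rigidity} to a single scalar $\lambda\in K$ and dispose of $\tau\in\{\Sp,\Sp\otimes\omega\}$ using the identity morphism. For $\tau=1$ you use that the image of $(\alpha,\beta)$ in $\End(\Ptriv)$ is $\mathfrak{a}$ (Proposition \ref{prop:alpha-beta-image}), together with Proposition \ref{prop:Hom-depths} and Lemma \ref{lem:depth-extension}, to force $\lambda\in\Hom(\Ppi,\Ptriv)$. Describing the map $\mathfrak{a}\to\Hom(\Ppi,\Ptriv)$ directly as multiplication by $\lambda$ in $K$ just makes automatic the well-definedness check the paper does for $f'(a)=\alpha x s+\beta y t$.
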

\begin{proof}
    By Corollary \ref{cor:compatible-diagram-hom}, the module $\Hom_{\mathfrak{X}}(\mathfrak{W}^\tau, \mathfrak{W}^\pi)$ is isomorphic to the set of pairs of morphisms $x, y$ making the following diagrams commute for all choices of $b \in S_2, c \in S_{-2}$: 
    \begin{center}
        \begin{tikzcd}
            \Hom(\Ptau, \PSpw) \ar[r, "x"]\ar[d, "c", bend left] & \Hom(\Ppi, \PSpw)\ar[d, "c", bend left] \\ 
            \Hom(\Ptau, \PSp) \ar[u, "b", bend left]\ar[r, "y"] & \Hom(\Ppi, \PSp) \ar[u, "b", bend left]
        \end{tikzcd}
    \end{center}
    Using Proposition \ref{cor:embed-matrix} and the subsequent Remark \ref{rem:rigidity}, we may fix an embedding $\tilde{E}_\mathfrak{B} \hookrightarrow M_{4 \times 4}(K)$ to identify each Hom module in the diagram with an $R$-submodule of $K$ and the morphisms $b, c, x, y$ with multiplication by elements of $K$. (Recall $K := \on{Frac} R$.) In particular, commutativity of the diagram implies that $x$ and $y$ are correspond to multiplication by the \emph{same} element of $K$. 

    We claim that the morphisms $x$ and $y$ are both induced by precomposition by the same morphism $\varphi: \Ppi \to \Ptau$. Such a morphism is necessarily unique, since distinct elements $\varphi, \psi$ of $\Hom(\Ppi, \Ptau)$ act by multiplication by different elements $x_{\varphi}, x_{\psi}$ of $K$ under the matrix embedding. The claim follows if we can show that the diagram extends to a diagram of the following form:
    \begin{center}
        \begin{tikzcd}
            \Hom(\Ptau, \PSpw) \ar[r, "x"]\ar[dd, "v", curve={height=80pt}]\ar[d, "c", bend left] & \Hom(\Ppi, \PSpw)\ar[d, "c", bend left] \ar[dd, "v",curve={height=-80pt} ] \\ 
            \Hom(\Ptau, \PSp) \ar[d, "w"] \ar[u, "b", bend left]\ar[r, "y"] & \Hom(\Ppi, \PSp) \ar[u, "b", bend left]\ar[d, "w"] \\
            \End(\Ptau) \ar[r, dashed, "f"] & \Hom(\Ppi, \Ptau)
        \end{tikzcd}
    \end{center}
    where $v, w$ are fixed and induced by some nonzero maps $\PSpw \to \Ptau, \PSp \to \Ptau$, respectively, and where all squares formed in the diagram commute (but the vertical subdiagrams need not commute, i.e.\ we do not require $v = wc$ or $w = vb$). Indeed, if such $f$ exists, then under the matrix embedding the maps $x, y,$ and $f$ must all be given by multiplication by the same element in $K$. The map $f$ sends the identity morphism on $\Ptau$ to an element $\varphi \in \Hom(\Ppi, \Ptau)$, and so by Remark \ref{rem:rigidity} precomposition by $\varphi$ corresponds to multiplication by the same element as $x, y,$ and $f$. That is, both $x$ and $y$ are induced by precomposition by $\varphi$, as desired.

    For $\tau = \Sp$ or $\Sp \otimes \omega$, the existence of such an extension is trivial, since this is already achieved by the original diagram, taking $v$ or $w$ to be an identity map. If we can prove the claim for $\Ptau = \Ptriv$, the claim for $\Pw$ also follows by twisting. Therefore set $\tau = 1$. We choose $v$ to be the morphism induced by postcomposition with $\alpha$ and $w$ to be induced by postcomposition with $\beta$, where $\alpha$ and $\beta$ are defined in Diagram \ref{seq:main}. The desired diagram is:
        \begin{center}
        \begin{tikzcd}
            \Hom(\Ptriv, \PSpw) \ar[r, "x"]\ar[dd, "\alpha", curve={height=80pt}]\ar[d, "c", bend left] & \Hom(\Ppi, \PSpw)\ar[d, "c", bend left] \ar[dd, "\alpha",curve={height=-80pt} ] \\ 
            \Hom(\Ptriv, \PSp) \ar[d, "\beta"] \ar[u, "b", bend left]\ar[r, "y"] & \Hom(\Ppi, \PSp) \ar[u, "b", bend left]\ar[d, "\beta"] \\
            \End(\Ptriv) \ar[r, dashed, "f"] & \Hom(\Ppi, \Ptriv)
        \end{tikzcd}
    \end{center}

    The image of $(\alpha, \beta): \Hom(\Ptriv, \PSpw \oplus \PSp) \to \End(\Ptriv) \simeq R$ is the ideal $\mathfrak{a}$, which follows immediately from Proposition \ref{prop:alpha-beta-image}. We obtain a homomorphism $f': \mathfrak{a} \to \Hom(\Ppi, \Ptau)$ making the squares of the diagram commute: given $a \in \mathfrak{a}$, let $(s, t) \in \Hom(\Ptriv, \PSpw) \oplus \Hom(\Ptriv, \PSp)$ map to $a$ under $(\alpha, \beta)$. Then we define $f'(a) = (\alpha \circ x)(s) + (\beta \circ y)(t)$. This is well-defined: if $(s', t')$ is another pair mapping to $a$, then, again identifying every element and morphism in sight with an element of $K$, since $x$ and $y$ correspond to the same element of $K$ we have
    \begin{align*}
        \alpha x s + \beta y t &= \alpha x s + \beta x t \\
        &= x(\alpha s + \beta t) \\
        &= x(\alpha s' + \beta t')\\
        &= \alpha x s' + \beta y t'.
    \end{align*}
    Hence, the proof reduces to whether $f': \mathfrak{a} \to \Hom(\Ppi, \Ptriv)$ extends to a morphism $f: \End(\Ptriv) \to \Hom(\Ppi, \Ptriv)$. But the unique existence of such an extension is guaranteed by Proposition \ref{prop:Hom-depths} and Lemma \ref{lem:depth-extension}, so we are done.  
\end{proof}

\begin{remark}
    The main idea behind Proposition \ref{prop:hom-iso} is that the diagrams in Corollary \ref{cor:compatible-diagram-hom} resemble some of the naturality diagrams that one would obtain from the the contravariant Yoneda embedding. The point is to show that, in this case, such diagrams do indeed come from the Yoneda embedding, i.e.\ that these commutative squares are induced by precomposition with a specific morphism. The proof of the Yoneda lemma was inspiration to examine the extension of the diagram in the proof of Proposition \ref{prop:hom-iso}, since the diagram is controlled by the image of the identity under $\End(\Ptau) \to \Hom(\Ppi, \Ptau)$.
\end{remark}

\begin{theorem}\label{thm:endomorphism-algebra-isomorphism}
    The natural map $\tilde{E}_{\mathfrak{B}}^{\on{op}} \to \End_{\mathfrak{X}}(\mathfrak{W}) \simeq \End_{\mathfrak{X}}(\mathfrak{W}_{\on{tf}})$ is a continuous isomorphism of $R$-algebras.
\end{theorem}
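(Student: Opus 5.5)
The plan is to assemble the theorem from Proposition \ref{prop:hom-iso} block by block. First, decompose both sides according to the direct sum $\tilde{P} = \bigoplus_{\pi \in \mathfrak{B}} \Ppi$. On the representation side, $\tilde{E}_{\mathfrak{B}} = \bigoplus_{\pi, \tau} \Hom(\Ppi, \Ptau)$. On the sheaf side, $\Hom(\tilde{P}, \tilde{P}') = \bigoplus_\pi \Hom(\Ppi, \tilde{P}')$ gives $\mathfrak{W} = \bigoplus_{\pi} \mathfrak{W}^\pi$, since tensor product over $\tilde{E}'_{\mathfrak{B}}$ commutes with finite direct sums. Hence
\[
\End_{\mathfrak{X}}(\mathfrak{W}) = \bigoplus_{\pi, \tau} \Hom_{\mathfrak{X}}(\mathfrak{W}^\tau, \mathfrak{W}^\pi).
\]
I will check that the natural map $\tilde{E}_{\mathfrak{B}}^{\on{op}} \to \End_{\mathfrak{X}}(\mathfrak{W})$, given by $\varphi \mapsto (v \otimes \psi \mapsto v \otimes (\psi \circ \varphi))$, respects these decompositions. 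Its $(\pi,\tau)$-component is exactly the map $\Hom(\Ppi, \Ptau) \to \Hom_{\mathfrak{X}}(\mathfrak{W}^\tau, \mathfrak{W}^\pi)$ of Proposition \ref{prop:hom-iso}. Bijectivity then follows componentwise from that proposition.

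Next I will verify the algebra structure. Precomposition reverses order: $\varphi_2 \circ \varphi_1$ acts as (action of $\varphi_1$) $\circ$ (action of $\varphi_2$). The map is therefore an anti-homomorphism of rings, i.e.\ a homomorphism out of $\tilde{E}_{\mathfrak{B}}^{\on{op}}$, and it sends identities to identities. It is well defined on the tensor product because precomposition on $\Hom(-, \tilde{P}')$ commutes with the left $\tilde{E}'_{\mathfrak{B}}$-action by postcomposition. For $R$-linearity, $R \simeq Z(\mathfrak{C}(\O))$ acts centrally (Corollary \ref{cor:E-vs-E}), so precomposing with $r \in R$ agrees with the $R$-action through either tensor factor, which is the $R = S_0$-module structure on $\mathfrak{W}$. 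The identification $\End_{\mathfrak{X}}(\mathfrak{W}) \simeq \End_{\mathfrak{X}}(\mathfrak{W}_{\on{tf}})$ is Corollary \ref{cor:endw=endwtf}, and it is compatible with the algebra and $R$-module structures because it is induced by descending endomorphisms along the quotient map.

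Continuity is the step I expect to need the most care. Both sides are finitely generated $R$-modules: the source because each $\Hom(\Ppi,\Ptau)$ is isomorphic to an ideal of $R$ (Proposition \ref{prop:homs-are-ideals}), and the target because it is isomorphic to the source. On the source, the compact topology should coincide with the $\mideal_R$-adic topology, since the Hom modules are finitely generated over the compact central ring $R$. I would give the target its $\mideal_R$-adic topology as well. An $R$-linear bijection between finitely generated modules over the complete noetherian local ring $R$, each carrying its $\mideal_R$-adic topology, is then automatically a homeomorphism.

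The only subtlety is justifying that the natural topology on $\tilde{E}_{\mathfrak{B}}$ is the $\mideal_R$-adic one. I would argue this as in \cite{paskunas13}: $\tilde{E}_{\mathfrak{B}}$ is finite over its center $R$, and the compact topology on $\tilde{E}_{\mathfrak{B}}$ restricts on $R$ to the profinite topology of $R$.
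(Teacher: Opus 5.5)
Your proposal is correct and follows essentially the same route as the paper. The paper also assembles the componentwise isomorphisms of Proposition \ref{prop:hom-iso} along the decompositions $\tilde{E}_{\mathfrak{B}} = \bigoplus_{\pi,\tau}\Hom(\Ppi,\Ptau)$ and $\End_{\mathfrak{X}}(\mathfrak{W}) = \bigoplus_{\pi,\tau}\Hom_{\mathfrak{X}}(\mathfrak{W}^\tau,\mathfrak{W}^\pi)$, then invokes Corollary \ref{cor:endw=endwtf}; your checks on the $R$-linear anti-homomorphism and on continuity (via the $\mathfrak{m}_R$-adic topologies on finitely generated $R$-modules) are sound details the paper leaves implicit.
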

\begin{proof}
    This follows from assembling the isomorphisms from Proposition \ref{prop:hom-iso} using the respective $R$-module direct sum decompositions of $\tilde{E}_{\mathfrak{B}}^{\on{op}}$ and $\End_{\mathfrak{X}}(\mathfrak{W})$. Recall that we have the isomorphism $\End_{\mathfrak{X}}(\mathfrak{W}) \simeq \End_{\mathfrak{X}}(\mathfrak{W}_{\on{tf}})$ from Corollary \ref{cor:endw=endwtf}.
\end{proof}

\subsection{Vanishing of higher Exts}\label{sec:ext-computations}

Although taking the $R$-torsion-free quotient of $\mathfrak{W}$ does not affect its endomorphism algebra, this step is important to ensure that the higher self-Exts of our candidate vanish, as we will demonstrate in this section. 

Our approach will be very similar to that taken in \cite{JNW} once we have made the appropriate identifications for the $\mathfrak{W}^{\pi}_{\on{tf}}$. We require one more preparatory identification:

\begin{lemma}\label{lem:hom-ptriv-pspw-reducibility}
    Identifying $\Hom(\Ptriv, \PSpw)$ as an ideal of $R \simeq \End(\Ptriv)$ via \ref{prop:hom-identifications}(a), the ideal $\Hom(\Ptriv, \PSpw)$ is equal to the reducibility ideal $I$ in $R$, given by the image of $S_2 \otimes_R S_{-2} \to R$. Moreover, the natural map 
    $$\Hom(\Ptriv, \PSp) \otimes_R \Hom(\PSp, \PSpw) \to \Hom(\Ptriv, \PSpw)$$
    is a surjection. 
\end{lemma}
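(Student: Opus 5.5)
The plan is to work inside $\End(\Ptriv) \simeq R$, identify both ideals in question, and compare them by a height/primality argument. Write $J \subset R$ for the image of $\Hom(\Ptriv, \PSpw)$ under postcomposition with $\alpha$; this map is injective by Proposition \ref{prop:hom-identifications}(c). The claim then splits into two parts: $J = I$, and the composition map $\Hom(\Ptriv, \PSp) \otimes_R \Hom(\PSp, \PSpw) \to \Hom(\Ptriv, \PSpw)$ has image $\alpha^{-1}(I)$. Together these give the surjectivity assertion.

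\emph{Step 1: the image of the composition map is $I$.} Take $\varphi \in \Hom(\Ptriv, \PSp)$ and $c \in \Hom(\PSp, \PSpw) \simeq S_{-2}$. By Lemma \ref{lem:same-center}, the central element of $\alpha \circ c \circ \varphi \in \End(\Ptriv)$ equals that of $c \circ \varphi \circ \alpha \in \End(\PSpw)$. By Lemma \ref{lem:Hom-1-Sp}, as $\varphi$ ranges over $\Hom(\Ptriv, \PSp)$, the element $b := \varphi \circ \alpha$ ranges over all of $\Hom(\PSpw, \PSp) \simeq S_2$. By Corollary \ref{cor:spw-sp-homs}, the composite $c \circ b \in \End(\PSpw) \simeq R$ is the product $cb$ in $S$. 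Hence $\alpha$ carries the image of the composition map exactly onto the image of $S_2 \otimes_R S_{-2} \to R$, namely $I$. In particular $I \subseteq J$.

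\emph{Step 2: $J$ is a height-one prime.} Apply $\Hom(\Ptriv, -)$ to the top row of Diagram (\ref{seq:main}). This gives an exact sequence $0 \to \Hom(\Ptriv, \PSpw) \to \End(\Ptriv) \to \Hom(\Ptriv, \Mone) \to 0$, where right exactness holds by projectivity of $\Ptriv$. By Lemma \ref{lem:1-mone}(a),(b), the last term is $\End(\Mone) \simeq \O\lb x, y\rb$, and the map $R \to \O\lb x,y\rb$ is the ring surjection. So $J$ is the kernel of a surjection $R \twoheadrightarrow \O\lb x, y\rb$. Thus $J$ is prime with $\dim R/J = 3$; since $\dim R = 4$, $R$ is a domain, and $R$ is complete local (hence catenary), $J$ has height $1$.

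\emph{Step 3: $I$ is also a height-one prime, so $I = J$.} This is the step I expect to be the main obstacle, since it requires input from the Galois side. By Bella\"iche--Chenevier, $R/I$ represents reducible pseudodeformations of $\omega \oplus 1$ with fixed determinant, i.e.\ pairs of characters $\chi_1\chi_2 = \tilde\omega\varepsilon$-type determinant lifting $\omega, 1$. This is the deformation ring of a single character of $\Gamma$, which is $\O\lb \Gamma^{\mathrm{ab}}(3)\rb \simeq \O\lb x,y\rb$ because the pro-$3$ completion of $\BQ_3^\times$ is $\BZ_3^2$. One can check this against the presentation of \S\ref{sec:explicit-S}: setting all $b_i c_j = 0$ leaves $\O\lb a_0, a_1\rb$, and the $b,c$-relations collapse. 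Hence $R/I$ is a $3$-dimensional domain, so $I$ is a prime of height $1$. Combined with $I \subseteq J$, this forces $I = J$.

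Alternatively, once $R/I \simeq \O\lb x,y\rb$ is known, one can avoid heights. The induced surjection $R/I \twoheadrightarrow R/J$ is a surjection $\O\lb x,y\rb \twoheadrightarrow \O\lb x,y\rb$, and a surjective endomorphism of a noetherian ring is injective. For this version I would need to check that the map is indeed a ring endomorphism under these identifications. Given $I = J$, Step 1 yields the surjectivity of the composition map.
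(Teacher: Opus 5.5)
Your proof is correct and takes essentially the paper's route. You get $I \subseteq \Hom(\Ptriv,\PSpw)$ by transporting the composition map through Lemma~\ref{lem:Hom-1-Sp} and the GMA identification of Corollary~\ref{cor:spw-sp-homs}, and your explicit use of Lemma~\ref{lem:same-center} makes precise the identification $\End(\Ptriv)\simeq\End(\PSpw)$ that the paper leaves implicit. You then get equality because $R/I \simeq \O\lb a_0,a_1\rb$ is a $3$-dimensional domain surjecting onto $\End(\Ptriv)/\Hom(\Ptriv,\PSpw) \simeq \O\lb x,y\rb$; your height-one-prime formulation via catenarity just repackages the paper's observation that a proper quotient of $\O\lb a_0,a_1\rb$ has strictly smaller dimension, and that form also removes your worry about whether the induced map is a ring endomorphism.
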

\begin{proof}
    From the description of $S$ given in \S\ref{sec:explicit-S}, the reducibility ideal $$I = (b_0c_0, b_0c_1, b_1c_0, b_1c_1)$$ satisfies $$R/I \simeq \O\lb a_0, a_1\rb.$$ Meanwhile, we know from Diagram \ref{seq:main} and Lemma \ref{lem:1-mone} that $$\End(\Ptriv)/\Hom(\Ptriv, \PSpw) \simeq \Hom(\Ptriv, \Mone) \simeq \O \lb x, y \rb$$ (abstractly as a continuous $R$-module). 

    Using the identification from \ref{cor:spw-sp-homs}, we know that the image of $$\Hom(\PSpw, \PSp) \otimes_R \Hom(\PSp, \PSpw) \to \End(\PSpw) = R$$ is the reducibility ideal $I$. By Lemma \ref{lem:Hom-1-Sp}, we deduce that the image of \begin{align}\label{map:ptriv-psp-tensor-psp-pspw}\Hom(\Ptriv, \PSp) \otimes_R \Hom(\PSp, \PSpw) \to \Hom(\Ptriv, \PSpw)\end{align} is also equal to $I$ under the identifications $$\Hom(\Ptriv, \PSpw) \hookrightarrow \End(\Ptriv) \simeq \End(\PSpw).$$ Hence $I \subseteq \Hom(\Ptriv, \PSpw)$, inducing a quotient map $$R/I \twoheadrightarrow \End(\Ptriv)/\Hom(\Ptriv, \PSpw).$$ But both of these modules are abstractly isomorphic to $\O\lb a_0, a_1\rb$ as continuous $R$-modules. No proper quotient of $\O\lb a_0, a_1\rb$ is isomorphic to $\O\lb a_0, a_1\rb$, since the dimension of this module would be lowered by at least one under any proper quotient. Hence we conclude that we must actually have equality $I = \Hom(\Ptriv, \PSpw)$ and the map (\ref{map:ptriv-psp-tensor-psp-pspw}) is surjective. 
\end{proof}
\begin{proposition}\label{prop:wtf-identifications}
As graded $S$-modules:
    \begin{enumerate}[(a)]
        \item $\mathfrak{W}_{\on{tf}}^{\Sp \otimes \omega} \simeq S(1)$;
        \item $\mathfrak{W}^{\Sp}_{\on{tf}} \simeq S(-1)$;
        \item $\mathfrak{W}^{1}_{\on{tf}} \simeq (b_0, b_1)(1)$; and 
        \item $\mathfrak{W}^\omega_{\on{tf}} \simeq (c_0, c_1)(-1)$,
    \end{enumerate}
    using the presentation of $S$ from \S\ref{sec:explicit-S}. 
\end{proposition}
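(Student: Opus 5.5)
Everything will go through Proposition \ref{prop:categorical-interpretation}, applied with $\tilde{P}$ replaced by a single $\Ppi$. This describes $\mathfrak{W}^\pi$ degreewise: its components are $S(-1)_i \otimes_R \Hom(\Ppi, \PSp)$ for $i \geq 0$ and $S(1)_i \otimes_R \Hom(\Ppi, \PSpw)$ for $i<0$, with transitions given by composition with $b \in S_2$ and $c \in S_{-2}$. In every case the plan is to write down a graded $S$-linear map from $\mathfrak{W}^\pi$ to the claimed target. Since the target is an $R$-torsion-free submodule of the domain $S$ (up to shift), this map factors through $\mathfrak{W}^\pi_{\on{tf}}$. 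Surjectivity is clear from generators in degrees $\pm 1$. For injectivity on $\mathfrak{W}^\pi_{\on{tf}}$ I will tensor with $K$: both sides should have rank one in each relevant degree, so the kernel of $\mathfrak{W}^\pi \to \text{target}$ is torsion, hence exactly the $R$-torsion.

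\emph{Cases (a) and (b).} Corollary \ref{cor:spw-sp-homs} identifies
\[
\Hom(\PSpw,\PSpw)\simeq R,\quad \Hom(\PSpw,\PSp)\simeq S_2,\quad \Hom(\PSp,\PSp)\simeq R,\quad \Hom(\PSp,\PSpw)\simeq S_{-2},
\]
compatibly with the GMA multiplication. For (a), the degree-$i$ component with $i\ge 0$ becomes $S_{i+1}\otimes_R S_2$, and the one with $i<0$ becomes $S_{i+1}\otimes_R R$. Multiplication in $S$ gives a map to $S_{i+1}\cdots$; more precisely, I will take the map $\mathfrak{W}^{\Sp\otimes\omega}\to S(1)$ sending $s\otimes \varphi \mapsto s\varphi$. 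This is just the canonical isomorphism $\mathfrak{V}\otimes_{E'}E'e\simeq \mathfrak{V}e$ (projective summand), so it is already an isomorphism before taking the torsion-free quotient. Case (b) is identical with the other idempotent.

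\emph{Case (c), the main step.} For $\pi=1$, I will identify the two modules as submodules of $S$:
\begin{itemize}
\item by Lemma \ref{lem:Hom-1-Sp}, $\Hom(\Ptriv,\PSp)\simeq \Hom(\PSpw,\PSp)\simeq S_2$;
\item by Proposition \ref{prop:hom-identifications}(a) and Lemma \ref{lem:hom-ptriv-pspw-reducibility}, $\Hom(\Ptriv,\PSpw)\simeq I\subset R$, the reducibility ideal.
\end{itemize}
These identifications should be compatible with composition by $b,c$, by Remark \ref{rem:rigidity} together with the fact that precomposition by $\alpha$ commutes with postcomposition. Consequently $\mathfrak{W}^1$ maps to $S(1)$ (or a suitable shift) with image generated in degree $-1$ by $I$ and in degree $1$ by $S_2$. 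The shifts have to be bookkept carefully. Because $S_2=(b_0,b_1)R$ and $I=(b_0c_0,b_0c_1,b_1c_0,b_1c_1)=S_2S_{-2}$, the $S$-submodule generated is exactly the ideal $(b_0,b_1)$, suitably shifted. The surjectivity statement in Lemma \ref{lem:hom-ptriv-pspw-reducibility} is what guarantees that degree $-1$ contributes nothing new beyond $c\cdot S_2$.

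The main obstacle is verifying that the $K$-ranks match degreewise, so that the kernel is exactly the $R$-torsion. Each $S_j\otimes_R(\text{rank one})$ has rank one over $K$, and $S_j\otimes_R K$ is one-dimensional because $S$ is a domain with $S_j\neq 0$ for even $j$. Granting this, the map is an isomorphism after $\otimes K$, so its kernel is torsion and its target is torsion-free.

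\emph{Case (d).} This follows from (c) by twisting by $\omega$. Under the symmetry of \S\ref{sec:explicit-S} the twist swaps $b_i\leftrightarrow c_i$ and swaps $\PSp$ with $\PSpw$, which flips the grading shift and yields $(c_0,c_1)(-1)$.
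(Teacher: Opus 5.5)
Your proposal is correct and follows the paper's proof: the degreewise description from Proposition \ref{prop:categorical-interpretation}, a surjection onto $S(1)$ resp.\ $(b_0,b_1)(1)$ built from Lemmas \ref{lem:Hom-1-Sp} and \ref{lem:hom-ptriv-pspw-reducibility}, a rank-one count over $K$ showing the kernel is exactly the $R$-torsion, and twisting for (b) and (d). Two small points. First, the rank bound $\dim_K S_j\otimes_R K\le 1$ comes from a nonzero element of $S_{-j}$ embedding $S_j$ into $R$, not from $S$ being a domain alone. Second, your observation $\mathfrak{V}\otimes_{\tilde{E}'}\tilde{E}'e\simeq\mathfrak{V}e$ is a slight sharpening showing no torsion needs removing in (a), but watch the indexing slip: the $i\ge 0$ component there is $S_{i-1}\otimes_R S_2$, not $S_{i+1}\otimes_R S_2$.
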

\begin{proof}
    We prove this for $\mathfrak{W}_{\on{tf}}^{\Sp \otimes \omega}$ and $\mathfrak{W}_{\on{tf}}^{1}$; the other two cases follow by twisting. 

    The description from Proposition \ref{prop:categorical-interpretation} yields
    \begin{align*}
        \mathfrak{W}_{i}^{\Sp \otimes \omega} &= \begin{cases}
            S_{i - 1} \otimes_R \Hom(\PSpw, \PSp) : & i \geq 0 \\
            S_{i + 1} \otimes_R \Hom(\PSpw, \PSpw) : & i < 0 
        \end{cases}\\
        &= \begin{cases}
            S_{i - 1} \otimes_R S_2 : & i \geq 0 \\
            S_{i + 1} \otimes_R R : & i < 0 
        \end{cases}
    \end{align*}
    We deduce that we have a surjective map of graded $S$-modules $\mathfrak{W}^{\Sp \otimes \omega} \to S(1)$, so we may conclude part (a) if we can show that the kernel of this map is $R$-torsion. But every $S_{i}$ and $\Hom(\Ppi,\Ptau)$ is a rank $1$ $R$-module, so $\mathfrak{W}^{\Sp \otimes \omega} \to S(1)$ becomes an isomorphism after passing to the generic fiber of $R$. Hence the kernel consists of precisely the $R$-torsion. 

    Likewise, we have a description
    \begin{align*}
        \mathfrak{W}_{i}^{1} &= \begin{cases}
            S_{i - 1} \otimes_R \Hom(\Ptriv, \PSp) : & i \geq 0 \\
            S_{i + 1} \otimes_R \Hom(\Ptriv, \PSpw) : & i < 0 
        \end{cases}\\
        &= \begin{cases}
            S_{i - 1} \otimes_R S_2 : & i \geq 0 \\
            S_{i + 1} \otimes_R I : & i < 0 
        \end{cases}
    \end{align*}
    where $I$ is the reducibility ideal of $R$, using Lemmas \ref{lem:Hom-1-Sp} and \ref{lem:hom-ptriv-pspw-reducibility}. We therefore get a surjective map $\mathfrak{W}^1_i \to (b_0, b_1)(1)$, and the same argument as in the previous case shows that the kernel of this map is precisely the $R$-torsion. 
\end{proof}
\begin{corollary}\label{cor:wtf-presentation}
    As graded $S$-modules,
    \begin{align*}
        \mathfrak{W}_{\on{tf}} \simeq S(1) \oplus S(-1) \oplus (b_0, b_1)(1) \oplus (c_0, c_1)(-1).
    \end{align*}
\end{corollary}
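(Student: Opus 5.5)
The plan is to reduce the statement to Proposition \ref{prop:wtf-identifications} by showing that the maximal $R$-torsion-free quotient commutes with the decomposition of $\mathfrak{W}$ over the four projective envelopes. Since $\tilde{P} = \bigoplus_{\pi \in \mathfrak{B}} \Ppi$, we have
\begin{align*}
\Hom(\tilde{P}, \tilde{P}') \simeq \bigoplus_{\pi \in \mathfrak{B}} \Hom(\Ppi, \tilde{P}')
\end{align*}
as left $\tilde{E}'_{\mathfrak{B}}$-modules. Tensor products commute with finite direct sums, so this gives a decomposition of graded $S$-modules $\mathfrak{W} \simeq \bigoplus_{\pi \in \mathfrak{B}} \mathfrak{W}^\pi$. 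The same decomposition is also visible degreewise in Proposition \ref{prop:categorical-interpretation}, where $\Hom(\tilde{P}, \PSp)$ and $\Hom(\tilde{P}, \PSpw)$ split over $\pi$.

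Next, the $R$-torsion submodule of a finite direct sum of $R$-modules is the direct sum of the torsion submodules, because $R$ is a domain. It is also a graded $S$-submodule: $R = S_0$, each graded piece is preserved, and torsion is stable under $S$. Hence
\begin{align*}
\mathfrak{W}_{\on{tf}} \simeq \bigoplus_{\pi \in \mathfrak{B}} \mathfrak{W}^\pi_{\on{tf}}
\end{align*}
as graded $S$-modules.

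Finally, substituting the four identifications of Proposition \ref{prop:wtf-identifications}(a)--(d) yields
\begin{align*}
\mathfrak{W}_{\on{tf}} \simeq S(1) \oplus S(-1) \oplus (b_0, b_1)(1) \oplus (c_0, c_1)(-1).
\end{align*}

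None of these steps is a real obstacle. The only point to state with care is that passing to the torsion-free quotient commutes with the direct sum and respects the grading, and this follows formally as described above.
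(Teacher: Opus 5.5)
Your proposal is correct and matches the paper's approach. The paper states the corollary without proof, as an immediate consequence of the splitting $\mathfrak{W} \simeq \bigoplus_{\pi \in \mathfrak{B}} \mathfrak{W}^\pi$, hence $\mathfrak{W}_{\on{tf}} \simeq \bigoplus_{\pi} \mathfrak{W}^\pi_{\on{tf}}$, combined with the four identifications of Proposition \ref{prop:wtf-identifications}; your check that the $R$-torsion submodule is a graded $S$-submodule compatible with finite direct sums is the only detail needing verification, and you handle it correctly.
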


To prove that $\Ext^i_{\mathfrak{X}}(\mathfrak{W}_{\on{tf}}, \mathfrak{W}_{\on{tf}}) = 0$ for all $i > 0$, we show that each $\Ext_{\mathfrak{X}}^i(\mathfrak{W}_{\on{tf}}^{\pi}, \mathfrak{W}_{\on{tf}}^{\tau})$ vanishes. Since $\mathfrak{W}_{\on{tf}}^{\pi}$ is projective\footnote{See \cite[Theorem 3.2.3, Theorem 3.5.1]{JNW}.} when $\pi = \on{Sp} \otimes \omega$ or $\Sp$, vanishing is automatic in these cases. We therefore focus on $\pi = 1$; the vanishing for $\pi = \omega$ follows from this case by twisting.  
\begin{definition}
    Let $Q^*$ denote the image of
    \begin{center}
        \begin{tikzcd}[ampersand replacement=\&, row sep=large, column sep=large]
            S(-1)^{\oplus 2} \ar[r, "{\begin{bmatrix} x_1 & x_0 \\ -b_1 & b_0\end{bmatrix}}"] \& S(-1) \oplus S(1).
        \end{tikzcd}
    \end{center}
\end{definition}
\begin{proposition}\label{prop:q*-is-b0b1}
    The projection $Q^* \to S(1)$ onto the second factor identifies $Q^*$ with $(b_0, b_1)(1) \simeq \mathfrak{W}_{\on{tf}}^1$. 
\end{proposition}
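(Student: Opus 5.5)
The map $S(-1)^{\oplus 2} \to S(-1)\oplus S(1)$ sends a pair $(u,v)$ of homogeneous elements to $(x_1u + x_0v,\; -b_1u + b_0v)$. Here the first row lands in $S(-1)$ because $x_0,x_1 \in R = S_0$, and the second row lands in $S(1)$ because $b_0,b_1 \in S_2$. The projection of $Q^*$ onto the second factor therefore has image $(b_0,b_1)S$, placed in $S(1)$ with the grading shifted so that it matches $(b_0,b_1)(1)$. The content of the proposition is that this projection is injective on $Q^*$. The isomorphism $(b_0,b_1)(1)\simeq \mathfrak{W}^1_{\on{tf}}$ is already Proposition \ref{prop:wtf-identifications}(c), so only injectivity needs proof.

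For injectivity, suppose $-b_1u+b_0v=0$. I will show that $x_1u+x_0v=0$ as well. The key input is the defining relation $x_0b_1+x_1b_0=0$ in $S$ from \S\ref{sec:explicit-S}, together with the fact that $S$ is a domain (Proposition \ref{prop:alg-properties}(b)). Work in $\on{Frac}(S)$. From $b_1u=b_0v$ and $b_0\neq 0$ (it is regular) we get $v=b_1u/b_0$. Hence
\begin{align*}
x_1u+x_0v = \frac{u}{b_0}\,(x_1b_0+x_0b_1) = 0.
\end{align*}
So the first coordinate of every element of the kernel vanishes, and the projection $Q^*\to S(1)$ is injective. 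Its image is $(b_0,b_1)(1)$, which proves the first claim.

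I do not expect a substantive obstacle. The only points needing care are bookkeeping. First, the relation must be used with the index convention of the footnote, so that the matrix entries match the relation as $x_1b_0+x_0b_1$ rather than with $x_0,x_1$ swapped; if the convention disagrees, the signs in the matrix are adjusted accordingly. Second, the grading shifts must be checked to be consistent, so that the isomorphism is one of graded $S$-modules. Finally, the identification with $\mathfrak{W}^1_{\on{tf}}$ should be stated as composing this isomorphism with the one from Proposition \ref{prop:wtf-identifications}(c).
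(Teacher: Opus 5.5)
Your proof is correct and matches the paper's argument: the paper likewise notes that the image is $(b_0,b_1)(1)$ and reduces to injectivity of the projection. It proves injectivity using the relation $x_0b_1+x_1b_0=0$ and the fact that $S$ is a domain, in the form of the identity $(s_0x_0+s_1x_1)b_0 = x_0(s_0b_0-s_1b_1)$ followed by cancelling $b_0$, which is equivalent to your division by $b_0$ in $\on{Frac}(S)$.
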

\begin{proof}
    We argue exactly as in \cite[Lemma 3.6.41]{DEG26}. The projection $Q^* \to S(1)$ has image $(b_0, b_1)(1)$, so we must show that this projection is injective. If $s_0, s_1 \in S$, then we have
    \begin{align*}
        (s_0x_0 + s_1x_1)b_0 = x_0( s_0b_0- s_1b_1).
    \end{align*}
    Therefore if the projection of
    \begin{align}\label{exp:linear-combo}
        s_1\begin{bmatrix}
            x_1 \\ -b_1
        \end{bmatrix} +
        s_0 \begin{bmatrix}x_0 \\ b_0\end{bmatrix}
    \end{align}
    to the second factor is $0 = s_0b_0- s_1b_1$, we must have $(s_0x_0 + s_1x_1)b_0 = 0$ too. But $S$ is an integral domain by Proposition \ref{prop:alg-properties}, so this implies $s_0x_0 + s_1x_1 = 0$ too, so the linear combination (\ref{exp:linear-combo}) is the zero vector. Hence the projection is injective. 
\end{proof}

We note that the quotient $$\frac{\O\lb a_0, a_1, b_0c_0, b_0c_1, b_1c_0, b_1c_1\rb [b_0, b_1, c_0, c_1]}{(y_0c_1 + y_1c_0)}$$ is an integral domain, since (using the approximations of $y_0$ and $y_1$ from \S\ref{sec:explicit-S}) we may change variables to see that this ring is isomorphic to 
$$\frac{\O\lb a_0', a_1', b_0c_0, b_0c_1, b_1c_0, b_1c_1\rb [b_0, b_1, c_0, c_1]}{(a_0'c_1 + a_1'c_0)}$$
which may be checked directly to be a domain. Therefore, we may argue as in \cite[\S3.5]{JNW} to conclude that we have a periodic resolution
\begin{center}
    \begin{tikzcd}
     \dotsb \ar[r, "M^t"] & S(-3)^{\oplus 2} \ar[r, "N^t"] & S(-3) \oplus S(-1) \ar[r, "M^t"] & S(-1)^{\oplus 2} \ar[r, "N^t"] & Q^* \ar[r] & 0
    \end{tikzcd}
\end{center}
where 
\begin{align*}
    M^t := \begin{bmatrix}
        b_0 & -x_0 \\
        b_1 & x_1
    \end{bmatrix}, N^t := \begin{bmatrix}
        x_1 & x_0 \\ -b_1 & b_0
    \end{bmatrix}
\end{align*}
and the surjection $S(-1)^{\oplus 2} \to Q^*$ comes from identifying $Q^*$ as the image of $N^t$ inside $S(-1) \oplus S(1)$ as in Proposition \ref{prop:q*-is-b0b1}. (We write the matrices with the transposition to emphasize the analogy to the dual of the situation in \cite{JNW}.)

\begin{proposition}\label{prop:ext-vanishing-1-spw-sp}
    $\Ext^i_{\mathfrak{X}}(\mathfrak{W}_{\on{tf}}^{1}, \mathfrak{W}_{\on{tf}}^{\Sp \otimes \omega}) = \Ext^i_{\mathfrak{X}}(\mathfrak{W}_{\on{tf}}^{1}, \mathfrak{W}_{\on{tf}}^{\Sp}) = 0$. 
\end{proposition}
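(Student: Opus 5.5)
By Proposition \ref{prop:wtf-identifications}(a),(b), the targets are the free modules $S(1)$ and $S(-1)$. By Proposition \ref{prop:q*-is-b0b1}, the source is $\mathfrak{W}^1_{\on{tf}} \simeq Q^*$. So it suffices to show $\Ext^i_{\mathfrak{X}}(Q^*, S(j)) = 0$ for $i>0$ and $j = \pm 1$. I would compute these groups with the periodic free resolution of $Q^*$ displayed just above, whose differentials alternate between $N^t$ and $M^t$ with suitable twists. On $\mathfrak{X} = [\Spec S/\mathbb{G}_m]$, $\Hom_{\mathfrak{X}}(S(a), S(j)) = S_{j-a}$. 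Applying $\Hom_{\mathfrak{X}}(-, S(j))$ therefore gives a complex whose terms are direct sums of graded pieces of $S$, with differentials given by the transposes, i.e.\ by $M$ and $N$ (the matrices of \cite[\S3.5]{JNW} form).

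The key step is exactness of the dual complex in positive degrees. I would argue this ungraded first and then take graded pieces, since taking a fixed degree is exact. Consider the two-periodic complex
\[
\dotsb \to S^{2} \xrightarrow{N} S^2 \xrightarrow{M} S^2 \xrightarrow{N} \dotsb .
\]
We have $MN = NM = (x_1b_0 + x_0b_1)\cdot \on{id} = 0$ in $S$. This is a matrix factorization of the defining relation $x_1b_0+x_0b_1$ of $S$ over the ring $T := \O\lb\dots\rb[b_0,b_1,c_0,c_1]/(y_0c_1+y_1c_0)$, which was shown above to be a domain. The standard theory of matrix factorizations (Eisenbud) then shows that both the complex and its $\Hom_S(-,S)$-dual are exact over the hypersurface $S = T/(f)$, provided $M$ and $N$ give a reduced matrix factorization. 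Here $\det M = \det N = f$ up to sign, and $f$ is a nonzerodivisor in $T$. Concretely, I would check $\ker(M) = \on{im}(N)$ on $S^2$ directly: if $M(u,v)^t=0$, then $b_0u = x_0 v$ and $b_1 u = -x_1 v$ (with appropriate signs). Since $S$ is a domain (Proposition \ref{prop:alg-properties}), I would compare with the primality of $(b_0,b_1)$-type ideals, as in the proof of Proposition \ref{prop:q*-is-b0b1}, and do the same for $\ker N = \on{im} M$. This is exactly the argument of \cite[\S3.5]{JNW} and \cite[Lemma 3.6.41]{DEG26}, which I would follow verbatim.

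The remaining subtlety is the low-degree term. $\Ext^1$ is the homology at the first $\Hom(S(-1)^{\oplus 2}, S(j))$ spot, namely $\ker(M\text{-dual})/\on{im}(N\text{-dual restricted from } \Hom(Q^*,S(j)))$. Since the resolution of $Q^*$ is obtained by truncating the periodic complex (with $Q^* = \on{im} N^t$), the dual sequence at that spot agrees with the dual of the full periodic complex. The periodic exactness therefore gives $\Ext^1 = 0$ as well.

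I expect the main obstacle to be verifying exactness of the $\Hom$-dual complex, rather than of the resolution itself. This requires the analogous domain property on the ``$b$-side'': the ring $S/(x_0,x_1)$, or equivalently the fact that $x_0,x_1$ (resp.\ $b_0,b_1$) behave like a regular pair modulo the relation. I would first try to deduce it by the symmetry $b\leftrightarrow c$ together with the domain statement for $T$ noted above, adapting the change of variables $x_1\equiv 6-2a_0$, $x_0\equiv -2a_1$.
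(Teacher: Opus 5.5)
Your proposal is correct and follows essentially the same route as the paper. You reduce via Proposition~\ref{prop:wtf-identifications} and Proposition~\ref{prop:q*-is-b0b1} to $\Ext^i_{\mathfrak{X}}(Q^*, S(\pm 1))$, dualize the $2$-periodic resolution, invoke the matrix-factorization exactness argument of \cite[\S3.5]{JNW}, and pass to graded pieces. The extra ``$b$-side'' domain property you worry about at the end is not needed: the transposed pair $(M, N)$ is again a matrix factorization of the same nonzerodivisor $x_1b_0 + x_0b_1$ in the domain $T$, so the dual complex is exact by exactly the same argument, which is what the paper means by ``the same reasoning as the original resolution.''
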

\begin{proof}
    The same argument as in \cite[Proposition 3.5.2]{JNW} works. Applying $\Hom_S(-, S)$ to resolution of $Q^* \simeq (b_0, b_1)(1) \simeq \mathfrak{W}_{\on{tf}}^{1}$ yields
    \begin{center}
    \begin{tikzcd}
     \dotsb \ar[r, "M"] & S(-3)^{\oplus 2} \ar[r, "N"] & S(-3) \oplus S(-1) \ar[r, "M"] & S(-1)^{\oplus 2} 
    \end{tikzcd}
\end{center}
which, by the same reasoning as the original resolution, is exact in nonzero degrees, so in the category of ungraded $S$-modules we have $\Ext_S^i(\mathfrak{W}_{\on{tf}}^{1}, S) = 0$ for all $i > 0$, and hence 
\begin{align*}
    \Ext_{\mathfrak{X}}^i(\mathfrak{W}_{\on{tf}}^{1}, S(1)) = \Ext_{\mathfrak{X}}^i(\mathfrak{W}_{\on{tf}}^{1}, S(-1)) = 0,
\end{align*}
which gives the desired result by Proposition \ref{prop:wtf-identifications}. 
\end{proof}

\begin{corollary}\label{cor:wtf-is-mcm}
    $\mathfrak{W}_{\on{tf}}$ is a maximal Cohen-Macaulay sheaf on $\mathfrak{X}$. 
\end{corollary}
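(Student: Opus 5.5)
By Corollary \ref{cor:wtf-presentation}, $\mathfrak{W}_{\on{tf}}\simeq S(1)\oplus S(-1)\oplus (b_0,b_1)(1)\oplus (c_0,c_1)(-1)$. Being maximal Cohen--Macaulay is stable under finite direct sums and degree shifts, so it suffices to treat each summand separately. Moreover, $(c_0,c_1)(-1)$ is the image of $(b_0,b_1)(1)$ under the involution of $S$ swapping $b_i\leftrightarrow c_i$. Hence the plan is to show that $S$ and $(b_0,b_1)\simeq Q^*$ are maximal Cohen--Macaulay over $S$, checked at the graded maximal ideal. Since everything is $\mathbb{G}_m$-equivariant, this is the only closed point of the stack that matters.

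For $S$ itself, Proposition \ref{prop:alg-properties}(b) says $S$ is a local complete intersection, hence Cohen--Macaulay of dimension $5$. So $S(\pm1)$ are maximal Cohen--Macaulay.

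For $Q^*\simeq \mathfrak{W}^1_{\on{tf}}$, I would use the duality criterion over a Gorenstein ring. $S$ is a complete intersection, hence Gorenstein, so its dualizing module is $S$ itself. A finitely generated module $N$ over a Cohen--Macaulay local ring with canonical module $S$ is maximal Cohen--Macaulay exactly when $\Ext^i_S(N,S)=0$ for all $i>0$. The proof of Proposition \ref{prop:ext-vanishing-1-spw-sp} already gives this vanishing for $N=\mathfrak{W}^1_{\on{tf}}$: the dualized periodic complex built from $M,N$ is exact in positive degrees. Applying the same conclusion to the twisted module gives the corresponding statement for $\mathfrak{W}^\omega_{\on{tf}}$.

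As a cross-check, there is a more elementary route. The short exact sequence $0\to S(-1)\xrightarrow{(-b_1,b_0)^{t}} S(-1)^{\oplus2}\to\cdots$ is not quite available, but the depth lemma can be applied to $0\to (b_0,b_1)\to S\to S/(b_0,b_1)\to 0$. This gives maximal Cohen--Macaulayness provided $S/(b_0,b_1)$ has depth $4$. That holds because $S/(b_0,b_1)\simeq \O\lb a_0,a_1\rb[c_0,c_1]/(\bar y_1c_0+\bar y_0c_1)$ is a hypersurface of dimension $4$, as computed in Proposition \ref{prop:alg-properties}.

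The main obstacle I expect is making the local statements match the stack. One has to pass from graded depth at the irrelevant/graded maximal ideal to depth at all points of $\Spec S$ in the support relevant to $\mathfrak{X}$. This requires care, since $S$ is only finite type over the complete local ring $R$, not local. I would handle it with the standard fact that for a graded module over a ring whose degree $0$ part is complete local, the Cohen--Macaulay property can be checked at the unique graded maximal ideal. Every closed orbit specializes to it, because every point of $\mathfrak{X}$ specializes to the closed point lying over the maximal ideal of $R$.
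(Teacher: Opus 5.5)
Your main argument is the paper's: $S$ is a local complete intersection, hence Gorenstein, so the vanishing $\Ext^i_S(\mathfrak{W}^1_{\on{tf}},S)=0$ for $i>0$ from the proof of Proposition \ref{prop:ext-vanishing-1-spw-sp}, together with twisting and the freeness of $S(\pm1)$, gives maximal Cohen--Macaulayness. Your depth-lemma cross-check via $0\to(b_0,b_1)\to S\to S/(b_0,b_1)\to 0$ is also sound, and the localization worry in your last paragraph does not arise on the $\Ext$ route, since $\Ext$ commutes with localization and every localization of $S$ is Gorenstein.
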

\begin{proof}
    In the previous proof, we showed that $\Ext^i_S(\mathfrak{W}_{\on{tf}}, S) = 0$ for all $i > 0$. The ring $S$ is a local complete intersection by Proposition \ref{prop:alg-properties}(b), hence Gorenstein, so vanishing of $\Ext^i_S(\mathfrak{W}_{\on{tf}}, S)$ for all $i > 0$ is equivalent to $\mathfrak{W}_{\on{tf}}$ being maximal Cohen-Macaulay. 
\end{proof} 

\begin{proposition}
    $\Ext^i_{\mathfrak{X}}(\mathfrak{W}_{\on{tf}}^{1}, \mathfrak{W}_{\on{tf}}^{1}) = 0$. 
\end{proposition}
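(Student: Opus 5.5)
We compute $\Ext^i_{\mathfrak{X}}(\mathfrak{W}_{\on{tf}}^{1}, \mathfrak{W}_{\on{tf}}^{1})$ with the periodic free resolution of $Q^* \simeq (b_0,b_1)(1) \simeq \mathfrak{W}_{\on{tf}}^{1}$ from Proposition \ref{prop:q*-is-b0b1}. We follow the template of the analogous statement in \cite[\S3.5]{JNW}, with the roles of $b$ and $c$ dualized.

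Apply $\Hom_{\mathfrak{X}}(-, Q^*)$ to the resolution
\begin{align*}
\dotsb \xrightarrow{M^t} S(-1)^{\oplus 2} \xrightarrow{N^t} Q^* \to 0 .
\end{align*}
Since every term of the resolution is a sum of shifts of $S$, the resulting complex has terms that are sums of graded pieces of $Q^*$. It is the complex $\dotsb \leftarrow Q^{*\oplus 2} \xleftarrow{N} Q^{*\oplus 2} \xleftarrow{M} Q^{*\oplus 2} \leftarrow \dotsb$, with appropriate twists, restricted to degree $0$. Taking $\mathbb{G}_m$-invariants (degree $0$ parts) is exact, so it suffices to show that the ungraded complex
\begin{align*}
Q^{*\oplus 2} \xrightarrow{N} Q^{*\oplus 2} \xrightarrow{M} Q^{*\oplus 2} \xrightarrow{N} \dotsb
\end{align*}
is exact in positive degrees. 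Equivalently, we must show $\Ext^i_S(Q^*, Q^*) = 0$ for $i>0$.

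The cleanest route is through the inclusion $Q^* \hookrightarrow S(-1)\oplus S(1)$ of Proposition \ref{prop:q*-is-b0b1}. Its cokernel $C$ is cyclic up to twist: it is $S/(b_0,b_1)$ or $S/(x_0,x_1)$, depending on the factor. For concreteness, projecting to the first factor identifies $Q^*$ with $(x_0,x_1)(-1)$, and this gives a short exact sequence
\begin{align*}
0 \to Q^* \to S(-1)\oplus S(1) \to C \to 0 .
\end{align*}
From Proposition \ref{prop:ext-vanishing-1-spw-sp}, $\Ext^i(Q^*, S(\pm1))=0$ for $i>0$. The long exact sequence then reduces the claim to two facts:
\begin{enumerate}[(i)]
\item $\Ext^i(Q^*, C)=0$ for $i\ge 1$;
\item $\Hom(Q^*, S(-1)\oplus S(1)) \to \Hom(Q^*, C)$ is surjective.
\end{enumerate}
Together these give $\Ext^i(Q^*,Q^*)=0$ for all $i\geq 1$.

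To compute $\Ext^i(Q^*, C)$, reduce the periodic resolution modulo the ideal $(b_0,b_1)$ (respectively $(x_0,x_1)$). Modulo $(b_0,b_1)$ the matrices become
\begin{align*}
M^t \equiv \begin{bmatrix} 0 & -x_0\\ 0 & x_1\end{bmatrix}, \qquad N^t \equiv \begin{bmatrix} x_1 & x_0 \\ 0 & 0\end{bmatrix}.
\end{align*}
The cohomology of the complex is then governed by the Koszul-type homology of $x_0,x_1$ on $S/(b_0,b_1)$. The computation in the proof of Proposition \ref{prop:alg-properties}(b) shows
\begin{align*}
S/(b_0,b_1) \simeq \O\lb a_0,a_1\rb[c_0,c_1]/(\bar y_1 c_0 + \bar y_0 c_1),
\end{align*}
a domain. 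In it, $\bar x_1 \equiv 6-2a_0$ and $\bar x_0 \equiv -2a_1$ (modulo higher terms). Since $3\mid 6$ and $\varpi\mid 3$, these form a regular sequence of length $2$. Regularity kills all the intermediate cohomology, exactly as in the vanishing of $\Ext^i(Q^*, S(\pm1))$.

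We expect the main obstacle to be verifying this regularity, together with the surjectivity in (ii), in the ring $S/(b_0,b_1)$, without the explicit $p\ge5$ presentations used in \cite{JNW}. For regularity, first try a leading-term argument: take initial forms $-2a_0$ (after absorbing the constant $6$, which lies in the maximal ideal) and $-2a_1$, and use the depth bound from Proposition \ref{prop:alg-properties}. If that fails, the fallback is to run the argument of \cite[Proposition 3.5.3]{JNW} verbatim. It needs only two inputs, that $S$ and $S/(y_0c_1+y_1c_0)$ are domains, and both have been verified for $p=3$. In that argument, endomorphisms of $Q^*$ are matched with degree-zero elements of $S$, and the degree-zero parts of the periodic complex are shown to be exact by a direct kernel computation using the domain property.
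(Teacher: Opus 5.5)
Your outer skeleton matches the paper's: embed $Q^*$ into a shift of $S$, use $\Ext^{i}_{\mathfrak{X}}(Q^*,S(\pm1))=0$ for $i>0$ (Proposition \ref{prop:ext-vanishing-1-spw-sp}), and push the problem onto the cokernel. However, the step that is supposed to do the work is argued incorrectly, and two reductions along the way are false.

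First, reducing to the \emph{ungraded} statement $\Ext^i_S(Q^*,Q^*)=0$ reduces to something untrue. The periodic resolution shows $\Omega^2Q^*\simeq Q^*(-2)$. Moreover $Q^*$ is not projective: its presentation matrix $M^t$ has entries $b_0,b_1,x_0,x_1$, all in the maximal ideal $(\varpi,a_0,a_1,b_0,b_1,c_0,c_1)$, since $x_1\equiv 6-2a_0$ and $\varpi\mid 3$. So $Q^*$ needs two generators there while having rank one. Hence the identity of $\Omega^2Q^*$ gives a nonzero class in $\Ext^2_{\mathfrak{X}}(Q^*,Q^*(-2))$, which is a direct summand of $\Ext^2_S(Q^*,Q^*)$. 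The vanishing you want is genuinely a graded phenomenon.

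Second, the cokernel of $Q^*\hookrightarrow S(-1)\oplus S(1)$ has rank one, so it cannot be the torsion module $S/(b_0,b_1)$ or $S/(x_0,x_1)$. The sequence you want is $0\to Q^*\to S(1)\to C\to 0$ via the second projection, with $C=(S/(b_0,b_1))(1)$. This $C$ is the paper's $\overline{Q}^*$.

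Third, and decisively, your argument for (i) fails. Reducing modulo $(b_0,b_1)$, the Hom complex alternates between $\phi\mapsto(0,\,-\phi_1\overline{x}_0+\phi_2\overline{x}_1)$ and $\psi\mapsto(\psi_1\overline{x}_1,\,\psi_1\overline{x}_0)$. So even granting that $\overline{x}_0,\overline{x}_1$ is regular, you get $\Ext^1_S(Q^*,S/(b_0,b_1))\simeq S/(b_0,b_1,x_0,x_1)\neq 0$: regularity only kills the even cohomology. (Lying in the maximal ideal is also no reason for $\overline{x}_0,\overline{x}_1$ to form a regular sequence.)

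The missing idea is a degree count, not regularity:
\begin{itemize}
\item Every homogeneous element of $S$ of positive degree lies in $(b_0,b_1)$, so $C_i=S_{i+1}/(b_0,b_1)_{i+1}=0$ for $i>0$.
\item Every term of the resolution is a sum of copies of $S(-1)$ and $S(-3)$, and $\Hom_{\mathfrak{X}}(S(-j),C)=C_j=0$ for $j>0$.
\item So the graded Hom complex into $C$ is identically zero, and $\Ext^i_{\mathfrak{X}}(Q^*,C)=0$ for all $i\geq 0$.
\item This includes $\Hom_{\mathfrak{X}}(Q^*,C)=0$, which makes your (ii) automatic. The long exact sequence then immediately gives $\Ext^{i+1}_{\mathfrak{X}}(Q^*,Q^*)=0$ for all $i\geq 0$.
\end{itemize}
The choice of $S(1)$ matters here. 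With the first projection the cokernel is $(S/(x_0,x_1))(-1)$, which does not vanish in positive degrees, so you would again need a real regularity computation. Your fallback of rerunning \cite{JNW} verbatim is a pointer, not an argument.
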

\begin{proof}
    Equivalently, we need to show that the higher self-Exts of $Q^*$ vanish. Let $\conj{Q}^*$ be defined by the exact sequence
    \begin{align*}
        0 \to (b_0, b_1)(1) \to S(1) \to \conj{Q}^*
    \end{align*}
    where the first map is the natural inclusion. Since $Q^* = (b_0, b_1)(1) \hookrightarrow S(1)$ is an isomorphism in degree $i > 0$, we conclude that $\conj{Q}^*_{i} = 0$ for all $i > 0$ and hence that $\Hom_{\mathfrak{X}}(S(i), \conj{Q}^*) = 0$ for all $i < 0$. Moreover, since $(b_0, b_1)(1)$ is also generated in degree $1$, we also have $\Hom_{\mathfrak{X}}(Q^*, \conj{Q}^*) = 0$.  Therefore, applying $\Hom_{\mathfrak{X}}(Q^*, -)$ to the exact sequence defining $\conj{Q}^*$ yields exact sequences
    \begin{align*}
        \Ext_{\mathfrak{X}}^{i}(Q^*,  S(1)) \to \Ext_{\mathfrak{X}}^i(Q^*,  \conj{Q}^*) \to \Ext^{i + 1}_{\mathfrak{X}}(Q^*, Q^*) \to \Ext_{\mathfrak{X}}^{i + 1}( Q^*, S(1))
    \end{align*}
    for $i \geq 0$. 
    
    We have $\Hom_{\mathfrak{X}}(Q^*, \conj{Q}^*) = 0$ and $\Ext_{\mathfrak{X}}^{1}( Q^*, S(1)) = 0$ using Proposition \ref{prop:ext-vanishing-1-spw-sp}, so we conclude $\Ext^1_{\mathfrak{X}}(Q^*, Q^*) = 0$ too. For $i > 0$, vanishing of $\Ext_{\mathfrak{X}}^{i}(Q^*, S(1))$ shows that we have an isomorphism $\Ext_{\mathfrak{X}}^i(Q^*,  \conj{Q}^*) \to \Ext^{i + 1}_{\mathfrak{X}}(Q^*, Q^*)$, so it suffices to prove that $\Ext_{\mathfrak{X}}^i(Q^*,  \conj{Q}^*) = 0$ for all $i > 0$. Since $\Hom_{\mathfrak{X}}(S(i), \conj{Q}^*) = 0$ for all $i < 0$, applying $\Hom_{\mathfrak{X}}(-, \conj{Q}^*)$ to the periodic resolution of $Q^*$ yields the zero complex, so we conclude that $\Ext^i_{\mathfrak{X}}(Q^*, \conj{Q}^*) = 0$ for all $i > 0$ (and also $i = 0$), as desired. 
\end{proof}

\begin{proposition}
    $\Ext^i_{\mathfrak{X}}(\mathfrak{W}_{\on{tf}}^{1}, \mathfrak{W}_{\on{tf}}^{\omega}) = 0$.
\end{proposition}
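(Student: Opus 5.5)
We aim to show $\Ext^i_{\mathfrak{X}}(Q^*, (c_0,c_1)(-1)) = 0$ for all $i > 0$, using $\mathfrak{W}^1_{\on{tf}} \simeq Q^* \simeq (b_0,b_1)(1)$ (Proposition \ref{prop:q*-is-b0b1}) and $\mathfrak{W}^\omega_{\on{tf}} \simeq (c_0,c_1)(-1)$ (Proposition \ref{prop:wtf-identifications}(d)). The approach mirrors the previous proposition: embed the target in a free module and analyse the cokernel. The natural inclusion gives an exact sequence
\begin{align*}
0 \to (c_0,c_1)(-1) \to S(-1) \to \conj{Q}_* \to 0,
\end{align*}
where $\conj{Q}_* := S(-1)/(c_0,c_1)(-1)$. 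The inclusion $(c_0,c_1)(-1)\hookrightarrow S(-1)$ is an isomorphism in all degrees $< 0$, since $S(-1)_d = S_{d-1}$ with $d-1 \le -2$ lies in the ideal generated by $c_0, c_1$. Hence $\conj{Q}_*$ is concentrated in degrees $\geq 0$.

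By Proposition \ref{prop:ext-vanishing-1-spw-sp}, $\Ext^i_{\mathfrak{X}}(Q^*, S(-1)) = 0$ for $i > 0$. The long exact sequence therefore yields isomorphisms $\Ext^{i}_{\mathfrak{X}}(Q^*, \conj{Q}_*) \simeq \Ext^{i+1}_{\mathfrak{X}}(Q^*, (c_0,c_1)(-1))$ for $i \ge 1$. It also gives an exact sequence
\begin{align*}
\Hom_{\mathfrak{X}}(Q^*, S(-1)) \to \Hom_{\mathfrak{X}}(Q^*, \conj{Q}_*) \to \Ext^1_{\mathfrak{X}}(Q^*, (c_0,c_1)(-1)) \to 0.
\end{align*}
So it suffices to compute $\Ext^\bullet_{\mathfrak{X}}(Q^*, \conj{Q}_*)$ using the periodic resolution of $Q^*$ by the free modules $S(-1)^{\oplus 2}$, $S(-3)\oplus S(-1)$, $S(-3)^{\oplus 2}, \dots$ with maps $N^t, M^t$.

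Since $\Hom_{\mathfrak{X}}(S(j), \conj{Q}_*) = (\conj{Q}_*)_{-j}$, the terms of the Hom complex are sums of $(\conj{Q}_*)_1$ and $(\conj{Q}_*)_3$. These pieces are nonzero: $(\conj{Q}_*)_d = S_{d-1}/(c_0,c_1)S_{d+1}$, so the complex does not vanish for trivial reasons, unlike in the previous proposition. This is the main obstacle. My plan is to identify $\conj{Q}_*$ concretely: $(\conj{Q}_*)_{\ge 0} = S/(c_0,c_1)$ restricted to nonnegative degrees and shifted by one. Here $S/(c_0,c_1) \simeq \O\lb a_0,a_1\rb[b_0,b_1]/(x_0b_1+x_1b_0)$ in nonnegative degrees, since the $y$-relation dies mod $c$. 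The Hom complex then becomes the complex obtained by applying the matrices $M, N$ (transposed back) to the relevant graded pieces of $\bar S := S/(c_0,c_1)$.

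In $\bar S$, we have the Koszul-type matrix factorization $M^tN^t = N^tM^t = (x_1b_0+x_0b_1)\cdot \on{id}$, and this product vanishes in $\bar S$. So the complex is $2$-periodic, built from $\bar S_{\text{even}}$ with maps $N, M$. Its exactness reduces to showing that the sequence $x_1 b_0 + x_0 b_1 = 0$ defines a hypersurface on which the matrix factorization $(M,N)$ has no cohomology. Equivalently, that $\ker N = \on{im} M$ and $\ker M = \on{im} N$ on $\bar S$ in the relevant degrees. I would prove this using that $\bar S \simeq \O\lb a_0,a_1\rb[b_0,b_1]/(x_0b_1+x_1b_0)$ is a domain in which $b_0, x_0$ (or $b_0$, $b_1$) behave like a regular sequence. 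This would be checked via the approximations $x_1 \equiv 6-2a_0$, $x_0\equiv -2a_1$, after a change of variables to $a_1'b_1 + a_0'b_0$, exactly as in the domain argument preceding the periodic resolution. A kernel element $(u,v)$ of $N$ satisfies $x_1u + x_0v = 0$ and $-b_1u+b_0v=0$. Unique factorization, or depth $2$ of the relevant ring in $(b_0,x_0)$, then forces $(u,v)$ to be a multiple of $(b_0, b_1)$-type columns of $M$.

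Finally, I must handle degree bookkeeping. Pieces of the complex landing in negative degrees of $\bar S$ vanish. The $\Ext^1$ term requires that every degree-$0$ map $Q^* \to \conj{Q}_*$ lift to $S(-1)$; this follows since $Q^*$ is free-generated in degree $1$ modulo relations and $S(-1)\to \conj{Q}_*$ is surjective in degree $1$ with relations lifting by the same exactness. If direct verification proves messy, a fallback is to use the swap involution $b_i\leftrightarrow c_i$ together with a duality $\Hom_S((b_0,b_1),S)\simeq (c_0,c_1)$-type identification and reflexivity of the MCM modules, as in \cite[\S3.5]{JNW}, to reduce to the already-proven self-Ext vanishing.
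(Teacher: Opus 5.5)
Your reduction is the same as the paper's. You use the same sequence $0\to(c_0,c_1)(-1)\to S(-1)\to\conj{Q}_*\to 0$ and the same description $(\conj{Q}_*)_d=S_{d-1}/IS_{d-1}$ for $d\ge 0$, with $I$ the reducibility ideal; that is, $\conj{Q}_*$ is a shift of $S/(c_0,c_1)\simeq\O\lb a_0,a_1\rb[b_0,b_1]/(\conj{x}_1b_0+\conj{x}_0b_1)$. You also use the same matrix-factorization complex mod $I$, exact in positive degrees, which gives $\Ext^i_{\mathfrak{X}}(Q^*,(c_0,c_1)(-1))=0$ for $i\ge 2$.

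One correction there: $S/(c_0,c_1)$ is not factorial. For instance, $(b_0,\conj{x}_0)$ is a non-principal height-one prime. Also, neither $b_0,\conj{x}_0$ nor $b_0,b_1$ is a regular sequence in it, since $\conj{x}_0b_1\in(b_0)$. So do not argue via unique factorization. The clean route is the standard lift to $\O\lb a_0,a_1\rb[b_0,b_1]$, where $\conj{x}_1b_0+\conj{x}_0b_1$ is a nonzerodivisor and $M^tN^t=N^tM^t=(\conj{x}_1b_0+\conj{x}_0b_1)\on{id}$.

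The genuine gap is the $\Ext^1$ step, which is exactly where this case differs from the previous proposition. By your own long exact sequence, $\Ext^1_{\mathfrak{X}}(Q^*,(c_0,c_1)(-1))$ is the cokernel of $\Hom_{\mathfrak{X}}(Q^*,S(-1))\to\Hom_{\mathfrak{X}}(Q^*,\conj{Q}_*)$. Positive-degree exactness of the Hom complex from the periodic resolution into $\conj{Q}_*$ says nothing about this $H^0$-level map, and ``relations lifting by the same exactness'' does not fill the hole. Lift the images $n_1,n_2\in R/I$ of the two degree-$1$ generators of $Q^*$ arbitrarily to $R$. The two relations of $Q^*$ are the columns of $M^t$: one with coefficients $-x_0,x_1$, valued in $R$, and one with coefficients $b_0,b_1$, valued in $S_2$. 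On the lifts these hold only modulo $I$ and $IS_2$. The obstruction to correcting the lifts by elements of $I$ is exactly the connecting class you are trying to show vanishes.

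What is missing is an explicit determination of $\Hom_{\mathfrak{X}}(Q^*,\conj{Q}_*)$. The paper restricts to degree $1$:
\begin{enumerate}[(i)]
\item Such a map is an $R$-linear map $S_2/IS_2\to R/I$, and $S_2/IS_2$ has generators $\conj{b}_0,\conj{b}_1$ with the single relation $\conj{x}_1\conj{b}_0+\conj{x}_0\conj{b}_1=0$.
\item Since $\conj{x}_0,\conj{x}_1$ (approximately $-2a_1$ and $6-2a_0$) have no common factor in $R/I\simeq\O\lb a_0,a_1\rb$, this Hom module is cyclic, generated by $\conj{b}_0\mapsto\conj{x}_0$, $\conj{b}_1\mapsto-\conj{x}_1$.
\item That generator lifts to the $S$-linear map $(b_0,b_1)(1)\to S(-1)$ given by multiplication by $x_0/b_0=-x_1/b_1$. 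This is just the first projection $Q^*\subset S(-1)\oplus S(1)\to S(-1)$.
\end{enumerate}

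Alternatively, you can make your idea rigorous by using exactness of the \emph{two-sided} periodic complex at the $H^0$ spot. The term preceding $\Hom_{\mathfrak{X}}(S(-1)^{\oplus 2},\conj{Q}_*)$ is $\Hom_{\mathfrak{X}}(S(-1)\oplus S(1),\conj{Q}_*)=R/I\oplus 0$. So every map $Q^*\to\conj{Q}_*$ is an $R/I$-multiple of the reduction of that first projection, which visibly lifts. One of these computations must be supplied. Your duality fallback is unnecessary.
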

\begin{proof}
    Analogously to our identifications $\mathfrak{W}_{\on{tf}}^{1} \simeq (b_0, b_1)(1) \simeq Q^*$ in Proposition \ref{prop:q*-is-b0b1}, we may identify $\mathfrak{W}^{\omega}_{\on{tf}} \simeq (c_0, c_1)(-1)$ with the image of
    \begin{center}
        \begin{tikzcd}[ampersand replacement=\&, row sep=large, column sep=large]
            S(1)^{\oplus 2} \ar[r, "{\begin{bmatrix} y_1 & y_0 \\ -c_1 & c_0\end{bmatrix}}"] \& S(1) \oplus S(-1).
        \end{tikzcd}
    \end{center}
    We denote this image by $Q^{*}_{\omega}$. Likewise, we define $\conj{Q}^*_\omega$ via the exact sequence
    \begin{align*}
        0 \to (c_0, c_1)(-1) \to S(-1) \to \conj{Q}^*_\omega \to 0.
    \end{align*}
    We have $\conj{Q}^*_{\omega, i} = 0$ for $i < 0$, and $\conj{Q}^*_{\omega, i} = S_{i - 1}/IS_{i - 1}$ for $i \geq 0$, where $I = (b_0c_0, b_0c_1, b_1c_0, b_1c_1)$ is the reducibility ideal of $R$. We apply $\Hom(-, \conj{Q}_\omega^*)$ to the resolution of $Q^*$: 
    \begin{center}
    \begin{tikzcd}
        \Hom_{\mathfrak{X}}(S(-1)^{\oplus 2}, \conj{Q}^*_\omega) \ar[r]  & \Hom_{\mathfrak{X}}(S(-3) \oplus S(-1), \conj{Q}^*_\omega) \ar[r] & \Hom_{\mathfrak{X}}(S(-3)^{\oplus 2}, \conj{Q}^*_\omega) \ar[r] & \dotsb
    \end{tikzcd}
    \end{center}
    Since $\conj{Q}_{\omega, i}^* = S_{i - 1}/IS_{i - 1}$ in degrees $i \geq 0$ and $S(-i)$ is generated by $S(-i)_i \simeq R$,  we have \begin{align*}\Hom_{\mathfrak{X}}(S(-i), \conj{Q}_{\omega}^*) &\simeq \Hom_{\mathfrak{X}}(S(-i), S(-1)/IS(-1))\\
    &\simeq \Hom_{\mathfrak{X}}(S(-i)/IS(-i), S(-1)/IS(-1)) \\
    &\simeq S_{i - 1}/IS_{i - 1},
    \end{align*}
    for $i \geq 0$, so the previous complex of $R$-modules is isomorphic to
    \begin{align*}
        (S_{0}/IS_{0})^{\oplus 2} \to S_{0}/IS_{0} \oplus S_{2}/IS_{2} \to (S_{2}/IS_{2})^{\oplus 2} \to \dotsb
    \end{align*}
    where the maps alternate between $\conj{M} = \begin{bmatrix}\conj{b}_0 & \conj{b}_1 \\ -\conj{x}_0 & \conj{x}_1\end{bmatrix}$ and $\conj{N} = \begin{bmatrix}\conj{x}_1 & -\conj{b}_1 \\ -\conj{x}_0 & \conj{b}_0\end{bmatrix}$, with the bars denoting residues mod $I$. But this is again a $2$-periodic resolution of the type discussed in \cite[\S3.5]{JNW}, so it is exact in positive cohomological degree, so we conclude that $\Ext_{\mathfrak{X}}^i(Q^*, \conj{Q}^*_\omega) = 0$ for $i > 0$.

    Applying $\Hom_{\mathfrak{X}}(Q^*, -)$ to the short exact sequence defining $\conj{Q}_\omega^*$ yields exact sequences
    \begin{align*}
        \Ext_{\mathfrak{X}}^{i}(Q^*,  S(-1)) \to \Ext_{\mathfrak{X}}^i(Q^*,  \conj{Q}_\omega^*) \to \Ext^{i + 1}_{\mathfrak{X}}(Q^*, Q_\omega^*) \to \Ext_{\mathfrak{X}}^{i + 1}( Q^*, S(-1))
    \end{align*}
    for $i \geq 0$. If $i > 0$, then the outer terms vanish, so vanishing of $\Ext_{\mathfrak{X}}^i(Q^*, \conj{Q}^*_\omega) = 0$ for $i > 0$ implies vanishing of $\Ext_{\mathfrak{X}}^i(Q^*, {Q}^*_\omega) = 0$ for $i > 0$ for $i > 1$. 
    
    It remains to prove $\Ext_{\mathfrak{X}}^1(Q^*, {Q}^*_\omega) = 0$. This will follow from the exact sequence above if we can show that $\Hom_{\mathfrak{X}}(Q^*, S(-1)) \to \Hom_{\mathfrak{X}}(Q^*, \conj{Q}^*_\omega)$ is surjective. A map $Q^* \to \conj{Q}_\omega^*$ is uniquely determined by the restriction $Q^*_{1} \to \conj{Q}^*_{\omega, 1}$, which we identify with a map $S_{2} \to R/I$, so the question is whether every $R$-module homomorphism $S_2 \to R/I$---equivalently, a homomorphism $S_2/IS_2 \to R/I$---lifts to an $R$-module homomorphism $S_2 \to R$. The $R$-module $S_2/IS_2$ may be presented with generators $\conj{b}_0, \conj{b}_1$ and relation $\conj{x}_0\conj{b}_1 + \conj{x}_1 \conj{b}_0 = 0$. From this presentation, we may compute that $\Hom_R(S_2/IS_2, R/I)$ is cyclic and generated by the map sending $\conj{b}_0 \mapsto \conj{x}_0, \conj{b}_1 \mapsto -\conj{x}_1$. This map lifts to the map $S_2 \to R$ sending $b_0 \mapsto x_0, b_1 \mapsto -x_1$; indeed, the lifted map is simply multiplication by $\frac{x_0}{b_0} = -\frac{x_1}{b_1}$. We conclude that $\Hom_{\mathfrak{X}}(Q^*, S(-1)) \to \Hom_{\mathfrak{X}}(Q^*, \conj{Q}^*_\omega)$ is surjective and that $\Ext_{\mathfrak{X}}^1(Q^*, {Q}^*_\omega) = 0,$ as desired.
\end{proof}

Since we have shown vanishing of each component in the direct sum decomposition, we conclude:
\begin{theorem}\label{thm:ext-vanishing}
    $\Ext_{\mathfrak{X}}^i(\mathfrak{W}_{\on{tf}}, \mathfrak{W}_{\on{tf}}) = 0$ for $i > 0$. 
\end{theorem}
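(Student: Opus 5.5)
The plan is to decompose $\mathfrak{W}_{\on{tf}}$ via Corollary \ref{cor:wtf-presentation} as $\mathfrak{W}_{\on{tf}}^{\Sp\otimes\omega}\oplus\mathfrak{W}_{\on{tf}}^{\Sp}\oplus\mathfrak{W}_{\on{tf}}^{1}\oplus\mathfrak{W}_{\on{tf}}^{\omega} \simeq S(1)\oplus S(-1)\oplus (b_0,b_1)(1)\oplus(c_0,c_1)(-1)$. Since $\Ext^i_{\mathfrak{X}}$ is additive in each variable, it suffices to show $\Ext^i_{\mathfrak{X}}(\mathfrak{W}^{\pi}_{\on{tf}},\mathfrak{W}^{\tau}_{\on{tf}})=0$ for all $i>0$ and all sixteen pairs $\pi,\tau\in\mathfrak{B}$.

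First, the case $\pi\in\{\Sp\otimes\omega,\Sp\}$. Here $\mathfrak{W}^\pi_{\on{tf}}$ is a shifted free module $S(\pm1)$. On $\mathfrak{X}=[\Spec S/\mathbb{G}_m]$, graded Hom out of $S(j)$ is exact, because taking a fixed graded piece is exact. So these Ext groups vanish for every target $\tau$.

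Next, the case $\pi=1$. The four targets are covered by the preceding results:
\begin{itemize}
\item $\tau=\Sp\otimes\omega$ and $\tau=\Sp$ are Proposition \ref{prop:ext-vanishing-1-spw-sp};
\item $\tau=1$ is the self-Ext vanishing for $Q^*$, proved via the sequence $0\to Q^*\to S(1)\to \conj{Q}^*\to 0$;
\item $\tau=\omega$ is the last proposition, which combines the periodic resolution with the lifting of maps $S_2/IS_2\to R/I$.
\end{itemize}

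Finally, the case $\pi=\omega$ follows from $\pi=1$ by twisting. The involution of $S$ swapping $b_i\leftrightarrow c_i$ (twisting deformations by $\omega$) is compatible with the grading up to the sign $d\mapsto -d$, so it induces an autoequivalence of $\on{Coh}(\mathfrak{X})$. This autoequivalence exchanges $S(1)\leftrightarrow S(-1)$ and $(b_0,b_1)(1)\leftrightarrow(c_0,c_1)(-1)$, that is, it swaps $\mathfrak{W}^{\Sp\otimes\omega}_{\on{tf}}\leftrightarrow\mathfrak{W}^{\Sp}_{\on{tf}}$ and $\mathfrak{W}^1_{\on{tf}}\leftrightarrow\mathfrak{W}^\omega_{\on{tf}}$. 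Being exact, it preserves Ext groups, which transports all the $\pi=1$ vanishing statements to $\pi=\omega$.

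The only point needing care is this last transport: one must check that the involution identifies $\mathfrak{W}^\omega_{\on{tf}}$ with the twist of $\mathfrak{W}^1_{\on{tf}}$ compatibly with the identifications in Proposition \ref{prop:wtf-identifications}, rather than only abstractly. Since Proposition \ref{prop:wtf-identifications}(d) was itself deduced by twisting, I expect this check to be routine. Assembling the sixteen vanishing statements then gives the theorem.
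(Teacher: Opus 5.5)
Your proposal is correct and follows the paper's own argument: reduce to the sixteen groups $\Ext^i_{\mathfrak{X}}(\mathfrak{W}^{\pi}_{\on{tf}}, \mathfrak{W}^{\tau}_{\on{tf}})$, dispose of $\pi \in \{\Sp\otimes\omega, \Sp\}$ because $S(\pm 1)$ is projective, cite the three preceding propositions for $\pi = 1$, and transport these to $\pi = \omega$ by the $\omega$-twist symmetry. The compatibility check you flag at the end is unnecessary: only vanishing is at stake, so abstract isomorphisms of graded modules, such as $\sigma^*\mathfrak{W}^{\omega}_{\on{tf}} \simeq \mathfrak{W}^{1}_{\on{tf}}$ under your grading-reversing involution $\sigma$, already suffice.
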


\section{Cohomological amplitude and the correspondence}
\subsection{Hecke modules and Poincar\'e duality}

We first record some results relating Hecke modules to group cohomology that strengthen \cite[Lemma 5.22]{paskunas13} since they do not seem readily accessible in the literature, although we shall only require loc.\ cit.\ in the arguments that follow in the next subsection.  

Let $G$ be a group with open profinite subgroup $H$ such that $gHg^{-1}$ has finite index in $H$ for all $g \in G$. We write $\mathcal{H} := C_c^\infty(H\backslash G/H)$ for the Hecke algebra  with coefficients in a field $k$. Given a smooth left $G$-module $\pi$ over $k$, we endow $\mathcal{I}(\pi) := \pi^H$ with the structure of a right $\mathcal{H}$-module by the formula
\begin{align*}
    v T_g := \sum_{u \in H/(H \cap g^{-1}Hg)} ug^{-1}v,
\end{align*}
where $v \in \pi^H$ and $T_g \in \mathcal{H}$ is the function sending the double coset $HgH$ to $1$ and all other double cosets to $0$. Then $\mathcal{I}$ defines a left-exact functor from the category of smooth $G$-modules (which has enough injectives) to the category of Hecke modules, so we may take its right derived functors $\mathbb{R}^i \mathcal{I}$. 

The Hecke action on $\mathbb{R}^i\mathcal{I}(\pi)$ may be described concretely using group cohomology. 
\begin{proposition}\label{prop:group-cohom-Hecke}
   Suppose that restriction to $H$ preserves injectives. For all $i \geq 0$, there is a natural isomorphism of Hecke modules $H^i(H, \pi) \simeq \mathbb{R}^i\mathcal{I}(\pi)$, where the Hecke action of $T_g \in \mathcal{H}$ on $H^i(H, \pi)$ is given by the composition
    \begin{center}
    \begin{tikzcd}
        H^i(H, \pi) \ar[r, "\on{res}_g"] &H^i(H \cap gH g^{-1}, \pi) \ar[r, "c_g"] & H^i(g^{-1}H g \cap H, \pi) \ar[r, "\on{cor}_g"] &H^i(H, \pi)
    \end{tikzcd}
    \end{center}
    where the first map is restriction, the second is induced by conjugation,\footnote{Meaning that this map is induced by functoriality by sending $v \in \pi$ to $g^{-1} \cdot v$, so that $g^{-1}hg$ acts by sending $g^{-1}v$ to $g^{-1}(hv)$.} and the third is corestriction.
\end{proposition}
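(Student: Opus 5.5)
The plan is to identify both sides as derived functors of the same functor $\mathcal{I} = (-)^H$ and then compare the Hecke actions. As abelian groups this is formal. Because restriction to $H$ preserves injectives, an injective resolution $\pi \to I^\bullet$ of smooth $G$-modules restricts to an injective resolution of smooth $H$-modules. Hence
\[
\mathbb{R}^i\mathcal{I}(\pi) = H^i((I^\bullet)^H) = H^i(H, \pi),
\]
where the second equality uses that continuous (smooth) cohomology of the profinite group $H$ is the derived functor of $H$-invariants on smooth $H$-modules. Naturality in $\pi$ is automatic, since morphisms lift to morphisms of injective resolutions, uniquely up to homotopy.

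Next I would check that the Hecke structures agree. The Hecke action on $\mathbb{R}^i\mathcal{I}(\pi)$ is, by definition, the one induced by applying the formula $vT_g = \sum_{u} ug^{-1}v$ termwise to the complex $(I^\bullet)^H$; this formula is functorial in the $G$-module, so it commutes with the differentials. It therefore suffices to show that the composite $\on{cor}_g \circ c_g \circ \on{res}_g$ is computed on $(I^\bullet)^H$ by exactly this formula. Write $H_1 := H \cap gHg^{-1}$ and $H_2 := g^{-1}Hg \cap H = g^{-1}H_1g$. I would take each of the three maps in turn:
\begin{enumerate}[(i)]
\item Restriction $H^i(H,\pi) \to H^i(H_1, \pi)$ is computed by the inclusion $(I^\bullet)^H \subset (I^\bullet)^{H_1}$. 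This uses that $I^\bullet$ remains an injective resolution as $H_1$-modules, which holds because $H_1$ is open in $H$ and restriction from $H$ to an open subgroup preserves injectives.
\item The conjugation map $c_g$ is induced by $v \mapsto g^{-1}v$, which sends $(I^\bullet)^{H_1}$ to $(I^\bullet)^{H_2}$. This is a map of resolutions compatible with the group isomorphism $H_2 \to H_1$, $h \mapsto ghg^{-1}$.
\item Corestriction $H^i(H_2,\pi) \to H^i(H,\pi)$ from the finite-index open subgroup $H_2$ is computed on invariants of an injective resolution by the trace $w \mapsto \sum_{u \in H/H_2} uw$. This is the standard description of corestriction in degree $0$, and it extends to all degrees by universality of $\delta$-functors, since both sides are effaceable $\delta$-functors agreeing in degree $0$.
\end{enumerate}
Composing the three maps gives $v \mapsto \sum_{u \in H/H_2} ug^{-1}v$, which is the stated formula for $vT_g$.

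The main obstacle is bookkeeping rather than substance. One point is making sure that each cohomological operation is realized on the \emph{same} complex $(I^\bullet)$, which requires that restriction to the open subgroups $H_1$ and $H_2$ preserves injectivity. The other is getting the conventions of $c_g$ right: matching the direction of conjugation and the coset set $H/(H\cap g^{-1}Hg)$ with the footnoted definition. The cleanest way to handle both is to argue entirely through the $\delta$-functor uniqueness principle: both $T_g$ on $\mathbb{R}^\bullet\mathcal{I}$ and the composite $\on{cor}_g c_g \on{res}_g$ are morphisms of universal $\delta$-functors $\mathbb{R}^\bullet\mathcal{I} \to \mathbb{R}^\bullet\mathcal{I}$ agreeing in degree $0$, hence they agree in all degrees.
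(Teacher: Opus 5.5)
Your proposal is correct and follows essentially the same route as the paper: you restrict a $G$-injective resolution to $H$ and to the open subgroups $H \cap gHg^{-1}$ and $g^{-1}Hg \cap H$, realize restriction, conjugation and corestriction on the invariants of that single complex, and compose to recover $v \mapsto \sum_{u} ug^{-1}v$. Your coset indexing $H/(g^{-1}Hg \cap H)$ for the corestriction is the right one, matching the definition of $T_g$, and your optional $\delta$-functor uniqueness argument is a valid, slightly cleaner way to package the same comparison.
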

\begin{proof}
   Take an injective resolution 
    \begin{align*}
        0 \to \pi \to J_0 \to J_1 \to \dots
    \end{align*}
    in the category of smooth $G$-modules. Since restriction to $H$ preserves injectives, this is also an injective resolution in the category of $H$-modules, so by taking $H$-invariants we obtain an isomorphism of $k$-modules
    \begin{align*}
        \mathbb{R}^i\mathcal{I}(\pi) \simeq H^i(H, \pi).
    \end{align*}
    Moreover, restriction from $H$ to a finite-index subgroup also preserves injectives by Frobenius reciprocity, so the same injective resolution holds when considering $\pi$ as a $(gHg^{-1} \cap H)$-module. Therefore, the corestriction map $H^i(gHg^{-1} \cap H, \pi) \to H^i(H, \pi)$ is induced at the level of $J_i^{gHg^{-1} \cap H}$ by
    \begin{align*}
        \on{cor}(v) = \sum_{u \in H/(gHg^{-1} \cap H)} uv, 
    \end{align*}
    and the desired composition at the level of the injective complex is given by
    \begin{align*}
        v \mapsto \sum_{u \in H/(gHg^{-1} \cap H)} u g^{-1}v
    \end{align*}
    which matches the Hecke action on $\mathcal{I}(J^i)$. Hence the Hecke action on cohomology agrees with previously defined Hecke action on $\mathbb{R}^i\mathcal{I}(\pi)$. 
\end{proof}

Framing the Hecke action in this way allows us to show a certain compatibility of the Hecke with Poincar\'e duality.
\begin{proposition}\label{prop:poincare-duality}
    Assume that $H$ is a Poincar\'e pro-$p$ group of dimension $d$ and that $k$ is a finite field of characteristic $p$. Let $\pi$ be a finite $G$-module. Then cup product induces a perfect pairing of Hecke modules
    \begin{align*}
        H^i(H, \pi) \times H^{d - i}(H, \pi^\vee) \to H^d(H, k) \simeq k
    \end{align*}
    inducing an isomorphism $H^i(H, \pi) \simeq H^{d - i}(H, \pi)^{\vee, \iota}$, where $(-)^\vee$ denote the $k$-linear dual and $\iota$ denotes the action of the Hecke algebra after composing with the standard anti-involution $\iota: T_g \mapsto T_{g^{-1}}$. 
\end{proposition}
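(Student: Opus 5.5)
The plan is to reduce to classical Poincaré duality for the Poincaré pro-$p$ group $H$ and then check Hecke-equivariance using the description of the Hecke action in Proposition \ref{prop:group-cohom-Hecke}, with $T_g$ acting as $\on{cor}_g \circ c_g \circ \on{res}_g$.

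\textbf{Perfect pairing as vector spaces.} Since $H$ is Poincaré of dimension $d$ and $\pi$ is finite, the cup product composed with evaluation $\pi \otimes \pi^\vee \to k$,
\begin{align*}
H^i(H, \pi) \times H^{d-i}(H, \pi^\vee) \to H^d(H, k) \simeq k,
\end{align*}
is perfect. This is the standard duality for Poincaré groups (Serre, \emph{Cohomologie galoisienne}, I.4.5). The same statement holds for every open subgroup $H' \subset H$, which is again Poincaré of dimension $d$. I will normalize the trace isomorphisms compatibly: $\on{cor}\colon H^d(H', k) \to H^d(H, k)$ is an isomorphism, and this choice is the one that makes the duality functorial.

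\textbf{Hecke-equivariance.} I need the identity
\begin{align*}
\langle xT_g, y\rangle = \langle x, yT_{g^{-1}}\rangle .
\end{align*}
I will check the adjointness of each of the three constituent maps.
\begin{enumerate}[(i)]
\item Restriction and corestriction are adjoint under cup product. This is the projection formula $\on{cor}(\on{res}(x)\cup y) = x \cup \on{cor}(y)$, combined with the fact that $\on{cor}$ on $H^d$ identifies the two trace maps.
\item Conjugation $c_g$ is compatible with cup product, $c_g(x) \cup c_g(y) = c_g(x \cup y)$, and it intertwines the top-degree traces: $c_g$ commutes with corestriction up to the appropriate conjugation, and $G$ acts trivially on the coefficient field $k$.
\end{enumerate}
Combining these, the adjoint of $\on{cor}_g \circ c_g \circ \on{res}_g$ (with respect to $H \cap gHg^{-1}$) is $\on{cor} \circ c_{g^{-1}} \circ \on{res}$ with respect to $H \cap g^{-1}Hg$. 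By Proposition \ref{prop:group-cohom-Hecke}, this is exactly the action of $T_{g^{-1}}$. This gives the isomorphism $H^i(H,\pi) \simeq H^{d-i}(H,\pi^\vee)^{\vee,\iota}$.

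\textbf{Main obstacle.} The delicate step is the conjugation compatibility with the fundamental class: I must show that the composite
\begin{align*}
H^d(H\cap gHg^{-1},k) \xrightarrow{c_g} H^d(g^{-1}Hg\cap H,k) \xrightarrow{\on{cor}} H^d(H,k)
\end{align*}
agrees with corestriction from $H \cap gHg^{-1}$. My first approach is to note that both composites are isomorphisms of one-dimensional spaces, so the two maps differ by a scalar; I would then compute that scalar via dualizing modules. The dualizing module of a pro-$p$ Poincaré group is given by a character to $\BZ_p^\times$, whose reduction mod $p$ is trivial, so the scalar should be $1$. Care is also needed with the identification of the coefficient module $\pi^\vee$ under $g$, matching the footnote convention in Proposition \ref{prop:group-cohom-Hecke}.
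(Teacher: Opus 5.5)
You follow the paper's own route: Serre's duality, the projection formula for $\on{res}$ and $\on{cor}$, and adjointness of $c_g$ with $c_{g^{-1}}$. The paper simply asserts the last of these. You correctly isolate that the only real content is the identity $\on{tr}_{H\cap g^{-1}Hg}\circ c_g=\on{tr}_{H\cap gHg^{-1}}$ on $H^d(H\cap gHg^{-1},k)$, where $\on{tr}_U:=\on{tr}_H\circ\on{cor}^H_U$. Your justification of that identity does not work.

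The dualizing character $H\to\BZ_p^\times$ only records how $H$ itself, i.e.\ inner conjugation, acts on top cohomology. For $g\in G\smallsetminus H$, the map $c_g$ comes from an isomorphism between open subgroups of $H$ that is not induced by any element of $H$. The fact that $G$ acts trivially on $k$ says nothing about it. Since $\on{cor}$ is an isomorphism in degree $d$ and commutes with $c_g$, the discrepancy is the value $\mathfrak{d}(g)$ of the character $\mathfrak{d}\colon G\to k^\times$ by which $G$ acts, via the $c_g$, on $\varprojlim_U H^d(U,k)$. This character is trivial on $H$, and what your argument actually proves is $\langle xT_g,y\rangle=\mathfrak{d}(g)\langle x,yT_{g^{-1}}\rangle$. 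Nothing forces $\mathfrak{d}=1$.

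Here is a counterexample. Take $p$ odd, $G=\BZ_p\rtimes\langle t\rangle$ with $tht^{-1}=h^u$ for some $u\in\BZ_p^\times$ with $u\not\equiv1\pmod p$, $H=\BZ_p$ (Poincar\'e of dimension $1$), and $\pi=k$. All indices are $1$.
\begin{itemize}
\item $T_t$ acts trivially on $H^0(H,k)=k$.
\item On $H^1(H,k)=\Hom(\BZ_p,k)$ it acts by $(c_t\xi)(h)=\xi(tht^{-1})=\bar u\,\xi(h)$, while $T_{t^{-1}}$ acts by $\bar u^{-1}$.
\end{itemize}
So no Hecke-equivariant isomorphism $H^0(H,k)\simeq H^1(H,k)^{\vee,\iota}$ exists.

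Thus the step you flagged cannot be completed in the stated generality, and the paper's one-line assertion has the same lacuna. One must either assume $\mathfrak{d}=1$ or replace $\iota$ by $T_g\mapsto\mathfrak{d}(g)T_{g^{-1}}$.

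In the case the paper actually uses ($G=\GL_2(\BQ_p)/Z$, $H=I_1/Z_1$, $p\ge5$), one does have $\mathfrak{d}=1$, but this needs its own argument. For example:
\begin{itemize}
\item If $g$ normalizes a uniform open subgroup $U$, Lazard's isomorphism $H^*(U,\BF_p)\simeq\bigwedge^*H^1(U,\BF_p)$ shows that $c_g$ acts on $H^d(U,\BF_p)$ by $\det(\on{Ad}(g))^{\pm1}\bmod p$.
\item This scalar is $1$, since $\det\circ\on{Ad}$ is trivial on $\PGL_2$.
\item $\GL_2(\BQ_p)$ is generated by its compact subgroups together with $\begin{pmatrix}0&1\\p&0\end{pmatrix}$, and each of these normalizes, modulo the center, some uniform open subgroup.
\end{itemize}
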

\begin{proof}
    The fact that cup product induces a perfect pairing of $k$-vector spaces follows from \cite[Proposition 30]{serre}. Restriction and corestriction are adjoint operators under the cup product pairing; in the notation of Proposition \ref{prop:group-cohom-Hecke}, this means that $\on{cor}_g(x) \cup y = x \cup \on{res}_{g^{-1}}(x)$ and $\on{res}_g(x) \cup y = x \cup \on{cor}_{g^{-1}}(x)$. Additionally, $c_{g}$ is adjoint to $c_{g^{-1}}$. Hence:
    \begin{align*}
        xT_g \cup y &= (\on{cor}_g \circ c_{g} \circ \on{res}_g x) \cup y \\
        &= x \cup (\on{cor}_{g^{-1}} \circ c_{g^{-1}} \circ \on{res}_{g^{-1}} y)\\
        &= x \cup yT_{g^{-1}}
    \end{align*}
    so that the operators $T_g$ and $T_{g^{-1}}$ are adjoints with respect to the cup product pairing. 
\end{proof}

These results can be used to give a more direct proof of, for example, \cite[Proposition 10.1]{paskunas13} in the $p \geq 5$ case. Restriction to $I_1/Z$ preserves injectives by the proof of \cite[Lemma 5.22]{paskunas13}, and for $p\geq 5$, the pro-$p$ Iwahori Hecke algebra $I_1/Z_1$ mod center is a Poincar\'e pro-$p$ group of dimension $3$. Knowing already that $\mathbb{R}^1\mathcal{I}(1) \simeq \mathcal{I}(\Ind_{P}^G \alpha)$ by \cite[Proposition 11.2]{paskunas_supersingular}, we conclude $$\mathbb{R}^2\mathcal{I}(1) \simeq \mathcal{I}(\Ind_{P}^G \alpha)^{\vee, \iota} \simeq \mathcal{I}(\Ind_{P}^G \alpha),$$ and $$\mathbb{R}^3\mathcal{I}(1) \simeq \mathcal{I}(1)^{\vee, \iota} \simeq \mathcal{I}(1),$$ using the explicit presentations of these Hecke modules as in \cite[\S9]{paskunas_supersingular} to show that they are self-dual. 

\subsection{Infinite cohomological amplitude of the $p = 3$ Steinberg block}

Unfortunately, the pro-$3$ Iwahori Hecke algebra 
\begin{align*}
    I_1 := \left\{\begin{pmatrix}
        1 + 3 \BZ_3 & \BZ_3 \\
        3\BZ_3 & 1 + 3\BZ_3
    \end{pmatrix}\right\} \subset \GL_2(\BQ_3)
\end{align*}
is not a Poincar\'e pro-$3$ group, nor is $I_1/Z_1$, where $Z_1 = Z\cap I_1$, since these groups have nontrivial $3$-torsion. For example, 
\begin{align*}
    \begin{bmatrix}
        -2 & -1 \\
        3 & 1
    \end{bmatrix}^3 = \begin{bmatrix}
        1 & 0 \\ 0 & 1
    \end{bmatrix}.
\end{align*}
Thus Proposition \ref{prop:poincare-duality} is not applicable to help compute the Hecke modules $\mathbb{R}^i\mathcal{I}(1)$. 

Moreover, we claim that these Hecke modules are nonzero for all $i \geq 0$. By Proposition \ref{prop:group-cohom-Hecke}, it suffices to show that $H^i(I_1/Z_1, k) \neq 0$ for all $i \geq 0$, with the hypotheses of this proposition again following from the proof of \cite[Lemma 5.22]{paskunas13}. We have $H^i(\BZ/3, k) = k$ for all $i \geq 0$, since the standard $2$-periodic complex computing these cohomology groups is
\begin{center}
    \begin{tikzcd}
        0 \ar[r] & k \ar[r, "1 - \sigma"] & k \ar[r, "1 + \sigma + \sigma^2"] & k \ar[r, "1 - \sigma"] & k \ar[r, "1 + \sigma + \sigma^2"] & \dotsb
    \end{tikzcd}
\end{center}
where $\sigma$ is a generator of $\Z/3$, but these maps are all $0$. Since $I_1/Z_1$ has $3$-torsion, this group contains $\Z/3$ as a closed subgroup, so by \cite[Proposition 3.3.5]{cohomology-of-number-fields}, the $p$-cohomological dimension of $I_1/Z_1$ is also infinity, so, by \cite[Proposition 3.3.2]{cohomology-of-number-fields}, we conclude that $H^i(I_1/Z_1, k) \neq 0$ for all $i \geq 0$.

From \cite[Proposition 5.26]{paskunas13}, for $\pi, \tau \in \on{Mod}_{G/Z}^{\on{lfin}}(k)$, we have an exact sequence of $k$-vector spaces
\begin{align*}
    0 \to \Ext^1_{\mathcal{H}}(\mathcal{I}(\pi), \mathbb{R}^{i - 1}\mathcal{I}(\tau)) \to \Ext^i_{G/Z}(\pi, \tau) \to \Hom_{\mathcal{H}}(\mathcal{I}(\pi), \mathbb{R}^i\mathcal{I}(\tau)) \to 0.
\end{align*}
Hence non-vanishing of $\mathbb{R}^i\mathcal{I}(1)$ implies non-vanishing of $\Ext^i_{G/Z}(\pi, 1)$ for some irreducible $\pi$, since we may choose $\tau$ such that $\mathcal{I}(\pi)$ is a submodule of $\mathbb{R}^i\mathcal{I}(1)$. We conclude:
\begin{proposition}\label{prop:infinite-cohom-dim}
    The simple right module over $\tilde{E}_{\mathfrak{B}}$ corresponding to $1^\vee \in \mathfrak{C}(\O)$ under Morita theory has infinite projective dimension. In particular, $\tilde{E}_{\mathfrak{B}}$ has infinite global dimension. 
\end{proposition}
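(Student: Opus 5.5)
The plan is to turn the nonvanishing of $\Ext^i_{G/Z}(\pi, 1)$ for infinitely many $i$ into unbounded projective dimension of the simple module, using Morita theory and Pontryagin duality.

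\textbf{Step 1: Morita translation.} Under the equivalence of $\mathfrak{C}(\O)^{\mathfrak{B}}$ with compact right $\tilde{E}_{\mathfrak{B}}$-modules, $M \mapsto \Hom(\tilde{P}, M)$, the object $1^\vee$ goes to the simple module $\Hom(\tilde{P}, 1^\vee)$. Projective objects go to projective modules, so projective resolutions correspond. Hence the projective dimension of this simple module equals the projective dimension of $1^\vee$ in $\mathfrak{C}(\O)^{\mathfrak{B}}$.

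\textbf{Step 2: reduction to Ext groups over $k$.} I would show that $\Ext^i_{\mathfrak{C}(\O)}(1^\vee, \pi^\vee) \neq 0$ for infinitely many $i$ and some irreducible $\pi \in \mathfrak{B}$. This forces infinite projective dimension, since a finite projective resolution of $1^\vee$ would kill these groups in large degree.

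By Pontryagin duality, $\Ext^i_{\mathfrak{C}(\O)}(1^\vee, \pi^\vee) = \Ext^i_{\on{Mod}^{\on{lfin}}_{G/Z}(\O)}(\pi, 1)$. To compare with the mod $\varpi$ category, I would use the minimal projective resolution of $1^\vee$ by sums of $\tilde{P}_{\tau^\vee}$. Since $\tilde{P}_{\tau^\vee}\otimes_\O k = P_{\tau^\vee}$ \cite[Lemma 2.11]{paskunas13}, and the cokernel $\O$ is $\O$-flat, reducing mod $\varpi$ gives a projective resolution of $k = 1^\vee$ in $\mathfrak{C}(k)$. It is still minimal, because $\Hom(P_{\tau^\vee}, \pi^\vee) = \Hom(\tilde{P}_{\tau^\vee}, \pi^\vee)$ for $\pi^\vee$ killed by $\varpi$.

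Consequently the multiplicity of $\tilde P_{\pi^\vee}$ in degree $i$ of the $\O$-resolution equals $\dim_k \Ext^i_{G/Z}(\pi,1)$ computed over $k$. So it suffices to show that, for each $i$, $\Ext^i_{G/Z}(\pi,1)\ne 0$ over $k$ for some $\pi\in\mathfrak{B}$.

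\textbf{Step 3: nonvanishing over $k$.} This is the preceding discussion. By \cite[Proposition 5.26]{paskunas13} there is an exact sequence
\[
0 \to \Ext^1_{\mathcal H}(\mathcal I(\pi), \mathbb R^{i-1}\mathcal I(1)) \to \Ext^i_{G/Z}(\pi,1) \to \Hom_{\mathcal H}(\mathcal I(\pi), \mathbb R^i\mathcal I(1)) \to 0 .
\]
We have $\mathbb R^i\mathcal I(1)\simeq H^i(I_1/Z_1,k)\neq 0$ for every $i$, by Proposition \ref{prop:group-cohom-Hecke} and the $3$-torsion argument.

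The main obstacle is to produce a simple Hecke submodule of $\mathbb R^i\mathcal I(1)$ of the form $\mathcal I(\pi)$ with $\pi \in \mathfrak{B}$. I would argue as follows.
\begin{itemize}
\item $\mathbb R^i\mathcal I(1)$ is a nonzero Hecke module that is finite-dimensional over $k$, since $I_1/Z_1$ is $p$-adic analytic. It therefore has a simple submodule.
\item Every simple $\mathcal H$-module is $\mathcal I(\pi)$ for an irreducible $\pi$, by the Ollivier/Vignéras equivalence as used in \cite[\S5]{paskunas13}.
\item Then $\Hom_{\mathcal H}(\mathcal I(\pi),\mathbb R^i\mathcal I(1))\ne0$, so $\Ext^i_{G/Z}(\pi,1)\neq0$.
\item Block decomposition forces $\pi\in\mathfrak{B}$, because Ext groups between different blocks vanish.
\end{itemize}

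Since $\mathfrak{B}$ is finite, some single $\pi \in \mathfrak{B}$ has nonzero $\Ext^i$ for infinitely many $i$. Thus the minimal resolution never terminates, and the simple module has infinite projective dimension. The global dimension of $\tilde{E}_{\mathfrak{B}}$ is bounded below by the projective dimension of any module, so it is infinite as well.
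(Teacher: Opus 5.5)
Your route is the paper's. You use nonvanishing of $H^i(I_1/Z_1,k)$ and the Hecke exact sequence to get $\Ext^i_{G/Z}(\pi,1)\neq 0$ over $k$ for infinitely many $i$, then transfer this to $\mathfrak{C}(\O)$, then apply Morita theory. Step 3 usefully justifies the existence of a simple submodule $\mathcal{I}(\pi)\subseteq\mathbb{R}^i\mathcal{I}(1)$ via finite-dimensionality and Ollivier's classification, which the paper only asserts. Step 2 replaces the paper's appeal to \cite[Corollary 5.17]{paskunas13} with a direct minimal-resolution argument. As written, however, Step 2 resolves the wrong object.

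In $\mathfrak{C}(\O)$ the object $1^\vee$ is $k$ with trivial action, which is killed by $\varpi$; it is not $\O=(L/\O)^\vee$. Its minimal resolution does not have cokernel $\O$. Reducing it mod $\varpi$ does not give a resolution either, since $\mathrm{Tor}_1^{\O}(k,k)\neq 0$.

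What your reduction argument actually proves concerns the torsion-free lift $\O$. The minimal resolution of $\O$ has degree-$i$ multiplicity of $\tilde{P}_{\pi^\vee}$ equal to $m_i:=\dim_k\Ext^i_{\mathfrak{C}(k)}(k,\pi^\vee)$, so $\Ext^i_{\mathfrak{C}(\O)}(\O,\pi^\vee)\cong\Ext^i_{\mathfrak{C}(k)}(k,\pi^\vee)$ and $\O$ has infinite projective dimension. The proposition, though, is about the simple module, i.e. about $k$. For $k$ your multiplicity claim is false: in degree $1$ the resolution of $k$ must contain a copy of $\tilde{P}_{1^\vee}$ to account for $\varpi\tilde{P}_{1^\vee}$.

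The fix is one line. Apply $\Hom(-,\pi^\vee)$ to $0\to\O\xrightarrow{\varpi}\O\to k\to 0$. Since $\varpi$ kills $\pi^\vee$, the maps induced by $\varpi$ on $\Ext^i_{\mathfrak{C}(\O)}(\O,\pi^\vee)$ vanish, which gives short exact sequences
\[
0\to\Ext^{i-1}_{\mathfrak{C}(\O)}(\O,\pi^\vee)\to\Ext^{i}_{\mathfrak{C}(\O)}(k,\pi^\vee)\to\Ext^{i}_{\mathfrak{C}(\O)}(\O,\pi^\vee)\to 0.
\]
Hence $\Ext^i_{\mathfrak{C}(\O)}(1^\vee,\pi^\vee)\neq 0$ whenever $\Ext^i_{\mathfrak{C}(k)}(k,\pi^\vee)\neq 0$. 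Equivalently, the minimal resolution of $k$ is the cone of $\varpi$ on that of $\O$, with multiplicities $m_i+m_{i-1}$. With this step added, the rest of your argument goes through.
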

\begin{proof}
    Taking the Pontryagin dual of the previous discussion, we have shown that $\Ext_{\mathfrak{C}(k)}^i(1^\vee, -) \neq 0$ for infinitely many $i$. By \cite[Corollary 5.17]{paskunas13}, this implies that the same Ext groups computed in $\mathfrak{C}(\O)$ are also nonzero. Hence, no finite projective resolution of $1^\vee$ exists in $\mathfrak{C}(\O)$. Finally, under Morita theory, $\mathfrak{C}(\O)^{\mathfrak{B}}$ is equivalent to the category of compact right modules over $\tilde{E}_{\mathfrak{B}}$. 
\end{proof}
\subsection{Non-Steinberg blocks}
Moving towards formulating a categorical $p$-adic Langlands correspondence, we briefly explain how previously existing results cover the non-Steinberg $3$-adic cases.

The results of \cite[\S8-9]{paskunas13} remain valid for $p = 3$, as explicitly stated at the beginning of these sections. All computations in \cite[\S3.2-3.3, \S5.3-5.4]{JNW}, including the tangent and obstruction computations and the presentations of $S$ and $R$, are unaffected by taking $p = 3$. We conclude that their proofs for the generic and non-generic of type I (i.e. scalar) cases continue to hold for $p = 3$. Alternatively, for the generic case we could utilize the new arguments developed in \S4.

In the supersingular case, the discussion of \cite[\S2.3.1]{JNW} remains valid, but, as mentioned in Remark 3.1.2 of loc.\ cit., for $p = 3$ there exists a $2$-dimensional irreducible residual representation $\conj{\rho}$ such that $R_{\conj{\rho}}^{\square}$ is singular. In fact, there is exactly one such residual representation up to isomorphism: $\conj{\rho} = \Ind_{\BQ_9}^{\BQ_3} \omega_2^2$, where $\omega_2$ is the niveau $2$ fundamental character. This is the unique $2$-dimensional irreducible mod $3$ representation of $\Gamma$ satisfying $\conj{\rho} \otimes \omega \simeq \conj{\rho}$, since all such representations are of the form $\Ind_{\BQ_9}^{\BQ_3} \omega_2^i \simeq \Ind_{\BQ_9}^{\BQ_3} \omega_2^{3i}$ for some $i$, and the condition for this being isomorphic to its own $\omega$-twist is that $i + 4 \equiv 3i \bmod 8$ since $\omega = \omega_2^4$. Consequently, for this $\conj{\rho}$ we have $$H^2(\Gamma, \on{Ad}^0 \conj{\rho}) \simeq H^0(\Gamma, (\on{Ad}^0 \conj{\rho}) \otimes \omega) \simeq \Hom_{\Gamma}(\conj{\rho}, \conj{\rho} \otimes \omega) \neq 0,$$
so the universal deformation ring (equivalently, the universal pseudodeformation ring) is singular. However, by \cite[Theorem 1.2]{paskunas16}, we still have $\tilde{E}_{\mathfrak{B}} \simeq R \simeq R_{\conj{\rho}}$, so we may still define a fully faithful functor $\on{Mod}_{G/Z}^{\on{lfin}}(\O)^{\mathfrak{B}} \to \on{QCoh}(\mathfrak{X}_{\mathfrak{B}})$ at the level of abelian categories by sending
\begin{align*}
    V \rightsquigarrow R(1) \otimes_{R} V
\end{align*}
as in \cite[\S 5.2]{JNW}. 

\subsection{A categorical $3$-adic local Langlands correspondence for the Steinberg block}

Due to the infinite cohomological amplitude discussed in \S5.2, \cite[Assumption 4.2.1(c)]{JNW} does not hold, and parts of the arguments in this section of loc.\ cit.\ break down. However, we can still define a fully faithful functor if we restrict to perfect complexes. Let $p = 3$ and let $\mathfrak{B}$ be the Steinberg block. We use the notation of \S1.2.  

\begin{theorem}\label{thm:fully-faithful-perf}
    The sheaf $\mathfrak{W}_{\on{tf}}$ defined in \S\ref{sec:candidate} induces a fully faithful functor 
    \begin{align*}
        \on{Perf}(\tilde{E}_{\mathfrak{B}}) &\to D^b(\on{Coh}(\mathfrak{X}_{\mathfrak{B}})) \\
        P_\bullet &\rightsquigarrow \mathfrak{W}_{\on{tf}} \otimes^L_{\tilde{E}_{\mathfrak{B}}} P_\bullet.
    \end{align*}
\end{theorem}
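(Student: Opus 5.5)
The plan is the standard argument that a derived tensor functor out of perfect complexes is fully faithful once the kernel's derived endomorphisms recover the algebra.

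\textbf{Step 1: well-definedness.} Regard $\mathfrak{W}_{\on{tf}}$ as a graded $S$-module carrying a right action of $\tilde{E}_{\mathfrak{B}}$. Here $\tilde{E}_{\mathfrak{B}}$ acts through the isomorphism $\tilde{E}_{\mathfrak{B}}^{\on{op}} \simeq \End_{\mathfrak{X}}(\mathfrak{W}_{\on{tf}})$ of Theorem \ref{thm:endomorphism-algebra-isomorphism}.
\begin{itemize}
\item Every object of $\on{Perf}(\tilde{E}_{\mathfrak{B}})$ is a bounded complex of finitely generated projectives, i.e.\ of direct summands of $\tilde{E}_{\mathfrak{B}}^{\oplus n}$.
\item So $\mathfrak{W}_{\on{tf}} \otimes^L_{\tilde{E}_{\mathfrak{B}}} P_\bullet$ can be computed termwise by the underived tensor product.
\item The terms are direct summands of $\mathfrak{W}_{\on{tf}}^{\oplus n}$; these are idempotent images under elements of $M_n(\End_{\mathfrak{X}}(\mathfrak{W}_{\on{tf}}))$. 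They are coherent, since $\mathfrak{W}_{\on{tf}}$ is coherent by Corollary \ref{cor:wtf-presentation}.
\end{itemize}
Hence the functor lands in $D^b(\on{Coh}(\mathfrak{X}_{\mathfrak{B}}))$. It is an exact (triangulated) functor because it is induced by an additive functor on bounded homotopy categories of projectives.

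\textbf{Step 2: reduction to the generator.} It suffices to show that for all $P_\bullet, Q_\bullet \in \on{Perf}(\tilde{E}_{\mathfrak{B}})$ and all $n$, the map
\[ \Hom_{\on{Perf}}(P_\bullet, Q_\bullet[n]) \to \Hom_{D^b}(F P_\bullet, F Q_\bullet[n]) \]
is an isomorphism. Since $\on{Perf}(\tilde{E}_{\mathfrak{B}})$ is the thick subcategory generated by $\tilde{E}_{\mathfrak{B}}$, I will use a two-variable dévissage:
\begin{itemize}
\item Fix $Q_\bullet$; the set of $P_\bullet$ for which the map is an isomorphism for all $n$ is closed under shifts, cones (five lemma) and direct summands.
\item Do the same in the second variable.
\end{itemize}
This reduces everything to $P_\bullet = Q_\bullet = \tilde{E}_{\mathfrak{B}}$, where $F(\tilde{E}_{\mathfrak{B}}) = \mathfrak{W}_{\on{tf}}$. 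The map becomes
\[ \Hom_{\tilde{E}_{\mathfrak{B}}}(\tilde{E}_{\mathfrak{B}}, \tilde{E}_{\mathfrak{B}}[n]) \to \Ext^n_{\mathfrak{X}}(\mathfrak{W}_{\on{tf}}, \mathfrak{W}_{\on{tf}}). \]

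\textbf{Step 3: the generator case.} For $n \neq 0$ the source vanishes because $\tilde{E}_{\mathfrak{B}}$ is projective, and the target vanishes by Theorem \ref{thm:ext-vanishing}; negative Ext between sheaves is zero anyway. For $n = 0$ the map is left multiplication $\tilde{E}_{\mathfrak{B}} = \End_{\tilde{E}_{\mathfrak{B}}}(\tilde{E}_{\mathfrak{B}}) \to \End_{\mathfrak{X}}(\mathfrak{W}_{\on{tf}})$, which is the isomorphism of Theorem \ref{thm:endomorphism-algebra-isomorphism}.

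\textbf{Main obstacle: conventions.} The delicate points are sidedness and topology, not the dévissage.
\begin{itemize}
\item One must check that the natural map $\tilde{E}_{\mathfrak{B}}^{\on{op}} \to \End_{\mathfrak{X}}(\mathfrak{W})$ (via precomposition on $\Hom(\tilde{P}, \tilde{P}')$, Proposition \ref{prop:hom-iso}) matches the module structure used to form $\mathfrak{W}_{\on{tf}} \otimes_{\tilde{E}_{\mathfrak{B}}} -$.
\item I will match left versus right modules so the $n=0$ map is literally the isomorphism of Theorem \ref{thm:endomorphism-algebra-isomorphism}.
\item One must also check that working with discrete versus compact modules does not matter for finitely generated projectives; both are just summands of $\tilde{E}_{\mathfrak{B}}^{\oplus n}$.
\end{itemize}
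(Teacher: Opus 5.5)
Your proposal is correct and matches the paper's proof, which likewise reduces by d\'evissage to $P_\bullet = Q_\bullet = \tilde{E}_{\mathfrak{B}}$ in a single degree and then invokes Theorem \ref{thm:endomorphism-algebra-isomorphism} for $n = 0$ and Theorem \ref{thm:ext-vanishing} for $n \neq 0$; your Step 1 (landing in $D^b(\on{Coh})$) makes explicit what the paper leaves implicit. The only slip is cosmetic: for left modules, $\End_{\tilde{E}_{\mathfrak{B}}}(\tilde{E}_{\mathfrak{B}}) \cong \tilde{E}_{\mathfrak{B}}^{\on{op}}$ via right multiplication, not $\tilde{E}_{\mathfrak{B}}$ via left multiplication, and this is exactly what matches $\End_{\mathfrak{X}}(\mathfrak{W}_{\on{tf}}) \simeq \tilde{E}_{\mathfrak{B}}^{\on{op}}$, as your remark on sidedness anticipated.
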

\begin{proof}
    The right action of $\tilde{E}_{\mathfrak{B}}$ on $\mathfrak{W}_{\on{tf}}$ defining the tensor product comes from Theorem \ref{thm:endomorphism-algebra-isomorphism}. To prove full faithfulness, it suffices to prove that we have an isomorphism $\Hom(V_\bullet, V'_\bullet) \simeq \Hom(\mathfrak{W}_{\on{tf}} \otimes_{\tilde{E}_{\mathfrak{B}}}^L V_\bullet, \mathfrak{W}_{\on{tf}} \otimes_{\tilde{E}_{\mathfrak{B}}}^L V'_\bullet ).$ By d\'evissage, it suffices to prove this when $V_\bullet, V_{\bullet}'$ are isomorphic to $\tilde{E}_{\mathfrak{B}}$ supported in a single homological degree. By shifting, this is equivalent to showing that 
    $$\Ext_{\mathfrak{X}}^i(\mathfrak{W}_{\on{tf}} \otimes_{\tilde{E}_{\mathfrak{B}}} \tilde{E}_{\mathfrak{B}}, \mathfrak{W}_{\on{tf}} \otimes_{\tilde{E}_{\mathfrak{B}}} \tilde{E}_{\mathfrak{B}}) = \Ext_{\tilde{E}_{\mathfrak{B}}}^i(\tilde{E}_{\mathfrak{B}}, \tilde{E}_{\mathfrak{B}}) = \begin{cases}
        \tilde{E}_{\mathfrak{B}}^{\on{op}} : & i=0 \\
        0 : &i\neq 0
    \end{cases}$$
    for all $i \in \BZ$. But the group on the left hand side is just $\Ext_{\mathfrak{X}}^i(\mathfrak{W}_{\on{tf}}, \mathfrak{W}_{\on{tf}})$. Hence the claim follows for $i = 0$ by Theorem \ref{thm:endomorphism-algebra-isomorphism} and for $i \neq 0$ by Theorem \ref{thm:ext-vanishing}. 
\end{proof}

\newpage
\printbibliography

@article {JNW,
    AUTHOR = {Johansson, Christian and Newton, James and Wang-Erickson,
              Carl},
     TITLE = {Moduli stacks of {G}alois representations and the {$p$}-adic
              local {L}anglands correspondence for {${\rm
              GL}_2(\Bbb{Q}_{p})$}},
   JOURNAL = {Compos. Math.},
  FJOURNAL = {Compositio Mathematica},
    VOLUME = {162},
      YEAR = {2026},
    NUMBER = {2},
     PAGES = {368--450},
      %ISSN = {0010-437X,1570-5846},
   MRCLASS = {22E50 (11F80 11F85 11S37)},
  MRNUMBER = {5088238},
       %DOI = {10.1017/S0010437X2610298X},
       %URL = {https://doi-org.libproxy.berkeley.edu/10.1017/S0010437X2610298X},
}

@article {paskunas16,
    AUTHOR = {Pa\v sk\=unas, Vytautas},
     TITLE = {On 2-dimensional 2-adic {G}alois representations of local and
              global fields},
   JOURNAL = {Algebra Number Theory},
  FJOURNAL = {Algebra \& Number Theory},
    VOLUME = {10},
      YEAR = {2016},
    NUMBER = {6},
     PAGES = {1301--1358},
      %ISSN = {1937-0652,1944-7833},
   MRCLASS = {11F80},
  MRNUMBER = {3544298},
MRREVIEWER = {Bao\ Viet\ Le Hung},
       %DOI = {10.2140/ant.2016.10.1301},
       %URL = {https://doi-org.libproxy.berkeley.edu/10.2140/ant.2016.10.1301},
}

@article {paskunas13,
    AUTHOR = {Pa\v sk\=unas, Vytautas},
     TITLE = {The image of {C}olmez's {M}ontreal functor},
   JOURNAL = {Publ. Math. Inst. Hautes \'Etudes Sci.},
  FJOURNAL = {Publications Math\'ematiques. Institut de Hautes \'Etudes
              Scientifiques},
    VOLUME = {118},
      YEAR = {2013},
     PAGES = {1--191},
      %ISSN = {0073-8301,1618-1913},
   MRCLASS = {22E35 (11S37 22E50)},
  MRNUMBER = {3150248},
MRREVIEWER = {Ivan\ Mati\'c},
       %DOI = {10.1007/s10240-013-0049-y},
       %URL = {https://doi-org.libproxy.berkeley.edu/10.1007/s10240-013-0049-y},
}

@article {paskunas_supersingular,
    AUTHOR = {Pa\v sk\=unas, Vytautas},
     TITLE = {Extensions for supersingular representations of {${\rm
              GL}_2(\Bbb Q_p)$}},
   JOURNAL = {Ast\'erisque},
  FJOURNAL = {Ast\'erisque},
    NUMBER = {331},
      YEAR = {2010},
     PAGES = {317--353},
      %ISSN = {0303-1179,2492-5926},
      ISBN = {978-2-85629-282-2},
   MRCLASS = {22E50 (11F70)},
  MRNUMBER = {2667891},
MRREVIEWER = {Gabor\ Wiese},
}

@misc{DEG26,
      title={A categorical p-adic Langlands correspondence for $\GL_2(\BQ_p)$}, 
      author={Andrea Dotto and Matthew Emerton and Toby Gee},
      year={2026},
      eprint={2603.26887},
      archivePrefix={arXiv},
      primaryClass={math.NT},
      %url={https://arxiv.org/abs/2603.26887}, 
}

@article {paskunas-tung,
    AUTHOR = {Pa\v sk\=unas, Vytautas and Tung, Shen-Ning},
     TITLE = {Finiteness properties of the category of {${\rm mod}\,p$}
              representations of {${\rm GL}_2 (\Bbb Q_p)$}},
   JOURNAL = {Forum Math. Sigma},
  FJOURNAL = {Forum of Mathematics. Sigma},
    VOLUME = {9},
      YEAR = {2021},
     PAGES = {Paper No. e80, 39},
      %ISSN = {2050-5094},
   MRCLASS = {22E50 (11F80)},
  MRNUMBER = {4350140},
MRREVIEWER = {Ivan\ Mati\'c},
       %DOI = {10.1017/fms.2021.72},
       %URL = {https://doi-org.libproxy.berkeley.edu/10.1017/fms.2021.72},
}

@article {bockle-3adic,
    AUTHOR = {B\"ockle, Gebhard},
     TITLE = {Deformation rings for some mod 3 {G}alois representations of
              the absolute {G}alois group of {$\bold Q_3$}},
   JOURNAL = {Ast\'erisque},
  FJOURNAL = {Ast\'erisque},
    NUMBER = {330},
      YEAR = {2010},
     PAGES = {529--542},
      %ISSN = {0303-1179,2492-5926},
      ISBN = {978-2-85629-281-5},
   MRCLASS = {11F80 (11F85 11S37 20F05)},
  MRNUMBER = {2642411},
MRREVIEWER = {Kentaro\ Nakamura},
}

@incollection {chenevier-pseudoreps,
    AUTHOR = {Chenevier, Ga\"etan},
     TITLE = {The {$p$}-adic analytic space of pseudocharacters of a
              profinite group and pseudorepresentations over arbitrary
              rings},
 BOOKTITLE = {Automorphic forms and {G}alois representations. {V}ol. 1},
    SERIES = {London Math. Soc. Lecture Note Ser.},
    VOLUME = {414},
     PAGES = {221--285},
 PUBLISHER = {Cambridge Univ. Press, Cambridge},
      YEAR = {2014},
      %ISBN = {978-1-107-69192-6},
   MRCLASS = {11F70 (11F80 14G22 20E18)},
  MRNUMBER = {3444227},
MRREVIEWER = {Cameron\ Franc},
}

@article{CWE18,
   title={Algebraic families of Galois representations and potentially semi-stable pseudodeformation rings},
   volume={371},
   %ISSN={1432-1807},
   %url={http://dx.doi.org/10.1007/s00208-017-1557-8},
   %DOI={10.1007/s00208-017-1557-8},
   number={3-4},
   journal={Mathematische Annalen},
   publisher={Springer Science and Business Media LLC},
   author={Wang-Erickson, Carl},
   year={2017},
   month=July, pages={1615–1681},
   shorthand = {WE17}
   }

@article {BC09,
    AUTHOR = {Bella\"iche, Jo\"el and Chenevier, Ga\"etan},
     TITLE = {Families of {G}alois representations and {S}elmer groups},
   JOURNAL = {Ast\'erisque},
  FJOURNAL = {Ast\'erisque},
    NUMBER = {324},
      YEAR = {2009},
     PAGES = {xii+314},
      %ISSN = {0303-1179,2492-5926},
      ISBN = {978-2-85629-264-8},
   MRCLASS = {11F80 (11F70 11F85 11G40 11S25 14F30 14G22)},
  MRNUMBER = {2656025},
MRREVIEWER = {Adolfo\ Quir\'os},
}

@article{bip,
   title={On local Galois deformation rings},
   volume={11},
   %ISSN={2050-5086},
   %url={http://dx.doi.org/10.1017/fmp.2023.25},
   %DOI={10.1017/fmp.2023.25},
   journal={Forum of Mathematics, Pi},
   publisher={Cambridge University Press (CUP)},
   author={Böckle, Gebhard and Iyengar, Ashwin and Paškūnas, Vytautas},
   year={2023} }

@book {serre,
    AUTHOR = {Serre, Jean-Pierre},
     TITLE = {Cohomologie galoisienne},
    SERIES = {Lecture Notes in Mathematics},
    VOLUME = {5},
   EDITION = {Fifth},
 PUBLISHER = {Springer-Verlag, Berlin},
      YEAR = {1994},
     PAGES = {x+181},
      %ISBN = {3-540-58002-6},
   MRCLASS = {12G05 (11R34)},
  MRNUMBER = {1324577},
       %DOI = {10.1007/BFb0108758},
       %URL = {https://doi-org.libproxy.berkeley.edu/10.1007/BFb0108758},
}

@article {breuil,
    AUTHOR = {Breuil, Christophe},
     TITLE = {Sur quelques repr\'esentations modulaires et {$p$}-adiques de
              {${\rm GL}_2(\bold Q_p)$}. {I}},
   JOURNAL = {Compositio Math.},
  FJOURNAL = {Compositio Mathematica},
    VOLUME = {138},
      YEAR = {2003},
    NUMBER = {2},
     PAGES = {165--188},
      %ISSN = {0010-437X,1570-5846},
   MRCLASS = {11F33 (11F80 11F85)},
  MRNUMBER = {2018825},
MRREVIEWER = {David\ L.\ Savitt},
       %DOI = {10.1023/A:1026191928449},
       %URL = {https://doi-org.libproxy.berkeley.edu/10.1023/A:1026191928449},
}

@misc{stacks-project,
  author       = {The {Stacks project authors}},
  title        = {The Stacks project},
  howpublished = {\url{https://stacks.math.columbia.edu}},
  year         = {2026},
  shorthand = {Sta26}
}

@article {barthel-livne,
    AUTHOR = {Barthel, Laure and Livn\'e, Ron},
     TITLE = {Irreducible modular representations of {${\rm GL}_2$} of a
              local field},
   JOURNAL = {Duke Math. J.},
  FJOURNAL = {Duke Mathematical Journal},
    VOLUME = {75},
      YEAR = {1994},
    NUMBER = {2},
     PAGES = {261--292},
      %ISSN = {0012-7094,1547-7398},
   MRCLASS = {22E50 (11F70)},
  MRNUMBER = {1290194},
MRREVIEWER = {David\ Goldberg},
       %DOI = {10.1215/S0012-7094-94-07508-X},
       %URL = {https://doi.org/10.1215/S0012-7094-94-07508-X},
}

@incollection {EGH,
    AUTHOR = {Emerton, Matthew and Gee, Toby and Hellmann, Eugen},
     TITLE = {An introduction to the categorical {$p$}-adic {L}anglands
              program},
 BOOKTITLE = {The {L}anglands program. {II}. {G}eometrization of the
              {L}anglands correspondence},
    SERIES = {Proc. Sympos. Pure Math.},
    VOLUME = {112.2},
     PAGES = {167--419},
 PUBLISHER = {Amer. Math. Soc., Providence, RI},
      YEAR = {2025},
      %ISBN = {978-1-4704-7438-6},
   MRCLASS = {11R39},
  MRNUMBER = {5007752},
       %DOI = {10.1090/pspum/112.2/02066},
       %URL = {https://doi-org.libproxy.berkeley.edu/10.1090/pspum/112.2/02066},
}

@book {cohomology-of-number-fields,
    AUTHOR = {Neukirch, J\"urgen and Schmidt, Alexander and Wingberg, Kay},
     TITLE = {Cohomology of number fields},
    SERIES = {Grundlehren der mathematischen Wissenschaften [Fundamental
              Principles of Mathematical Sciences]},
    VOLUME = {323},
   EDITION = {Second},
 PUBLISHER = {Springer-Verlag, Berlin},
      YEAR = {2008},
     PAGES = {xvi+825},
      %ISBN = {978-3-540-37888-4},
   MRCLASS = {11R34 (11-02 11G45 11R23 11S20 11S25 11S31 12G05)},
  MRNUMBER = {2392026},
       %DOI = {10.1007/978-3-540-37889-1},
       %URL = {https://doi-org.libproxy.berkeley.edu/10.1007/978-3-540-37889-1},
}
\end{document}